\documentclass{style}

\makeatletter
\@namedef{subjclassname@2020}{%
  \textup{2020} Mathematics Subject Classification}
\makeatother

\renewcommand{\labelenumi}{\textup{(\alph{enumi})}}

\allowdisplaybreaks

\DeclareMathOperator{\rank}{rank}

\DeclareMathOperator{\End}{End}

\DeclareMathOperator{\Irr}{Irr}

\newcommand{\C}{\mathcal{C}}

\newcommand{\D}{\mathcal{D}}

\newcommand{\G}{\mathcal{G}}

\newcommand{\Z}{\mathcal{Z}}

\newcommand{\M}{\mathcal{M}}

\newcommand{\cO}{\mathcal{O}}

\newcommand{\bC}{\mathbb{C}}
\newcommand{\bZ}{\mathbb{Z}}

\newcommand{\fgl}{\mathfrak{gl}}

\newcommand{\kk}{\Bbbk}

\newcommand{\ev}{\mathrm{ev}}
\newcommand{\coev}{\mathrm{coev}}

\newcommand{\id}{\mathrm{id}}
\newcommand{\Rep}{\mathrm{Rep}}

\newcommand{\Hom}{\mathrm{Hom}}

\newcommand{\unit}{\mathbbm{1}}

\newcommand{\Vect}{\mathbf{Vec}}
\newcommand{\vect}{\mathbf{vec}}
\newcommand{\iHom}{\underline{\sf Hom}}

\newcommand{\Inv}{{\sf Inv}}

\newcommand{\loc}{\mathrm{loc}}

\newcommand{\rev}{\mathrm{rev}}
\newcommand{\op}{\mathrm{op}}

\definecolor{darkgreen}{RGB}{55,138,0}
\definecolor{burntorange}{RGB}{180,85,0}
\definecolor{navyblue}{RGB}{18,40,180}
\definecolor{cyan(process)}{rgb}{0.0, 0.6, 1.0}

\newcommand{\fp}{\mathrm{fp}}
\newcommand{\fg}{\mathrm{fg}}

\newcommand{\fl}{\mathrm{fl}}

\newcommand{\Ext}{\mathrm{Ext}}
\newcommand{\Ker}{\mathrm{Ker}}
\newcommand{\Img}{\mathrm{Im}}
\newcommand{\Cok}{\mathrm{Coker}}

\newcommand{\Stab}{\mathrm{Stab}}

\newcommand{\iC}{\widehat{\C}}

\makeatletter
\newcommand{\namedlabel}[2]{%
  \phantomsection
  \def\@currentlabel{#1}%
  \label{#2}%
}
\makeatother

\begin{document}

\begin{abstract}
We give criteria for when finitely generated local modules over a commutative algebra $A$ in the ind-completion $\widehat{\mathcal{C}}$ of a braided tensor category $\mathcal{C}$ inherit the structure of a (rigid, braided, ribbon) tensor category. We then apply this to simple current algebras $A = \bigoplus_{g \in \Gamma} E_g$, where $\Gamma$ is a subgroup of invertible objects in $\mathcal{C}$. Using a description of simple $A$-modules, we verify the required hypotheses for this class of algebras and deduce rigidity, braided, ribbon, and non-degeneracy properties for their finitely generated local modules. As applications, we construct examples of ribbon tensor categories from quantum supergroup categories for unrolled $\mathfrak{gl}(1|1)$.
\end{abstract}

\title{Ribbon categories from ind-exact algebras: simple current case}

\maketitle


\section{Introduction}
Simple currents (i.e.\ invertible objects) play a central role in constructions and classification problems in braided tensor categories arising in representation theory and conformal field theory. Given a braided tensor category $\C$ and a subgroup $\Gamma$ of its group of isomorphism classes of invertible objects, the object $A = \bigoplus_{g \in \Gamma} E_g$ (where $E_g\in\C$ denotes an invertible object corresponding to $g\in\Gamma$) has a structure of an algebra in the ind-completion $\iC$ under an appropriate cocycle condition. We call an algebra of this type a \emph{simple current algebra} (see Definition~\ref{def:simple-current-alg} for the precise definition). The goal of this paper is to give workable criteria ensuring that suitable subcategories of modules over a commutative algebra in $\iC$ inherit the structure of ribbon tensor category from $\C$ and to establish that commutative simple current algebras satisfy these criteria under mild assumptions.

\subsection{Main results}
Throughout this paper, we work over an algebraically closed field $\kk$.
The paper has two parts.
First, we set up a general framework for commutative algebras $A\in\iC$, where $\C$ is a braided tensor category. A right $A$-module is said to be \emph{finitely generated} if it is a quotient of the free $A$-module $X \otimes A$ for some object $X \in \C$. Based on this definition, one can define the category $\fg\text{-}\iC_A$ of finitely generated $A$-modules and its subcategory $\fg\text{-}\iC_A^{\loc}$ of finitely generated local modules.

\begin{namedtheorem}[A]\namedlabel{A}{thm:main-A}
Let $A\in\iC$ be a haploid commutative algebra.
\begin{enumerate}
\item If $A$ is Artinian, then $\fg\text{-}\iC_A$ and $\fg\text{-}\iC_A^{\loc}$ are abelian categories of finite length.
\item If moreover $A$ is ind-exact, then $\fg\text{-}\iC_A$ is tensor and $\fg\text{-}\iC_A^{\loc}$ is braided tensor.
\item If in addition $\C$ is ribbon and $A$ is Frobenius with $\theta_A=\id_A$, then $\fg\text{-}\iC_A^{\loc}$ is ribbon.
\end{enumerate}
\end{namedtheorem}

See Definitions \ref{def:artinian-alg} and \ref{def:ind-exact-alg} for the
definitions of Artinian and ind-exact algebras, respectively. The term
Frobenius algebra is used in an extended sense as defined in
Definition~\ref{def:Frobenius-alg-in-Ind}, and in particular the algebra $A$
in (c) may not be a rigid object in $\iC$. Our discussion of Artinian and ind-exact algebras builds upon \cite{coulembier2023incompressible} and our prior work \cite{shimizu2024commutative}. Part~(c) is the main new contribution in this theorem: we introduce `infinite' Frobenius algebras and use this property to obtain the ribbon structure of $\fg\text{-}\iC_A^{\loc}$.

Second, we discuss a practical way of checking ind-exactness of algebras. Etingof and Penneys proved that in an abelian braided monoidal category with right exact tensor product, if all the simple objects are rigid, then all finite length objects are rigid \cite[Lemma~4.2]{etingof2024rigidity}. In Appendix~\ref{app:EP-lemma}, we prove a non-braided generalization of this result (Theorem~\ref{thm:EP-rigidity-main}). Using this, we obtain the following result (Theorem~\ref{thm:ind-exact}):

\begin{namedtheorem}[B]\namedlabel{B}{thm:main-B}
Let $\C$ be a tensor category and $A$ an Artinian central commutative algebra in $\iC$. Then, $A$ is ind-exact if and only if every simple $A$-module is rigid.
\end{namedtheorem}

Third, and this is the main contribution of the paper: we prove that \emph{simple current algebras} (Definition \ref{def:simple-current-alg}) satisfy these hypotheses under explicit and checkable conditions, and we derive concrete consequences for their module categories. For a simple current algebra $A=\bigoplus_{g\in \Gamma}E_g$, we first classify simple $A$-modules. We then verify the hypotheses of Theorem~\ref{thm:main-A} for this class: in particular, $A$ is always Artinian (Proposition~\ref{prop:A-artinian}) and Frobenius (Theorem~\ref{thm:simple-current-ext-is-Fb}). If $A$ is central commutative and char$(\kk)=0$, we use the explicit description of simple $A$-modules to show that they are rigid. By Theorem~\ref{thm:main-B}, this establishes that $A$ is ind-exact. Section~\ref{sec:simple-current-algebras} also gives a general categorical non-degeneracy criterion for the category $\fg\text{-}\iC_A^\loc$.

\begin{namedtheorem}[C]\namedlabel{C}{thm:main-C}
Suppose that \textup{char}$(\kk)=0$. Let $\Gamma$ be a subgroup of the group of isomorphism classes of invertible objects of $\C$ such that $c_{E_g,E_g}=\id_{E_g\otimes E_g}$ for all $g\in \Gamma$. Then the object $A = \bigoplus_{g \in \Gamma} E_g$ has a unique (up to isomorphism) structure of commutative simple current algebra. For the commutative simple current algebra $A$, we have:
\begin{enumerate}
\item $\fg\text{-}\iC_A$ is a tensor category and $\fg\text{-}\iC_A^{\loc}$ is a braided tensor category.
\item If $\C$ is ribbon and $\theta_{E_g}=\id_{E_g}$ for all $g\in \Gamma$, then $\fg\text{-}\iC_A^{\loc}$ is ribbon.
\end{enumerate}
Now suppose that $\C$ is Frobenius (i.e., it has enough projectives). Then,
\begin{enumerate}
\item[\textup{(c)}] $\fg\text{-}\iC_A$ and $\fg\text{-}\iC_A^{\loc}$ are Frobenius.
\item[\textup{(d)}] $\fg\text{-}\iC_A$ is finite if and only if $\Irr(\C)/\Gamma$ is finite, and $\fg\text{-}\iC_A^{\loc}$ is braided finite if and only if $\Irr(\C_\Gamma)/\Gamma$ is finite. Here, $\C_{\Gamma}$ is the full subcategory
\[\C_\Gamma:=\{X\in\C| c_{E_g,X}c_{X,E_g}=\id_{X\otimes E_g}\ \forall\ g\in \Gamma\} . \]
\item[\textup{(e)}] If the simple objects in $\C_\Gamma$ that trivially double braid with all other simples in $\C_\Gamma$ are precisely $E_g$ for $g\in \Gamma$, then $\fg\text{-}\iC^{\loc}_A$ is non-degenerate.
\end{enumerate}
\end{namedtheorem}

One may call a ribbon tensor category with trivial M\"uger center a \emph{modular tensor category}. Then Theorem~\ref{thm:main-C} gives sufficient conditions that a commutative simple current algebra $A$ must satisfy to ensure that the category $\fg\text{-}\iC_A^\loc$ is modular.

\subsection{VOA motivation and applications}
The study of simple current extensions originated in conformal field theory with the work of Schellekens and Yankielowicz \cite{schellekens1989modular} on modular invariants generated by integer-spin simple currents. In vertex operator algebra theory, Dong, Li, and Mason \cite{dong1996simple} gave an early treatment of extensions by simple currents. In the rational tensor-categorical setting, the algebra-object and local-module viewpoint was developed by Kirillov--Ostrik \cite{kirillov2002q} and by Fuchs, Runkel, and Schweigert \cite{fuchs2004tft}. Later, Huang, Kirillov, and Lepowsky \cite{huang2015braided} showed that, under suitable hypotheses, extensions of a VOA $V$ are equivalent to commutative associative algebra objects with trivial twist in the braided tensor category of $V$-modules. In this framework, the extended VOA $W$ corresponds to an algebra object $A$ in $\C$ (or $\iC$), and the category of $W$-modules is described by the category of local $A$-modules; see \cite{huang2015braided,creutzig2024tensor}. If the underlying object of the algebra is a direct sum of invertible objects, it is called a simple current extension (this is why we call an algebra of the form $A = \bigoplus_{g \in \Gamma} E_g$ a simple current algebra).

Our motivation comes from studying simple current extensions of logarithmic VOAs. The representation categories of such VOAs are typically not semisimple and not finite, and moreover, the simple current extensions are typically not objects of the original category, but rather lie in the ind-completion.
There are many papers which discuss rigid, ribbon structures on categories of modules over simple current extensions in specific examples; see, for example
\cite{auger2018infinite,creutzig2022direct,creutzig2022uprolling,creutzig2022tensor}.

Our results can be viewed as a rigidity inheritance statement for simple current extensions of vertex operator
algebras.
Let $V$ be a VOA such that a suitable category $\C$ of $V$-modules carries a braided tensor category structure.
Assume moreover that $\C$ is rigid, and let $V\subset W$ be a simple current extension.
In the tensor-categorical formulation, the extension corresponds to a commutative algebra object
$A\in \iC$, and $W$-modules are described by local $A$-modules in $\iC$
\cite{huang2015braided,creutzig2024tensor}.
Under the hypotheses of Theorem~\ref{thm:main-A}, we show that the category $\fg\text{-}\iC_A^{\loc}$ is rigid.
Thus $W$ also admits a rigid braided tensor category of representations.
This is useful because rigidity is often one of the hardest properties to establish in vertex tensor category
theory.

\subsection{Examples}
We also obtain examples of (braided, ribbon) tensor categories by applying our criteria to commutative simple current algebras in two families of non-semisimple ribbon categories: the categories of integral weight modules for $U^H_q(\mathfrak{g})$ \cite{rupert2022categories} and $U^E_q(\fgl(1|1))$ \cite{geer2025three}. In these settings, one can sum invertibles over suitable subgroups to obtain commutative simple current algebras, and our criteria produce new categories of local modules. In the unrolled quantum group setting, this specializes to the explicit lattice criterion of Creutzig--Rupert \cite{creutzig2022uprolling}.

Note that the weight module categories of $U^H_q(\mathfrak{g})$ and $U^E_q(\fgl(1|1))$ are relative modular categories in the sense of \cite{costantino2014quantum}. It would be interesting to examine whether the relative modular structure descends to the local module category obtained from our results.

\subsection{Organization}
The paper is organized as follows. Section~\ref{sec:background} recalls basic terminology on tensor categories and centers. Section~\ref{sec:framework} develops a general framework for constructing (braided, ribbon) tensor categories from commutative algebras in ind-completions; the main criteria are summarized in Theorem~\ref{thm:main-criteria}. Section~\ref{sec:simple-current-algebras} applies this framework to simple current algebras. Section~\ref{sec:examples} records concrete examples from the unrolled quantum groups of $\mathfrak{gl}(1|1)$. Appendix~\ref{app:EP-lemma} contains a non-braided variant of the Etingof--Penneys \cite{etingof2024rigidity} lemma used for rigidity arguments.

\subsection{Acknowledgements}
The authors thank RIMS for hospitality during the completion of this work. We thank Pavel Etingof and Dave Penneys for a helpful email exchange regarding \cite[Lemma~4.2]{etingof2024rigidity}.

HY is partially supported by a start-up grant from the University of Alberta and an NSERC Discovery Grant. KS is supported by JSPS KAKENHI Grant Number JP24K06676.


\section{Background}\label{sec:background}

\subsection{Notations}
Throughout the paper, $\kk$ will denote an algebraically closed field. We will write $\vect$ to denote the category of finite-dimensional $\kk$-vector spaces and $\Vect$ to denote the category of all $\kk$-vector spaces.

Given a category $\C$, we write $\C^{\op}$ to denote the opposite category of $\C$. 
Unless otherwise noted, the monoidal product and the unit object of a monoidal category $\C$ are denoted by $\otimes : \C \times \C \to \C$ and $\unit \in \C$, respectively.
$\C^{\mathrm{rev}}$ will denote the category $\C$ with the opposite monoidal product. For a braided category $\C$ with braiding $c$, we use the notation $\overline{\C}$ to denote $\C^{\mathrm{rev}}$ with reversed braiding $\overline{c}_{X,Y} = c_{Y,X}^{-1}$.

Regarding the terminology related to duality, we follow the conventions used in \cite{etingof2016tensor}.
An object of a monoidal category $\C$ is {\em left rigid} and {\em right rigid} if it has a left and a right dual object, respectively. A {\em rigid} object is a left and right rigid object. We say that $\C$ is {\em rigid} if every object of $\C$ is rigid. Given a left rigid object $X \in \C$, we denote by
\begin{equation*}
  (X^*,
  \ \ev_X : X^* \otimes X \to \unit,
  \ \coev_X : \unit \to X \otimes X^*)
\end{equation*}
the left dual object of $X$. We call $\ev_X$ and $\coev_X$ the {\em evaluation} and the {\em coevaluation} for $X$, respectively. An (object-part of) a right dual object of $X$ is denoted by ${}^*\!X$.


\subsection{Tensor categories}
A \emph{locally finite abelian category} is an essentially small $\kk$-linear abelian category where every object is of finite length and every Hom space is finite-dimensional. A \emph{tensor category} is a locally finite abelian category endowed with a structure of a rigid monoidal category such that the monoidal product is $\kk$-bilinear and the unit is simple \cite{etingof2016tensor}. Let $\C$ be a tensor category. We record the following result for later use.

\begin{lemma}[{\cite[Proposition~1.1]{deligne2002categories}}]
\label{lem:hom-finite}
Suppose that $\C$ is an abelian $\kk$-linear monoidal category with $\End_\C(\unit)\cong\kk$ whose objects have finite length. If $\C$ is rigid, then it is Hom-finite.
\end{lemma}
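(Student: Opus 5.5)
The plan is to reduce Hom-finiteness to a statement about morphisms out of the unit, and then bound those by induction on length.

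\emph{Reduction to the unit.} Rigidity gives, for all objects $X,Y\in\C$, a natural isomorphism
\[
\Hom_\C(X,Y)\cong\Hom_\C(\unit, Y\otimes X^*),
\]
using $\ev_X$ and $\coev_X$. So it suffices to show that $\Hom_\C(\unit,Z)$ is finite-dimensional for every object $Z$.

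\emph{Induction on length.} I will prove $\dim\Hom_\C(\unit,Z)<\infty$ by induction on the length of $Z$. Take a short exact sequence $0\to Z'\to Z\to Z''\to0$ with $Z''$ simple. Left exactness of $\Hom_\C(\unit,-)$ gives
\[
\dim\Hom_\C(\unit,Z)\le\dim\Hom_\C(\unit,Z')+\dim\Hom_\C(\unit,Z'').
\]
Only left exactness of $\Hom$ is used here, not exactness of $\otimes$. This reduces the claim to $Z=S$ simple, where I need $\dim\Hom_\C(\unit,S)<\infty$.

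\emph{The simple case.} If $\Hom_\C(\unit,S)\ne0$, a nonzero map $\unit\to S$ must be examined. First I show that $\unit$ is simple. Indeed, $\End_\C(\unit)\cong\kk$, and in a rigid monoidal abelian category the subobjects of $\unit$ correspond to idempotents in $\End_\C(\unit)$; equivalently, use the standard result that $\unit$ is simple when $\End_\C(\unit)$ is a field. Granting this, any nonzero map $\unit\to S$ is a monomorphism with simple image, hence an isomorphism. Then
\[
\Hom_\C(\unit,S)\cong\End_\C(\unit)\cong\kk ,
\]
which is finite-dimensional.

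\emph{Main obstacle.} The delicate step is proving that $\unit$ is simple from $\End_\C(\unit)\cong\kk$ together with rigidity. I would follow Deligne's argument. For a subobject $I\subset\unit$, rigidity and the finite-length hypothesis let one show that $I\otimes I\cong I$ and $\unit=I\oplus J$, producing an idempotent in $\End_\C(\unit)$. Since $\End_\C(\unit)\cong\kk$, this idempotent is $0$ or $1$, so $I=0$ or $I=\unit$. Proving $\unit=I\oplus J$ requires exactness of $-\otimes Y$; this follows from rigidity, because tensoring with a rigid object is both a left and a right adjoint. Once $\unit$ is known to be simple, the reduction and the induction above complete the proof, and we may cite \cite[Proposition~1.1]{deligne2002categories} for the details.
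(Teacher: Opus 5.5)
Your proposal is correct and is essentially the standard argument behind the cited result of Deligne. You reduce to $\Hom_\C(\unit,-)$ via duality, induct on length, and use simplicity of $\unit$ to get $\Hom_\C(\unit,S)\in\{0,\kk\}$ for simple $S$.

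The one step you only sketch, simplicity of $\unit$, closes quickly and needs no claim about $I\otimes I$. Take a simple subobject $I\subseteq\unit$. Duality is an anti-equivalence, so $\unit\to I^*$ is epic. Hence $I\cong I\otimes\unit\twoheadrightarrow I\otimes I^*\neq 0$ is an isomorphism. Then $\unit\xrightarrow{\coev_I} I\otimes I^*\cong I\hookrightarrow\unit$ is a nonzero element of $\End_\C(\unit)\cong\kk$, hence invertible, which forces $I=\unit$.
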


A tensor category $\C$ is said to be \emph{Frobenius} if, in addition, every simple object of $\C$ has an injective hull \cite{andruskiewitsch2015two}. Several equivalent conditions for $\C$ to be Frobenius were given in \cite{shibata2023nakayama}.


\subsection{Braided tensor categories}

\subsubsection{M\"uger center and nondegeneracy}
Given a braided tensor category $\C$ and a monoidal subcategory $\D$, the \emph{centralizer} of $\D$ in $\C$ is defined as:
\[ \Z_2(\D\subset \C) = \{ X\in\C \mid c_{Y,X}\circ c_{X,Y}=\id_{X\otimes Y},\, \forall \; Y\in \D \} .\]
The \emph{M\"uger center} of $\C$ is $\C':= \Z_2(\C \subset \C)$. We call $\C$ \textit{non-degenerate} if $\C$ is a tensor category and its M\"uger center is equivalent to $\vect$. 

We will use the following criteria for nondegeneracy.

\begin{lemma}\label{lem:non-deg-Muger-center}
Suppose that $\mathrm{char}(\kk) = 0$. Then a braided Frobenius tensor category $\C$ is non-degenerate if and only if the only simple object of the M\"uger center of $\C$ is $\unit$ up to isomorphism.
\end{lemma}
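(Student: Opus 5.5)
The forward direction needs no work: if $\C'\simeq\vect$, then $\unit$ is the only simple object of $\C'$, since $\vect$ has a unique simple object. The substance is the converse. Assume that every simple object of $\C'$ is isomorphic to $\unit$. We must show that every object $X\in\C'$ is isomorphic to $\unit^{\oplus n}$. Here $\C'$ is a full subcategory of $\C$ closed under subobjects, quotients, tensor products and duals, so it is itself a tensor category. Every object of $\C'$ has finite length, and all of its composition factors are $\unit$. So $\C'$ is semisimple with unique simple object $\unit$, and hence equivalent to $\vect$, as soon as we show $\Ext^1_{\C'}(\unit,\unit)=0$.

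To prove this vanishing, take a non-split extension $0\to\unit\to P\to\unit\to 0$ in $\C'$ and aim for a contradiction. My plan is to use the Frobenius hypothesis on $\C$ together with Deligne's theorem. First I will check that $\C'$ is again Frobenius: a tensor subcategory of a Frobenius tensor category closed under subquotients should inherit enough injectives, and I would like to derive this from the equivalent characterizations in \cite{shibata2023nakayama}. Deligne's theorem \cite{deligne2002categories} applies because $\C'$ is symmetric and, for the growth condition, the lengths $\ell(X^{\otimes n})$ are bounded by $\ell(X)^n$. It therefore identifies $\C'$ with $\Rep(G,z)$ for some affine supergroup scheme $G$; here I use $\mathrm{char}(\kk)=0$.

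The remaining step is the hardest. In $\Rep(G,z)$ every simple object is trivial. Hence the reductive quotient of $G$ is trivial, and the even part $G_0$ is unipotent, possibly infinite type. I would then split into two cases. If $G$ is purely even and unipotent, then $\Rep G$ has an injective hull of $\unit$ only when $G$ is trivial: the injective hull of $\unit$ is $\cO(G)$, which is not of finite length unless $G=1$. If $G$ has odd directions, then the odd part forces the fermion, i.e.\ $z$ acting by $-1$ on odd vectors, or forces non-trivial extensions. Since the only simple object is $\unit$, $\cO(G)$ must be finite dimensional. A finite unipotent supergroup in characteristic $0$ has trivial even part, so it is purely odd, $G=\Lambda V$ with $V$ odd. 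Its simple modules are $\unit$ and possibly a parity-shifted $\unit$. The hard point is excluding a nonzero purely odd $V$. I would try to do this using only the hypothesis that $\unit$ is the unique simple object, together with the Frobenius property. If that fails, the fallback is to use the nontrivial superdimension of the projective cover, or ribbon/pivotal considerations, to rule it out.

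Once $G$ is shown to be trivial, $\C'\simeq\vect$, and this completes the argument. I expect the main obstacles to be the inheritance of the Frobenius property and the exclusion of odd unipotent $G$.
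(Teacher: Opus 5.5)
Your reduction is correct and matches the paper. $\C'$ is a full subcategory of $\C$ closed under subquotients, and every composition factor of an object of $\C'$ is $\unit$. So everything comes down to $\Ext^1_{\C'}(\unit,\unit)=0$.

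The gap is in how you propose to prove this vanishing: the route through Deligne's theorem is never closed.
\begin{enumerate}
\item The inheritance of the Frobenius property by $\C'$ is only hoped for, not proved.
\item The step ``since the only simple object is $\unit$, $\cO(G)$ must be finite dimensional'' does not follow from uniqueness of the simple object. It could only come from a Frobenius property of $\C'$, which you have not established.
\item The exclusion of a purely odd $G$ is left open with a speculative fallback, and you yourself call it the hard point.
\end{enumerate}
As written, the converse direction is not established.

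The missing idea is that you never need to look inside $\C'$: the vanishing already holds in all of $\C$. Since $\C$ is Frobenius, $\unit$ has a projective cover in $\C$. The argument of \cite[Theorem~4.4.1]{etingof2016tensor} uses only this, not finiteness of $\C$, to give $\Ext^1_{\C}(\unit,\unit)=0$ when $\mathrm{char}(\kk)=0$. Because $\C'$ is full in $\C$, an extension $0\to\unit\to E\to\unit\to 0$ in $\C'$ splits in $\C$, and the splitting morphism lies in $\C'$. Induction on length then shows every object of $\C'$ is a finite direct sum of copies of $\unit$, so $\C'\simeq\vect$. This is the paper's short proof.

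If you want to salvage your own route, three repairs are needed.
\begin{enumerate}
\item The odd case never arises. The element $z$ has order dividing $2$, so it acts semisimply. It acts trivially on every composition factor $\unit$, hence trivially on every object. So all objects are even, and $\C'\simeq\Rep(H)$ with $H$ an ordinary pro-unipotent group scheme.
\item Frobenius inheritance does hold. The largest subobject lying in $\C'$ of the injective hull of $\unit$ in $\C$ is injective in $\C'$. Tensoring with rigid objects then gives enough injectives.
\item Hence the injective hull $\cO(H)$ of $\unit$ is finite dimensional, which forces $H=1$ in characteristic $0$.
\end{enumerate}
This works, but it is far heavier machinery than the statement needs.
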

\begin{proof}
Since $\C$ is Frobenius, the projective cover of $\unit$ exists in $\C$.
To show the `if' part, we note that $\mathrm{Ext}^1_{\C}(\unit, \unit) = 0$ under our assumption (the proof in the finite case \cite[Theorem~4.4.1]{etingof2016tensor} requires that a projective cover of $\unit$ exists in $\C$ but does not use the finiteness of $\C$). If $\unit$ is the only simple object of $\C'$ up to isomorphism, every object of $\C'$ is a finite direct sum of $\unit$ since $\mathrm{Ext}^1_{\C}(\unit, \unit) = 0$, and therefore $\C'$ is equivalent to $\vect$. The converse is trivial. 
\end{proof}


\subsubsection{Ribbon tensor categories}
A \emph{ribbon tensor category} is a braided tensor category $\C$ equipped with a \emph{twist}, i.e.\ a natural automorphism $\theta = \{\theta_X : X \xrightarrow{\sim} X\}_{X \in \C}$ satisfying
\begin{equation}\label{eq:ribbon-axioms}
  \theta_{X \otimes Y} = c_{Y,X} \circ c_{X,Y} \circ (\theta_X \otimes \theta_Y), \qquad \theta_{X^*} = (\theta_X)^*
\end{equation}
for all $X, Y \in \C$.
One can deduce $\theta_\unit = \id_\unit$ from the former equation.


\subsection{Modules in a closed monoidal category}
Given algebras $A$ and $B$ in a monoidal category $\C$, we denote by ${}_A\C$, $\C_B$ and ${}_A\C_B$, the categories of left $A$-modules, right $B$-modules and $A$-$B$-bimodules in $\C$, respectively.

A monoidal category $\C$ is said to be \emph{closed} if for every $X \in \C$ the endofunctors $X \otimes -$ and $- \otimes X$ on $\C$ have right adjoints. Let $\C$ be a closed monoidal category. Given $X \in \C$, we denote by $[X, -]$ a right adjoint of $- \otimes X$. The assignment $(X, Y) \mapsto [X, Y]$ extends to a functor $\C^{\op} \times \C \to \C$, which we call the \emph{right internal Hom functor}. By definition, there is a natural isomorphism $\Hom_{\C}(W \otimes X, Y) \cong \Hom_{\C}(W, [X, Y])$ for $W, X, Y \in \C$. The \emph{left internal Hom functor} is defined in a similar manner. Since a functor admitting a right adjoint preserves colimits, the endofunctors $X \otimes (-)$ and $(-) \otimes X$ preserve colimits.

We assume that $\C$ admits equalizers and coequalizers. For a right $A$-module $M$ and a left $A$-module $N$ in $\C$, their tensor product $M \otimes_A N$ is defined to be the coequalizer of $a^r_M \otimes \id_N$ and $\id_M \otimes a^l_N$, where $a^r_M : M \otimes A \to M$ and $a^l_N : A \otimes N \to N$ are the right and the left action of $A$ on $M$ and $N$, respectively. Since $\C$ is closed, one can verify that the category ${}_A\C_A$ of $A$-bimodules is a monoidal category with monoidal product $\otimes_A$ and unit $A$. Moreover, ${}_A\C_A$ is a closed monoidal category. A construction of the internal Hom functor is found in \cite[\S3.2]{shimizu2024commutative}.

\subsubsection{Modules over a central algebra}
Let $\C$ be a closed monoidal category admitting equalizers and coequalizers. We call an algebra $A$ in $\C$ \emph{central commutative} if it is equipped with a half-braiding $\sigma$ such that $(A,\sigma)$ is a commutative algebra in the Drinfeld center $\Z(\C)$. In this case, a right $A$-module can be viewed as a left $A$-module by the action given by $\sigma$, and the category $\C_A$ is viewed as a full subcategory of ${}_A\C_A$. One can verify that $\C_A$ is closed under $\otimes_A$, and therefore $\C_A$ is a monoidal category. $\C_A$ is also a closed monoidal category by the same internal Hom functor as ${}_A\C_A$ \cite[\S3.3]{shimizu2024commutative}.

\subsubsection{Local modules}
Assume that the closed monoidal category $\C$ is braided and $A$ is a commutative algebra in $\C$. Then, as noted above, the category $\C_A$ is monoidal with the relative tensor product $\otimes_A$. However, it is not braided in general. The full subcategory of $\C_A$ consisting of \emph{local} modules, that is, those $A$-modules $(M,\rho)$ for which the action $\rho:M\otimes A\to M$ satisfies $\rho \circ c_{A,M} \circ c_{M,A} = \rho$, is braided with the braiding inherited from $\C$ \cite{pareigis1995braiding}.
When $\C$ admits equalizers, the category $\C_A^{\loc}$ is in fact a closed monoidal category by the same internal Hom functor as ${}_A\C_A$ \cite[\S3.3]{shimizu2024commutative}.

 
\subsection{Ind-completions and their subcategories}

\subsubsection{Ind-completions}
Let $\C$ be a tensor category. The ind-completion $\iC$ of $\C$ is the completion of $\C$ under filtered colimits \cite{kashiwara2006catsheaves}. It is a Grothendieck abelian category, and the inclusion $\C\hookrightarrow \iC$ is exact and fully faithful. Moreover, $\iC$ admits a monoidal structure extending that of $\C$, and the inclusion is strong monoidal. While $\iC$ is not rigid, it is closed monoidal.
When $\C$ is braided, the braiding of $\C$ naturally extends to $\iC$.
When $\C$ is moreover a ribbon category, the twist of $\C$ naturally extends to a natural isomorphism $\id_{\iC} \to \id_{\iC}$ satisfying the first equation in \eqref{eq:ribbon-axioms}.


\subsubsection{Category of modules over an algebra in the ind-completion}
Let $\C$ be a tensor category and $A$ an algebra in the ind-completion $\iC$.
The category $\iC_A$ is identified with the category of algebras over the monad $T := -\otimes A$.
Since $\iC$ is closed monoidal, the functor $T$ admits a right adjoint.
Thus $\iC_A$ is an abelian category such that the forgetful functor $\iC_A\to\iC$ is exact and faithful.


\subsubsection{Various module categories}\label{subsubsec:various-modules}
Let $A$ be an algebra in $\iC$. 
We recall the following three finiteness conditions for a right $A$-module. 

\begin{definition}
\label{def:various-modules}
  A right $A$-module $M$ in $\iC$ is said to be \emph{finitely generated} if there exists an epimorphism $X\otimes A\twoheadrightarrow M$ in $\iC_A$ for some $X\in\C$.
  $M$ is \emph{finitely presented} if there exists an exact sequence $X\otimes A \to Y\otimes A \to M \to 0$ in $\iC_A$ for some $X,Y\in\C$.
  $M$ is \emph{of finite length} if it admits a finite composition series in $\iC_A$.
  We denote by $\fg\text{-}\iC_A$, $\fp\text{-}\iC_A$ and $\fl\text{-}\iC_A$ the full subcategories of $\iC_A$ consisting of finitely generated, finitely presented and finite length modules, respectively.
\end{definition}

It is obvious that the full subcategory $\fg\text{-}\iC_A$ is closed under quotients. However, $\fg\text{-}\iC_A$ need not be closed under kernels in $\iC_A$, so it may fail to be abelian. The full subcategory $\fl\text{-}\iC_A$ of finite-length objects is a Serre subcategory of $\iC_A$; in particular it is abelian.

We recall that $\iC_A$ is monoidal with respect to $\otimes_A$ when $A$ is central commutative. Although we do not know whether $\fp\text{-}\iC_A$ and $\fl\text{-}\iC_A$ are closed under $\otimes_A$, one can prove:

\begin{lemma}
  If $A$ is central commutative, then $\fg\text{-}\iC_A$ is closed under $\otimes_A$.
\end{lemma}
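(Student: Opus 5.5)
Given $M,N\in\fg\text{-}\iC_A$, choose epimorphisms $p\colon X\otimes A\twoheadrightarrow M$ and $q\colon Y\otimes A\twoheadrightarrow N$ in $\iC_A$ with $X,Y\in\C$. I will show that $M\otimes_A N$ is a quotient of $(X\otimes Y)\otimes A$. Since $X\otimes Y\in\C$, this proves the claim.

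\textbf{Step 1: $p\otimes_A q$ is an epimorphism.} Because $A$ is central commutative, $\iC_A$ is a monoidal category under $\otimes_A$, which we regard inside ${}_A\iC_A$. It is moreover closed, by \cite[\S3.3]{shimizu2024commutative}. Therefore $M'\otimes_A(-)$ and $(-)\otimes_A N'$ are left adjoints on $\iC_A$, hence preserve colimits, and in particular preserve epimorphisms (cokernels). Writing
\[ p\otimes_A q=(p\otimes_A\id_N)\circ(\id_{X\otimes A}\otimes_A q), \]
both factors are epimorphisms in $\iC_A$, so the composite is too.

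\textbf{Step 2: identify the source.} I claim $(X\otimes A)\otimes_A(Y\otimes A)\cong X\otimes Y\otimes A$ as right $A$-modules. Using the standard isomorphism $P\otimes_A (A\otimes Q)\cong P\otimes Q$, coming from the split coequalizer, we get $(X\otimes A)\otimes_A(Y\otimes A)\cong X\otimes A\otimes_A(Y\otimes A)$. Here the left $A$-action on $Y\otimes A$ is the one induced by the half-braiding $\sigma$. The half-braiding gives an isomorphism $\sigma_Y\otimes\id\colon A\otimes Y\otimes A\to Y\otimes A\otimes A$. Composed with multiplication, this identifies the left action on $Y\otimes A$ with the free left module structure on $A\otimes Y$ transported along $\sigma$. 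Hence $A\otimes_A(Y\otimes A)\cong Y\otimes A$ via the unit isomorphism, and the whole expression becomes $X\otimes Y\otimes A$.

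\textbf{Main obstacle.} The main point requiring care is checking that these isomorphisms are compatible with the right $A$-module structures. One must verify that the right action on $M\otimes_A N$, which is induced via the right action on $N$, matches the free right action on $X\otimes Y\otimes A$. This follows from the half-braiding axioms for $(A,\sigma)$ in $\Z(\iC)$, together with associativity of $A$.
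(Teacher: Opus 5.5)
Your proof is correct and follows the paper's argument: $M\otimes_A N$ is a quotient of $(X\otimes A)\otimes_A(Y\otimes A)\cong(X\otimes Y)\otimes A$, and your Step 1 supplies the right-exactness of $\otimes_A$ that the paper leaves implicit. The half-braiding detour in Step 2, and the compatibility worry about the right actions, are unnecessary: $(X\otimes A)\otimes_A N\cong X\otimes N$ as right $A$-modules for any $A$-bimodule $N$, simply because $X\otimes(-)$ preserves the defining coequalizer.
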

\begin{proof}
  If $M_i \in \fg\text{-}\iC_A$ ($i = 1, 2$) is a quotient of $X_i \otimes A$ ($X_i \in \C$), then $M_1 \otimes_A M_2$ is a quotient of $(X_1 \otimes A) \otimes_A (X_2 \otimes A) \cong (X_1 \otimes X_2) \otimes A$ and thus it is finitely generated.
\end{proof}

From the definitions, we have $\fp\text{-}\iC_A\subseteq \fg\text{-}\iC_A$. As we show next, $\fl\text{-}\iC_A\subseteq \fg\text{-}\iC_A$ as well.

\begin{lemma}\label{lem:finite-length-fg}
Every finite length $A$-module is finitely generated.
\end{lemma}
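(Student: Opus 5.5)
We argue by induction on the length of a finite-length $A$-module $M$. The key input is that every object of $\iC$ is a filtered colimit (in fact a directed union) of its subobjects lying in $\C$, and that $-\otimes A$ preserves colimits because $\iC$ is closed.

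\emph{Step 1: simple modules.} Let $M$ be a simple $A$-module. Write the underlying object of $M$ in $\iC$ as the directed union $M=\bigcup_i X_i$ of its subobjects $X_i\in\C$. For each $i$, the composite $X_i\otimes A\hookrightarrow M\otimes A\xrightarrow{\rho} M$ is a morphism of right $A$-modules. By the free--forgetful adjunction, it corresponds to the inclusion $X_i\hookrightarrow M$, so it is nonzero as soon as $X_i\neq 0$. Since $M\neq 0$, some $X_i\neq 0$. The image of the corresponding map is then a nonzero $A$-submodule of the simple module $M$, hence equals $M$. This gives an epimorphism $X_i\otimes A\twoheadrightarrow M$, so $M$ is finitely generated.

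\emph{Step 2: extensions.} Let $0\to M'\to M\to M''\to 0$ be exact in $\iC_A$, with $M'$ and $M''$ finitely generated. Choose epimorphisms $X'\otimes A\twoheadrightarrow M'$ and $X''\otimes A\twoheadrightarrow M''$ with $X',X''\in\C$. We want to lift $X''\otimes A\to M''$ to $M$; by adjunction it is enough to lift the underlying morphism $f:X''\to M''$ in $\iC$. Since $M\to M''$ is an epimorphism in $\iC_A$ and the forgetful functor is exact, it is an epimorphism in $\iC$. This is the step needing care, since we cannot lift along an epi out of $X''$ directly (objects of $\C$ need not be projective). Instead, consider the pullback $P=X''\times_{M''}M\to X''$, which is an epimorphism in $\iC$ because $\iC$ is abelian. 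Write $P$ as a directed union of subobjects $P_j$ lying in $\C$. The images of the $P_j$ in $X''$ form a directed family of subobjects with union $X''$. Since $X''$ has finite length, hence is Noetherian, one $P_j$ already surjects onto $X''$.

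Set $Y:=P_j\in\C$, and let $g:Y\to X''$ and $h:Y\to M$ be the induced maps. Then $Y\otimes A\to X''\otimes A\to M''$ is still an epimorphism, since $-\otimes A$ preserves epimorphisms, and it factors through $\tilde h:Y\otimes A\to M$, the module map adjoint to $h$. Now take
\[
(X'\oplus Y)\otimes A\longrightarrow M,
\]
defined by $X'\otimes A\to M'\hookrightarrow M$ on the first summand and $\tilde h$ on the second. A short diagram chase (a five-lemma type argument on the snake sequence) shows that this map is surjective. Hence $M$ is finitely generated.

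\emph{Step 3: conclusion.} Combining Steps 1 and 2, induction on the length of a composition series proves the lemma. The main point to check is Step 2: choosing $Y$ inside the pullback, using compactness or Noetherianity of $X''\in\C$, circumvents the lack of projectivity. Step 1 is in fact the case $M'=0$ of the same idea.
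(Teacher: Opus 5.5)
Your proof is correct, but it takes a different route from the paper's. The paper does not induct on length. It considers the family of \emph{all} finitely generated submodules of $M$ and shows that they exhaust $M$: each subobject $X_\lambda\subset M$ with $X_\lambda\in\C$ lies in the finitely generated submodule $W_\lambda$, the image of $X_\lambda\otimes A$ under the action, which is essentially your Step 1 observation. It then notes that this family is directed, since the sum of two finitely generated submodules is a quotient of their direct sum. Finally, the ascending chain condition on $M$ forces $M$ itself to belong to the family.

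In the paper's argument the finiteness hypothesis is carried entirely by $M$. No lifting problem arises, and the argument in fact shows that every Noetherian $A$-module is finitely generated.

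Your argument instead puts the finiteness on the generating object. In Step 2 you get around the non-projectivity of $X''$ by pulling back along $M\to M''$ and using that $X''\in\C$ is Noetherian, so that a single $P_j\in\C$ already maps onto $X''$. This is longer, but it proves something the paper never records: $\fg\text{-}\iC_A$ is closed under extensions in $\iC_A$ for an arbitrary algebra $A$, with no finiteness assumption on the middle term. The lemma then follows from this together with the simple case.

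Both proofs rest on the same basic fact: every object of $\iC$ is the union of its subobjects lying in $\C$.
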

\begin{proof}
Let $M\in \iC_A$ be of finite length. Let $\{F_i\}_{i\in I}$ be the family of all finitely generated submodules of $M$.
We first claim that $M=\sum_{i\in I} F_i$.
Since $M$ is an object of $\iC$, we can write $M=\sum_{\lambda\in \Lambda} X_\lambda$ for some family of subobjects $X_\lambda\subset M$ with $X_\lambda\in \C$. For each $\lambda\in \Lambda$, let $W_{\lambda}$ be the image of the subobject $X_{\lambda} \otimes A$ under the action $M \otimes A \to M$. Then $W_\lambda$ is a submodule of $M$, and by construction it is a quotient of the free $A$-module $X_\lambda\otimes A$. Hence $W_\lambda$ is finitely generated. Moreover, $X_\lambda\subset W_\lambda$ by the unit axiom. Therefore
\[
M=\sum_{\lambda\in\Lambda} X_\lambda \subset \sum_{\lambda\in\Lambda} W_\lambda \subset \sum_{i \in I} F_i \subset M,
\]
which proves the claim.

Next, note that the family $\{F_i\}_{i\in I}$ is directed under inclusion. Indeed, if $F_i$ and $F_j$ are finitely generated submodules of $M$, then $F_i\oplus F_j$ is also finitely generated. Hence, its quotient $F_i+F_j$ is a finitely generated submodule of $M$. Thus there exists $k\in I$ such that $F_i\subset F_k$ and $F_j\subset F_k$.

Assume for contradiction that $M$ is not finitely generated. Choose $j_1\in I$ such that $F_{j_1}\neq 0$. Since $F_{j_1}\neq M$ and $M=\sum_{i\in I}F_i$, there exists $i_2\in I$ such that $F_{i_2}\not\subset F_{j_1}$. By directedness, there exists $j_2\in I$ such that $F_{j_1}+F_{i_2}\subset F_{j_2}$.
Then $F_{j_1}\subsetneq F_{j_2}$. Repeating this argument, we construct a strictly increasing chain $F_{j_1}\subsetneq F_{j_2}\subsetneq F_{j_3}\subsetneq \cdots$ of submodules of $M$, which is impossible because $M$ has finite length. Hence $M$ is finitely generated.
\end{proof}

\begin{remark}
  As $\iC$ is a locally presentable category, so is $\iC_A$. In such a category, there is a general notion of finitely presented and finitely generated objects, defined by the condition that $\Hom_{\iC_A}(M,-)$ commutes with filtered colimits and filtered colimits of monomorphisms, respectively. Our definitions of finitely generated and finitely presented modules coincide with these general notions \cite{shibata2023nakayama}. Thus, the lemma above can be proven by appealing to general results on locally presentable categories, but we have given direct proofs for the convenience of readers.
\end{remark}


\subsubsection{Local modules in ind-completion}\label{subsubsec:local-modules-ind-completion}
Let $\C$ be a braided tensor category, and let $A$ be a commutative algebra in $\iC$. 
By \cite[Proposition~2.56]{creutzig2024tensor} (specialized to the non-super case), $\iC_A^\loc$ is closed under taking kernels and cokernels. Hence it is an abelian category and the inclusion functor $i : \iC_A^\loc \hookrightarrow \iC_A$ is exact.

In fact, $\iC_A^{\loc}$ is a coreflective subcategory of $\iC_A$ \cite[Theorem~3.6]{pareigis1995braiding}. That is, $i$ admits a right adjoint $i^R: \iC_A \to \iC_A^\loc$. Explicitly, for $(M,\rho)\in \iC_A$, $i^R(M,\rho)$ is the equalizer of the maps 
\begin{equation}\label{eq:right-adjoint-local}
  [A,\rho]\circ \eta_M, [A,\rho \circ c_{A,M}\circ c_{M,A}]\circ \eta_M : M\to [A,M],
\end{equation} 
where $\eta_M : M \to [A, M \otimes A]$ is the component of the unit of the adjunction. In particular, $\iC_A^\loc$ is closed under  colimits in $\iC_A$ and the inclusion $i$ preserves colimits.

\begin{definition}
\label{def:fg-local}
We define the full subcategories $\fl\text{-}\iC_A^\loc$, $\fg\text{-}\iC_A^\loc$ and $\fp\text{-}\iC_A^\loc$ of $\iC_A^\loc$ as the intersections of $\iC_A^\loc$ with $\fl\text{-}\iC_A$, $\fg\text{-}\iC_A$ and $\fp\text{-}\iC_A$, respectively.
\end{definition}


\subsection{Invertibles in a tensor category}
Let $\C$ be a locally finite $\kk$-linear abelian monoidal category with simple unit. We recall some basic facts about invertible objects in $\C$. An object $X\in\C$ is called \emph{invertible} if there exists an object $Y\in\C$ such that $X\otimes Y \cong \unit \cong Y \otimes X$. The set of isomorphism classes of invertible objects in $\C$ forms a group under the tensor product, denoted by $\Inv(\C)$.


\subsubsection{Group cohomology}
We fix our convention for group cohomology. Let $G$ be a group. For an integer $n \ge 1$, we denote by $\mathrm{C}^n(G)$ the set of all maps from $G \times \cdots \times G$ ($n$ times) to $\kk^{\times}$ such that $f(x_1, \cdots, x_n) = 1$ whenever one of $x_i$ is the identity element. An element of $\mathrm{C}^n(G)$ is called a \emph{normalized $n$-cochain}. For each $n \ge 1$, there is the \emph{coboundary map $\partial_{n+1} : \mathrm{C}^{n}(G) \to \mathrm{C}^{n+1}(G)$}. For example,
\begin{gather}  
  \label{eq:coboundary-map-1}
  \partial_2(f)(x, y) = f(x) f(x y)^{-1} f(y), \\
  \label{eq:coboundary-map-2}
  \partial_3(g)(x, y, z) = g(x, y) g(x, y z)^{-1} g(x y, z) g(y, z)^{-1}
\end{gather}
for $f \in \mathrm{C}^1(G)$, $g \in \mathrm{C}^2(G)$ and $x, y, z \in G$. An element of $\Ker(\partial_{n+1})$ and of $\Img(\partial_{n})$ is called an \emph{$n$-cocycle} and an \emph{$n$-coboundary}, respectively.
We define the \emph{$n$-th cohomology group} $\mathrm{H}^n(G)$ to be the quotient group $\Ker(\partial_{n+1}) / \Img(\partial_n)$.


\subsubsection{A 3-cocycle arising from the associator} 
For each class $g \in \Inv(\C)$, we choose an object $E_g \in \C$ representing $g$. We assume that $E_1$ is the unit object of $\C$. Since $E_1$ is simple and the endofunctor $E_g \otimes (-)$ on $\C$ is an equivalence, $E_g \cong E_g \otimes E_1$ is also a simple object. Thus, by Schur's lemma and local finiteness, the space $\Hom_{\C}(E_g, E_h)$ for $g, h \in \Inv(\C)$ is one-dimensional if $g = h$, and zero otherwise.
We further choose isomorphisms
\begin{equation*}
  \phi_{x,y} : E_x \otimes E_y \to E_{x y}
  \quad (x, y \in \Inv(\C))
\end{equation*}
such that $\phi_{1,x}$ and $\phi_{x,1}$ are unit isomorphisms for all $x \in \Inv(\C)$. We define the map $\omega : \Inv(\C)^3 \to \kk^{\times}$ so that the following diagram is commutative:
\begin{equation}
  \label{eq:omega-def}
  \begin{tikzcd}[column sep = 48pt]
    (E_x \otimes E_y) \otimes E_z
    \arrow[d, "{\alpha_{E_x , E_y, E_z}}"']
    \arrow[r, "{\phi_{x,y} \otimes \id}"]
    & E_{x y} \otimes E_z
    \arrow[r, "{\phi_{x y, z}}"]
    & E_{x y z}
    \arrow[d, "{\omega(x, y, z) \, \id}"] \\
    E_x \otimes (E_y \otimes E_z)
    \arrow[r, "{\id \otimes \phi_{y,z}}"]
    & E_{x} \otimes E_{y z}
    \arrow[r, "{\phi_{x, y z}}"]
    & E_{x y z},
  \end{tikzcd}
\end{equation}
for all $x, y, z \in \Inv(\C)$, where $\alpha$ is the associator. As is well-known, the pentagon axiom implies that $\omega$ is a 3-cocycle on the group $\Inv(\C)$. The cocycle $\omega$ depends on the choice of the isomorphisms $\phi_{x, y}$; however, its cohomology class does not.

\subsubsection{An abelian 3-cocycle arising from the braiding}

We assume that the monoidal category $\C$ has a braiding $c$. Then we also define the map $\beta : \Inv(\C)^2 \to \kk^{\times}$ by 
\begin{equation}
  \label{eq:beta-def}
  \beta(x, y) \id_{E_{x y}} = \phi_{y, x} \circ c_{E_x, E_y} \circ \phi_{x, y}^{-1}
\end{equation}
for $x, y \in \Inv(\C)$. 
The hexagon axioms for the braiding imply that the pair $(\omega,\beta)$ forms an \emph{abelian 3-cocycle} on $\Inv(\C)$ in the sense of Eilenberg--Mac Lane \cite{eilenberg1954groups}; see also \cite{joyal1993braided}. 
The function $\mathsf{q} : \Inv(\C) \to \kk^\times$ defined by $\mathsf{q}(g) = \beta(g,g)$ is a quadratic form whose associated symmetric bicharacter is the double braiding scalar $b_{\mathsf{q}}(g,h) = \beta(g,h)\beta(h,g)$. This perspective is developed in \cite{joyal1993braided}, \cite[\S2.11, Appendix~D]{drinfeld2010braided} and \cite[\S8.4]{etingof2016tensor}.


\subsubsection{(Commutative) algebras from invertible objects}\label{subsubsec:comm-group-algebras}
Let $\C$ be a locally finite $\kk$-linear abelian closed monoidal category, and define $\omega$ (and $\beta$) as before (when $\C$ is braided).
We now introduce the following class of algebra in $\iC$, which is the main object of study in this paper:

\begin{definition} \label{def:simple-current-alg}
  Let $\Gamma$ be a subgroup of $\Inv(\C)$. A \emph{simple current algebra} over $\Gamma$ is an algebra $A$ such that $A = \bigoplus_{g \in \Gamma} E_g$ as an object of $\iC$, the multiplication $A \otimes A \to A$ restricts to an isomorphism $E_x \otimes E_y \to E_{x y}$ for all elements $x, y \in \Gamma$ and the unit is given by the inclusion morphism $\unit = E_1 \to A$. 
\end{definition}

\begin{lemma}\label{lem:group-algebra-comm}
  We fix a subgroup $\Gamma$ of $\Inv(\C)$ and set $A = \bigoplus_{g \in \Gamma} E_g$.
\begin{enumerate}
  \item The object $A$ becomes a simple current algebra over $\Gamma$ if and only if the restriction of the 3-cocycle $\omega$ to $\Gamma \times \Gamma \times \Gamma$ is a coboundary. If this is the case, for any cochain $\eta \in \mathrm{C}^2(\Gamma)$ such that $\omega|_{\Gamma \times \Gamma \times \Gamma} = \partial_3(\eta)$, the morphism $\mu : A \otimes A \to A$ induced by
  \begin{equation}
  \label{eq:mu-def}
  \mu_{x, y} := \eta(x, y) \phi_{x, y} : E_x \otimes E_y \to E_{x y}
  \quad (x, y \in \Gamma)
  \end{equation}
  makes $A$ a simple current algebra over $\Gamma$.
  \item We assume that $\C$ is braided. Then $A$ is a commutative simple current algebra if and only if $\mathsf{q}|_\Gamma \equiv 1$, i.e., $\beta(g,g) = 1$ for all $g \in \Gamma$. If this is the case, then we have
\begin{equation}\label{eq:comm-implies-trivial-monodromy}
    \beta(x,y)\beta(y,x)=1, \qquad \text{for all} \;\; x,y\in\Gamma.
\end{equation}  
\end{enumerate}
\end{lemma}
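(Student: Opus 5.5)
The plan is to reduce both parts to scalar identities. Every morphism $E_x \otimes E_y \to E_{xy}$ is a scalar multiple of $\phi_{x,y}$, because $\Hom_\C(E_g,E_h)$ is one-dimensional for $g=h$ and zero otherwise. Since $A$ is a direct sum and $\otimes$ preserves colimits in $\iC$, a morphism $\mu : A \otimes A \to A$ is determined by its restrictions. It restricts to isomorphisms $E_x\otimes E_y\to E_{xy}$ exactly when $\mu_{x,y} = \eta(x,y)\phi_{x,y}$ for some function $\eta : \Gamma^2 \to \kk^\times$. A summand-preserving map of this kind has no components $E_x\otimes E_y\to E_z$ with $z\ne xy$; this is forced anyway, since such Hom spaces vanish.

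For part (a), I would first use the unit axiom. With unit the inclusion $E_1 \to A$, the axiom forces $\mu_{1,x}$ and $\mu_{x,1}$ to be the unit isomorphisms. Since $\phi_{1,x}$ and $\phi_{x,1}$ are already the unit isomorphisms, this says precisely that $\eta$ is normalized, i.e. $\eta\in \mathrm{C}^2(\Gamma)$.

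Next I would compare the two composites $(E_x\otimes E_y)\otimes E_z \to E_{xyz}$ in the associativity axiom. Using \eqref{eq:omega-def}, the left-bracketed composite $\mu_{xy,z}(\mu_{x,y}\otimes\id)$ equals $\eta(x,y)\eta(xy,z)\,\phi_{xy,z}(\phi_{x,y}\otimes \id)$. The right-bracketed composite $\mu_{x,yz}(\id\otimes\mu_{y,z})\alpha$ equals $\eta(y,z)\eta(x,yz)\,\omega(x,y,z)\,\phi_{xy,z}(\phi_{x,y}\otimes\id)$. Associativity is therefore equivalent to
\[
\omega(x,y,z)=\eta(x,y)\eta(x,yz)^{-1}\eta(xy,z)\eta(y,z)^{-1}=\partial_3(\eta)(x,y,z),
\]
which matches \eqref{eq:coboundary-map-2}. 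This proves both directions of (a), together with the explicit form \eqref{eq:mu-def}. The one bookkeeping point is that $\alpha$ is compatible with the direct sum decomposition, by naturality and since $\otimes$ commutes with coproducts in $\iC$.

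For part (b), commutativity means $\mu\circ c_{A,A}=\mu$, which on summands reads $\mu_{y,x}\circ c_{E_x,E_y}=\mu_{x,y}$. By \eqref{eq:beta-def} this is the condition
\[
\eta(y,x)\beta(x,y)=\eta(x,y) \qquad \text{for all } x,y\in\Gamma. \qquad (\ast)
\]
So the question becomes whether some normalized $\eta$ with $\partial_3\eta=\omega|_\Gamma$ satisfies $(\ast)$. Existence of such a simple current structure is presupposed, so a preferred $\eta_0$ exists. Necessity and the consequence \eqref{eq:comm-implies-trivial-monodromy} are easy. Setting $x=y$ in $(\ast)$ gives $\beta(g,g)=1$. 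Applying $(\ast)$ twice, once as written and once with $x,y$ swapped, gives $\beta(x,y)\beta(y,x)=1$.

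The main obstacle is sufficiency. Assuming $\mathsf{q}|_\Gamma\equiv 1$, we must produce an $\eta$ satisfying $(\ast)$. Here I would use the Eilenberg--Mac Lane theory. The restriction $(\omega,\beta)|_\Gamma$ is an abelian 3-cocycle, and its class in $\mathrm{H}^3_{ab}(\Gamma,\kk^\times)$ is determined by the quadratic form $\mathsf{q}|_\Gamma$ \cite{eilenberg1954groups,joyal1993braided}. If $\mathsf{q}|_\Gamma\equiv1$, this class is trivial. So there is a 2-cochain $\eta$ with $\omega|_\Gamma=\partial_3\eta$ and $\beta(x,y)=\eta(x,y)\eta(y,x)^{-1}$, which is exactly $(\ast)$. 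This $\eta$ can be normalized, because the normalized and unnormalized complexes compute the same cohomology and $\beta$ is normalized. Invoking this classification is the step I expect to carry the real content. It might need care for infinite $\Gamma$, but the Eilenberg--Mac Lane theorem holds for arbitrary abelian groups; note that $\Gamma$ is abelian since $\C$ is braided.
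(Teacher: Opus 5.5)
Your proof is correct and follows the paper's argument. For (a) you reduce the unit and associativity axioms to scalar identities, namely normalization of $\eta$ and $\partial_3(\eta)=\omega|_{\Gamma^3}$. For (b) you use the componentwise commutativity condition $\eta(y,x)\beta(x,y)=\eta(x,y)$ to get necessity and the monodromy identity, and the Eilenberg--Mac Lane classification of abelian 3-cocycles by quadratic forms to get sufficiency.

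Your remark that a simple current structure is ``presupposed'' in (b) is unnecessary. The Eilenberg--Mac Lane step itself produces an $\eta$ satisfying both conditions. Normalizing it is also immediate: any $\eta$ with normalized coboundary is constant on pairs involving $1$, so you can divide by that constant.
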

Parts~(a) and~(b) are well known; see \cite[\S8.8]{etingof2016tensor} for the pointed fusion category case, \cite[\S2.11]{drinfeld2010braided} for the general statement, and \cite[\S3]{fuchs2004tft} for simple current extensions.

\begin{proof}
(a) Given a normalized 2-cochain $\eta : \Gamma \times \Gamma \to \kk^{\times}$, it is straightforward to verify that the morphism $\mu : A \otimes A \to A$ induced by \eqref{eq:mu-def} makes $A$ a simple current algebra if and only if $\omega|_{\Gamma \times \Gamma \times \Gamma} = \partial_3(\eta)$. In particular, we have proved the `if' part. To show the converse, we assume that $A$ is a simple current algebra with multiplication $\mu$. By the definition of a simple current algebra, the multiplication must be induced by \eqref{eq:mu-def} for some map $\eta : \Gamma \times \Gamma \to \kk^{\times}$. The same computation as `if' part completes the proof.

(b) Suppose $A$ is a simple current algebra with multiplication induced by $\eta$. The commutativity condition $\mu \circ c_{A,A} = \mu$ on the $(x,y)$-component reads
\begin{equation}\label{eq:comm-condition}
    \eta(y,x)\,\beta(x,y) = \eta(x,y) \qquad (x,y \in \Gamma).
\end{equation}
Setting $x = y$ and using $\eta(x,x) \neq 0$ gives $\beta(x,x) = 1$, i.e., $\mathsf{q}|_\Gamma \equiv 1$.
Conversely, assume that $\mathsf{q}|_\Gamma \equiv 1$.
Then the abelian $3$-cocycle $(\omega|_{\Gamma^3}, \beta|_{\Gamma^2})$ has trivial associated quadratic form.
By the Eilenberg--Mac Lane isomorphism between abelian cohomology and quadratic forms (see \cite{galindo2026note} for a recent reference), its class in abelian cohomology is therefore trivial.
Hence there exists a normalized $2$-cochain $\eta \in \mathrm{C}^2(\Gamma)$ such that
\[
  \partial_3(\eta) = \omega|_{\Gamma^3}
  \qquad\text{and}\qquad
  \beta(x,y) = \eta(x,y)\eta(y,x)^{-1}
  \quad (x,y \in \Gamma).
\]
The first identity gives an algebra structure on $A$ by part~(a), while the second identity is exactly the commutativity condition \eqref{eq:comm-condition}. Thus $A$ is a commutative algebra.
Equation \eqref{eq:comm-implies-trivial-monodromy} easily follows from \eqref{eq:comm-condition}.
\end{proof}

The choice of $\eta$ in (a) is not unique. If a simple current algebra $A_i$ ($i = 1, 2$) is constructed from a cochain $\eta_i \in \mathrm{C}^2(\Gamma)$ satisfying $\partial_3(\eta_i) = \omega|_{\Gamma \times \Gamma \times \Gamma}$, then $A_1 \cong A_2$ as algebras if and only if $\eta_1$ and $\eta_2$ are cohomologous. In other words, the set of the isomorphism classes of simple current algebras over $\Gamma$ (if it is non-empty) is a torsor over $\mathrm{H}^2(\Gamma)$.

The choice of $\eta$ in (b) is also not unique. Suppose that cochains $\eta_1,\eta_2 \in \mathrm{C}^2(\Gamma)$ define two commutative simple current algebra structures on the object $A = \bigoplus_{g \in \Gamma} E_g$.
Then $\rho := \eta_1\eta_2^{-1}$ is a symmetric $2$-cocycle on the abelian group $\Gamma$.
Since the group $\kk^\times$ is divisible, every symmetric $2$-cocycle $\Gamma \times \Gamma \to \kk^\times$ is a coboundary.
Thus there exists $\lambda \in \mathrm{C}^1(\Gamma)$ such that $\rho = \partial_2(\lambda)$.
Rescaling each summand $E_x$ by the scalar $\lambda(x)$ yields an algebra isomorphism between the two structures. In particular, the cohomology class of $\eta$ in $\mathrm{H}^2(\Gamma)$, and hence the isomorphism class of the commutative algebra $A$, is uniquely determined.


\section{A construction of ribbon braided tensor categories}\label{sec:framework}
In this section, we present a framework for constructing new ribbon braided tensor categories. We begin with a braided tensor category $\C$ and consider commutative algebras within its ind-completion $\iC$. The full category of modules over such an algebra is typically too large to form a tensor category of the type we study, as it will include non-rigid and infinite length objects. Therefore, we must identify specific subcategories that are closed under tensor products and satisfy necessary finiteness conditions.

We establish that for \emph{Artinian algebras} the three finiteness conditions of finitely generated, finitely presented, and finite length modules coincide (Theorem~\ref{thm:artinian-locally-finite}), yielding a $\kk$-linear abelian monoidal category where every object is of finite length. In Section~\ref{subsec:ind-exact-algebras}, we introduce ind-exact algebras (following \cite{coulembier2023incompressible}) to ensure rigidity of the category of finitely generated modules. In Section~\ref{subsec:frobenius-algebras}, we introduce the notion of a Frobenius algebra in a closed monoidal category by extending the commonly used definition in the rigid case (see Definition \ref{def:Frobenius-alg-in-Ind}), and show that the category of finitely generated local modules is ribbon if the algebra $A$ is Artinian, ind-exact and Frobenius in our sense and the twist is the identity on $A$. The main results are summarized in Theorem~\ref{thm:main-criteria} below.

\smallskip
\noindent
\textbf{Convention.} Throughout this section, $\C$ denotes a tensor category (with additional structure imposed when needed), $\iC$ its ind-completion, and $A$ an algebra in $\iC$.

\begin{theorem}\label{thm:main-criteria}
  Suppose that $\C$ is a braided tensor category and $A\in \iC$ is a commutative algebra.
  If $A$ is Artinian, haploid and ind-exact, then $\fg\text{-}\iC_A$ is a tensor category and
  $\fg\text{-}\iC_A^{\loc}$ is a braided tensor category.
  If in addition, $\C$ is ribbon, $A$ is Frobenius and $\theta_A=\id_A$, then $\fg\text{-}\iC_A^{\loc}$ is a ribbon
  tensor category.
\end{theorem}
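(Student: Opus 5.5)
This theorem summarizes results that are proved later in this section, so the plan is to assemble those results in three stages. Throughout, a commutative algebra in a braided $\iC$ is regarded as central commutative with half-braiding given by the braiding of $\C$. This makes $\iC_A$ monoidal under $\otimes_A$ and contains $\iC_A^\loc$ as a braided monoidal subcategory.

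\emph{Stage 1: finiteness.} Since $A$ is Artinian, the classes of finitely generated, finitely presented and finite length modules coincide (Theorem~\ref{thm:artinian-locally-finite}). Hence $\fg\text{-}\iC_A=\fl\text{-}\iC_A$, which is a Serre subcategory of the Grothendieck category $\iC_A$ and so is abelian with every object of finite length. The same holds for $\fg\text{-}\iC_A^\loc$, because $\iC_A^\loc$ is closed under kernels and cokernels (Section~\ref{subsubsec:local-modules-ind-completion}). Closure under $\otimes_A$ was shown above, and closure of local modules under $\otimes_A$ is standard \cite{pareigis1995braiding}. Haploidity, $\Hom_{\iC}(\unit,A)\cong\kk$, gives $\End_{\iC_A}(A)\cong\Hom_{\iC}(\unit,A)\cong\kk$. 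Together with the fact that every subobject of a finitely generated module is again finitely generated, this should show that the unit $A$ is simple: a nonzero proper submodule $N\subset A$ would give a nonzero finite length quotient, and one would produce a nonzero map $\unit\to A$ that does not generate, which is a contradiction. I expect this simplicity argument to need some care.

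\emph{Stage 2: rigidity.} Here ind-exactness is used. The approach following \cite{coulembier2023incompressible} is to show that for a finitely generated $M$, the internal Hom $[M,A]_A$ is a dual. Concretely, free modules $X\otimes A$ with $X\in\C$ are rigid with dual $X^*\otimes A$. Ind-exactness, meaning that $-\otimes_A-$ is exact on the relevant modules, or equivalently that the finitely presented modules are closed under this structure, then lets rigidity pass to cokernels of maps between free modules. Every finitely generated module is finitely presented, so every object is rigid. Once the category is rigid, Lemma~\ref{lem:hom-finite} gives Hom-finiteness, and essential smallness follows from finite generation by objects of $\C$. So $\fg\text{-}\iC_A$ is a tensor category. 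For $\fg\text{-}\iC_A^\loc$, I would show that duals of local modules are local; this can be checked using the braiding, since the dual of $M$ is computed by the internal Hom, which preserves locality \cite[\S3.3]{shimizu2024commutative}. The braided structure then comes from \cite{pareigis1995braiding}.

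\emph{Stage 3: ribbon.} Assume $\theta_A=\id_A$. Then the extended twist $\theta_M$ is an $A$-module map for local $M$, because $\theta_{M\otimes A}=c_{A,M}c_{M,A}(\theta_M\otimes\id_A)$ and locality removes the double braiding. It descends through the coequalizer to satisfy the balancing axiom for $\otimes_A$. What remains is $\theta_{M^*}=(\theta_M)^*$ for the dual in $\fg\text{-}\iC_A^\loc$. This is where the Frobenius hypothesis enters. A Frobenius structure on $A$ (Definition~\ref{def:Frobenius-alg-in-Ind}) should identify the $A$-module dual $[M,A]_A$ with something built from the dual $M^*$ in $\iC$, namely the equalizer or dual module structure on $M^*\otimes$ data. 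This transports the duality maps $\ev$ and $\coev$ of $\fg\text{-}\iC_A^\loc$ to those of $\C$ composed with the Frobenius form. The identity $\theta_{M^*}=\theta_M^*$ in $\iC$ then yields the required identity over $A$. I expect this comparison between $A$-module duality and $\C$-duality via the Frobenius form, without $A$ being rigid, to be the main obstacle. My first attempt would be to verify that $\varepsilon\circ\rho:M\otimes A^\vee\to\unit$-type pairings give a natural isomorphism $\Hom_A(N,M^\vee_A)\cong\Hom_{\iC}(N\otimes M,\unit)$-type adjunction that is compatible with the twist.
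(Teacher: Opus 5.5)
Your overall plan follows the paper's route:
\begin{itemize}
\item Theorem~\ref{thm:artinian-locally-finite} for finiteness;
\item ind-exactness plus closure of rigid objects under cokernels, applied to finite presentations by free modules, for rigidity;
\item haploidity for $\End(A)\cong\kk$ and Lemma~\ref{lem:hom-finite};
\item closure of $\iC_A^{\loc}$ under the internal Hom of $\iC_A$, so that local modules are rigid;
\item the Kirillov--Ostrik twist plus the Frobenius form for the ribbon axiom.
\end{itemize}
Two steps, however, do not work as written.

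\textbf{Simplicity of the unit.} You cannot get this in Stage~1, before rigidity. An Artinian haploid commutative algebra need not be simple as a module over itself. Take $\C=\Rep(\bZ/2\bZ)$ with $\mathrm{char}\,\kk\neq 2$, and $A=\kk[x]/(x^2)$ with the generator acting by $x\mapsto -x$. Then:
\begin{itemize}
\item $A$ is commutative;
\item $\Hom(\unit,A)=A^{\bZ/2\bZ}=\kk$, so $A$ is haploid;
\item $A$ is finite-dimensional, hence Artinian;
\item yet $xA$ is a nonzero proper submodule (and, consistently, $A$ is not ind-exact).
\end{itemize}
There is no ``non-generating map $\unit\to A$'' to find, since the unit is the only nonzero one up to scalar. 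The correct order is to first prove rigidity of $\fg\text{-}\iC_A$ from ind-exactness. Then deduce simplicity of $A$ from rigidity together with $\End(A)\cong\kk$, by the standard result for rigid abelian monoidal categories \cite{etingof2016tensor}. This is what Theorems~\ref{thm:indexact-multitensor} and~\ref{thm:indexact-multitensor-local} implicitly do.

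\textbf{The duality axiom in Stage~3.} The identity ``$\theta_{M^*}=\theta_M^*$ in $\iC$'' is not available. A finitely generated $A$-module, e.g.\ $A$ itself, is usually not in $\C$ and has no dual in $\iC$. The missing ingredient is the ind-version of the ribbon axiom: $\theta_{[M,\unit]}=[\theta_M,\unit]$ for all $M\in\iC$. It follows by writing $M=\varinjlim M_i$ with $M_i\in\C$, so that $[M,\unit]=\varprojlim M_i^*$, and using naturality of $\theta$.

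Your proposed adjunction is the right comparison once corrected to $\Hom_A(N,\iHom_A(M,A))\cong\Hom_A(N\otimes_A M,A)\cong\Hom_{\iC}(N\otimes_A M,\unit)\cong\Hom_A(N,[M,\unit])$. The middle isomorphism is exactly where the Frobenius property enters. Since $\phi\colon A\to[A,\unit]$ is an isomorphism of right $A$-modules, $\Hom_{\iC_A}(L,A)\cong\Hom_{\iC_A}(L,[A,\unit])\cong\Hom_{\iC}(L\otimes_A A,\unit)\cong\Hom_{\iC}(L,\unit)$. This gives a natural isomorphism $\phi_M\colon M^\dagger:=\iHom_A(M,A)\to[M,\unit]$ of right $A$-modules. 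Then $\phi_M\theta_{M^\dagger}=\theta_{[M,\unit]}\phi_M=[\theta_M,\unit]\phi_M=\phi_M(\theta_M)^\dagger$, which is the duality axiom.

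With these two repairs, your argument coincides with the paper's.
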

\begin{proof}
This theorem is a summary of results proved later in this section. The first claim follows from Theorem~\ref{thm:indexact-multitensor-local}, and the second claim follows from Theorem~\ref{thm:local-modules-ribbon}.
\end{proof}


\subsection{Artinian algebras}\label{subsec:artinian-algebras}
Recall the subcategories $\fl\text{-}\iC_A$, $\fg\text{-}\iC_A$, and $\fp\text{-}\iC_A$ of finite length, finitely generated, and finitely presented $A$-modules, respectively, from Definition \ref{def:various-modules}.

Ideally, one would like to work inside the full subcategory of $\iC_A$ consisting of objects that are simultaneously finitely generated, finitely presented, and of finite length.
This leads to the guiding problem: find algebras $A$ for which these three finiteness conditions coincide.


\begin{definition} \label{def:artinian-alg}
  We call an algebra $A\in\iC$ \emph{Artinian} \cite{coulembier2023incompressible} if every finitely generated right $A$-module is of finite length.
\end{definition}

We first record the key property of Artinian algebras.
\begin{lemma}\label{lem:fgindCA-fin-presented}
	    If $A$ is Artinian, every finitely generated right $A$-module is finitely presented.
\end{lemma}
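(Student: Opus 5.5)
Let $M$ be a finitely generated right $A$-module. By definition there is an epimorphism $p : X\otimes A \twoheadrightarrow M$ in $\iC_A$ for some $X\in\C$. Let $K=\Ker(p)$, computed in $\iC_A$; this is legitimate because $\iC_A$ is abelian and the forgetful functor to $\iC$ is exact. It suffices to show that $K$ is finitely generated. Indeed, if $q : Y\otimes A\twoheadrightarrow K$ is an epimorphism, then the composite $Y\otimes A\to X\otimes A$ has image $K$, and
\[ Y\otimes A \to X\otimes A \to M \to 0 \]
is exact, which is a finite presentation.

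To see that $K$ is finitely generated, use the Artinian hypothesis twice. The free module $X\otimes A$ is finitely generated (the identity is an epimorphism), so it has finite length. Since $\fl\text{-}\iC_A$ is a Serre subcategory of $\iC_A$, the submodule $K\subset X\otimes A$ also has finite length. Lemma~\ref{lem:finite-length-fg} then shows that $K$ is finitely generated, which gives the required epimorphism $Y\otimes A\twoheadrightarrow K$ with $Y\in\C$.

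There is no serious obstacle. The two points to check are the following.

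\begin{enumerate}
\item The kernel must be taken in $\iC_A$, so that $K$ is an $A$-submodule. This holds because $\iC_A$ is abelian and the forgetful functor to $\iC$ is exact.
\item Finite length must pass to submodules. This is exactly the Serre-subcategory property of $\fl\text{-}\iC_A$; equivalently, a composition series of $X\otimes A$ intersected with $K$ yields a finite filtration of $K$ whose factors are zero or simple.
\end{enumerate}

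The substantive input, namely the passage from finite length to finite generation, is already supplied by Lemma~\ref{lem:finite-length-fg}.
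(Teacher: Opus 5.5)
Your proposal is correct and matches the paper's proof essentially verbatim: take $K=\Ker(X\otimes A\twoheadrightarrow M)$, use the Artinian hypothesis to get that $X\otimes A$, and hence its subobject $K$, has finite length, apply Lemma~\ref{lem:finite-length-fg} to obtain an epimorphism $Y\otimes A\twoheadrightarrow K$, and compose to get the presentation. Your two side remarks (kernels computed in $\iC_A$, finite length passing to subobjects) are exactly the facts the paper uses implicitly.
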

\begin{proof}
Let $M$ be a finitely generated $A$-module. By definition there exists an epimorphism
$f:X\otimes A\twoheadrightarrow M$ in $\iC_A$ for some $X\in\C$.
Let $K:=\ker(f)$ and let $k:K\hookrightarrow X\otimes A$ be the kernel inclusion, so we have an exact sequence
\[
0\to K \xrightarrow{k} X\otimes A \xrightarrow{f} M \to 0.
\]

Since $X\otimes A$ is finitely generated and $A$ is Artinian, $X\otimes A$ has finite length.
Hence $K$ has finite length as a subobject of $X\otimes A$ in the abelian category $\iC_A$.
By Lemma~\ref{lem:finite-length-fg}, $K$ is finitely generated. Thus there exists $X'\in\C$ and an epimorphism
$g:X'\otimes A\twoheadrightarrow K$.

Then $k\circ g:X'\otimes A\to X\otimes A$ has image $K=\ker(f)$, so $f$ is the cokernel of $k\circ g$.
Equivalently,
\[
X'\otimes A \xrightarrow{k\circ g} X\otimes A \xrightarrow{f} M \to 0
\]
is exact, proving that $M$ is finitely presented.
\end{proof}

Our interest in Artinian algebras stems from the following theorem, which shows that being Artinian forces these finiteness notions to coincide. 

\begin{theorem}
  \label{thm:artinian-locally-finite}
    Let $A\in\iC$ be an Artinian algebra. Then we have $\fl\text{-}\iC_A = \fg\text{-}\iC_A = \fp\text{-}\iC_A$ and these categories are abelian $\kk$-linear categories where objects have finite length.
  If $\C$ has enough projective objects, then so do these categories. 
\end{theorem}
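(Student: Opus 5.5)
The equalities of subcategories follow directly from what is already established. By Definition~\ref{def:artinian-alg}, $\fg\text{-}\iC_A\subseteq\fl\text{-}\iC_A$, and Lemma~\ref{lem:finite-length-fg} gives the reverse inclusion, so $\fl\text{-}\iC_A=\fg\text{-}\iC_A$. Lemma~\ref{lem:fgindCA-fin-presented} gives $\fg\text{-}\iC_A\subseteq\fp\text{-}\iC_A$, and $\fp\text{-}\iC_A\subseteq\fg\text{-}\iC_A$ holds by definition. Thus all three categories coincide.

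For abelianness, I will use that $\fl\text{-}\iC_A$ is a Serre subcategory of the abelian category $\iC_A$, so it is closed under subobjects, quotients, extensions and finite direct sums. Hence it is an abelian subcategory in which kernels and cokernels are computed in $\iC_A$. $\kk$-linearity is inherited from $\iC_A$, and every object has finite length by construction. So the first sentence requires no new work.

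For the projectives, the plan is to use free modules. The key observation is the adjunction $\Hom_{\iC_A}(P\otimes A,M)\cong\Hom_{\iC}(P,M)$ for $P\in\C$, coming from the free-forgetful adjunction for the monad $-\otimes A$. If $P$ is projective in $\C$, I want $P\otimes A$ to be projective in $\fg\text{-}\iC_A$, meaning $\Hom_{\iC}(P,-)$ should be exact on $\iC$ (or at least on the relevant short exact sequences), since the forgetful functor $\iC_A\to\iC$ is exact.

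This is the main obstacle. $P$ is projective in $\C$, but the underlying objects of our modules live in $\iC$. To handle it, I will show that a projective $P\in\C$ remains projective in $\iC$. Since $P$ is compact in $\iC$, $\Hom_{\iC}(P,-)$ commutes with filtered colimits. Given an epimorphism $M\twoheadrightarrow N$ in $\iC$ and a map $P\to N$, the map factors through some subobject $N_0\in\C$ of $N$, using that $P$ is compact and $N$ is a filtered union of objects of $\C$. The preimage of $N_0$ in $M$ is a filtered union of objects of $\C$, and their images form a filtered family with union $N_0$; since $N_0$ has finite length, one member $M_0\in\C$ already surjects onto $N_0$. Projectivity of $P$ in $\C$ then lifts $P\to N_0$ to $P\to M_0\to M$.

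With this, $\Hom_{\iC_A}(P\otimes A,-)$ is exact, so $P\otimes A$ is projective in $\iC_A$ and hence in $\fg\text{-}\iC_A$. Given $M\in\fg\text{-}\iC_A$, choose an epimorphism $X\otimes A\twoheadrightarrow M$ with $X\in\C$, and a projective cover or epimorphism $P\twoheadrightarrow X$ in $\C$. Since $-\otimes A$ is right exact, $P\otimes A\twoheadrightarrow X\otimes A\twoheadrightarrow M$ is an epimorphism, and $P\otimes A$ is finitely generated. This gives enough projectives.
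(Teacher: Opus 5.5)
Your proposal is correct and follows the paper's proof: the three equalities come from Lemma~\ref{lem:fgindCA-fin-presented}, Lemma~\ref{lem:finite-length-fg} and the Artinian hypothesis, abelianness comes from $\fl\text{-}\iC_A$ being a Serre subcategory, and enough projectives come from the free modules $P\otimes A$ via the free--forgetful adjunction. The one place you go further is checking that a projective $P\in\C$ remains projective in $\iC$, using compactness of $P$ and finite length of $N_0$; the paper simply asserts this when it says $\Hom_{\iC}(P,-)$ is exact, and your argument for it is sound.
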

\begin{proof}
By Lemma~\ref{lem:fgindCA-fin-presented}, it is immediate that $\fg\text{-}\iC_A=\fp\text{-}\iC_A$. Next, using Lemma~\ref{lem:finite-length-fg} together with the Artinian property, we see that an object of $\iC_A$ is finitely generated if and only if it is of finite length, i.e. $\fl\text{-}\iC_A=\fg\text{-}\iC_A$. Thus, the three categories are equal. In particular, $\fg\text{-}\iC_A$ is abelian, $\kk$-linear and objects have finite length.

Assume that $\C$ has enough projective objects.
Let $M\in\fg\text{-}\iC_A$, and choose an epimorphism $q:X\otimes A\twoheadrightarrow M$ with $X\in\C$.
Choose an epimorphism $\pi:P\twoheadrightarrow X$ in $\C$ with $P$ projective.
Then $\pi\otimes\id_A:P\otimes A\twoheadrightarrow X\otimes A$ is an epimorphism in $\iC_A$, and hence so is
$q\circ(\pi\otimes\id_A):P\otimes A\twoheadrightarrow M$.
Moreover $P\otimes A$ is projective in $\iC_A$: indeed, for any $A$-module $N$ we have an adjunction
\[
\Hom_{\iC_A}(P\otimes A,N)\cong \Hom_{\iC}(P,U(N)),
\]
where $U$ is the forgetful functor, which is exact because kernels and cokernels in $\iC_A$ are computed in
$\iC$.
Since $\Hom_{\iC}(P,-)$ is exact, it follows that $\Hom_{\iC_A}(P\otimes A,-)$ is exact.
Thus $\fg\text{-}\iC_A$ has enough projectives.
\end{proof}


\subsection{Ind-exact algebras}\label{subsec:ind-exact-algebras}
We consider the following notion from \cite[\S6]{coulembier2023incompressible}.

\begin{definition} \label{def:ind-exact-alg}
We call an algebra $A$ in $\iC$ \emph{ind-exact} if the functor $-\otimes_A -: \fg\text{-}\iC_A \times \fg\text{-}{}_A\iC \to \iC$ is bi-exact. Here, $\fg\text{-}{}_A\iC$ is the category of finitely generated left $A$-modules, which is defined in the same manner as $\fg\text{-}\iC_A$.
\end{definition}

\begin{theorem}\label{thm:ind-exact}
Let $A$ be an Artinian central commutative algebra in $\iC$. The following are equivalent:
\begin{enumerate}
    \item $A$ is ind-exact.
    \item The category $\fg\text{-}\iC_A$ is rigid.
    \item Every simple $A$-module is rigid.
\end{enumerate}
\end{theorem}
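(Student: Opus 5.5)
We prove the cycle (a)$\Rightarrow$(b)$\Rightarrow$(c)$\Rightarrow$(a). Throughout we use Theorem~\ref{thm:artinian-locally-finite}: $\fg\text{-}\iC_A=\fl\text{-}\iC_A$ is an abelian category of finite-length objects. Since $A$ is central commutative, it is also a monoidal category under $\otimes_A$ with unit $A$, and $\otimes_A$ is right exact in each variable. Indeed, $\iC_A$ is closed monoidal, with the internal Hom of ${}_A\iC_A$, so $M\otimes_A-$ and $-\otimes_A M$ have right adjoints on $\iC_A$. Finally, left and right $A$-modules are identified via the half-braiding $\sigma$, so $\fg\text{-}{}_A\iC\simeq\fg\text{-}\iC_A$. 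Hence ind-exactness is equivalent to $-\otimes_A-$ being bi-exact on $\fg\text{-}\iC_A$; the composite to $\iC$ reflects exactness because the forgetful functor is exact and faithful.

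\textbf{(a)$\Rightarrow$(b).} By (a), $\fg\text{-}\iC_A$ is an abelian monoidal category with exact tensor product and finite-length objects. Free modules $X\otimes A$ with $X\in\C$ are rigid in $\iC_A$: the dual is $X^*\otimes A$, with evaluation and coevaluation induced from those of $X$ via $(X\otimes A)\otimes_A(Y\otimes A)\cong(X\otimes Y)\otimes A$. Every $M\in\fg\text{-}\iC_A$ sits in an exact sequence
\[
X'\otimes A\to X\otimes A\to M\to 0
\]
by Lemma~\ref{lem:fgindCA-fin-presented}. The plan is to show that in a monoidal abelian category with bi-exact tensor product, the cokernel of a morphism between rigid objects is rigid. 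The candidate left dual is $M^*:=\ker\bigl((X\otimes A)^*\to (X'\otimes A)^*\bigr)$. Evaluation and coevaluation are then induced by exactness of $\otimes_A$: $M^*\otimes_A M$ is a quotient of $M^*\otimes_A(X\otimes A)$, and $M\otimes_A M^*$ is a subobject of $(X\otimes A)\otimes_A M^*$. The zigzag identities descend from those of $X\otimes A$; right duals are handled symmetrically. This is a standard argument (cf.\ \cite{coulembier2023incompressible}), and bi-exactness is exactly what makes the induced maps well defined. Alternatively, one can use the internal Hom $[M,A]$ and check that the canonical map $[M,A]\otimes_A N\to[M,N]$ is an isomorphism: it is so for free $M$, and the five lemma, using exactness in both variables, extends this to all finitely presented $M$.

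\textbf{(b)$\Rightarrow$(c)} is trivial.

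\textbf{(c)$\Rightarrow$(a)} is the main step. Here we invoke the non-braided Etingof--Penneys lemma, Theorem~\ref{thm:EP-rigidity-main}, applied to the abelian monoidal category $\fg\text{-}\iC_A$. It has right exact tensor product, all objects of finite length, and all simple objects rigid by hypothesis. The theorem then gives that every object is rigid, i.e.\ (b). Rigidity in turn gives exactness: if $M$ is rigid, then $M\otimes_A-$ has both adjoints, namely $M^*\otimes_A-$ and ${}^*M\otimes_A-$, so it is exact, and similarly in the other variable. This yields ind-exactness.

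The obstacle I expect is checking that the hypotheses of Theorem~\ref{thm:EP-rigidity-main} match. That theorem presumably needs either a braiding or some substitute in its non-braided form, for example that the tensor product is right exact on both sides and that the unit behaves well. Central commutativity supplies the needed symmetry: the half-braiding makes $M\otimes_A-$ and $-\otimes_A M$ comparable and realizes $\fg\text{-}\iC_A$ inside ${}_A\iC_A$. I would also need to confirm that the duals produced lie in $\fg\text{-}\iC_A$. This holds because the category is closed under the relevant kernels and cokernels, being equal to $\fl\text{-}\iC_A$, a Serre subcategory.
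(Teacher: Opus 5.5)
Your argument is correct and follows the paper's proof. You get (a)$\Rightarrow$(b) from closure of rigid objects under cokernels when $\otimes_A$ is bi-exact; the paper simply cites Wiggins for this. In your sketch, note that $M\otimes_A M^*$ embeds in $M\otimes_A(X\otimes A)^*$, not in $(X\otimes A)\otimes_A M^*$. You get (c)$\Rightarrow$(b)$\Rightarrow$(a) from the non-braided Etingof--Penneys Theorem~\ref{thm:EP-rigidity-main} followed by the adjoint argument, again as the paper does.

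The hypothesis of Theorem~\ref{thm:EP-rigidity-main} you left unidentified is that the category has enough left and right flat objects. This comes not from the half-braiding but from the free modules $X\otimes A$. They are rigid, hence flat on both sides by Lemma~\ref{lem:rigid-implies-flat}, and every object of $\fg\text{-}\iC_A$ is a quotient of one of them.
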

\begin{proof}
$(a)\Rightarrow(b)$: The Artinian assumption implies that $\fp\text{-}\iC_A=\fg\text{-}\iC_A$. Thus, every object in $\fg\text{-}\iC_A$ is finitely presented, that is, it is the cokernel of a morphism $X\otimes A \to Y\otimes A$ for some $X,Y\in\C$. Here $X\otimes A$ and $Y\otimes A$ are rigid. So,  
the claim follows because rigid objects in an abelian monoidal category with biexact tensor product are closed under taking cokernels \cite{wiggins2018dualizable}.

$(b)\Rightarrow(a)$: Rigidity implies that the functor $-\otimes_A V$ admits left/right adjoints given by tensoring with left/right duals of $V$ in $\fg\text{-}\iC_A$. Thus, tensoring over $A$ is bi-exact.

$(b)\Rightarrow(c)$: This is immediate.

$(c)\Rightarrow(b)$: Denote $\D=\fg\text{-}\iC_A$. Since $A$ is Artinian, Theorem~\ref{thm:artinian-locally-finite} implies that $\D$ is an abelian
$\kk$-linear category in which every object has finite length. In $\D$, every object is a quotient of a free module $X\otimes A$ for some $X\in\C$, which is rigid.
Thus, by a non-braided version of Etingof and Penneys' lemma (see Theorem~\ref{thm:EP-rigidity-main}), rigidity of all simple objects implies rigidity of the whole category.
\end{proof}

\begin{theorem}\label{thm:indexact-multitensor}
    Let $A\in \iC$ be a haploid, Artinian algebra that is central. If either one of the conditions in Theorem~\ref{thm:ind-exact} is satisfied, then $\fg\text{-}\iC_A$ is a tensor category. If moreover $\C$ is Frobenius, then $\fg\text{-}\iC_A$ is a Frobenius tensor category. 
\end{theorem}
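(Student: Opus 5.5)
We verify the axioms of a tensor category for $\D:=\fg\text{-}\iC_A$ one at a time, using the results already established.

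\emph{Abelian and locally finite.} Since $A$ is Artinian, Theorem~\ref{thm:artinian-locally-finite} shows that $\D=\fl\text{-}\iC_A=\fp\text{-}\iC_A$. Hence $\D$ is a $\kk$-linear abelian category in which every object has finite length. Because $\fl\text{-}\iC_A$ is a Serre subcategory of $\iC_A$, it is also closed under subquotients and extensions in $\iC_A$.

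\emph{Monoidal and rigid.} Since $A$ is central commutative, $\iC_A$ is monoidal with respect to $\otimes_A$ and has unit $A$. The earlier lemma shows that $\D$ is closed under $\otimes_A$, and $A\cong\unit\otimes A$ lies in $\D$. Bilinearity of $\otimes_A$ is inherited from the bilinear tensor product on $\iC$. Under either condition of Theorem~\ref{thm:ind-exact}, $\D$ is rigid.

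\emph{Simple unit and Hom-finiteness.} Here haploidness enters: $\Hom_{\iC}(\unit,A)\cong\kk$. We have
\[\End_{\iC_A}(A)\cong\Hom_{\iC}(\unit,A)\cong\kk\]
by the free--forgetful adjunction. We would then show that $A$ is simple in $\D$. Suppose $0\neq N\subseteq A$ is a submodule. Then $N$ lies in $\D$ and is rigid. In a rigid monoidal category whose unit has endomorphism ring $\kk$, a nonzero subobject of the unit has to equal the unit. One standard argument runs as follows: from $N\hookrightarrow A$ and duality one gets $A\to N^*$ and a nonzero map $N\otimes_A N^*\to A$. An alternative is to argue as in \cite[Theorem~4.3.8]{etingof2016tensor}: if $A$ has a proper subobject $N$ with cokernel $Q\neq0$, exactness of $\otimes_A$ gives $N\otimes_A Q\hookrightarrow A\otimes_A Q=Q$, and rigidity forces idempotent-type splittings, which contradicts $\End(A)=\kk$. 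The main technical step is writing this argument carefully in the possibly non-semisimple setting. The plan is to use the proof of \cite[Theorem~4.3.8]{etingof2016tensor}, which needs only rigidity, exactness and a local finiteness/length condition, all of which we have. Once $\End(A)\cong\kk$, rigidity and finite length are in hand, Lemma~\ref{lem:hom-finite} gives Hom-finiteness. Essential smallness follows because every object is a quotient of some $X\otimes A$ with $X$ ranging over the essentially small category $\C$. Together these show that $\D$ is a tensor category.

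\emph{Frobenius case.} Assume now that $\C$ is Frobenius, so every simple object of $\C$ has an injective hull and a projective cover. By Theorem~\ref{thm:artinian-locally-finite}, $\D$ has enough projectives in the form $P\otimes A$. We need this conclusion for Frobenius categories, whose projectives need not cover everything in general, so the argument is the one given in that proof applied to projective covers $P\twoheadrightarrow X$. Since $P\otimes A$ has finite length, each simple $M\in\D$ is a quotient of a projective of finite length, and therefore has a projective cover in $\D$. In a tensor category, projectives are injective (tensoring with duals, \cite[\S6.1]{etingof2016tensor}). Dualizing the projective cover of $M^*$ then produces an injective hull of $M$. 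Thus $\D$ is Frobenius.
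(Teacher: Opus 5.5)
Your proposal is correct and follows essentially the same route as the paper: Theorem~\ref{thm:artinian-locally-finite} gives abelianness and finite length, Theorem~\ref{thm:ind-exact} gives rigidity, haploidness gives $\End(A)\cong\kk$, Lemma~\ref{lem:hom-finite} gives Hom-finiteness, and free modules $P\otimes A$ on projectives handle the Frobenius part. Two small remarks: the simplicity of the unit, which the paper leaves implicit, is cleanest if you apply Lemma~\ref{lem:hom-finite} first and then cite \cite[Theorem~4.3.8]{etingof2016tensor} verbatim, since that result already covers the non-semisimple case and needs no extra work; and your caveat that a Frobenius $\C$ might lack enough projectives is unnecessary, because when every object has finite length, projective covers of all simples already give enough projectives.
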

\begin{proof}
By Theorem~\ref{thm:artinian-locally-finite}, $\fg\text{-}\iC_A$ is an abelian $\kk$-linear category where all objects have finite length. Moreover, Theorem~\ref{thm:ind-exact} implies that $\fg\text{-}\iC_A$ is rigid. 
As $A$ is haploid, 
\[ \Hom_{\fg\text{-}\iC_A}(A,A) \cong \Hom_{\iC}(\unit,A) \cong\kk. \]
Consequently, Lemma~\ref{lem:hom-finite} implies that $\fg\text{-}\iC_A$ is locally finite. The same argument as in Theorem \ref{thm:artinian-locally-finite} applies to the monoidal subcategory $\fg\text{-}\iC_A$ and the claim follows.
\end{proof}

\begin{remark}
We can relax the assumption that $\Hom(\unit,A)\cong\kk$ to the weaker assumptions that $\dim(\Hom(\unit,A))<\infty$ and that $A$ is connected\footnote{$A$ is called connected if the ring $A^{\mathrm{inv}}:=\Hom(\unit,A)$ is a finite dimensional connected ring, that is, there are no nontrivial idempotents. Equivalently, $A$ is not isomorphic to $A_1\oplus A_2$ for some algebras $A_1$ and $A_2$.}. Indeed, $\Hom_{\iC}(\unit,A)$ is a finite-dimensional connected algebra, and the unit object of $\fg\text{-}\iC_A$ (namely $A$) satisfies $\Hom_{\fg\text{-}\iC_A}(A,A) \cong \Hom_{\iC}(\unit,A)$, which is a finite dimensional connected algebra. Hence, by a similar argument as in \cite[Theorem~4.3.1]{etingof2016tensor}, $\Hom_{\fg\text{-}\iC_A}(A,A)$ is isomorphic to a direct sum of copies of $\kk$. Since it is also connected, we get $\Hom_{\fg\text{-}\iC_A}(A,A)\cong \kk$.
\end{remark}


\subsection{Frobenius algebras}\label{subsec:frobenius-algebras}
In this subsection, we introduce Frobenius algebras in $\iC$ and discuss their properties. We show that if $\C$ is ribbon and $A$ is a Frobenius algebra, then the category of local modules over $A$ admits a ribbon structure.

Recall that $\iC$ is closed monoidal and we denote the right internal Hom as $[-,-]$. It satisfies  $\Hom_{\iC}(L\otimes M, N)\cong \Hom_{\iC}(L,[M,N])$.

\begin{definition} \label{def:Frobenius-alg-in-Ind}
  Let $A$ be an algebra in $\iC$ with multiplication $\mu$, and let $\lambda : A \to \unit$ be a morphism in $\iC$. We define $\phi : A \to [A, \unit]$ to be the morphism in $\iC$ corresponding to $\lambda \mu : A \otimes A \to \unit$ under the adjunction isomorphism $\Hom_{\iC}(A \otimes A, \unit) \cong \Hom_{\iC}(A, [A, \unit])$. We say that $A$ is a \emph{Frobenius algebra with Frobenius form $\lambda$} if the morphism $\phi$ is invertible.
\end{definition}

As we will see in Section~\ref{sec:simple-current-algebras}, a `simple current algebra' is such a Frobenius algebra (Theorem~\ref{thm:simple-current-ext-is-Fb}). We expect co-Frobenius Hopf algebras to provide more examples, and we will pursue this line of inquiry in future work. 

Now we give properties of Frobenius algebras. Lemma \ref{lem:free-right-adj-1} below is an analogue of the fact that the free module functor for an ordinary Frobenius algebra is right adjoint to the forgetful functor from the category of modules.

\begin{lemma}
  \label{lem:free-right-adj-1}
    Let $A$ be a Frobenius algebra in $\iC$. Then there is a natural isomorphism
    \begin{equation}
        \label{eq:free-right-adj-1}
        \Hom_{\iC_A}(M, X \otimes A) \cong \Hom_{\iC}(M, X)
    \end{equation}
    for $M \in \iC_A$ and $X \in \C$.
\end{lemma}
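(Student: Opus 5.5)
The isomorphism \eqref{eq:free-right-adj-1} will be assembled from three natural isomorphisms:
\[
\Hom_{\iC_A}(M, X\otimes A) \cong \Hom_{\iC_A}(M, X\otimes [A,\unit]) \cong \Hom_{\iC_A}(M, [A, X]) \cong \Hom_{\iC}(M\otimes_A A, X) \cong \Hom_{\iC}(M,X).
\]

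\textbf{Step 1: transport along $\phi$.} The Frobenius isomorphism $\phi : A \to [A,\unit]$ should be a morphism of right $A$-modules. Here $[A,\unit]$ carries the right $A$-action induced by precomposing with left multiplication: it corresponds to the map $[A,\unit]\otimes A\otimes A \to [A,\unit]\otimes A \to \unit$, whose second arrow is evaluation. Equivariance of $\phi$ reduces, via the adjunction, to associativity of $\mu$, namely
\[
\lambda\mu(\mu\otimes \id) = \lambda\mu(\id\otimes\mu).
\]
Tensoring with $X$ then gives an isomorphism $X\otimes A \cong X\otimes[A,\unit]$ of right $A$-modules.

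\textbf{Step 2: pass to $[A,X]$.} Since $X\in\C$ is rigid, we have the canonical isomorphism
\[
X\otimes[A,\unit] \cong [A, X].
\]
It exists because $X\otimes -$ has the left adjoint $X^*\otimes -$, so
\[
\Hom(W, X\otimes[A,\unit]) \cong \Hom(X^*\otimes W\otimes A,\unit) \cong \Hom(W\otimes A, X).
\]
This isomorphism is compatible with the right $A$-actions, both of which come from $A$ acting on the argument $A$.

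\textbf{Step 3: the hom-tensor adjunction.} For a right $A$-module $M$, $\Hom_{\iC_A}(M,[A,X])$ is the set of maps $f: M\otimes A\to X$ satisfying the balancing condition
\[
f(\rho\otimes\id) = f(\id\otimes\mu).
\]
Such $f$ are exactly the maps out of the coequalizer $M\otimes_A A\cong M$. Concretely, $f$ corresponds to $f\circ(\id_M\otimes u)$, where $u$ is the unit of $A$. Conversely, $g : M\to X$ corresponds to $g\rho$. Composing the three steps gives \eqref{eq:free-right-adj-1}, and naturality in $M$ and $X$ holds at each stage. Unwinding, the composite sends $g:M\to X$ to the $A$-linear map
\[
M \to X\otimes A
\]
obtained from $g\rho$ through $\phi^{-1}$. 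In the other direction, an $A$-linear $h : M\to X\otimes A$ is sent to $(\id_X\otimes\lambda)\circ h$.

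\textbf{Main obstacle.} The hardest step will be the module-structure bookkeeping in Steps 1 and 2, in particular fixing which side $A$ acts on $[A,\unit]$ so that $\phi$ is right $A$-linear. One must also verify in the closed, non-rigid category $\iC$ that $X\otimes[A,\unit]\cong[A,X]$ holds as $A$-modules, which relies on rigidity of $X$ only. If the side conventions clash, I would instead use the left internal Hom, or prove the identity $\lambda\mu = \lambda\mu\circ(\text{swap})$-free version directly. Alternatively, I would verify by hand that the explicit maps $h\mapsto(\id\otimes\lambda)h$ and $g\mapsto(\id_X\otimes\phi^{-1})\circ\widetilde{g\rho}$ are mutually inverse.
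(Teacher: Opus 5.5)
Your proposal is correct and follows essentially the same route as the paper. You transport along the right $A$-linear isomorphism $\phi : A \to [A,\unit]$, identify $X\otimes[A,\unit]\cong[A,X]$ using only the rigidity of $X\in\C$, and then apply the adjunction $\Hom_{\iC_A}(M,[A,X])\cong\Hom_{\iC}(M\otimes_A A,X)\cong\Hom_{\iC}(M,X)$. Your explicit inverse, $h\mapsto(\id_X\otimes\lambda)\circ h$, is a correct unwinding of the composite, which the paper leaves implicit.
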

\begin{proof}
There is a natural transformation $f_{X,M} : X \otimes [A, M] \to [A, X \otimes M]$ for $X, M \in \iC$ making the functor $[A, -] : \iC \to \iC$ a `lax' left $\iC$-module functor since $[A, -]$ is a right adjoint of the left $\iC$-module functor $- \otimes A : \iC \to \iC$. We note that $f_{X,M}$ is invertible if $X$ is rigid. It is easy to see that the morphism $\phi : A \to [A, \unit]$ in Definition~\ref{def:Frobenius-alg-in-Ind} is an isomorphism of right $A$-modules. Hence, for $X \in \C$, we have $X \otimes A \cong X \otimes [A, \unit]\cong [A, X]$ as right $A$-modules. Now we have
    \begin{gather*}
        \Hom_{\iC_A}(M, X \otimes A)
        \cong \Hom_{\iC_A}(M, [A, X])
        \cong \Hom_{\iC}(M \otimes_A A, X) \cong \Hom_{\iC}(M, X)
    \end{gather*}
    for $M \in \iC_A$ and $X \in \C$. The proof is done.
\end{proof}


We define $\iHom_A(M,-)$ to be the right adjoint of the functor $-\otimes_A M: \iC_A \to \iC$. This can be constructed in the same way as the internal Hom functor for ${}_A\iC_A$ (see \cite{shimizu2024commutative}).

\begin{lemma}
  \label{lem:free-right-adj-2}
  With the above notation, we have a natural isomorphism
  \begin{equation*}
    \iHom_A(M, X \otimes A) \cong [M, X]
  \end{equation*}
  of right $A$-modules for $M \in {}_A\iC_A$ and $X \in \C$.
\end{lemma}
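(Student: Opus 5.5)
The plan is to prove the isomorphism by the Yoneda lemma: for every right $A$-module $N \in \iC_A$, I will build natural isomorphisms
\begin{equation*}
  \Hom_{\iC_A}\bigl(N, \iHom_A(M, X \otimes A)\bigr) \cong \Hom_{\iC_A}\bigl(N, [M, X]\bigr).
\end{equation*}
For this to make sense, $[M,X]$ must first be a right $A$-module. Since $M$ is an $A$-bimodule, the left $A$-action on $M$ induces a right $A$-action on $[M,X]$. Concretely, the action $[M,X]\otimes A \to [M,X]$ is the adjoint of
\begin{equation*}
  [M,X]\otimes A\otimes M \to [M,X]\otimes M \to X,
\end{equation*}
where the first map is $\id \otimes a^l_M$ and the second is evaluation. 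The module axioms follow from associativity and unitality of $a^l_M$ by a routine adjunction argument.

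Next I compute the left-hand side using the defining adjunction of $\iHom_A$ and then Lemma~\ref{lem:free-right-adj-1}:
\begin{equation*}
  \Hom_{\iC_A}\bigl(N, \iHom_A(M, X\otimes A)\bigr) \cong \Hom_{\iC_A}(N\otimes_A M, X\otimes A) \cong \Hom_{\iC}(N\otimes_A M, X).
\end{equation*}
Here $N\otimes_A M$ is a right $A$-module via the right action of $M$, which is exactly the setting in which Lemma~\ref{lem:free-right-adj-1} applies, since $X \in \C$. The second isomorphism is the main input and is where the Frobenius hypothesis enters.

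Then I compute the right-hand side. A morphism $N\otimes_A M\to X$ is the same as a morphism $N\otimes M\to X$ in $\iC$ that coequalizes $a^r_N\otimes\id_M$ and $\id_N\otimes a^l_M$. Under the tensor--internal-Hom adjunction, a morphism $g : N\otimes M \to X$ corresponds to $\tilde g : N\to[M,X]$. The balancing condition on $g$ translates exactly into $\tilde g$ intertwining the right $A$-action on $N$ with the right $A$-action on $[M,X]$ defined above. This gives
\begin{equation*}
  \Hom_{\iC}(N\otimes_A M, X)\cong\Hom_{\iC_A}(N,[M,X]).
\end{equation*}
Both steps are natural in $N$, so composing the three isomorphisms and applying Yoneda in $\iC_A$ yields $\iHom_A(M,X\otimes A)\cong[M,X]$ as right $A$-modules. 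Naturality in $M$ and $X$ follows from the naturality of each ingredient, including that of \eqref{eq:free-right-adj-1}.

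I expect the main obstacle to be bookkeeping rather than substance. The right $A$-module structure on $\iHom_A(M,-)$ comes from its construction as in \cite{shimizu2024commutative}, namely from the right $A$-action on $M$. I must check that the Yoneda isomorphism respects this structure, i.e.\ that this structure agrees with the one on $[M,X]$ induced by the bimodule structure of $M$. To get this, I will work consistently with the adjunctions in $\iC_A$ as above, so that the module structure is built in throughout. I must also make sure the module structure used in Lemma~\ref{lem:free-right-adj-1} is the one coming from the right action of $M$, not the left.
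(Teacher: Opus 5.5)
Your proposal is correct and follows essentially the same route as the paper. The paper also applies the Yoneda lemma in $\iC_A$ to the chain formed by the defining adjunction of $\iHom_A(M,-)$, Lemma~\ref{lem:free-right-adj-1} applied to $N \otimes_A M$, and the tensor--internal-Hom adjunction; you additionally spell out the right $A$-action on $[M,X]$ and how the balancing condition becomes $A$-linearity, which the paper leaves implicit.

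Your closing bookkeeping worry is unnecessary, because the Yoneda lemma in $\iC_A$ automatically gives an isomorphism of right $A$-modules. One small correction: the right $A$-module structure on $\iHom_A(M,Y)$ comes from the \emph{left} action on $M$, and the right action is used only to cut out the $A$-linear part.
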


\begin{proof}[Proof of Lemma~\ref{lem:free-right-adj-2}]
  For a right $A$-module $L$, we have natural isomorphisms
  \begin{align*}
    \Hom_{A}(L, \iHom_A(M, X \otimes A))
    & \cong \Hom_{A}(L \otimes_A M, X \otimes A) \\
    & \cong \Hom_{\iC}(L \otimes_A M, X) \\
    & \cong \Hom_{A}(L, [M, X]),
  \end{align*}
  where we have used Lemma \ref{lem:free-right-adj-1}.
  Thus the claim follows from the Yoneda lemma.
\end{proof}


\subsection{Braided case}
We also record the analogous finiteness consequences for the local module category. In this section, $\C$ is assumed to be a braided tensor category.

\begin{proposition}\label{prop:artinian-locally-finite-local}
Let $A\in\iC$ be a commutative Artinian algebra. Then we have
\[ \fl\text{-}\iC_A^\loc = \fg\text{-}\iC_A^\loc = \fp\text{-}\iC_A^\loc, \]
and these categories are finite length, abelian, $\kk$-linear, braided monoidal categories.
If, moreover, $\fg\text{-}\iC_A$ has enough injective objects, then so do these categories. 
\end{proposition}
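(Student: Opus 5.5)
The plan is to reduce everything to Theorem~\ref{thm:artinian-locally-finite} together with the fact, recalled in \S\ref{subsubsec:local-modules-ind-completion}, that $\iC_A^\loc$ is closed under kernels and cokernels in $\iC_A$.

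\emph{Step 1: the three subcategories coincide.} By Definition~\ref{def:fg-local}, each of $\fl\text{-}\iC_A^\loc$, $\fg\text{-}\iC_A^\loc$ and $\fp\text{-}\iC_A^\loc$ is the intersection of $\iC_A^\loc$ with $\fl\text{-}\iC_A$, $\fg\text{-}\iC_A$ and $\fp\text{-}\iC_A$, respectively. Theorem~\ref{thm:artinian-locally-finite} says these last three categories are equal, so the three intersections are equal.

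\emph{Step 2: abelian, $\kk$-linear, finite length.} Write $\D:=\fg\text{-}\iC_A^\loc = \fg\text{-}\iC_A\cap \iC_A^\loc$. Both $\fg\text{-}\iC_A$ ($=\fl\text{-}\iC_A$, a Serre subcategory) and $\iC_A^\loc$ are closed under kernels and cokernels in $\iC_A$, so $\D$ is an exact abelian subcategory of $\iC_A$. It is $\kk$-linear. Every object of $\D$ has finite length in $\iC_A$. Any subquotient in $\D$ is a subquotient in $\iC_A$, and any subobject in $\iC_A$ of a local module is again local. Hence the length in $\D$ equals the length in $\iC_A$, and in particular is finite.

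\emph{Step 3: braided monoidal structure.} Commutativity of $A$ makes $\iC_A^\loc$ braided monoidal under $\otimes_A$, by Pareigis. The lemma that $\fg\text{-}\iC_A$ is closed under $\otimes_A$ was stated for central commutative $A$; a commutative $A$ in braided $\iC$ is central via the braiding, so that lemma applies. The unit $A$ is local and finitely generated, being a quotient of $\unit\otimes A$. So $\D$ is a full monoidal subcategory, and it inherits the braiding.

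\emph{Step 4: injectives.} Suppose $\fg\text{-}\iC_A$ has enough injectives. For $M\in\D$, embed $M\hookrightarrow I$ with $I$ injective in $\fg\text{-}\iC_A$, and apply the right adjoint $i^R$ of the inclusion, given by the equalizer \eqref{eq:right-adjoint-local}. Then:
\begin{itemize}
\item $i^R$ preserves monomorphisms, and the unit $M=i^R(M)\to i^R(I)$ is a monomorphism;
\item $i^R(I)$ is injective, because $i^R$ is right adjoint to an exact functor.
\end{itemize}
The main obstacle is that $i^R(I)$ must lie in $\D$. It is a submodule of $I$, since the equalizer sits inside $I$. So it has finite length, because $\fl\text{-}\iC_A$ is Serre, and is therefore finitely generated. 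Thus $i^R(I)\in\D$, and it is injective there since $\Hom_\D(-,i^R I)\cong\Hom_{\fg\text{-}\iC_A}(-,I)$ on $\D$ is exact.
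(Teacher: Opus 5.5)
Your proposal is correct and follows essentially the same route as the paper. Both proofs reduce the equalities to Theorem~\ref{thm:artinian-locally-finite}, get abelianness from closure of $\fg\text{-}\iC_A$ and $\iC_A^\loc$ under kernels and cokernels, and obtain injectives via $i^R$, using that $i^R(I)$ is a submodule of $I$ and hence of finite length. Your version is slightly more careful in saying that $i^R(I)$ is injective in $\fg\text{-}\iC_A^\loc$ rather than in all of $\iC_A^\loc$, which is the correct claim since $I$ is only injective in $\fg\text{-}\iC_A$.
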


\begin{proof}
The first statement follows from Theorem~\ref{thm:artinian-locally-finite}. As $\fg\text{-}\iC_A^\loc = \fg\text{-}\iC_A\cap \iC_A^\loc$ and both $\fg\text{-}\iC_A$ and $\iC_A^\loc$ are abelian $\kk$-linear subcategories of $\iC_A$, $\fg\text{-}\iC_A^\loc$ is an abelian $\kk$-linear subcategory as well. Lastly, since $\fg\text{-}\iC_A$ is finite length and $\fg\text{-}\iC_A^\loc$ is a full subcategory of $\fg\text{-}\iC_A$, $\fg\text{-}\iC_A^\loc$ is finite length as well.

Recall that the inclusion functor $i: \iC_A^\loc \to \iC_A$ admits a right adjoint $i^R$. If $M\in \fg\text{-}\iC_A=\fl\text{-}\iC_A$, then $i^R(M)$ (which is a subobject of $M$, see \eqref{eq:right-adjoint-local}) is also of finite length, hence finitely generated. Thus, the right adjoint $i^R: \iC_A \to \iC_A^\loc$ restricts to a functor $\fg\text{-}\iC_A \to \fg\text{-}\iC_A^\loc$. 

Suppose that $\fg\text{-}\iC_A$ has enough injective objects. Let $M\in \fg\text{-}\iC_A^\loc$ and choose an injection $i(M)\hookrightarrow I$ in $\fg\text{-}\iC_A$ with $I$ injective. Then, $i^R(I)$ is injective in $\iC_A^\loc$ because $\Hom_{\iC_A^\loc}(-, i^R(I)) \cong \Hom_{\iC_A}(i(-), I)$ is exact. As $M\hookrightarrow i^R(I)$, we see that $\fg\text{-}\iC_A^\loc$ has enough injective objects as well.

Lastly, when $A$ is a commutative algebra, the braided monoidal structure on $\iC_A^\loc$ restricts to $\fg\text{-}\iC_A^\loc$ because the tensor product of two finitely generated local modules is again finitely generated.
\end{proof}


\subsubsection{Rigidity}

\begin{theorem}\label{thm:indexact-multitensor-local}
 Let $A\in \iC$ be an Artinian, haploid commutative algebra. If either one of the conditions in Theorem~\ref{thm:ind-exact} is satisfied, then $\fg\text{-}\iC_A$ is a tensor category and
  $\fg\text{-}\iC_A^\loc$ is a braided tensor category. If moreover $\C$ is Frobenius, then both are Frobenius tensor categories. 
\end{theorem}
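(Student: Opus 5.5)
The first claim, that $\fg\text{-}\iC_A$ is a tensor category, is Theorem~\ref{thm:indexact-multitensor} once we note that a commutative algebra in the braided category $\iC$ is central commutative: the half-braiding $\sigma_X := c_{X,A}$ (or its inverse, with the appropriate convention) makes $(A,\sigma)$ a commutative algebra in $\Z(\iC)$. The same observation makes the hypotheses of Theorem~\ref{thm:ind-exact} meaningful, so we may freely pass between the three equivalent conditions. The Frobenius clause for $\fg\text{-}\iC_A$ is likewise contained in Theorem~\ref{thm:indexact-multitensor}.

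For $\fg\text{-}\iC_A^\loc$, Proposition~\ref{prop:artinian-locally-finite-local} already gives an abelian, $\kk$-linear, finite-length braided monoidal category. Its unit $A$ is local, and $\End(A)\cong\Hom_{\iC}(\unit,A)\cong\kk$ by haploidness. Hom-finiteness will follow from Lemma~\ref{lem:hom-finite}, or simply because $\fg\text{-}\iC_A^\loc$ is a full subcategory of the locally finite category $\fg\text{-}\iC_A$. It therefore remains to show rigidity. The plan is to prove that $\fg\text{-}\iC_A^\loc$ is closed under duals in the rigid category $\fg\text{-}\iC_A$.

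Let $M$ be local with left dual $M^*$ in $\fg\text{-}\iC_A$. Since the dual is unique, we can identify $M^*$ with the internal Hom $\iHom_A(M,A)$. By \cite[\S3.3]{shimizu2024commutative}, the internal Hom of $\iC_A^\loc$ is computed by the same functor as in ${}_A\iC_A$, so internal Homs out of local modules into local modules are local. Alternatively, one can apply the coreflector directly. Since $A$ is local, $\Hom_{\iC_A}(L\otimes_A M, A)=\Hom_{\iC_A}(L,M^*)$, and $L\otimes_A M$ is local for local $L$, so $i^R(M^*)$ also represents $L\mapsto \Hom(L\otimes_A M,A)$ on local $L$. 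To conclude, I will check directly that the monodromy of $A$ on $M^*$ is trivial. This is a standard graphical computation: the action on $M^*$ is the transpose of the action on $M$, and the double braiding $c_{A,M^*}c_{M^*,A}$ is the transpose of $c_{A,M}c_{M,A}$ by naturality of the braiding with respect to $\ev_M$ and $\coev_M$. Right duals are handled symmetrically, or via the braiding ${}^*M\cong M^*$.

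I expect this last step to be the main obstacle. The computation has to be carried out in $\iC$, where $A$ is not rigid, and the evaluation and coevaluation maps are $A$-balanced rather than maps in $\C$. I would first try the internal-Hom description of locality, $[A,\rho]\circ\eta_M$ versus $[A,\rho c c]\circ\eta_M$ as in \eqref{eq:right-adjoint-local}, which avoids needing duals of $A$.

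Finally, for the Frobenius claim, Proposition~\ref{prop:artinian-locally-finite-local} transfers enough injectives from $\fg\text{-}\iC_A$. In a tensor category, having enough injectives is equivalent to having enough projectives, so Frobenius follows.
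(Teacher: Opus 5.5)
Your proposal is correct and follows essentially the same route as the paper. You obtain $\fg\text{-}\iC_A$ from Theorem~\ref{thm:indexact-multitensor}, and you get rigidity of $\fg\text{-}\iC_A^\loc$ from the fact that $\iC_A^\loc$ is closed under the internal Hom of ${}_A\iC_A$, so that $M^*\cong\iHom_A(M,A)$ is local. You use haploidness for $\End(A)\cong\kk$, and you transfer enough injectives via Proposition~\ref{prop:artinian-locally-finite-local} for the Frobenius claim. The direct monodromy computation you flag as the main obstacle is therefore unnecessary: the cited closure property already completes the rigidity step, exactly as in the paper.
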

\begin{proof}
  The proof is similar to that of Theorem~\ref{thm:indexact-multitensor}. The key point is that $\iC_A^\loc$ is a closed monoidal subcategory of $\iC_A$. Hence, $\fg\text{-}\iC_A^\loc$ is a closed braided monoidal subcategory of $\fg\text{-}\iC_A$. Thus, if $\fg\text{-}\iC_A$ is rigid, so is $\fg\text{-}\iC_A^\loc$. Since $A$ is haploid, $\End_{\fg\text{-}\iC_A}(A)\cong\Hom_{\iC}(\unit,A)$ is connected; as above, this identifies the unit endomorphism ring with $\kk$, so these are tensor categories. 

  We recall from \cite{shibata2023nakayama} that a tensor category is Frobenius if and only if it has enough injective objects, if and only if it has enough projective objects. Now we assume that $\C$ is Frobenius. Then, by Theorem~\ref{thm:artinian-locally-finite}, the tensor category $\fg\text{-}\iC_A$ is Frobenius.
  Hence, by Proposition~\ref{prop:artinian-locally-finite-local}, $\fg\text{-}\iC_A^\loc$ is also Frobenius.
\end{proof}


\subsubsection{Ribbon structure on local modules}
Let $\C$ be a ribbon tensor category with braiding $c$ and twist $\theta$. Then $\iC$ is a closed braided monoidal category. Moreover, the twist induces an automorphism of the identity functor of $\iC$ that satisfies $\theta_{M \otimes N} = c_{N,M} c_{M,N} (\theta_M \otimes \theta_N)$ for all $M, N \in \iC$.

We denote by $\iC'$ the M\"uger center of $\iC$. Next we will show that, in the ind-completion of a ribbon category, the internal Hom is compatible with the ribbon structure. In particular, $\theta_{[M, \unit]} = [\theta_M, \unit]$ for $M \in \iC$.
\begin{lemma}
  For all objects $M \in \iC$ and $X \in \iC'$, we have $\theta_{[M,X]} = [\theta_M, \theta_X]$.
\end{lemma}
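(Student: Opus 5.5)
We argue by the Yoneda lemma, testing against an arbitrary object $L \in \iC$. The claim concerns the endomorphism $\theta_{[M,X]}$ of $[M,X]$. We first transport $\theta$ through the adjunction $\Hom_{\iC}(L, [M,X]) \cong \Hom_{\iC}(L \otimes M, X)$. Naturality of the isomorphism in $L$ gives the following: for $f : L \to [M,X]$ with adjoint $\tilde f = \ev_{M,X} \circ (f \otimes \id_M)$, the morphism $\theta_{[M,X]} \circ f$ corresponds to $\ev_{M,X}\circ((\theta_{[M,X]}\circ f)\otimes \id_M)$. Here $\ev_{M,X} : [M,X]\otimes M \to X$ is the counit. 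Meanwhile, $[\theta_M,\theta_X]\circ f$ corresponds to $\theta_X \circ \tilde f \circ (\id_L\otimes \theta_M^{-1})$. This is the definition of the functor $[-,-]$ on morphisms, with contravariance in the first slot; taking inverses in the contravariant slot is the natural convention for this to be an automorphism. So it suffices to show the single identity
\[
\ev_{M,X}\circ(\theta_{[M,X]}\otimes \id_M) \;=\; \theta_X\circ \ev_{M,X}\circ(\id_{[M,X]}\otimes\theta_M^{-1}),
\]
since then precomposing with $f \otimes \id_M$ gives the claim for all $f$. The case $f = \id$ reduces to this identity anyway.

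To prove the identity, we apply naturality of $\theta$ (which holds on all of $\iC$) to the morphism $\ev_{M,X} : [M,X]\otimes M \to X$. This gives $\theta_X \circ \ev_{M,X} = \ev_{M,X}\circ \theta_{[M,X]\otimes M}$. Next we expand the twist of the tensor product using the balancing axiom extended to $\iC$:
\[
\theta_{[M,X]\otimes M} = c_{M,[M,X]}\,c_{[M,X],M}\,(\theta_{[M,X]}\otimes\theta_M).
\]
Combining these, the right-hand side of the desired identity becomes $\ev_{M,X}\circ c_{M,[M,X]}c_{[M,X],M}\circ(\theta_{[M,X]}\otimes \id_M)$. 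So the whole problem reduces to
\[
\ev_{M,X}\circ c_{M,[M,X]}\,c_{[M,X],M} = \ev_{M,X},
\]
i.e.\ the counit is unchanged by the double braiding.

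This last step is the main obstacle, and it is exactly where $X\in\iC'$ enters. Naturality of the braiding applied to $\ev_{M,X}$ alone does not directly remove the monodromy on $[M,X]\otimes M$. Instead we use that $X$ is transparent, so that $[M,X]$ behaves like a dual. The plan is as follows.
\begin{enumerate}
\item By naturality of the double braiding, for any $W$ we have $c_{X,W}c_{W,X}\circ(\ev_{M,X}\otimes \id_W) = (\ev_{M,X}\otimes\id_W)\circ c_{[M,X]\otimes M,W}\,c_{W,[M,X]\otimes M}$.
\item Since $X$ is transparent, the left side equals $\ev_{M,X}\otimes \id_W$.
\item Expanding the double braiding with $[M,X]\otimes M$ by the hexagon axioms, we aim to take $W = M$ and slide strands so that the monodromy of $[M,X]$ around $M$ can be traded for the monodromy of $X$ around $M$, which is trivial.
\end{enumerate}

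If this graphical sliding does not close up cleanly, we will instead use a Yoneda argument on $\Hom(L\otimes M, X)$. Postcomposition with the monodromy-invariant morphism $\tilde f$ satisfies $\tilde f\circ c_{M,L}c_{L,M} = c_{X,?}\dots$, and transparency of $X$ should force $\tilde f \circ c_{M,L}c_{L,M} = \tilde f$ under the identification. We would reduce to the case where $M$ and $L$ are in $\C$ by writing them as filtered colimits, using that $[-,X]$ turns colimits in the first variable into limits and that $\otimes$ commutes with filtered colimits. For rigid $M$ we have $[M,X]\cong X\otimes M^*$, and the identity becomes an explicit diagram in $\C$ using $c_{X,M}c_{M,X}=\id$ and $\theta_{M^*}=\theta_M^*$.
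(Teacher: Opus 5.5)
Your proposal has a genuine gap. The convention in your first step is wrong, and as a result you set out to prove a false identity.

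\textbf{The wrong convention.} The adjunction $\Hom_{\iC}(L\otimes M,X)\cong\Hom_{\iC}(L,[M,X])$ is natural in the contravariant variable. So for $f\colon M'\to M$ and $h\colon X\to X'$, the composite $[f,h]\circ g$ has adjoint $h\circ\tilde g\circ(\id_L\otimes f)$. Since $\theta_M\colon M\to M$, the morphism $[\theta_M,\theta_X]$ is already an automorphism of $[M,X]$. Its adjoint involves $\id_L\otimes\theta_M$, not $\theta_M^{-1}$.

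\textbf{Your target identity is false.} This is not a harmless change of convention. Take $X=\unit$ and $M=W\in\C$. Then $[W,\unit]=W^*$ and $[\theta_W,\unit]=(\theta_W)^*$, so the lemma says exactly $\theta_{W^*}=(\theta_W)^*$. Your identity would instead give $\ev_W\circ(\id\otimes\theta_W)=\ev_W\circ(\id\otimes\theta_W^{-1})$, i.e.\ $\theta_W^2=\id_W$. This fails as soon as some twist is not an involution, e.g.\ a nontrivial scalar twist on an invertible object.

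\textbf{Your ``main obstacle'' is false.} Consequently, the reduced claim $\ev_{M,X}\circ c_{M,[M,X]}c_{[M,X],M}=\ev_{M,X}$ is false as well. For $M=W\in\C$ and $X$ transparent, the hexagon axioms and transparency give $c_{W,X\otimes W^*}c_{X\otimes W^*,W}=\id_X\otimes c_{W,W^*}c_{W^*,W}$. Naturality of $\theta$, balancing and $\theta_{W^*}=(\theta_W)^*$ then give $\ev_W\circ c_{W,W^*}c_{W^*,W}=\ev_W\circ(\id_{W^*}\otimes\theta_W^{-2})$. So no sliding of strands can establish it. The assertion in your fallback that $\tilde f\circ c_{M,L}c_{L,M}=\tilde f$ fails for the same reason: take $\tilde f=\ev_W$.

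\textbf{Even the corrected identity needs a missing idea.} With the correct convention, your otherwise valid reduction via naturality of $\theta$ and balancing leads to $\ev_{M,X}\circ c_{M,[M,X]}c_{[M,X],M}\circ(\id\otimes\theta_M^{2})=\ev_{M,X}$. This is true, but the tools in your main plan cannot prove it. Naturality of $\theta$ and $c$, the balancing axiom and transparency of $X$ all hold in any balanced braided category. For $X=\unit$ and $M=W\in\C$, however, this identity is equivalent to $\theta_{W^*}=(\theta_W)^*$, which does not follow from balancing. For example, take $\vect^{G}$ with $G$ finite abelian, the symmetric braiding, and $\theta$ a character of order $>2$.

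So the self-duality axiom of $\C$ must enter, and it is only available on rigid objects. This is what forces a reduction to $M\in\C$. The paper does exactly this:
\begin{itemize}
\item Write $M=\varinjlim M_i$ with $M_i\in\C$. Then $[M,X]=\varprojlim[M_i,X]$, and by naturality of $\theta$ on $\iC$, $\theta_{[M,X]}=\varprojlim\theta_{[M_i,X]}$.
\item For $W\in\C$, identify $[W,X]=X\otimes W^*$ and compute $\theta_{X\otimes W^*}=c_{W^*,X}c_{X,W^*}(\theta_X\otimes\theta_{W^*})=\theta_X\otimes(\theta_W)^*=[\theta_W,\theta_X]$, using transparency of $X$ and the ribbon axiom.
\end{itemize}
Only $M$ needs to be reduced, not $L$. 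Your last paragraph names these ingredients, but as written it is aimed at the incorrect identity.
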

\begin{proof}
  We write $M$ as $M = \varinjlim_{i \in I} M_i$ for $M_i \in \C$. Then we have $[M,X] = \varprojlim_{i \in I} \, [M_i, X]$.
  If $W \in \C$, then we may, and do, identify $[W, X]$ with $X \otimes W^*$. Hence we have
  \begin{equation*}
    \theta_{[W, X]} = \theta_{X \otimes W^*}
    = c_{W^*,X}c_{X,W^*}(\theta_{X} \otimes \theta_{W^*})
    = \theta_X \otimes (\theta_{W})^*
    = [\theta_{W}, \theta_X],
  \end{equation*}
  where we have used the assumption $X \in \iC'$ at the third equality.
  Since $M_i \in \C$ for all $i$,
  \begin{equation*}
    \theta_{[M,X]}
    = \varprojlim_{i \in I} \theta_{[M_i, X]}
    = \varprojlim_{i \in I} \, [\theta_{M_i}, \theta_X]
    = [\varinjlim_{i \in I} \theta_{M_i}, \theta_X]
    = [\theta_M, \theta_X]. \qedhere
  \end{equation*}
\end{proof}

Let $A$ be a commutative Frobenius algebra in $\iC$. Then the category $\D := \fg\text{-}\iC_A^{\loc}$ of finitely-generated local $A$-modules is a braided monoidal category.

\begin{theorem}\label{thm:local-modules-ribbon}
  If $\theta_A = \id_A$ and $\D$ is rigid, then $\D$ is a ribbon category.
\end{theorem}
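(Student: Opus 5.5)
The plan is to give $\D$ the twist restricted from $\iC$. For $M\in\D$ I set $\theta^{\D}_M:=\theta_M$, the component in $\iC$ of the natural extension of the twist of $\C$.

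\textbf{Step 1: $\theta_M$ lies in $\D$.} I must show that $\theta_M$ is a morphism of right $A$-modules. By naturality, $\theta_M\circ\rho=\rho\circ\theta_{M\otimes A}$. Expanding gives
\[
\theta_{M\otimes A}=c_{A,M}c_{M,A}(\theta_M\otimes\theta_A)=c_{A,M}c_{M,A}(\theta_M\otimes\id_A),
\]
using $\theta_A=\id_A$. Locality $\rho\, c_{A,M}c_{M,A}=\rho$ then yields $\theta_M\rho=\rho(\theta_M\otimes\id_A)$. Naturality of $\theta^{\D}$ on $\D$ is inherited from $\iC$.

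\textbf{Step 2: balancing in $\D$.} The braiding of $\D$ is induced from $c$ through the coequalizer $M\otimes N\to M\otimes_A N$. Since $\theta$ is natural in $\iC$, $\theta_{M\otimes_A N}$ is the morphism induced by $\theta_{M\otimes N}=c_{N,M}c_{M,N}(\theta_M\otimes\theta_N)$. This is exactly the induced double braiding composed with $\theta_M\otimes_A\theta_N$. Hence $\theta$ is a balancing on $\D$.

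\textbf{Step 3: $\theta_{M^*}=(\theta_M)^*$, the main obstacle.} The dual in $\D$ is not the dual in $\iC$, and $\theta$ is not known to be compatible with internal Homs in general. This is where the Frobenius hypothesis enters. The left dual of $M$ in $\D$ is the internal Hom $\iHom_A(M,A)$ (or its left-module variant), with evaluation induced from the adjunction. By Lemma~\ref{lem:free-right-adj-2} with $X=\unit$,
\[
M^*\cong \iHom_A(M,A)\cong [M,\unit]
\]
as right $A$-modules, where $[M,\unit]$ carries its natural module structure. Since $\unit\in\iC'$, the preceding lemma gives $\theta_{[M,\unit]}=[\theta_M,\unit]$.

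It remains to check two compatibilities.
\begin{enumerate}
\item The isomorphism $M^*\cong[M,\unit]$ intertwines $(\theta_M)^*$, defined in $\D$ via $\ev$ and $\coev$, with $[\theta_M,\unit]$. This should follow from naturality of the isomorphisms of Lemmas~\ref{lem:free-right-adj-1} and~\ref{lem:free-right-adj-2} in $M$, since $(f)^*$ for a morphism $f$ in a rigid closed category coincides with $\iHom_A(f,A)$.
\item The isomorphism intertwines the $\iC$-twists. This holds because it is a morphism in $\iC$ and $\theta$ is natural.
\end{enumerate}
Combining these, $\theta_{M^*}=(\theta_M)^*$. The right dual case either follows symmetrically or is automatic once a balancing on a rigid braided category satisfies the left-dual condition.

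I expect the delicate point to be the identification of $(\theta_M)^*$ in $\D$ with $[\theta_M,\unit]$. This requires tracing the dual morphism through the chain of adjunction isomorphisms, that is, making precise that these isomorphisms are natural in $M$ with respect to $A$-module maps.
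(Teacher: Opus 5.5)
Your proposal is correct and follows the paper's proof. The twist is restricted from $\iC$; the paper cites Kirillov--Ostrik for your Steps~1--2, which you verify directly. The ribbon condition is then obtained by transporting along the Frobenius isomorphism $\iHom_A(M,A)\cong[M,\unit]$ of Lemma~\ref{lem:free-right-adj-2}, using naturality of $\theta$, the preceding lemma giving $\theta_{[M,\unit]}=[\theta_M,\unit]$, and naturality of that isomorphism in $M$. The point you flag as delicate, that the dual morphism in $\D$ is $\iHom_A(f,A)$, is exactly what the paper uses implicitly when it calls $M\mapsto\iHom_A(M,A)$ the duality functor.
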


We explain that this theorem can be applied to simple current algebras in $\iC$ in characteristic zero in Section \ref{sec:simple-current-algebras}.
The condition $\theta_A = \id_A$ is analogous to triviality of the Nakayama automorphism in the finite-dimensional Frobenius case \cite[Proposition~2.25]{frohlich2006correspondences}. 

\begin{proof}
  By the assumption that $\theta_A = \id_A$, we can define a natural isomorphism $\theta^A_M : M \to M$ for $M \in \D$ by $\theta^A_M = \theta_M$ in the same way as \cite[Theorem 1.17 (2)]{kirillov2002q}. With the use of the internal Hom functor, the duality functor of $\D$ is given by $M \mapsto M^{\dagger} := \iHom_A(M, A)$.
  It remains to show that $(\theta^A_M)^{\dagger} = \theta^A_{M^{\dagger}}$ holds for all $M \in \D$. By Lemma~\ref{lem:free-right-adj-2}, there is a natural isomorphism $\phi_M : M^{\dagger} \to [M, \unit]$ for $M \in \iC_A$. Now we have
  \begin{equation*}
    \phi_M \circ \theta^A_{M^{\dagger}}
    = \theta^A_{[M, \unit]} \circ \phi_M
    = [\theta^A_M, \unit] \circ \phi_M
    = \phi_M \circ (\theta^A_M)^{\dagger}
  \end{equation*}
  for $M \in \D$, where the first equality follows from the naturality of $\theta$, the second from the previous lemma, and the last from the naturality of $\phi$. The proof is done.
\end{proof}

\section{Representation theory of simple current algebras}\label{sec:simple-current-algebras}
Throughout this section, $\C$ denotes a locally finite $\kk$-linear abelian closed monoidal category with simple unit; additional hypotheses (rigidity, braiding, etc.) will be imposed when needed.
We also fix a subgroup $\Gamma$ of the group $\Inv(\C)$ of invertible objects of $\C$ and, for each $g \in \Gamma$, choose an object $E_g \in \C$ representing $g$. In view of Lemma \ref{lem:group-algebra-comm} (a), we assume that there is a 2-cochain $\eta \in \mathrm{C}^2(\Gamma)$ such that $\partial_3(\eta) = \omega|_{\Gamma \times \Gamma \times \Gamma}$, where $\omega$ is the 3-cocycle on $\Inv(\C)$ arising from the associator, and construct a simple current algebra $A = \bigoplus_{g \in \Gamma} E_g$ by the 2-cochain $\eta$.
The aim of this section is to investigate the representation theory of $A$.
In this section, we verify that $A$ satisfies the hypotheses of Theorem~\ref{thm:main-criteria} under some assumptions on $\Gamma$ and $\C$.
As a consequence, we obtain criteria ensuring the category of finitely-generated local modules over $A$ is a ribbon (Frobenius) tensor category (Theorem \ref{thm:fg-local-simple-main}). 


\subsection{Classification of simple modules}
In this subsection, we give a classification of simple right modules of the algebra $A = \bigoplus_{g \in \Gamma} E_g$ in $\iC$. 
To do this, we study the restriction of $A$-modules to subalgebras of $A$, which is similar in spirit to Clifford theory. 

The invertible summands $E_g$ act on simples of $\iC$ by tensor product, so any simple $A$-module $M$ is supported on a single $\Gamma$-orbit of a simple constituent $X\subset M$. One then passes from the full algebra $A$ to the stabilizer subgroup $S=\Stab_\Gamma(X)$ and the corresponding subalgebra $A_X=\bigoplus_{s\in S}E_s$, which captures the part of the action that preserves $X$. The associativity constraints for an $A_X$-action on $X$ produce a canonical 2-cocycle $\xi_X$ on $S$; this cocycle measures the obstruction to choosing the identifications $X\otimes E_s\simeq X$ compatibly, and it forces the multiplicity space to carry a $\xi_X$-projective representation of $S$. Thus simple $A_X$-modules whose underlying object is isomorphic to $X \oplus \dotsb \oplus X$ are parametrized by irreducible $\xi_X$-projective representations $W$ of $S$, and the corresponding simple $A$-modules are obtained by induction $\mathbf{M}(X,W)=(W\otimes X)\otimes_{A_X}A$. Finally, isomorphism classes are obtained by the obvious orbit relation: twisting $X$ by $E_g$ replaces $(S,\xi_X,W)$ by the conjugate data, yielding the same induced $A$-module.


\subsubsection{Projective representations} 
We recall basics on projective representations of a group.
Let $S$ be a group and $\xi \in \mathrm{C}^2(S)$ a normalized 2-cocycle. A \emph{projective representation of $S$ with multiplier $\xi$} is a finite-dimensional vector space $V$ together with a map $\rho: S \to \mathrm{GL}(V)$ satisfying
\[
  \rho(s)\,\rho(t) = \xi(s,t)\,\rho(st) \quad \text{for all } s, t \in S.
\]
Morphisms are linear maps that intertwine the $S$-actions. We denote by $\mathrm{Rep}(S,\xi)$ the category of such representations. It is equivalent to the category of finite-dimensional modules over the twisted group algebra $\kk^{\xi}[S]$, which has $\kk$-basis $\{e_s\}_{s \in S}$ and multiplication $e_s \cdot e_t = \xi(s,t)\,e_{st}$. If $\xi$ and $\xi'$ are cohomologous, then $\mathrm{Rep}(S,\xi) \simeq \mathrm{Rep}(S,\xi')$, so the category depends only on the class $[\xi] \in \mathrm{H}^2(S)$.

\subsubsection{Modules over the stabilizer}\label{subsubsec:modules-over-stabilizer}

For a simple object $X \in \C$, we define
\begin{equation*}
  \Stab_{\Gamma}(X) = \{ g \in \Gamma \mid X \otimes E_g \cong X \}
\end{equation*}
and call it the {\em stabilizer} of $X$.
It is obvious that $\Stab_{\Gamma}(X)$ is a subgroup of $\Gamma$.
We fix a simple object $X \in \C$ and write $S = \Stab_{\Gamma}(X)$.
Later, we will construct a simple right $A$-module by induction from the subalgebra
\begin{equation*}
  A_X := \bigoplus_{s \in S} E_s
\end{equation*}
of $A$. We first remark that $A_X$ is in fact an algebra in $\C$. Namely,

\begin{lemma}
  The group $S$ is finite.
\end{lemma}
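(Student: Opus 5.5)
We show $S$ finite by bounding its size by a dimension of a Hom space in the locally finite category $\C$. For each $s \in S$ fix an isomorphism $f_s : X \otimes E_s \to X$. The idea is to realize the elements of $S$ as linearly independent vectors in a single finite-dimensional Hom space.

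First, by adjunction (using that $E_s$ is invertible, hence rigid with dual $E_{s^{-1}}$), an isomorphism $X \otimes E_s \cong X$ gives $E_s \cong X^\ast \otimes X$-type data. In our setting $\C$ need not be rigid, so we avoid duals of $X$. Instead we use the internal Hom $[X,X] \in \C$ (or in $\iC$), which satisfies
\[
\Hom_{\C}(E_s, [X,X]) \cong \Hom_{\C}(X\otimes E_s, X)
\]
via the left internal Hom adjunction. The left-hand version applies here since $X\otimes E_s$ has $E_s$ on the right: the relevant adjoint is $\Hom(X\otimes W, Y)\cong \Hom(W, \{X,Y\})$ for the left internal Hom $\{X,-\}$, the right adjoint of $X\otimes -$. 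Thus each $s\in S$ yields a nonzero morphism $E_s \to \{X,X\}$.

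Next we prove that the induced morphism $\bigoplus_{s\in S'} E_s \to \{X,X\}$ is a monomorphism for every finite subset $S'\subset S$. Each $E_s$ is simple, and the $E_s$ are pairwise non-isomorphic, so a finite direct sum of them is semisimple with multiplicity one. Every subobject of such a sum is therefore a sum of some of the $E_s$. Hence the kernel of the induced map is a direct sum of some summands $E_s$. No summand lies in the kernel, since each component map $E_s\to\{X,X\}$ is nonzero. So the map is injective.

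It follows that the length of $\{X,X\}$ is at least $|S'|$ for all finite $S'$. The main obstacle is showing that $\{X,X\}$ is an object of $\C$ of finite length. This needs the closed structure on $\C$ itself, which the section assumes ("locally finite closed monoidal"), so that $\{X,X\}\in\C$ has finite length. If only $\iC$ is available, I would instead argue directly as follows. Assume $S$ is infinite and take distinct $s_1,s_2,\dots$. The maps $f_{s_i}$ define elements of $\Hom(X\otimes E_{s_i},X)$, and one needs a single finite-length target. With local finiteness and simple unit, $\Hom(X\otimes E_s, X)\cong \Hom(E_s,\{X,X\})$ is nonzero for all $s\in S$. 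So the distinct simple objects $E_s$, $s\in S$, all occur as composition factors of the finite-length object $\{X,X\}$. Its length is finite, and therefore $S$ is finite.
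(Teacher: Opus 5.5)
Your proof is correct and is essentially the paper's argument. The paper also uses the left internal Hom $[X,X]^l\in\C$ (right adjoint to $X\otimes -$) to see that each $E_g$, $g\in S$, is a simple subobject of $[X,X]^l$, and bounds $|S|$ by the finite length of its socle, which is what your multiplicity-free direct-sum argument spells out. Your closing ``if only $\iC$ is available'' fallback still silently assumes $\{X,X\}$ has finite length, but it is not needed, since the section assumes $\C$ itself is closed.
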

\begin{proof}
  We denote by $[-,-]^l$ the left internal Hom functor of $\C$ (thus $[X, -]^{l}$ is right adjoint to $X \otimes -$). If $g \in S$, then we have $\Hom_{\C}(E_g, [X, X]^{l}) \cong \Hom_{\C}(X \otimes E_g, X) \cong \Hom_{\C}(X, X) \ne 0$.
  This means that $E_g$ is a simple subobject of $[X, X]^{l}$. Hence the cardinality of $S$ is bounded by the length of the socle of $[X, X]^{l} \in \C$, which is at most finite.
\end{proof}

We choose a family $\tilde{\phi}_s : X \otimes E_s \to X$ ($s \in S$) of isomorphisms in $\C$ such that $\tilde{\phi}_1$ is the unit isomorphism, and define the map $\eta_X : S \times S \to \kk^{\times}$ so that the following diagram is commutative:
\begin{equation}
  \label{eq:eta-X-def}
  \begin{tikzcd}[column sep = 48pt]
    (X \otimes E_s) \otimes E_t
    \arrow[r, "{\tilde{\phi}_s \otimes \id}"]
    \arrow[d, "{\alpha_{X,E_s,E_t}}"']
    & X \otimes E_t
    \arrow[r, "{\tilde{\phi}_t}"]
    & X \arrow[d, "{\eta_X(s, t) \, \id}"] \\
    X \otimes (E_s \otimes E_t)
    \arrow[r, "{\id \otimes \phi_{s,t}}"]
    & X \otimes E_{s t}
    \arrow[r, "{\tilde{\phi}_{s t}}"]
    & X
  \end{tikzcd}
\end{equation}
for all $s, t \in S$.

\begin{lemma}
  $\partial_3(\eta_X^{-1}) = \omega|_{S \times S \times S}$.
\end{lemma}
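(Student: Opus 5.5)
We compare two ways of reducing the morphism $((X \otimes E_s) \otimes E_t) \otimes E_u \to X$ to a scalar multiple of a fixed reference map. This is the standard argument showing that a module structure over a twisted group algebra forces the twisting cochains to match. Throughout, write $\Phi(s,t) := \tilde{\phi}_t \circ (\tilde{\phi}_s \otimes \id)$. By \eqref{eq:eta-X-def},
\[
\Phi(s,t) = \eta_X(s,t)\, \tilde{\phi}_{st} \circ (\id \otimes \phi_{s,t}) \circ \alpha_{X,E_s,E_t}.
\]
Since $\Hom_{\C}(((X\otimes E_s)\otimes E_t)\otimes E_u, X)$ is one-dimensional ($X$ is simple and tensoring with invertibles is an equivalence), every composite below is a scalar multiple of any other nonzero one. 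The task is therefore to track scalars.

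First consider $F := \tilde{\phi}_u \circ (\tilde{\phi}_t \otimes \id) \circ (\tilde{\phi}_s \otimes \id \otimes \id)$. We rewrite $F$ in two ways.

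\emph{Route 1.} Apply \eqref{eq:eta-X-def} to the pair $(s,t)$ and then to the pair $(st,u)$. This expresses $F$ as
\[
\eta_X(s,t)\,\eta_X(st,u)\; \tilde{\phi}_{stu} \circ (\id\otimes \phi_{st,u})\circ \alpha \circ \bigl((\id\otimes\phi_{s,t})\circ\alpha\bigr)\otimes \id .
\]
Naturality of $\alpha$ lets us move $\phi_{s,t}$ past the associator.

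\emph{Route 2.} First use naturality of $\alpha_{X\otimes E_s, E_t, E_u}$ to regroup $E_t \otimes E_u$. Apply \eqref{eq:eta-X-def} to the pair $(t,u)$, but with $X$ replaced via $\tilde{\phi}_s$; note that \eqref{eq:eta-X-def} is natural in the first slot because $\tilde{\phi}_s$ is an isomorphism onto $X$. Then apply it to the pair $(s,tu)$. This gives the factor $\eta_X(t,u)\,\eta_X(s,tu)$ times
\[
\tilde{\phi}_{stu}\circ(\id\otimes\phi_{s,tu})\circ(\id\otimes(\id\otimes\phi_{t,u}))\circ(\text{associators}).
\]

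Both routes now end in maps of the form $\tilde{\phi}_{stu}\circ(\id_X\otimes \Psi)\circ(\text{associators})$, where $\Psi$ is a map $E_s\otimes E_t\otimes E_u\to E_{stu}$ built from the $\phi$'s with the two different bracketings. By the pentagon axiom, the associator composites agree. By the definition \eqref{eq:omega-def} of $\omega$, the two $\Psi$'s differ by the factor $\omega(s,t,u)$. Comparing the two expressions for $F$ gives
\[
\eta_X(s,t)\,\eta_X(st,u)=\omega(s,t,u)^{\pm1}\,\eta_X(t,u)\,\eta_X(s,tu).
\]
Rearranged, this reads $\partial_3(\eta_X)(s,t,u)^{-1}=\omega(s,t,u)$ with a definite sign convention, that is, $\partial_3(\eta_X^{-1})=\omega|_{S^3}$. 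Normalization of $\eta_X$ follows from $\tilde{\phi}_1$ being the unit isomorphism and from $\phi_{1,x}$ and $\phi_{x,1}$ being unit isomorphisms, using the triangle axiom.

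The main obstacle is bookkeeping: getting the exponent of $\omega$ on the correct side, which in turn fixes why $\eta_X^{-1}$ rather than $\eta_X$ appears. I would do the computation first in the strict case, assuming $\alpha=\id$ by Mac Lane coherence, where both routes reduce to comparing $\phi_{st,u}(\phi_{s,t}\otimes\id)$ with $\phi_{s,tu}(\id\otimes\phi_{t,u})$. In that setting \eqref{eq:omega-def} says directly that the former equals $\omega(s,t,u)^{-1}$ times the latter. Collecting the scalars then gives $\eta_X(s,t)\eta_X(st,u)\,\omega^{-1}=\eta_X(t,u)\eta_X(s,tu)$, which is exactly the claim via \eqref{eq:coboundary-map-2}.
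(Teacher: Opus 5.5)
Your strategy is the paper's (Figure~\ref{fig:eta-X-coboundary-condition}): reduce $F=\tilde{\phi}_u\circ(\tilde{\phi}_t\otimes\id)\circ((\tilde{\phi}_s\otimes\id)\otimes\id)$ in two ways, remove the associators with the pentagon axiom, and compare the two bracketings of the $\phi$'s via \eqref{eq:omega-def}. The scalar bookkeeping is the whole content of this lemma, and yours contains two errors that cancel. So your stated conclusion is right, but the argument as written does not derive it.

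First, your opening display inverts the scalar. In \eqref{eq:eta-X-def} the factor $\eta_X(s,t)$ sits at the end of the top path, so the diagram says $\eta_X(s,t)\,\Phi(s,t)=\tilde{\phi}_{st}\circ(\id\otimes\phi_{s,t})\circ\alpha_{X,E_s,E_t}$, i.e.
\[
\Phi(s,t)=\eta_X(s,t)^{-1}\,\tilde{\phi}_{st}\circ(\id\otimes\phi_{s,t})\circ\alpha_{X,E_s,E_t}.
\]
Hence Route~1 actually produces $\eta_X(s,t)^{-1}\eta_X(st,u)^{-1}$, and Route~2 produces $\eta_X(t,u)^{-1}\eta_X(s,tu)^{-1}$.

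Second, look at the relation you end with, $\eta_X(s,t)\eta_X(st,u)\,\omega(s,t,u)^{-1}=\eta_X(t,u)\eta_X(s,tu)$. By \eqref{eq:coboundary-map-2} it says $\partial_3(\eta_X)=\omega$, i.e.\ $\partial_3(\eta_X^{-1})=\omega^{-1}$, which is the opposite of the claim. It also contradicts your previous line $\partial_3(\eta_X)^{-1}=\omega$, which would require $\eta_X(s,t)\eta_X(st,u)\,\omega(s,t,u)=\eta_X(t,u)\eta_X(s,tu)$. Leaving the exponent as $\omega^{\pm1}$ mid-proof postpones exactly the point that has to be settled.

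What your computation does show correctly is this: the cochain $c$ defined by $\Phi(s,t)=c(s,t)\,\tilde{\phi}_{st}\circ(\id\otimes\phi_{s,t})\circ\alpha_{X,E_s,E_t}$ satisfies $\partial_3(c)=\omega$. The lemma follows once you note that $c=\eta_X^{-1}$.

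Here is the computation done explicitly in the strict case. There \eqref{eq:omega-def} reads $\omega(s,t,u)\,\phi_{st,u}(\phi_{s,t}\otimes\id)=\phi_{s,tu}(\id\otimes\phi_{t,u})$. Comparing the two routes gives
\[
\eta_X(s,t)^{-1}\eta_X(st,u)^{-1}\omega(s,t,u)^{-1}=\eta_X(t,u)^{-1}\eta_X(s,tu)^{-1},
\]
which is the paper's \eqref{eq:eta-X-coboundary-condition}. Then \eqref{eq:coboundary-map-2} with $g=\eta_X^{-1}$ gives $\partial_3(\eta_X^{-1})(s,t,u)=\eta_X(s,t)^{-1}\eta_X(s,tu)\eta_X(st,u)^{-1}\eta_X(t,u)=\omega(s,t,u)$.
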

\begin{proof}
  There is a commutative diagram given by Figure \ref{fig:eta-X-coboundary-condition} (in the figure, the tensor product of objects of $\C$ is expressed by juxtaposition to save space). The composition along the first column is equal to the identity morphism by the pentagon axiom. Thus we have
  \begin{equation}
    \label{eq:eta-X-coboundary-condition}
    \eta_X(s, t)^{-1}
    \eta_X(s t, u)^{-1}
    \omega(s, t, u)^{-1}
    \eta_X(s, t u) \eta_X(t, u) = 1
    \quad (s, t, u \in S),
  \end{equation}
  which implies the claim (see the definition \eqref{eq:coboundary-map-2} of $\partial_3$).
\end{proof}

\begin{figure}
  \begin{tikzcd}[column sep = 48pt]
    ((X E_s) E_t) E_u
    \arrow[r, "{(\tilde{\phi}_s \otimes \id) \otimes \id}"]
    \arrow[dd, "{\alpha_{X E_s, E_t, E_u}}"']
    & (X E_t) E_u
    \arrow[r, "{\tilde{\phi}_t \otimes \id}"]
    \arrow[d, "{\alpha_{X, E_t, E_u}}"]
    & X E_u
    \arrow[dd, phantom, "\text{\scriptsize Diagram \eqref{eq:eta-X-def}}"]
    \arrow[r, "{\tilde{\phi}_{u}}"]
    & X
    \arrow[dd, "{\eta_X(t, u) \, \id}"] \\
    & X (E_t E_u)
    \arrow[rd, "{\id \otimes \phi_{t,u}}"] \\[-20pt]
    (X E_s) (E_t E_u)
    \arrow[ru, "{\tilde{\phi}_s \otimes (\id \otimes \id)}"]
    \arrow[rd, "{(\id \otimes \id) \otimes \phi_{t,u}}"']
    \arrow[dd, "{\alpha_{X, E_s, E_t E_u}}"']
    & & X E_{t u}
    \arrow[r, "{\tilde{\phi}_{t u}}"]
    \arrow[dd, phantom, "\text{\scriptsize Diagram \eqref{eq:eta-X-def}}"]
    & X
    \arrow[dd, "{\eta_X(s, t u) \, \id}"] \\[-20pt]
    & (X E_s) E_{t u}
    \arrow[ru, "{\tilde{\phi}_s \otimes \id}"']
    \arrow[d, "{\alpha_{X, E_s, E_{t u}}}"] \\
    X (E_s (E_t E_u))
    \arrow[r, "{\id \otimes (\id \otimes \phi_{t,u})}"]
    \arrow[d, "{\id \otimes \alpha_{E_s, E_t, E_u}^{-1}}"']
    \arrow[rrd, phantom, "\text{\scriptsize Diagram \eqref{eq:omega-def}}"]
    & X (E_s E_{t u})
    \arrow[r, "{\id \otimes \phi_{s, t u}}"]
    & X E_{s t u}
    \arrow[r, "{\tilde{\phi}_{s t u}}"]
    \arrow[d, "{\omega(s, t, u)^{-1} \, \id}"]
    & X
    \arrow[d, "{\omega(s, t, u)^{-1} \, \id}"] \\
    X ((E_s E_t) E_u)
    \arrow[r, "{\id \otimes (\phi_{s t} \otimes \id)}"]
    \arrow[d, "{\alpha_{X, E_s E_t, E_u}^{-1}}"']
    & X (E_{s t} E_u)
    \arrow[r, "{\id \otimes \phi_{s t, u}}"]
    \arrow[d, "{\alpha_{X, E_{s t}, E_u}^{-1}}"']
    \arrow[rrd, phantom, "\text{\scriptsize Diagram \eqref{eq:eta-X-def}}"]
    & X E_{s t u}
    \arrow[r, "{\tilde{\phi}_{s t u}}"]
    & X
    \arrow[d, "{\eta_X(s t, u)^{-1} \, \id}"] \\
    (X (E_s E_t)) E_u
    \arrow[d, "{\alpha_{X,E_s,E_t}^{-1} \otimes \id}"']
    \arrow[r, "{(\id \otimes \phi_{s, t}) \otimes \id}"]
    \arrow[rrd, phantom, "\text{\scriptsize Diagram \eqref{eq:eta-X-def}}"]
    & (X E_{s t}) E_u
    \arrow[r, "{\tilde{\phi}_{s t} \otimes \id}"]
    & X E_u
    \arrow[d, "{\eta_X(s,t)^{-1}\,\id}"]
    \arrow[r, "{\tilde{\phi}_{u}}"]
    & X
    \arrow[d, "{\eta_X(s,t)^{-1}\,\id}"] \\
    ((X E_s) E_t) E_u
    \arrow[r, "{(\tilde{\phi}_s \otimes \id) \otimes \id}"]
    & (X E_t) E_u
    \arrow[r, "{\tilde{\phi}_t \otimes \id}"]
    & X E_u
    \arrow[r, "{\tilde{\phi}_{u}}"]
    & X
  \end{tikzcd}
  \caption{Proof of Equation~\eqref{eq:eta-X-coboundary-condition}}
  \label{fig:eta-X-coboundary-condition}
\end{figure}

By this lemma, we have a 2-cocycle $\xi_X$ on $S$ given by
\begin{equation}
  \label{eq:xi-X-def}
  \xi_X(s, t) = \eta(s,  t) \cdot \eta_X(s, t)
  \quad (s, t \in S).
\end{equation}

We call $\xi_X$ {\em the 2-cocycle associated to $\{ \tilde{\phi}_s \}_{s \in S}$}.
The 2-cocycle $\xi_X$ depends on the choice of the family $\{ \tilde{\phi}_s \}_{s \in S}$ of isomorphisms (and, in fact, also depends on choices of $\phi_{s,t}$ and $\eta$); however, its cohomology class depends only on $X$.

Now we consider the tensor category $\vect^S$ of finite-dimensional $S$-graded vector spaces over $\kk$. Let $\kk_s \in \vect^S$ denote the one-dimensional vector space $\kk$ graded by $s \in S$. We denote by $\M_X$ the category $\vect$ made into a right module category over $\vect^S$ by the action given by $V \triangleleft W  = V \otimes W$ for $V \in \vect$ and $W \in \vect^S$ and the module associator $\tilde{\alpha}$ determined by
\begin{equation}
  \label{eq:module-assoc-def-1}
  \tilde{\alpha}_{V, \kk_s, \kk_t} = \xi_X(s, t) \, \id_V : (V \triangleleft \kk_s) \triangleleft \kk_t \to V \triangleleft (\kk_s \otimes \kk_t)
\end{equation}
for $s, t \in S$.

There is a unique (up to isomorphism) linear functor $\mathcal{E}: \vect^S \to \C$ such that $\mathcal{E}(\kk_s) = E_s$ for all elements $s \in S$. The functor $\mathcal{E}$ is a monoidal functor with the monoidal structure induced by the morphism $\mu_{s,t}$ given by \eqref{eq:mu-def}. Hence $\C$ is a right module category over $\vect^S$ through the monoidal functor $\mathcal{E}$. We note that the module associator $\tilde{\alpha}$ of $\C$ is given by
\begin{equation}
  \label{eq:module-assoc-def-2}
  \tilde{\alpha}_{V, \kk_s, \kk_t}
  = (\id_V \otimes \mu_{s,t}) \circ \alpha_{V, E_s, E_t}:
  (V \triangleleft \kk_s) \triangleleft \kk_t
  \to V \triangleleft (\kk_s \otimes \kk_t)
\end{equation}
for $V \in \C$ and $s, t \in S$.

From now on, we view $\vect$ as a full subcategory of $\C$ by identifying the unit object of $\C$ with the vector space $\kk$ so that $V \otimes Y$ for $V \in \vect$ and $Y \in \C$ makes sense.

\begin{lemma}
  \label{lem:module-functor-FX}
  The functor
  \begin{equation*}
    \mathcal{F}_X : \M_X \to \C,
    \quad \mathcal{F}_X(W) = W \otimes X
  \end{equation*}
  is a right $\vect^S$-module functor by the module structure $f$ determined by
  \begin{equation}
    \label{eq:module-functor-FX-def-2}
    f_{W,\kk_s} = (\id_{\mathcal{E}(W)} \otimes \tilde{\phi}_s) \circ \alpha_{W, X, E_s}
    : \mathcal{F}_X(W) \triangleleft \kk_s
    \to \mathcal{F}_X(W \triangleleft \kk_s)
  \end{equation}
  for $s \in S$.
\end{lemma}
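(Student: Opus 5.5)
We need to verify that $(\mathcal{F}_X, f)$ satisfies the axioms of a right module functor. There are two: the unit axiom, and the pentagon-type compatibility with the module associators.

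\textbf{Naturality and invertibility.} Naturality of $f_{W,\kk_s}$ in $W$ is clear, since $\alpha$ is natural and $\id_{\mathcal{E}(W)}\otimes\tilde{\phi}_s$ only involves $W$ through the identity. Since $\M_X$ is semisimple with simple object $\kk$, and $\vect^S$ is generated by the $\kk_s$, it suffices to define $f$ on pairs $(W,\kk_s)$ and extend additively. Invertibility holds because each $\tilde{\phi}_s$ is an isomorphism.

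\textbf{Unit axiom.} Here we need that $f_{W,\kk_1}$ coincides with the composite of unit isomorphisms. This follows from the normalization that $\tilde{\phi}_1$ is the unit isomorphism, together with the triangle axiom relating $\alpha_{W,X,\unit}$ to the right unitors.

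\textbf{Associativity axiom.} This is the main step. For $s,t\in S$ we must show
\[ f_{W,\kk_s\otimes\kk_t}\circ(\id\otimes\mathcal{E}\text{-structure})\circ\tilde{\alpha}^{\C}_{\mathcal{F}_X(W),\kk_s,\kk_t} = \mathcal{F}_X(\tilde{\alpha}^{\M_X}_{W,\kk_s,\kk_t})\circ f_{W\triangleleft\kk_s,\kk_t}\circ(f_{W,\kk_s}\triangleleft\kk_t) \]
as maps $((W\otimes X)\otimes E_s)\otimes E_t\to W\otimes X$. We plan to proceed as follows.

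First, we use \eqref{eq:module-assoc-def-2} to write the left-hand side as
\[ (\id_W\otimes\tilde{\phi}_{st})\circ\alpha_{W,X,E_{st}}\circ(\id\otimes\mu_{s,t})\circ\alpha_{W\otimes X,E_s,E_t}. \]
Second, using naturality of $\alpha$ and the pentagon axiom, we rewrite $\alpha_{W,X,E_{st}}\circ(\id\otimes\mu_{s,t})\circ\alpha_{W\otimes X,E_s,E_t}$ as
\[ (\id_W\otimes((\id_X\otimes\mu_{s,t})\circ\alpha_{X,E_s,E_t}))\circ(\id_W\otimes\alpha_{X,E_s,E_t})^{-1}\circ\ldots, \]
that is, we reduce everything to a morphism $\id_W\otimes(-)$ precomposed with the canonical reassociation $((WX)E_s)E_t\to W((XE_s)E_t)$. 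The right-hand side reduces in the same way: by naturality of $\alpha$ it becomes $\xi_X(s,t)\,\id_W\otimes(\tilde{\phi}_t\circ(\tilde{\phi}_s\otimes\id))$ precomposed with the same reassociation.

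Third, the identity to be proved becomes the scalar equation
\[ \tilde{\phi}_{st}\circ(\id\otimes\mu_{s,t})\circ\alpha_{X,E_s,E_t} = \xi_X(s,t)\,\tilde{\phi}_t\circ(\tilde{\phi}_s\otimes\id). \]
This follows from diagram \eqref{eq:eta-X-def}, which gives $\tilde{\phi}_{st}(\id\otimes\phi_{s,t})\alpha = \eta_X(s,t)^{-1}\tilde{\phi}_t(\tilde{\phi}_s\otimes\id)$. Substituting $\mu_{s,t}=\eta(s,t)\phi_{s,t}$ then yields the coefficient $\eta(s,t)\eta_X(s,t)^{-1}$.

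At this point a normalization check is needed. Comparing with \eqref{eq:xi-X-def}, the resulting coefficient must match $\xi_X(s,t)^{\pm1}$ according to the direction in which the module associator of $\M_X$ is applied in the axiom. This convention bookkeeping, and not the pentagon manipulation, is the step I expect to be the main obstacle. If a mismatch appears, I would first recheck the direction of $\tilde{\alpha}$ in \eqref{eq:module-assoc-def-1}, and then check whether $\eta_X$ enters inverted, as in the preceding lemma $\partial_3(\eta_X^{-1})=\omega$, before concluding.
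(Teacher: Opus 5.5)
Your reduction is the same as the paper's (Figure~\ref{fig:proof-lem-module-functor-FX}). By naturality of $\alpha$ and the pentagon axiom, both sides of the associativity axiom become $\id_W\otimes(-)$ precomposed with the reassociation $((W\otimes X)\otimes E_s)\otimes E_t\to W\otimes((X\otimes E_s)\otimes E_t)$. Everything then hinges on the identity
\[
\tilde{\phi}_{st}\circ(\id_X\otimes\mu_{s,t})\circ\alpha_{X,E_s,E_t}=\xi_X(s,t)\,\tilde{\phi}_t\circ(\tilde{\phi}_s\otimes\id_{E_t}).
\]
The step where you close this identity fails as written, because you have misread \eqref{eq:eta-X-def}. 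The scalar $\eta_X(s,t)\,\id$ is the right-hand vertical arrow, applied after the top row, so commutativity says
\[
\eta_X(s,t)\,\tilde{\phi}_t\circ(\tilde{\phi}_s\otimes\id_{E_t})=\tilde{\phi}_{st}\circ(\id_X\otimes\phi_{s,t})\circ\alpha_{X,E_s,E_t},
\]
with $\eta_X(s,t)$, not $\eta_X(s,t)^{-1}$. Your coefficient $\eta(s,t)\eta_X(s,t)^{-1}$ is in general neither $\xi_X(s,t)$ nor $\xi_X(s,t)^{-1}$, so your argument does not close. Reorienting $\tilde{\alpha}$ would not rescue it either, since it would require $\eta(s,t)^2=1$.

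With the correct reading, substituting $\mu_{s,t}=\eta(s,t)\,\phi_{s,t}$ from \eqref{eq:mu-def} gives the coefficient $\eta(s,t)\,\eta_X(s,t)$. This is exactly $\xi_X(s,t)$ by \eqref{eq:xi-X-def}, with $\tilde{\alpha}$ oriented as in \eqref{eq:module-assoc-def-1}. So there is no convention bookkeeping left to do, and this finishes the proof just as in the paper. The identity $\partial_3(\eta_X^{-1})=\omega|_{S\times S\times S}$ plays no role in this verification. Its job is to make $\xi_X$ a $2$-cocycle, i.e.\ to make $\M_X$ a module category in the first place.

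One further point: your intermediate display is garbled. The pentagon gives $\alpha_{W,X,E_s\otimes E_t}\circ\alpha_{W\otimes X,E_s,E_t}=(\id_W\otimes\alpha_{X,E_s,E_t})\circ\alpha_{W,X\otimes E_s,E_t}\circ(\alpha_{W,X,E_s}\otimes\id_{E_t})$, so no factor $(\id_W\otimes\alpha_{X,E_s,E_t})^{-1}$ appears. Your verbal summary of the reduction is nonetheless correct.
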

\begin{proof}
  It is easy to see $f_{W, \kk_1} = \id_{W} \otimes \id_X$ for $W \in \M_X$. To complete the proof, it suffices to show that the equation
  \begin{equation}
    \label{eq:lem-module-functor-FX-proof-1}
    \mathcal{F}_X(\tilde{\alpha}_{W, \kk_s, \kk_t})
    \circ f_{W \triangleleft \, \kk_s, \kk_t} \circ (f_{W,\kk_s} \triangleleft \kk_t)
    = f_{W, \kk_s \otimes \kk_t} \circ \tilde{\alpha}_{\mathcal{F}_X(W), E_s, E_t}
  \end{equation}
  holds for all $W \in \M_X$ and $s, t \in S$.
  We consider the commutative diagram given by Figure~\ref{fig:proof-lem-module-functor-FX}. 
  By \eqref{eq:module-assoc-def-1} and \eqref{eq:module-functor-FX-def-2}, the left hand side of \eqref{eq:lem-module-functor-FX-proof-1} is equal to the composition of morphisms along the counter-clockwise path from the upper left corner to the bottom right one in the diagram of Figure~\ref{fig:proof-lem-module-functor-FX}. By \eqref{eq:module-assoc-def-2} and \eqref{eq:module-functor-FX-def-2}, the clockwise path yields the right hand side of \eqref{eq:lem-module-functor-FX-proof-1}. Thus we obtain \eqref{eq:lem-module-functor-FX-proof-1}.
\end{proof}

\begin{figure}
  \begin{tikzcd}[column sep = 32pt]
    ((W X) E_s) E_t
    \arrow[rr, "{\alpha_{W \otimes X, E_s, E_t}}"' { yshift = -5pt }]
    \arrow[d, "{\alpha_{W, X, E_s} \otimes \id}"]
    & & (W X) (E_s E_t)
    \arrow[r, "{(\id \otimes \id) \otimes \mu_{s,t}}"' { yshift = -5pt }]
    \arrow[dd, "{\alpha_{W, X, E_s \otimes E_t}}"]
    \arrow[ddl, phantom, "{\text{\scriptsize (pentagon axiom)}}"]
    & (W X) E_{s t}
    \arrow[dd, "{\alpha_{W,X,E_{s t}}}"] \\
    (W (X E_s)) E_t
    \arrow[rd, "{\alpha_{W, X \otimes E_s, E_t}}"]
    \arrow[d, "{(\id \otimes \tilde{\phi}_s) \otimes \id}"'] \\
    (W X) E_t
    \arrow[d, "{\alpha_{W,X,E_t}}"']
    & W ((X E_s) E_t)
    \arrow[dl, "{\id \otimes (\tilde{\phi}_s \otimes \id)}"]
    \arrow[r, "{\id \otimes \alpha_{X,E_s, E_t}}" { yshift = 5pt }]
    & W (X (E_s E_t))
    \arrow[r, "{\id \otimes (\id \otimes \mu_{s, t})}" { yshift = 5pt }]
    \arrow[d, "{\id \otimes (\id \otimes \phi_{s,t})}"']
    \arrow[rd, "{\text{\scriptsize \eqref{eq:mu-def}}}", phantom]
    & W (X E_{s t}) \arrow[d, equal] \\
    W (X E_t)
    \arrow[d, "{\id \otimes \tilde{\phi}_t}"']
    \arrow[rr, phantom, "{\text{\scriptsize Diagram \eqref{eq:eta-X-def}}}" {yshift=-12pt}]
    & & W (X E_{s t})
    \arrow[d, "{\id \otimes \tilde{\phi}_{s t}}"']
    \arrow[r, "{\eta(s, t) \, \id}"]
    & W (X E_{s t})
    \arrow[d, "{\id \otimes \tilde{\phi}_{s t}}"] \\
    W X \arrow[rr, "{\eta_X(s, t) \, \id}"]
    & & W X \arrow[r, "{\eta(s, t) \, \id}"] & W X
  \end{tikzcd}
  \caption{Proof of Lemma~\ref{lem:module-functor-FX}}
  \label{fig:proof-lem-module-functor-FX}
\end{figure}

Given a right module category $\M$ over a monoidal category $\D$ and an algebra $R$ in $\D$ with multiplication $m$ and unit $u$, the category of right $R$-modules in $\M$ is defined. We recall that an object of this category is an object $M \in \M$ together with a morphism $a : M \triangleleft R \to M$ such that the following diagrams are commutative:
\begin{equation*}
  \begin{tikzcd}
    (M \triangleleft R) \triangleleft R
    \arrow[d, "{\cong}"']
    \arrow[r, "{a \triangleleft \id}"]
    & M \triangleleft R \arrow[r, "{a}"]
    & M \arrow[d, "{\id}"] \\
    M \triangleleft (R \otimes R)
    \arrow[r, "{\id \triangleleft m}"]
    & M \triangleleft R \arrow[r, "{a}"]
    & M 
  \end{tikzcd}
  \quad
  \begin{tikzcd}
    M \triangleleft 1
    \arrow[r, "{\id \triangleleft u}"]
    \arrow[d, "{\cong}"']
    & M \triangleleft R
    \arrow[d, "{a}"] \\
    M \arrow[r, "{\id}"]
    & M.
  \end{tikzcd}
\end{equation*}

We consider the group algebra $R = \kk S$, which is an algebra in $\vect^S$ by the grading given by $R_s = \kk s$ ($s \in S$). When we view $\C$ as a right module category over $\vect^S$ via the monoidal functor $\mathcal{E} : \vect^S \to \C$, the category of right $R$-modules in $\C$ is identical to $\C_{A_X}$ since $\mathcal{E}(\kk S) = A_X$ as algebras. Taking the formula \eqref{eq:module-assoc-def-2} of the module associator into account, we see that a right $R$-module in $\M_X$ is the same thing as a finite-dimensional vector space $W$ together with a family $\{ \rho(s) \}_{s \in S}$ of linear endomorphisms on $W$ such that the equations
\begin{equation*}
  \rho(1) = \id_W
  \quad \text{and} \quad
  \xi_X(s, t) \rho(s t) = \rho(t) \rho(s)
\end{equation*}
hold for all $s, t \in S$. Therefore the category of right $R$-modules in $\M_X$ is identified with the category $\Rep(S^{\op}, \xi_X^{\op})$ of projective representations of $S^{\op}$ with multiplier $\xi^{\op}_X$, where $\xi^{\op}_X(t, s) = \xi_X(s, t)$ ($s, t \in S$). We use this observation to prove:

\begin{lemma}
  \label{lem:functor-LX}
  Given $\rho \in \Rep(S^{\op}, \xi_X^{\op})$ with representation space $W$, we define
  \begin{equation*}
    \mathbf{L}(X, \{ \tilde{\phi}_s \}_{s \in S}, \rho) = W \otimes X
  \end{equation*}
  and make it a right $A_X$-module in $\C$ by the action induced by
  \begin{equation*}
    (W \otimes X) \otimes E_s
    \xrightarrow{\ \alpha_{W, X, E_s} \ }
    W \otimes (X \otimes E_s)
    \xrightarrow{\ \rho(s) \otimes \tilde{\phi}_s \ }
    W \otimes X.
  \end{equation*}
  This construction gives rise to a functor
  \begin{equation*}
    \mathbf{L}(X, \{ \tilde{\phi}_s \}_{s \in S}, -) : \Rep(S^{\op}, \xi_X^{\op}) \to \C_{A_X},
  \end{equation*}
  which induces an equivalence from $\Rep(S^{\op}, \xi_X^{\op})$ to the category of right $A_X$-modules in $\C$ whose underlying object is isomorphic to the direct sum of finitely many copies of $X$.
\end{lemma}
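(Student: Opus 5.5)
The plan is to derive the lemma from the general correspondence between module functors and algebra modules. Lemma~\ref{lem:module-functor-FX} says $\mathcal{F}_X:\M_X\to\C$ is a right $\vect^S$-module functor. A module functor sends right $R$-modules in $\M_X$ to right $\mathcal{E}(R)$-modules in $\C$: if $a:W\triangleleft R\to W$ is an action, the action on $\mathcal{F}_X(W)$ is $\mathcal{F}_X(a)\circ f_{W,R}$. The module-functor coherence \eqref{eq:lem-module-functor-FX-proof-1} is exactly what makes the associativity diagram hold, and $f_{W,\kk_1}=\id$ gives the unit axiom.

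First I check that this general construction, applied with $R=\kk S$ and $\mathcal{E}(R)=A_X$, reproduces the explicit formula in the statement. On the summand $E_s$, the map $f_{W,\kk_s}$ is $(\id\otimes\tilde\phi_s)\circ\alpha_{W,X,E_s}$, and $\mathcal{F}_X$ of the action is $\rho(s)\otimes\id_X$. Their composite is $(\rho(s)\otimes\tilde\phi_s)\circ\alpha_{W,X,E_s}$, as required. Under the identification of right $R$-modules in $\M_X$ with $\Rep(S^{\op},\xi_X^{\op})$, this gives a well-defined functor $\mathbf{L}$. Morphisms are handled by $\mathbf{L}(g)=g\otimes\id_X$, and naturality of $f$ shows these are $A_X$-linear.

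Next I prove that $\mathbf{L}$ is fully faithful. The functor $\mathcal{F}_X=-\otimes X:\vect\to\C$ is fully faithful, because $\End(X)=\kk$ by Schur's lemma and local finiteness, so $\Hom_\C(W\otimes X,W'\otimes X)\cong\Hom_\kk(W,W')$. A linear map $g$ makes $g\otimes\id_X$ commute with the actions exactly when $g\rho(s)\otimes\tilde\phi_s=\rho'(s)g\otimes\tilde\phi_s$ for every $s$. Since $\tilde\phi_s$ is an isomorphism, this happens exactly when $g$ intertwines $\rho$ and $\rho'$.

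The main step, essential surjectivity, goes as follows.
1. Take a right $A_X$-module $M$ with $M\cong W\otimes X$ in $\C$, and transport the structure so that $M=W\otimes X$.
2. The restriction of the action to $E_s$ is a morphism $(W\otimes X)\otimes E_s\to W\otimes X$. Precomposing with $\alpha^{-1}$ and $(\id_W\otimes\tilde\phi_s)^{-1}$ gives an endomorphism of $W\otimes X$.
3. Since $\End_\C(W\otimes X)\cong\End_\kk(W)$, this endomorphism is $\rho(s)\otimes\id_X$ for a unique $\rho(s)\in\End(W)$.
4. The unit axiom gives $\rho(1)=\id$, using that $\tilde\phi_1$ is the unit isomorphism.
5. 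The associativity axiom gives $\rho(t)\rho(s)=\xi_X(s,t)\rho(st)$. This is the same diagram chase as in Figure~\ref{fig:proof-lem-module-functor-FX}, read backwards: diagram~\eqref{eq:eta-X-def} contributes $\eta_X(s,t)$ and $\mu_{s,t}=\eta(s,t)\phi_{s,t}$ contributes $\eta(s,t)$, whose product is $\xi_X(s,t)$. Again faithfulness of $-\otimes X$ on $\vect$ lets me read off the equality on $W$.

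The invertibility of $\rho(s)$ follows from $\rho(s)\rho(s^{-1})$ being a nonzero multiple of $\id$.

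I expect the main obstacle to be the bookkeeping in step 5: tracking the scalar and its orientation, $\xi^{\op}_X$ versus $\xi_X$, for right actions. Rather than redoing the chase, I would cite the general fact that for a module functor $\mathcal{F}$ which is fully faithful, $R$-module structures on $\mathcal{F}(W)$ correspond bijectively to $R$-module structures on $W$. This bijection holds because both sets of structures are cut out by the same equations transported along the isomorphisms $f$.
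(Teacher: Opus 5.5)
Your proposal is correct and follows the paper's proof. The paper likewise obtains $\mathbf{L}(X,\{\tilde{\phi}_s\}_{s\in S},-)$ as the functor on right $R$-modules induced by the module functor $\mathcal{F}_X$ of Lemma~\ref{lem:module-functor-FX}, and deduces the equivalence from the fact that $\mathcal{F}_X$ is an equivalence from $\M_X$ onto the full subcategory of finite direct sums of copies of $X$; your explicit checks of full faithfulness and essential surjectivity, including the orientation $\rho(t)\rho(s)=\xi_X(s,t)\rho(st)$, spell out what the paper leaves implicit.
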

\begin{proof}
  The right $\vect^S$-module functor $\mathcal{F}_X : \M_X \to \C$ induces a functor from the category of right $R$-modules in $\M_X$ to the category of those in $\C$, and the induced functor is nothing but the functor $\mathbf{L}(X, \{ \tilde{\phi}_s \}_{s \in S}, -)$ in question. The last claim of this lemma follows because $\mathcal{F}_X$ induces an equivalence from $\M_X$ to the full subcategory of $\C$ consisting of finite direct sums of copies of $X$.
\end{proof}

Let $(\rho, W)$ be as in Lemma~\ref{lem:functor-LX}, and let $(\rho', W')$ be a projective representation of $S^{\op}$ that is projectively equivalent to $\rho$. By definition, there are an isomorphism $T : W \to W'$ and a map $\theta : S^{\op} \to \kk^{\times}$ such that $\rho'(s) = \theta(s) T \rho(s) T^{-1}$ for all $s \in S^{\op}$. Since the multiplier of $\rho'$ is
\begin{equation*}
  \xi'(t, s) = \theta(s) \theta(t) \theta(s t)^{-1} \xi_X(s, t)
  \quad (s, t \in S^{\op})
\end{equation*}
and may not be equal to $\xi_X^{\op}$, the right $A_X$-module $\mathbf{L}(X, \{ \tilde{\phi}_s \}_{s \in S}, \rho')$ may not be defined. However, we can use the map $\theta$ to introduce a new family
\begin{equation*}
  \tilde{\phi}'_s = \theta(s)^{-1} \tilde{\phi}_s : X \otimes E_s \to X
  \quad (s \in S)
\end{equation*}
of isomorphisms. Then the right $A_X$-module $\mathbf{L}(X, \{ \tilde{\phi}'_s \}_{s \in S}, \rho')$ is defined and
\begin{equation*}
  \mathbf{L}(X, \{ \tilde{\phi}'_s \}_{s \in S}, \rho')
  \cong \mathbf{L}(X, \{ \tilde{\phi}_s \}_{s \in S}, \rho)
\end{equation*}
as right $A_X$-modules. Thus we introduce the following notation:

\begin{definition}
  Given a projective representation $\rho'$ of $S^{\op}$ whose multiplier is cohomologous to the 2-cocycle $\xi_X^{\op}$, we define $\mathbf{L}(X, \rho') := \mathbf{L}(X, \{ \tilde{\phi}'_{s} \}_{s \in S}, \rho')$, where $\{ \tilde{\phi}'_s : X \otimes E_{s} \to X \}_{s \in S}$ is a family of isomorphisms in $\C$ such that the associated 2-cocycle is equal to the multiplier of $\rho'$.
\end{definition}


\subsubsection{Construction of simple modules}

Let $X$ be a simple object of $\C$, and let $S$ be the stabilizer of $X$.
We fix a family $\tilde{\phi}_s : X \otimes E_s \to X$ ($s \in S$) of isomorphisms and define $\xi_X$ by \eqref{eq:xi-X-def}. 

\begin{definition}
  We define the following functor
  \begin{equation*}
      \mathbf{M}(X, -) : \Rep(S^{\op}, \xi_X^{\op}) \to \iC_A,
      \quad \rho \mapsto \mathbf{L}(X, \rho) \otimes_{A_X} A.
  \end{equation*}
\end{definition}

\begin{lemma} \label{lem:functor-MX}
    The functor $\mathbf{M}(X, -)$ introduced in the above is exact and faithful.    
\end{lemma}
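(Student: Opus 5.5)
The functor $\mathbf{M}(X,-)$ is a composite of two functors,
\[
\mathbf{L}(X,-) : \Rep(S^{\op},\xi_X^{\op}) \to \C_{A_X}
\qquad\text{and}\qquad
-\otimes_{A_X} A : \iC_{A_X} \to \iC_A .
\]
By Lemma~\ref{lem:functor-LX}, the first is fully faithful onto the category of $A_X$-modules whose underlying object is a finite direct sum of copies of $X$. Its underlying object is $W \otimes X$, which is exact and faithful in $W$ because $X \neq 0$. Since the forgetful functor $\C_{A_X} \to \C$ is exact and faithful, $\mathbf{L}(X,-)$ is itself exact and faithful. It therefore suffices to show that induction $-\otimes_{A_X} A$ is exact and faithful on $A_X$-modules in $\iC$.

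For this, I will establish a decomposition of $A$ as a left $A_X$-module. Choose a set $\{g_j\}_{j \in J}$ of coset representatives for $S \backslash \Gamma$. The plan is to show that the left $A_X$-submodule $\bigoplus_{s \in S} E_{s g_j}$ is isomorphic to $A_X \otimes E_{g_j}$. The map is the restriction of the multiplication $\mu$, which is invertible on each summand $E_s \otimes E_{g_j} \to E_{s g_j}$. Compatibility with the left $A_X$-actions is exactly associativity of $\mu$. This yields an isomorphism of left $A_X$-modules
\[
A \cong \bigoplus_{j \in J} A_X \otimes E_{g_j},
\]
that is, $A$ is a free left $A_X$-module.

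Consequently, for any right $A_X$-module $N$,
\[
N \otimes_{A_X} A \cong \bigoplus_{j \in J} (N \otimes_{A_X} A_X) \otimes E_{g_j} \cong \bigoplus_{j} N \otimes E_{g_j}
\]
naturally in $N$, as objects of $\iC$. I use here that $\otimes$ commutes with colimits in $\iC$, so $-\otimes_{A_X}-$ commutes with direct sums and with tensoring by $E_{g_j}$ on the right (the coequalizer defining $\otimes_{A_X}$ is compatible with $-\otimes E_{g_j}$). Each functor $N \mapsto N \otimes E_{g_j}$ is an exact autoequivalence, and direct sums are exact in the Grothendieck category $\iC$. Hence the composite of induction with the forgetful functor $\iC_A \to \iC$ is exact. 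Since the forgetful functor reflects exactness, induction is exact. The summand with $g_j = 1$ gives a natural split monomorphism $N \to N \otimes_{A_X} A$, namely the map $n \mapsto n \otimes 1$. So induction is faithful: if $f \otimes_{A_X} A = 0$, then $f$ vanishes on that summand and hence $f = 0$.

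The main obstacle is the bookkeeping in the decomposition step. One must verify that the isomorphism $\bigoplus_s E_{s g_j} \cong A_X \otimes E_{g_j}$ respects the left $A_X$-module structures. One must also check that the identification $(A_X \otimes E_{g_j}) \otimes_{A_X}$ with its right-hand side is natural, keeping track of the associators in the non-strict setting. Both checks reduce to associativity of $\mu$ and the pentagon axiom. Composing the two exact faithful functors then gives the lemma.
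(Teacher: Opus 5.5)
Your proposal is correct and follows the paper's proof. You factor $\mathbf{M}(X,-)$ through $\mathbf{L}(X,-)$, which is exact and faithful by Lemma~\ref{lem:functor-LX}, and through induction $-\otimes_{A_X}A$, which is exact and faithful because $A\cong\bigoplus_j A_X\otimes E_{g_j}$ is free as a left $A_X$-module; you spell out the consequences of this decomposition (which the paper records just after the lemma), whereas the paper treats them as evident.
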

\begin{proof}
The functor $\mathbf{M}(X, -)$ is decomposed as follows:
\begin{equation*}
    \Rep(S^{\op}, \xi_X^{\op})
    \xrightarrow{\quad \mathbf{L}(X, -) \quad}
    \mathcal{X}
    \xrightarrow{\quad i \quad}
    \iC_{A_X}
    \xrightarrow{\quad F \quad}
    \iC_A,
\end{equation*}
where $\mathcal{X}$ is the category of right $A_X$-modules in $\C$ whose underlying object is isomorphic to a direct sum of finitely many copies of $X$, $i$ is the inclusion functor, and $F = - \otimes_{A_X} A$ is the induction functor.
By Lemma \ref{lem:functor-LX}, the first arrow is exact and faithful. It is obvious that the second arrow is also exact and faithful.
Since $A$ is free over $A_X$, the functor $F$ is also exact and faithful. Thus $\mathbf{M}(X, -)$ is exact and faithful as the composition of such functors.
\end{proof}

We fix coset representatives $\{ g_i \}_{i \in I}$ of $S \backslash \Gamma$. 
Then we have an isomorphism
\begin{equation*}
  A = \bigoplus_{i \in I} \bigoplus_{s \in S} E_{s g_i}
  \xrightarrow[\cong]{\quad \bigoplus_{i \in I} \bigoplus_{s \in S} \mu_{s,g_i}^{-1} \quad}
  \bigoplus_{i \in I} \bigoplus_{s \in S} E_{s} \otimes E_{g_i}
  = \bigoplus_{i \in I} A_X \otimes E_{g_i}
\end{equation*}
of left $A_X$-modules. Thus we have
\begin{equation}
  \label{eq:M-X-rho-decomposition}
  \mathbf{M}(X, \rho)
  \cong \bigoplus_{i \in I} (W \otimes X) \otimes E_{g_i}
\end{equation}
as an object of $\iC$, where $W$ is the representation space of $\rho$.
The action of $A$ on the right hand side induced by \eqref{eq:M-X-rho-decomposition} is given by
\begin{equation}
  \label{eq:M-X-rho-action}
  \begin{aligned}
    ((W X) E_{g_i}) E_{\gamma}
    & \cong (W X) (E_{g_i} E_{\gamma})
    \xrightarrow{\makebox[9em][c]{\scriptsize
        $(\id \otimes \id) \otimes \mu_{s,g_j}^{-1} \mu_{g_i, \gamma}^{}$}}
    (W X) (E_{s} E_{g_j}) \\
    & \cong (W (X E_{s})) E_{g_j}
    \xrightarrow{\makebox[9em][c]{\scriptsize
        $(\rho(s) \otimes \tilde{\phi}_s) \otimes \id$}}
    (W X) E_{g_j}
  \end{aligned}
\end{equation}
for $\gamma \in \Gamma$, where the tensor product of objects is expressed by juxtaposition, $s \in S$ and $j \in I$ are elements determined by $s g_j = g_i \gamma$, and $\cong$'s are canonical isomorphisms obtained by the associator. We now give some results on the structure of $\mathbf{M}(X, \rho)$.

\begin{lemma}\label{lem:M-X-rho-Eg}
The right $A$-module $\mathbf{M}(X, \rho)$ is isomorphic to $A$ if and only if $X \cong E_g$ for some $g\in \Gamma$.
\end{lemma}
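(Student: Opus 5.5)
The plan is to use the decomposition \eqref{eq:M-X-rho-decomposition} of $\mathbf{M}(X,\rho)$ as an object of $\iC$ for the "only if" direction, and an explicit free-module isomorphism for the "if" direction. I assume, as in the intended application, that $\rho$ is a nonzero irreducible projective representation. Without irreducibility the "if" direction cannot hold: for instance $W=\kk^2$ would give $A\oplus A$.

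\textbf{($\Rightarrow$)} Suppose $\mathbf{M}(X,\rho)\cong A$. Forgetting to $\iC$ and using \eqref{eq:M-X-rho-decomposition}, we obtain
\[
\bigoplus_{i\in I}(W\otimes X)\otimes E_{g_i}\cong\bigoplus_{\gamma\in\Gamma}E_\gamma .
\]
Choose the coset representative of the trivial coset to be $g_{i_0}=1$. Since $W\neq 0$, the simple object $X\in\C$ is a subobject of the left-hand side, giving a nonzero morphism $X\to\bigoplus_\gamma E_\gamma$. Objects of $\C$ are compact in $\iC$, so
\[
\Hom_{\iC}\bigl(X,\textstyle\bigoplus_\gamma E_\gamma\bigr)=\bigoplus_\gamma\Hom_\C(X,E_\gamma).
\]
Hence some $\Hom_\C(X,E_\gamma)$ is nonzero. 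Both $X$ and $E_\gamma$ are simple, so Schur's lemma gives $X\cong E_\gamma$ with $\gamma\in\Gamma$. I expect this compactness step to be the only point needing care.

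\textbf{($\Leftarrow$)} First, $\mathbf{M}(X,\rho)$ depends on $X$ only up to isomorphism: an isomorphism $X\cong X'$ transports the family $\{\tilde\phi_s\}$ and hence the $A_X$-module $\mathbf{L}$. So we may take $X=E_g$ with $g\in\Gamma$.

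Next, the stabilizer is trivial. Indeed, $E_g\otimes E_s\cong E_{gs}$, and $E_{gs}\cong E_g$ forces $s=1$ because the $E_h$ represent distinct classes in $\Inv(\C)$. Therefore $S=\{1\}$, $A_X=\unit$, and an irreducible $\rho$ is $W=\kk$. This gives $\mathbf{M}(E_g,\rho)\cong E_g\otimes A$, the free right $A$-module.

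Finally, define $f\colon E_g\otimes A\to A$ componentwise by $\mu_{g,\gamma}\colon E_g\otimes E_\gamma\to E_{g\gamma}$. This is the restriction of the multiplication $\mu$ to the summand $E_g\otimes A$, so associativity of $\mu$ shows that $f$ is a morphism of right $A$-modules. On each summand $f$ is an isomorphism, and left multiplication $\gamma\mapsto g\gamma$ is a bijection of $\Gamma$ (since $g\in\Gamma$). Hence $f$ is an isomorphism in $\iC$, and thus in $\iC_A$, so $\mathbf{M}(E_g,\rho)\cong A$.
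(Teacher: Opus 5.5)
Your proposal is correct and follows the paper's proof. In one direction both arguments use the trivial stabilizer, so $A_X=\unit$ and $\mathbf{M}(E_g,\rho)\cong E_g\otimes A$, and then the multiplication isomorphism $E_g\otimes A\cong A$. In the other direction both compare simple subobjects via the decomposition \eqref{eq:M-X-rho-decomposition}; you take $X$ itself (trivial coset) and make the compactness/Schur step explicit, whereas the paper takes an arbitrary simple subobject $Y\cong X\otimes E_{g_i}\cong E_h$ and solves $X\cong E_{hg_i^{-1}}$. Your irreducibility caveat matches the paper's parenthetical remark that $\rho$ must be the trivial one-dimensional representation.
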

\begin{proof}
Consider $X=E_{g}$ for some $g\in \Gamma$. Then, the stabilizer in $\Gamma$ is trivial, hence $A_X=\unit$ and $\mathbf{M}(E_{g},\rho)\ \cong\ E_{g}\otimes A$ as right $A$-modules (necessarily with $\rho$ the trivial one-dimensional representation).
Finally, multiplication in $A$ induces an isomorphism $E_{g} \otimes A \cong A$ of right $A$-modules, since it identifies each summand $E_{g}\otimes E_g'\cong E_{gg'}$ with the corresponding summand of $A$.

For the converse, we assume that $\mathbf{M}(X, \rho) \cong A$ in $\iC_A$ and let $Y$ be a simple subobject of $\mathbf{M}(X, \rho)$  as an object of $\iC$. Then $Y \cong E_{h}$ for some $h \in \Gamma$ by the definition of $A$, while $Y \cong X \otimes E_{g_i}$ for some $i \in I$ by the decomposition \eqref{eq:M-X-rho-decomposition}. Thus $X \cong E_h \otimes E_{g_i}^* \cong E_{h g_i^{-1}}$, as desired.
\end{proof}

\begin{lemma}\label{lem:M-X-rho-simple}
  The right $A$-module $\mathbf{M}(X, \rho)$ in $\iC$ is simple if and only if the projective representation $\rho$ is irreducible.
\end{lemma}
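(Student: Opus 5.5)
Since $\mathbf{M}(X,-)$ is exact and faithful (Lemma~\ref{lem:functor-MX}), the implication ``$\mathbf{M}(X,\rho)$ simple $\Rightarrow$ $\rho$ irreducible'' is immediate. A proper nonzero subrepresentation $\rho'\subset\rho$ yields a monomorphism $\mathbf{M}(X,\rho')\hookrightarrow \mathbf{M}(X,\rho)$. Faithfulness and exactness show that it is neither zero nor an isomorphism: its cokernel is $\mathbf{M}(X,\rho/\rho')\neq 0$, and $\mathbf{M}(X,\rho')\ne 0$. So the real content is the converse, and I would prove it by showing that every nonzero $A$-submodule $N\subseteq \mathbf{M}(X,\rho)$ is everything.

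First I would reduce to the $A_X$-level using the decomposition \eqref{eq:M-X-rho-decomposition}, $\mathbf{M}(X,\rho)\cong\bigoplus_{i\in I}(W\otimes X)\otimes E_{g_i}$. For $i\ne j$, the objects $X\otimes E_{g_i}$ and $X\otimes E_{g_j}$ are non-isomorphic simples in $\C$, because $g_ig_j^{-1}\notin S$. Hence the summands $(W\otimes X)\otimes E_{g_i}$ are the isotypic components of $\mathbf{M}(X,\rho)$ as an object of $\iC$, and any subobject $N$ in $\iC$ (in particular any submodule) decomposes as $N=\bigoplus_i N_i$ with $N_i\subseteq (W\otimes X)\otimes E_{g_i}$. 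Here I use that every object of $\iC$ is a union of objects of $\C$ and that the isotypic decomposition is functorial; this is a sum of subobjects, so it passes to filtered colimits. Choose $i$ with $N_i\neq 0$. By formula \eqref{eq:M-X-rho-action}, the action of $E_{g_i^{-1}}$ (with suitable $s,j$) maps the $i$-th summand isomorphically onto the summand indexed by the coset $S$, and invertibility of $E_{g_i^{-1}}$ means it carries $N_i$ into $N$ as a nonzero subobject. So we may assume that $N_0:=N\cap (W\otimes X)\otimes E_{g_0}\neq 0$ with $g_0=1$.

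Next, the summand $W\otimes X$ with $g_0=1$ is exactly $\mathbf{L}(X,\rho)$, and it is an $A_X$-submodule of $\mathbf{M}(X,\rho)$ restricted to $A_X$: by \eqref{eq:M-X-rho-action} with $i=0$ and $\gamma=s\in S$, we get $j=0$ and the action is $\rho(s)\otimes\tilde\phi_s$. Hence $N_0$ is a nonzero $A_X$-submodule of $\mathbf{L}(X,\rho)$ whose underlying object is a direct sum of copies of $X$. By the equivalence of Lemma~\ref{lem:functor-LX} (its essential image is closed under subobjects, since subobjects of $X^{\oplus n}$ are again of that form), $N_0=\mathbf{L}(X,\rho'')$ for some subrepresentation $\rho''\subseteq\rho$. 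Irreducibility of $\rho$ forces $N_0=\mathbf{L}(X,\rho)$. Finally, the image of $N_0\otimes E_{g_i}\to \mathbf{M}(X,\rho)$ under the action is the whole $i$-th summand, again by \eqref{eq:M-X-rho-action}, since $\mu$ and $\tilde\phi$ are isomorphisms. So $N$ contains every summand and $N=\mathbf{M}(X,\rho)$. Note also that $\mathbf{M}(X,\rho)\neq0$ as $W\ne0$.

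The step I expect to need the most care is the isotypic splitting of an arbitrary submodule $N$ in $\iC$ and the check that the action by $E_{g_i^{-1}}$ moves $N_i$ into the identity summand. Concretely, one has to verify from \eqref{eq:M-X-rho-action} that for $\gamma$ with $g_i\gamma\in S$ the component lands in $j=0$ and is an isomorphism. I would handle the splitting via the Hom-characterization $\Hom_{\iC}(X\otimes E_{g_j},-)$ applied to $N\subseteq\mathbf{M}(X,\rho)$.
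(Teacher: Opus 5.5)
Your proposal is correct and follows essentially the same route as the paper. The reducible case uses exactness and faithfulness of $\mathbf{M}(X,-)$. For irreducible $\rho$, you show that a nonzero submodule meets the summand $(W\otimes X)\otimes E_1$, then contains all of it by irreducibility, and then contains every $(W\otimes X)\otimes E_{g_j}$ via the action \eqref{eq:M-X-rho-action}. Your isotypic splitting, the translation by $E_{g_i^{-1}}$, and the appeal to Lemma~\ref{lem:functor-LX} for sub-$A_X$-modules of $\mathbf{L}(X,\rho)$ simply make explicit the steps the paper compresses into ``the same argument as in Lemma~\ref{lem:M-X-rho-Eg}'' and ``since $\rho$ is irreducible''.
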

\begin{proof}
  We assume that $\rho$ is irreducible. Let $M'$ be a non-zero submodule of $\mathbf{M}(X, \rho)$, and let $Y$ be a simple subobject of $M'$ as an object of $\iC$. Then $Y \cong X \otimes E_{g_i}$ for some $i \in I$ and by the same argument as in Lemma~\ref{lem:M-X-rho-Eg}, we find that $X$ is also a subobject of $M'$. 
  Since $\rho$ is irreducible, $M'$ contains the subobject corresponding to $(W \otimes X) \otimes E_1$ in the decomposition \eqref{eq:M-X-rho-decomposition}. By \eqref{eq:M-X-rho-action}, it is easy to see that $M'$ contains $(W \otimes X) \otimes E_{g_j}$ for all $j \in I$. Thus $\mathbf{M}(X, \rho)$ is simple.

  We now consider the case where $\rho$ is reducible. Then there is an exact sequence $0 \to \rho' \to \rho \to \rho'' \to 0$ in the category $\Rep(S^{\op}, \xi_X^{\op})$ with $\rho'$ and $\rho''$ non-zero. 
  By Lemma \ref{lem:functor-MX}, we have an exact sequence $0 \to \mathbf{M}(X, \rho') \to \mathbf{M}(X, \rho) \to \mathbf{M}(X, \rho'') \to 0$ in $\iC_A$ with $\mathbf{M}(X, \rho')$ and $\mathbf{M}(X, \rho'')$ non-zero. This means that $\mathbf{M}(X, \rho)$ is not a simple right $A$-module. The proof is done.
\end{proof}

\begin{lemma} \label{lem:functor-MX-preserves-length}
    The functor $\mathbf{M}(X, -)$ preserves the length.
\end{lemma}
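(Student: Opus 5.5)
We argue by induction on the length of $\rho$ in $\Rep(S^{\op},\xi_X^{\op})$, which is a finite abelian category: it is equivalent to finite-dimensional modules over the finite-dimensional twisted group algebra $\kk^{\xi}[S^{\op}]$, since $S$ is finite by the lemma above. Hence every object $\rho$ has a composition series
\[
  0=\rho_0\subset\rho_1\subset\dotsb\subset\rho_n=\rho
\]
with irreducible subquotients $\rho_k/\rho_{k-1}$.

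First, apply the functor $\mathbf{M}(X,-)$ to this series. By Lemma~\ref{lem:functor-MX} the functor is exact, so it carries the filtration to a filtration
\[
  0=\mathbf{M}(X,\rho_0)\subset\mathbf{M}(X,\rho_1)\subset\dotsb\subset\mathbf{M}(X,\rho_n)=\mathbf{M}(X,\rho)
\]
in $\iC_A$. Here the inclusions are monomorphisms because exact functors preserve monomorphisms. Exactness also identifies each subquotient $\mathbf{M}(X,\rho_k)/\mathbf{M}(X,\rho_{k-1})$ with $\mathbf{M}(X,\rho_k/\rho_{k-1})$.

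Next, each subquotient $\rho_k/\rho_{k-1}$ is an irreducible projective representation. By Lemma~\ref{lem:M-X-rho-simple}, the module $\mathbf{M}(X,\rho_k/\rho_{k-1})$ is therefore a simple right $A$-module. The filtration above is thus a composition series of $\mathbf{M}(X,\rho)$ in $\iC_A$ of length $n$. By the Jordan--H\"older theorem in the abelian category $\iC_A$, we conclude that $\mathbf{M}(X,\rho)$ has finite length equal to the length of $\rho$. This gives the claim, and in particular $\mathbf{M}(X,\rho)$ lies in $\fl\text{-}\iC_A$.

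The only point needing care is that Lemma~\ref{lem:M-X-rho-simple} is applied to the quotient representations $\rho_k/\rho_{k-1}$. These have the same multiplier $\xi_X^{\op}$ as $\rho$, so $\mathbf{M}(X,-)$ is defined on them with the same fixed family $\{\tilde{\phi}_s\}$, and no change of cocycle representative is needed. Everything else follows formally from exactness, so I expect no substantial obstacle.
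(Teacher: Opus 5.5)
Your argument is correct and is essentially the paper's proof: both combine the exactness of $\mathbf{M}(X,-)$ from Lemma~\ref{lem:functor-MX} with the fact (Lemma~\ref{lem:M-X-rho-simple}) that it sends irreducible projective representations to simple $A$-modules. The only difference is presentation: the paper phrases this as an induction on the length of $\rho$, while you push a composition series of $\rho$ through the exact functor.
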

\begin{proof}
    Lemma \ref{lem:M-X-rho-simple} implies that $\mathbf{M}(X, -)$ sends a simple object to a simple object. By using Lemma \ref{lem:functor-MX}, one can prove that $\mathbf{M}(X, \rho)$ and $\rho$ have the same length by induction on the length of a projective representation $\rho$.
\end{proof}


\subsubsection{Classification of simple modules}

\begin{theorem}
  \label{thm:classification-1}
  Let $M$ be a simple right $A$-module in $\iC$, and let $X$ be a simple subobject of $M$ as an object of $\iC$.
  Then, $M$ is isomorphic to $\mathbf{M}(X, \rho)$ for some irreducible projective representation $\rho$ of $\Stab_{\Gamma}(X)^{\op}$ with multiplier $\xi_X^{\op}$.
\end{theorem}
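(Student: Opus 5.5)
The plan is to build a nonzero morphism from some $\mathbf{M}(X,\rho)$ into $M$ and then use simplicity on both sides. We set $S=\Stab_\Gamma(X)$ and restrict $M$ along the subalgebra inclusion $A_X\hookrightarrow A$ to a right $A_X$-module. Since $X$ is a simple subobject of $M$ in $\iC$, the sub-$A_X$-module it generates, namely the image $N$ of $X\otimes A_X\to M$ under the action, is a quotient of $X\otimes A_X=\bigoplus_{s\in S}X\otimes E_s\cong X^{\oplus|S|}$. Here $S$ is finite by the earlier lemma. So $N$ lies in $\C$ and is a finite direct sum of copies of $X$, and it is a nonzero right $A_X$-module in $\C$. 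By Lemma~\ref{lem:functor-LX}, $N$ contains a nonzero submodule of the form $\mathbf{L}(X,\rho)$. Indeed, the category of such modules is equivalent to $\Rep(S^{\op},\xi_X^{\op})$, which has finite length, so any nonzero object has a simple subobject; we take $\rho$ irreducible.

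Next we apply the induction adjunction. Since $F=-\otimes_{A_X}A$ is left adjoint to restriction $\iC_A\to\iC_{A_X}$, the inclusion $\mathbf{L}(X,\rho)\hookrightarrow M|_{A_X}$ corresponds to a nonzero $A$-module map $\Phi:\mathbf{M}(X,\rho)=\mathbf{L}(X,\rho)\otimes_{A_X}A\to M$. It is nonzero because its composite with the unit $\mathbf{L}(X,\rho)\to\mathbf{M}(X,\rho)$ is the original inclusion. The adjunction is standard: it holds because $\otimes_{A_X}$ is a coequalizer in the closed monoidal category $\iC$, and $A$ is an $A_X$-$A$-bimodule via multiplication.

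By Lemma~\ref{lem:M-X-rho-simple}, $\mathbf{M}(X,\rho)$ is simple since $\rho$ is irreducible, and $M$ is simple by hypothesis. So the nonzero map $\Phi$ between simple objects of the abelian category $\iC_A$ is an isomorphism. The multiplier is $\xi_X^{\op}$ up to the fixed choice of $\{\tilde\phi_s\}$, as required.

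The main obstacle is the first step, namely ensuring that the submodule generated by $X$ really is a module in $\C$ isotypic of type $X$. Each $X\otimes E_s\cong X$ for $s\in S$, so its image in $M$ is either $0$ or a copy of $X$. The sum of finitely many such copies is a quotient of $X^{\oplus|S|}$, hence semisimple and $X$-isotypic. Closure under the $A_X$-action follows from associativity of the action together with $\mu_{s,t}$ mapping $E_s\otimes E_t$ onto $E_{st}$. Once this is checked, the rest is formal.
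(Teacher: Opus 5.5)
Your proposal is correct and follows essentially the same route as the paper: take the image of $X\otimes A_X$ under the action to get a nonzero $X$-isotypic $A_X$-submodule of $M$, extract a simple submodule $\mathbf{L}(X,\rho)$ with $\rho$ irreducible via Lemma~\ref{lem:functor-LX}, use the induction adjunction to obtain a nonzero map $\mathbf{M}(X,\rho)\to M$, and conclude by simplicity of both sides (Lemma~\ref{lem:M-X-rho-simple}) and Schur's lemma. Your additional justifications, namely why the induced map is nonzero and why the image lies in $\C$, fill in details the paper leaves implicit.
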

\begin{proof}
  We choose a simple subobject $X$ of $M$ in $\iC$.
  Let $L$ be the image of $X \otimes A_X$ under the action $M \otimes A \to M$. Then $L$ is a non-zero $A_X$-submodule of $M$. We choose a simple $A_X$-submodule $L'$ of $L$. Since $X \otimes A_X$ is the finite direct sum of copies of $X$, so is $L$, and therefore so is $L'$. Thus, by Lemma~\ref{lem:functor-LX}, the right $A_X$-module $L'$ is isomorphic to $\mathbf{L}(X, \rho)$ for some irreducible $\rho \in \Rep(S^{\op}, \xi_X^{\op})$. By the universal property of the induction, there is a non-zero morphism of right $A$-modules, say $f$, from $\mathbf{M}(X, \rho)$ to $M$. Since both $\mathbf{M}(X, \rho)$ and $M$ are simple, the morphism $f$ is in fact an isomorphism by Schur's lemma. The proof is done.
\end{proof}

Given an object $M \in \iC$ and a simple object $X \in \C$, we denote by $M|_X$ the sum of all subobjects of $M$ isomorphic to $X$. Let $M$ be a simple right $A$-module in $\iC$, and let $X$ be a simple subobject of $M$ in $\iC$. As shown in the above theorem, $M$ is isomorphic to $\mathbf{M}(X, \rho)$ for some $\rho \in \Rep(S^{\op}, \xi_X^{\op})$. In view of the decomposition \eqref{eq:M-X-rho-decomposition} and the proof of the above theorem, the subobject $M|_X$ is an $A_X$-submodule of $M$ such that $M|_X \cong \mathbf{L}(X, \rho)$, and thus we can recover the projective equivalence class of $\rho$ from $M|_X$ by Lemma~\ref{lem:functor-LX}. We use this observation to establish:

\begin{theorem}
  \label{thm:classification-2}
  $\mathbf{M}(X, \rho)$ and $\mathbf{M}(X', \rho')$ are isomorphic as right $A$-modules if and only if there exists an element $g \in \Gamma$ such that $X'$ is isomorphic to $X \otimes E_g$ and $\rho'$ is projectively equivalent to $\rho^g$.
\end{theorem}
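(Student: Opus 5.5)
The plan is to prove both directions with one tool: the subobject $M|_{X'}$ of a module $M$, for a simple $X'$. This subobject is an $A_{X'}$-submodule, and the remark before the theorem says it recovers the projective equivalence class of the representation. Here $\rho^g$ denotes the conjugate representation: for $X' = X \otimes E_g$ the stabilizer is $S' := \Stab_\Gamma(X') = g^{-1} S g$, and $\rho^g(s') := \rho(g s' g^{-1})$, whose multiplier is cohomologous to $\xi_{X'}^{\op}$. The central step is the following claim.

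\emph{Key claim.} Let $X' = X \otimes E_g$, and choose the coset representatives so that $g = g_i$ for some $i \in I$. In the decomposition \eqref{eq:M-X-rho-decomposition}, the summand $(W \otimes X) \otimes E_{g}$ is exactly $\mathbf{M}(X,\rho)|_{X'}$. As a right $A_{X'}$-module it is isomorphic to $\mathbf{L}(X', \rho'')$ with $\rho''$ projectively equivalent to $\rho^g$.

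To prove the claim I would read off the action of $E_{s'}$, $s' \in S'$, from \eqref{eq:M-X-rho-action}. Since $g s' = s g$ with $s = g s' g^{-1} \in S$, the action is the composite
\[ (\id\otimes\id)\otimes \mu_{s,g}^{-1}\mu_{g,s'}, \quad \text{then associators}, \quad \text{then } (\rho(s)\otimes\tilde\phi_s)\otimes \id_{E_g}. \]
Define isomorphisms $\tilde\phi'_{s'} : X' \otimes E_{s'} \to X'$ by this same composite, but without $\rho(s)$ and with $\tilde\phi_s$ in place of $\rho(s) \otimes \tilde\phi_s$. Then the action takes the form $\rho(s) \otimes \tilde\phi'_{s'}$ after the associator. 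Associativity of the $A$-action forces the family $\{\tilde\phi'_{s'}\}$ to have an associated cocycle $\xi_{X'}$ such that $s' \mapsto \rho(g s' g^{-1})$ is a projective representation with multiplier $\xi_{X'}^{\op}$, up to a coboundary. Lemma~\ref{lem:functor-LX} then identifies the summand with $\mathbf{L}(X', \rho'')$, where $\rho''$ differs from $\rho^g$ only by a scalar function $S' \to \kk^\times$.

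\emph{The "if" direction.} Apply the argument of Theorem~\ref{thm:classification-1} to $M = \mathbf{M}(X,\rho)$ with the simple subobject $X' = X \otimes E_g$. The inclusion $\mathbf{L}(X',\rho'') \cong M|_{X'} \hookrightarrow M$ induces, by the universal property of induction, a morphism $\mathbf{M}(X',\rho'') \to M$ of right $A$-modules. I would check that it is an isomorphism using \eqref{eq:M-X-rho-decomposition} on both sides. Each side is a direct sum over $\Gamma/S$-cosets of copies of $W \otimes X \otimes E_\gamma$, and the map sends the summand indexed by a coset to the corresponding one through the isomorphisms $\mu$, so it is an isomorphism. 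If $\rho$ is irreducible, it is quicker to use Lemma~\ref{lem:M-X-rho-simple} and Schur's lemma. Finally, since $\rho''$ is projectively equivalent to $\rho^g$, which is projectively equivalent to $\rho'$, the remark before the definition of $\mathbf{L}(X,\rho')$ gives $\mathbf{M}(X',\rho'') \cong \mathbf{M}(X',\rho')$.

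\emph{The "only if" direction.} Let $f : \mathbf{M}(X,\rho) \to \mathbf{M}(X',\rho')$ be an isomorphism. Then $X'$ is a simple subobject of $\mathbf{M}(X,\rho)$, so by \eqref{eq:M-X-rho-decomposition} we have $X' \cong X \otimes E_{g_i}$ for some $i$; set $g := g_i$. The isomorphism $f$ restricts to an isomorphism of $A_{X'}$-modules $\mathbf{M}(X,\rho)|_{X'} \cong \mathbf{M}(X',\rho')|_{X'}$. The right-hand side is $\mathbf{L}(X',\rho')$, the summand indexed by the trivial coset. The left-hand side is $\mathbf{L}(X',\rho'')$ by the key claim. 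The equivalence in Lemma~\ref{lem:functor-LX}, combined with the reindexing of $\tilde\phi$ by scalars, gives that $\rho'$ is projectively equivalent to $\rho''$, hence to $\rho^g$.

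The main obstacle is the scalar bookkeeping in the key claim. One must check that the scalars coming from $\mu_{s,g}^{-1}\mu_{g,s'}$, from the associators, and from $\tilde\phi_s$ combine into an honest family $\tilde\phi'_{s'}$, and that the leftover discrepancy between $\rho''$ and $\rho^g$ is only a coboundary. I would handle this abstractly rather than by computing cocycles. Once the summand is known to be an $A_{X'}$-module whose underlying object is a sum of copies of $X'$, Lemma~\ref{lem:functor-LX} guarantees it has the form $\mathbf{L}(X',\rho'')$. It then only remains to compare the operators $\rho''(s')$ with $\rho(gs'g^{-1})$ up to scalars, and this comparison is visible directly from \eqref{eq:M-X-rho-action}.
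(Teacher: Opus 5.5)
Your proposal is correct and is essentially the paper's proof. Your key claim is exactly the paper's construction of $\tilde{\phi}'_{s'}$ making the cell $(\heartsuit)$ of Figure~\ref{fig:proof-thm-classification-2} commute; this identifies $\mathbf{M}(X,\rho)|_{X'}$ with $\mathbf{L}(X', \{\tilde{\phi}'_{s'}\}_{s' \in S'}, \rho^g)$ on the nose, so no coboundary correction is needed, and Lemma~\ref{lem:functor-LX} then gives the \emph{only if} direction. Your induction-plus-blockwise-isomorphism argument for the \emph{if} direction fills in what the paper calls easy; when $X'$ is only isomorphic to $X \otimes E_g$, you should transport along a fixed isomorphism $\psi : X \otimes E_g \to X'$, as the paper does.
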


Here, $\rho^g$ is defined by $\rho^g(s') = \rho(g s' g^{-1})$ for $s' \in S'$.
We assume that $X'$ is isomorphic to $X \otimes E_g$ and write $S = \Stab_{\Gamma}(X)$ and $S' = \Stab_{\Gamma}(X')$. It is easy to see that $S' = g^{-1} S g$. Thus $\rho^g$ is a well-defined projective representation of $S'{}^{\op}$.

\begin{proof}
  The `if' part is easy. We prove the `only if' part.
  We assume that $\mathbf{M}(X, \rho)$ and $\mathbf{M}(X', \rho')$ are isomorphic as right $A$-modules. Then, $X'$ is a subobject of $\mathbf{M}(X, \rho)$. Thus, by \eqref{eq:M-X-rho-decomposition}, $X'$ is isomorphic to $X \otimes E_g$ for some $g \in \Gamma$. By the discussion preceding this theorem, we have isomorphisms
  \begin{equation}
    \label{eq:proof-thm-classification-2-eq-1}
    \mathbf{L}(X', \rho')
    \cong \mathbf{M}(X', \rho')|_{X'}
    \cong \mathbf{M}(X, \rho)|_{X'}
  \end{equation}
  of right $A_{X'}$-modules in $\C$.
  Now we fix an isomorphism $\psi : X \otimes E_g \to X'$ and consider the diagram given by Figure~\ref{fig:proof-thm-classification-2}, where $s' \in S'$ and $s = g s' g^{-1} \in S$ so that $s g = g s'$. The diagram is commutative except the cell labeled ($\heartsuit$). We can, and do, choose isomorphisms $\tilde{\phi}'_{s'} : X' \otimes E_{s'} \to X'$ ($s' \in S'$) and put it on the dashed arrow of the diagram so that the cell ($\heartsuit$) is commutative. In view of \eqref{eq:M-X-rho-action}, the left column of the diagram is the action of $A_{X'}$ on $\mathbf{M}(X, \rho)|_{X'}$. Thus, by the diagram, we obtain an isomorphism
  \begin{equation*}
    (\id_W \otimes \psi) \alpha_{W,X,E_g} : \mathbf{M}(X, \rho)|_{X'}
    \to \mathbf{L}(X', \{ \tilde{\phi}'_{s'} \}_{s' \in S'}, \rho^g)
  \end{equation*}
  of right $A_{X'}$-modules. By \eqref{eq:proof-thm-classification-2-eq-1} and Lemma~\ref{lem:functor-LX}, we conclude that $\rho^g$ is projectively equivalent to $\rho'$. The proof is done.
\end{proof}

\begin{figure}
  \begin{tikzcd}[column sep = 64pt]
    ((W X) E_g) E_{s'}
    \arrow[r, "{\alpha_{W,X,E_g} \otimes \id}"]
    \arrow[d, "{\alpha_{W \otimes X, E_g, E_{s'}}}"']
    \arrow[rd, phantom, "{\text{\scriptsize (pentagon axiom)}}" {yshift = -5pt}]
    & (W (X E_g)) E_{s'}
    \arrow[r, "{(\id \otimes \psi) \otimes \id}"]
    \arrow[d, "{\alpha_{W, X \otimes E_g, E_{s'}}}"]
    & (W X') E_{s'}
    \arrow[d, "{\alpha_{W,X',E_{s'}}}"] \\
    (W X) (E_g E_{s'})
    \arrow[rd, "{\alpha_{W, X, E_g \otimes E_{s'}}}" {xshift = -10pt}]
    \arrow[d, "{(\id \otimes \id) \otimes \mu_{g,s'}}"']
    & W ((X E_g) E_{s'})
    \arrow[r, "{\id \otimes (\psi \otimes \id)}"]
    \arrow[d, "{\id \otimes \alpha_{X,E_g,E_{s'}}}"]
    \arrow[rddddd, phantom, "{(\heartsuit)}"]
    & W (X' E_{s'})
    \arrow[ddddd, dashed] \\
    (W X) E_{g s'}
    \arrow[d, "{(\id \otimes \id) \otimes \mu_{s,g}^{-1}}"']
    \arrow[rd, "{\alpha_{W,X,E_{g s'}}}" {xshift = -10pt}]
    & W (X (E_g E_{s'}))
    \arrow[d, "{\id \otimes (\id \otimes \mu_{g, s'})}"] \\
    (W X) (E_s E_g)
    \arrow[rd, "{\alpha_{W, X, E_s \otimes E_g}}" {xshift = -10pt}]
    \arrow[d, "{\alpha_{W \otimes X, E_g, E_s}^{-1}}"']
    & W (X E_{g s'})
    \arrow[d, "{\id \otimes \mu_{s,g}^{-1}}"] \\
    ((W X) E_s) E_g
    \arrow[d, "{\alpha_{W,X,E_s} \otimes \id}"']
    \arrow[rd, phantom, "{\text{\scriptsize (pentagon axiom)}}" {yshift = 5pt}]
    & W (X (E_s E_g))
    \arrow[d, "{\id \otimes \alpha_{X, E_s, E_g}^{-1}}"] \\
    (W (X E_s)) E_g
    \arrow[r, "{\alpha_{W, X \otimes E_s, E_g}}"]
    \arrow[dd, "{\rho(s) \otimes \tilde{\phi}_s}"']
    & W ((X E_s) E_g)
    \arrow[d, "{\id \otimes (\tilde{\phi}_s \otimes \id)}"] \\
    & W (X E_g)
    \arrow[d, "{\rho(s) \otimes \id}"]
    \arrow[r, "{\id \otimes \psi}"]
    & W X' \arrow[d, "{\rho(s) \otimes \id}"] \\
    (W X) E_g
    \arrow[r, "{\alpha_{W,X,E_g}}"]
    & W (X E_g)
    \arrow[r, "{\id \otimes \psi}"]
    & W X'
  \end{tikzcd}
  \caption{Proof of Theorem~\ref{thm:classification-2}}
  \label{fig:proof-thm-classification-2}
\end{figure}


\subsubsection{Finiteness of the number of simple modules}

We give a formula of the number of isomorphism classes of simple objects of $\iC_A$ and, in particular, determine when it is finite. Given an abelian category $\mathcal{A}$, we denote by $\Irr(\mathcal{A})$ the set of isomorphism classes of simple objects of $\mathcal{A}$. The group $\Gamma$ acts on $\Irr(\C)$ by $[X] \cdot g = [X \otimes E_g]$ for $g \in \Gamma$ and $[X] \in \Irr(\mathcal{C})$ (this action is well-defined since $(-)\otimes E_g$ is an autoequivalence and preserves simplicity).

\begin{corollary}\label{cor:number-of-simples}
For brevity, we set $I_X = \Irr\bigl(\Rep(\Stab_\Gamma(X)^{\op},\,\xi_X^{\op})\bigr)$ for a simple object $X \in \C$, where $\xi_X$ is the $2$-cocycle determined by \eqref{eq:xi-X-def}. Then there is a bijection
\[ \bigsqcup_{X \in \cO} I_X \to \Irr(\iC_A),
\quad [\rho] \mapsto [\mathbf{M}(X, \rho)] \quad ([\rho] \in I_X), \]
where $\cO$ is the set of orbit representatives of $\Irr(\C) / \Gamma$.    
\end{corollary}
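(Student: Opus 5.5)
The map is well defined by Lemma~\ref{lem:M-X-rho-simple}: for $X\in\cO$ and an irreducible $\rho\in\Rep(\Stab_\Gamma(X)^{\op},\xi_X^{\op})$, the module $\mathbf{M}(X,\rho)$ is a simple right $A$-module. Its isomorphism class depends only on the isomorphism class of $\rho$, since $\mathbf{M}(X,-)$ is a functor. The proof then has two parts, surjectivity and injectivity, both read off from Theorems~\ref{thm:classification-1} and~\ref{thm:classification-2}.

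For surjectivity, let $M$ be a simple right $A$-module in $\iC$. It has a simple subobject $Y\in\C$ as an object of $\iC$, since $\iC$ is the ind-completion of a locally finite category and every nonzero object contains a nonzero subobject from $\C$. By Theorem~\ref{thm:classification-1}, $M\cong\mathbf{M}(Y,\rho_Y)$ for some irreducible $\rho_Y$. Pick $X\in\cO$ and $g\in\Gamma$ with $Y\cong X\otimes E_g$. Since $gS'g^{-1}=S$, the representation $\rho:=\rho_Y^{g^{-1}}$ is an irreducible projective representation of $\Stab_\Gamma(X)^{\op}$. Its multiplier is cohomologous to $\xi_X^{\op}$: the proof of Theorem~\ref{thm:classification-2} shows that the conjugated cocycle class matches. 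After replacing $\rho$ by a projectively equivalent representation with multiplier exactly $\xi_X^{\op}$ (the rescaling discussion preceding the definition of $\mathbf{L}(X,\rho')$), the `if' direction of Theorem~\ref{thm:classification-2} gives $\mathbf{M}(X,\rho)\cong\mathbf{M}(Y,\rho_Y)\cong M$.

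For injectivity, suppose $\mathbf{M}(X,\rho)\cong\mathbf{M}(X',\rho')$ with $X,X'\in\cO$. By Theorem~\ref{thm:classification-2}, $X'\cong X\otimes E_g$ for some $g$, so $X$ and $X'$ lie in the same orbit and hence $X=X'$. Then $g\in\Stab_\Gamma(X)$ and $\rho'$ is projectively equivalent to $\rho^g$. It remains to see that $\rho^g$ and $\rho$ define the same element of $I_X$ when $g\in S$. This is the main subtle point.

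When $S$ is abelian, $\rho^g=\rho$ literally. In general, observe that $\rho^g(s)=\rho(g)\rho(s)\rho(g)^{-1}$ up to scalar factors coming from $\xi_X$. Hence $\rho(g)$ is an intertwiner between $\rho$ and $\rho^g$ after rescaling by a function $S\to\kk^\times$, which is exactly a projective equivalence.

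The cleaner route is to avoid $\rho^g$ altogether. By the discussion before Theorem~\ref{thm:classification-2}, both $\rho$ and $\rho'$ are recovered up to projective equivalence from $M|_X$ via Lemma~\ref{lem:functor-LX}, so the isomorphism $M|_X\cong M'|_X$ of $A_X$-modules gives $\rho\cong\rho'$ directly. Here one must be careful that projective equivalence with the same multiplier $\xi_X^{\op}$ means isomorphism twisted by a character of $S$. The equivalence in Lemma~\ref{lem:functor-LX} is taken with a fixed family $\{\tilde\phi_s\}$, and it yields a genuine isomorphism in $\Rep(S^{\op},\xi_X^{\op})$ once the identification $M|_X\cong\mathbf{L}(X,\rho)$ is made with that same family. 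I expect this bookkeeping of the choice of $\{\tilde\phi_s\}$ to be the only real obstacle.
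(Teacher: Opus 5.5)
Your argument is correct and follows the paper's outline. Surjectivity comes from Theorem~\ref{thm:classification-1} plus transport to the orbit representative via Theorem~\ref{thm:classification-2}. Injectivity comes from Theorem~\ref{thm:classification-2} forcing $X=X'$ and $g\in\Stab_\Gamma(X)$.

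\textbf{Injectivity, final step.} This is where you differ from the paper. The paper concludes with $\sigma\sim\rho^g\sim\rho$, but $\sim$ is projective equivalence. Two projective representations with the same multiplier $\xi_X^{\op}$ that are projectively equivalent differ by a character of $S$. So, as written, this does not give $[\sigma]=[\rho]$ in $I_X$. For instance, when $\xi_X\equiv 1$, distinct characters of $S$ are projectively equivalent, yet with $\{\tilde\phi_s\}$ fixed they give non-isomorphic simple modules. For the same reason, your preliminary remark that $\rho^g=\rho$ for abelian $S$ does not finish the step by itself.

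Your ``cleaner route'' is what actually closes it:
\begin{enumerate}
\item An $A$-isomorphism $\mathbf{M}(X,\rho)\cong\mathbf{M}(X,\rho')$ restricts to an $A_X$-isomorphism of the $X$-isotypic parts.
\item By \eqref{eq:M-X-rho-decomposition}, taking the representative of the coset $S$ to be $1$, these parts are the images of $\mathbf{L}(X,\rho)$ and $\mathbf{L}(X,\rho')$, both built from the same fixed family $\{\tilde\phi_s\}$.
\item Full faithfulness in Lemma~\ref{lem:functor-LX} then gives $\rho\cong\rho'$ in $\Rep(S^{\op},\xi_X^{\op})$.
\end{enumerate}

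\textbf{Surjectivity.} The same character ambiguity appears in your surjectivity step. ``A projectively equivalent representation with multiplier exactly $\xi_X^{\op}$'' is determined only up to a character. Only the rescaling dictated by the family $\tilde\phi'_{s'}$ built in the proof of Theorem~\ref{thm:classification-2} makes its ``if'' direction apply. The paper glosses over this in the same way, so it is not a defect relative to the paper.

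You can avoid conjugation altogether. Suppose $Y\subset M$ is simple with $Y\cong X\otimes E_g$. The action restricted to $Y\otimes E_{g^{-1}}\cong X$ is nonzero: compose with the action of $E_g$ and use associativity and the unit axiom. Since $Y\otimes E_{g^{-1}}$ is simple, this restriction is monic. So $M$ contains a copy of $X$ itself, and Theorem~\ref{thm:classification-1} applied to that copy gives $M\cong\mathbf{M}(X,\rho)$ with $[\rho]\in I_X$ directly.
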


The bijection implies $\#\,\Irr(\iC_A) = \sum_{[X]\in \cO}\;\# I_X$. Since the set $I_X$ is non-empty and finite for all simple $X \in \C$ (since the stabilizer $\Stab_\Gamma(X)$ is finite), we have that $\Irr(\iC_A)$ is finite if and only if $\Irr(\C)/\Gamma$ is finite.

\begin{proof}
We first prove that the map is surjective.
By Theorem~\ref{thm:classification-1}, every simple right $A$-module is isomorphic to $\mathbf{M}(X',\rho')$ for some simple $X' \in \C$ and some $[\rho'] \in I_{X'}$. By the definition of the action of $\Gamma$ on $\Irr(\C)$, there exists $X \in \mathcal{O}$ and $g \in \Gamma$ such that $X' \cong X \otimes E_g$. Letting $\rho = (\rho')^{g^{-1}}$, we have $\mathbf{M}(X', \rho') \cong \mathbf{M}(X, \rho)$ by Theorem \ref{thm:classification-2}. This means that the map is surjective.

To show the injectivity, we let $X, Y \in \cO$, $[\rho] \in I_X$ and $[\sigma] \in I_Y$ and assume that $[\mathbf{M}(X, \rho)] = [\mathbf{M}(Y, \sigma)]$. By Theorem \ref{thm:classification-2}, there is an element $g \in \Gamma$ such that $Y \cong X \otimes E_g$ and $\sigma \sim \rho^g$. Since $X$ and $Y$ are representatives of some orbits, we have $X = Y$ and thus $g \in \Stab_{\Gamma}(X)$. Hence $\sigma \sim \rho^g \sim \rho$. This shows that the map is injective. The proof is done.
\end{proof}


\subsection{Tensor category \texorpdfstring{$\fg\text{-}\iC_A$}{fg-C\_A}}
From here on, $\C$ is a tensor category.
As above, $A=\bigoplus_{g\in\Gamma} E_g$ is a simple current algebra over a subgroup $\Gamma < \Inv(\C)$. With some additional assumptions, we will verify that $A$ satisfies the assumptions of Theorem~\ref{thm:indexact-multitensor}, and thus the conclusion of this theorem holds.


\subsubsection{Artinian property}
\begin{proposition}\label{prop:A-artinian}
The algebra $A$ is Artinian.
\end{proposition}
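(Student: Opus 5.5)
We must show that every finitely generated right $A$-module has finite length. Since finite length is preserved under quotients (as $\fl\text{-}\iC_A$ is a Serre subcategory of $\iC_A$), it suffices to show that each free module $Y \otimes A$ with $Y \in \C$ has finite length in $\iC_A$. Every $Y \in \C$ has a finite composition series in $\C$, and $-\otimes A$ is exact (it has a right adjoint and is also left exact, since $\iC$ is Grothendieck and tensoring with an object of $\C$ is exact). The free module functor therefore turns this series into a finite filtration of $Y\otimes A$ by $A$-modules whose subquotients are $X\otimes A$ with $X$ simple in $\C$. So we reduce to the case where $Y = X$ is simple.

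For simple $X$, put $S=\Stab_\Gamma(X)$, which is finite by the lemma above. We have
\[
X\otimes A \cong (X\otimes A_X)\otimes_{A_X} A
\]
as right $A$-modules. The free $A_X$-module $X\otimes A_X=\bigoplus_{s\in S} X\otimes E_s \cong X^{\oplus |S|}$ is an object of $\C$, and its underlying object is a finite direct sum of copies of $X$. By Lemma~\ref{lem:functor-LX}, it is therefore isomorphic to $\mathbf{L}(X,\rho)$, where $\rho$ is the projective representation of $S^{\op}$ with multiplier $\xi_X^{\op}$ given by the twisted regular representation. Its dimension is $|S|$, so it is finite dimensional and hence of finite length in $\Rep(S^{\op},\xi_X^{\op})$. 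Consequently $X\otimes A\cong \mathbf{M}(X,\rho)$.

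By Lemma~\ref{lem:functor-MX-preserves-length}, $\mathbf{M}(X,\rho)$ has the same length as $\rho$, which is finite (at most $|S|$). This completes the argument.

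The main point to check carefully is the identification $X\otimes A_X\cong \mathbf{L}(X,\rho)$. This is automatic from the essential surjectivity part of Lemma~\ref{lem:functor-LX}, applied to the category of $A_X$-modules whose underlying object is a finite sum of copies of $X$, so no explicit computation of $\rho$ is needed. Alternatively, one can avoid $\rho$ altogether. Since the functor $-\otimes_{A_X}A$ is exact (as $A$ is free over $A_X$) and sends simple $A_X$-modules of this type to simple $A$-modules (by Lemmas~\ref{lem:functor-LX} and~\ref{lem:M-X-rho-simple}), applying it to a composition series of the finite-length $A_X$-module $X\otimes A_X$ in $\C_{A_X}$ yields a composition series of $X\otimes A$. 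Here one also uses that every $A_X$-submodule of $X\otimes A_X$ is again a finite sum of copies of $X$.
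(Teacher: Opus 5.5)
Your proposal is correct and follows essentially the same route as the paper. Both arguments reduce to free modules $X\otimes A$ and handle general $X\in\C$ through a composition series (the paper phrases this as induction on $\ell(X)$, using exactness of the free module functor). For simple $X$, both identify $X\otimes A\cong(X\otimes A_X)\otimes_{A_X}A\cong\mathbf{M}(X,\rho)$ via Lemma~\ref{lem:functor-LX} and conclude with Lemma~\ref{lem:functor-MX-preserves-length}.

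Your justification for exactness of $Y\mapsto Y\otimes A$ is muddled, though the claim is true. The clean reason is that $A$ is a filtered colimit of objects of $\C$, tensoring with a fixed object of the rigid category $\C$ is exact, and filtered colimits are exact in $\iC$.
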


\begin{proof}
  Since a finitely generated object of $\fg\text{-}\iC_A$ is a quotient of $X \otimes A$ for some $X \in \C$, and since a quotient of a finite length object is also of finite length, it suffices to show that the free module $X \otimes A$ for $X \in \C$ is of finite length. We prove this by induction on the length of $X$ as an object of $\C$.

  The claim is trivial if $\ell(X) = 0$. We assume that $\ell(X) = 1$ or, equivalently, $X$ is a simple object. Since $X \otimes A_X$ is a right $A_X$-module whose underlying object is isomorphic to a direct sum of finitely many copies of $X$, by Lemma \ref{lem:functor-LX}, there exists $\rho \in \Rep(S^{\op}, \xi_X^{\op})$ such that $X \otimes A_X \cong \mathbf{L}(X, \rho)$ as right $A_X$-modules. There are isomorphisms
  \begin{equation*}
      X \otimes A \cong (X \otimes A_X) \otimes_{A_X} A
      \cong \mathbf{L}(X, \rho) \otimes_{A_X} A = \mathbf{M}(X, \rho)
  \end{equation*}
  of right $A$-modules. Thus, by Lemma \ref{lem:functor-MX-preserves-length}, $X \otimes A$ is of finite length.

  Now we assume that $\ell(X) > 1$ and $Y \otimes A$ is of finite length for all $Y \in \C$ with $\ell(Y) < \ell(X)$. We choose a simple subobject $X' \subset X$ and let $X'' = X/X'$ so that there is a short exact sequence $0 \to X' \to X \to X'' \to 0$ in $\C$. By applying the free module functor to this sequence, we obtain a short exact sequence $0 \to X' \otimes A \to X \otimes A \to X'' \otimes A \to 0$ in $\iC_A$. Since $X' \otimes A$ and $X'' \otimes A$ are of finite length, so is $X \otimes A$. This proves that $A$ is Artinian.
\end{proof}


\subsubsection{Frobenius algebra structure}
In this subsection, we prove that $A$ is Frobenius. We recall Iovanov's result \cite{iovanov2006isomorphic}, which plays a key role. Let $\mathcal{A}$ be a complete and cocomplete abelian category, and let $\{ M_i \}_{i \in I}$ be an infinite family of objects of $\mathcal{A}$. Then we have two objects: The product $\Pi := \prod_{i \in I} M_i$ and the coproduct $\Sigma := \bigoplus_{i \in I} M_i$. There is a canonical morphism $\Sigma \to \Pi$. When $\mathcal{A}$ is the category of modules over a ring and $M_i \ne 0$ for infinitely many $i$, the canonical morphism is never an isomorphism. However, according to Iovanov \cite[Example 2.9]{iovanov2006isomorphic}, the canonical morphism $\Sigma \to \Pi$ is an isomorphism if $\mathcal{A}$ is an ind-completion of a locally finite abelian category and $\Sigma$ is quasi-finite. We use this to prove:

\begin{theorem}
  \label{thm:simple-current-ext-is-Fb}
The algebra $A$ is Frobenius with the Frobenius form $\lambda : A \to \unit$ given by the projection to $E_1 = \unit$.
\end{theorem}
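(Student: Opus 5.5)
We must show that $\phi : A \to [A,\unit]$, the adjunct of $\lambda\mu$, is an isomorphism in $\iC$. The plan is to compute $[A,\unit]$ explicitly and then check $\phi$ summand by summand.

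First, the internal Hom turns colimits in the first variable into limits. Since $A = \bigoplus_{g\in\Gamma} E_g$, this gives
\[
  [A,\unit] \cong \prod_{g\in\Gamma} [E_g,\unit]\cong \prod_{g\in\Gamma} E_g^* \cong \prod_{g\in\Gamma} E_{g^{-1}} ,
\]
where the product is taken in $\iC$. Here each $E_g$ lies in $\C$ and is rigid, so $[E_g,\unit]\cong \unit\otimes E_g^*$, as in the proof of the lemma on $\theta_{[M,X]}$. Moreover, $E_g^*\cong E_{g^{-1}}$ because $E_g$ is invertible.

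Under this identification, $\phi$ becomes the canonical map $\Sigma=\bigoplus_{h} E_h \to \Pi=\prod_{g} E_{g^{-1}}$ composed with componentwise scalars. Concretely, the $(h,g)$-component of $\phi$ is the adjunct of $\lambda\circ\mu|_{E_h\otimes E_g} : E_h\otimes E_g\to\unit$. This restriction vanishes unless $hg=1$, because $\lambda$ is the projection onto $E_1$. When $hg=1$ it equals $\eta(h,g)\phi_{h,g}$, which is a nondegenerate pairing. Its adjunct is therefore an isomorphism $E_h\to E_g^*$, namely a nonzero multiple of the canonical one.

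Hence, after reindexing $h=g^{-1}$, $\phi$ equals $D\circ\iota$. Here $\iota:\bigoplus_{h}E_h\to\prod_h E_h$ is the canonical morphism. The map $D$ is the automorphism of $\prod_h E_h$ given componentwise by the invertible maps $E_h\to E_{h}$ just described; products of isomorphisms are isomorphisms. It therefore suffices to show that $\iota$ is invertible.

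That step is the main obstacle, and it is exactly where Iovanov's result \cite[Example 2.9]{iovanov2006isomorphic} enters. We must verify that $\Sigma=\bigoplus_{h\in\Gamma}E_h$ is quasi-finite, i.e.\ that $\Hom_{\iC}(Y,\Sigma)$ is finite-dimensional for each simple (equivalently, each) $Y\in\C$. This holds because the $E_h$ are pairwise non-isomorphic simple objects. For a simple $Y$, $\Hom(Y,\Sigma)=\bigoplus_h\Hom(Y,E_h)$, since $Y$ is compact in $\iC$. At most one summand is nonzero, and that summand is one-dimensional. For general $Y\in\C$, finite-dimensionality then follows by induction on length using left exactness of $\Hom(-,\Sigma)$.

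Iovanov's criterion applied in $\iC$ then yields that $\iota:\Sigma\to\Pi$ is an isomorphism, so $\phi$ is invertible and $A$ is Frobenius with form $\lambda$. One bookkeeping point remains: we must confirm that the identification of $\phi$ with $D\circ\iota$ is compatible with the limit description of $[A,\unit]$. This amounts to naturality of the adjunction in the first variable, applied to each coprojection $E_h\to A$.
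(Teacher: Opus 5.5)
Your proposal is correct and takes essentially the same route as the paper. Both identify $[A,\unit]\cong\prod_{g}[E_g,\unit]\cong\prod_g E_{g^{-1}}$, show that $\phi$ agrees with the canonical map $\bigoplus_g E_g\to\prod_g E_g$ up to a componentwise automorphism, and then invoke Iovanov's result. Your version is somewhat more careful than the paper's: you factor out the nonzero scalars $\eta(h,h^{-1})$ explicitly through the automorphism $D$, and you verify the quasi-finiteness hypothesis of Iovanov's criterion, both of which the paper leaves implicit.
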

\begin{proof}
For a family $\{ M_i \}_{i \in I}$ of objects of $\iC$, we have $[ \bigoplus_{i \in I} M_i, N ] \cong \prod_{i \in I} [M_i, N]$. Thus,
  \begin{equation*}
    [A, \unit]
    \cong \prod_{g \in \Gamma} [g, \unit]
    \cong \prod_{g \in \Gamma} g^{-1}
    \cong \prod_{g \in \Gamma} g.
  \end{equation*}
  Unwinding the definition of the morphism $\phi : A \to [A, \unit]$ in Definition \ref{def:Frobenius-alg-in-Ind}, one checks that under the above identifications it coincides with the
  canonical morphism $\bigoplus_{g\in \Gamma} g\to \prod_{g\in \Gamma} g$:
  indeed, writing $A=\bigoplus_{g\in \Gamma} g$, the composite $\lambda \mu$
  vanishes on $g\otimes h$ unless $h\simeq g^{-1}$, in which case it is identified (up to the chosen
  identifications $g\otimes g^{-1}\cong\unit$) with the evaluation pairing.
  By adjunction, this yields a morphism $g\to [A,\unit]\cong\prod_{h\in \Gamma} h$ whose only non-zero component is the
  canonical map $g\to g$ into the $h=g$ factor. Summing over $g$ gives the canonical morphism
  $\bigoplus_{g\in \Gamma} g\to \prod_{g\in \Gamma} g$.
  This is an isomorphism by Iovanov's result cited above.
\end{proof}


\subsubsection{Rigidity of simple modules} \label{subsubsec:rigidity}
We retain the notation in the previous subsection.
We fix a simple object $X$ of $\C$ with stabilizer $S$ and a projective representation $\rho$ of $S^{\op}$ with multiplier $\xi_X^{\op}$.

\begin{theorem}
  Assume that $|S| \ne 0$ in the base field $\kk$.
  Then $\mathbf{M}(X, \rho)$ is a direct summand of the free module $X^{\oplus m} \otimes A$ for some positive integer $m$.
\end{theorem}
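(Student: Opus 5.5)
We reduce the statement to a Maschke-type argument for the twisted group algebra and then induce from $A_X$ to $A$. Since $\mathbf{M}(X,\rho) = \mathbf{L}(X,\rho)\otimes_{A_X} A$ and $X\otimes A \cong (X\otimes A_X)\otimes_{A_X} A$ (as in the proof of Proposition~\ref{prop:A-artinian}), the induction functor $-\otimes_{A_X}A$ is additive. It therefore suffices to show the following: $\mathbf{L}(X,\rho)$ is a direct summand of $(X\otimes A_X)^{\oplus m} \cong X^{\oplus m}\otimes A_X$ in $\C_{A_X}$ for some $m\ge 1$.

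By Lemma~\ref{lem:functor-LX}, the functor $\mathbf{L}(X,-)$ is an equivalence from $\Rep(S^{\op},\xi_X^{\op})$ onto the full subcategory of $\C_{A_X}$ consisting of modules whose underlying object is a finite direct sum of copies of $X$. The free module $X\otimes A_X$ lies in this subcategory, since $X\otimes E_s\cong X$ for $s\in S$. Hence $X\otimes A_X\cong \mathbf{L}(X,\rho_0)$ for some $\rho_0\in \Rep(S^{\op},\xi_X^{\op})$, and $\dim \rho_0 = |S|$. The problem thus becomes one inside $\Rep(S^{\op},\xi_X^{\op})\simeq$ finite-dimensional modules over the twisted group algebra $\kk^{\xi}[S^{\op}]$ (with $\xi=\xi_X^{\op}$). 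We must show that $\rho$ is a direct summand of $\rho_0^{\oplus m}$.

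Two facts are needed. First, since $|S|\neq 0$ in $\kk$, the algebra $\kk^{\xi}[S^{\op}]$ is semisimple by the standard Maschke averaging argument. Given a subrepresentation $W'\subset W$ and any linear projection $p$, the averaged map $\frac{1}{|S|}\sum_s \rho(s)^{-1} p\,\rho(s)$ is an equivariant projection; the cocycle scalars cancel because $\rho(s)^{-1}$ carries the inverse scalar. Hence every object of $\Rep(S^{\op},\xi_X^{\op})$ is a direct sum of irreducibles. Second, we identify $\rho_0$ as a faithful module, and in fact the regular one. I would use the adjunction $\Hom_{\C_{A_X}}(X\otimes A_X, \mathbf{L}(X,\rho))\cong \Hom_\C(X,W\otimes X)\cong W$, which is nonzero for every nonzero $\rho$. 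Transporting through the equivalence gives $\Hom(\rho_0,\rho)\neq 0$ for every $\rho$. With semisimplicity, this shows that every irreducible $\rho$ appears as a summand of $\rho_0$. Consequently any $\rho$, being a finite sum of irreducibles, is a summand of $\rho_0^{\oplus m}$ with $m$ at most the length of $\rho$; for irreducible $\rho$ one may take $m=1$.

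Applying $\mathbf{L}(X,-)$ and then $-\otimes_{A_X}A$ to the splitting $\rho_0^{\oplus m}\cong \rho\oplus\rho'$ yields $X^{\oplus m}\otimes A\cong \mathbf{M}(X,\rho)\oplus \mathbf{M}(X,\rho')$ in $\iC_A$.

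The main obstacle is the identification of $X\otimes A_X$ with an object in the image of $\mathbf{L}(X,-)$ that has enough summands. One must make sure that the multiplier of $\rho_0$ really is $\xi_X^{\op}$ for the fixed family $\tilde\phi_s$; this is automatic from Lemma~\ref{lem:functor-LX}, since that lemma gives an equivalence onto the whole subcategory. The other delicate point is the Hom computation: it uses freeness of $X\otimes A_X$ and requires $\End(X)=\kk$, which holds since $\C$ is locally finite and $\kk$ is algebraically closed. Semisimplicity is where the hypothesis $|S|\neq 0$ enters.
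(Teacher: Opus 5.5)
Your proof is correct, but it takes a different route from the paper's.

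\textbf{The paper's route.} The paper never passes through $\Rep(S^{\op},\xi_X^{\op})$. It uses that $A_X$ is a separable algebra when $|S|\neq 0$ in $\kk$. Hence the quotient $M\otimes N\to M\otimes_{A_X}N$ has a section natural in $M$ and $N$, and this section is $A$-linear whenever $N$ is an $A_X$-$A$-bimodule. Taking $M=\mathbf{L}(X,\rho)$ and $N=A$ shows directly that $\mathbf{M}(X,\rho)$ is a right $A$-module summand of $\mathbf{L}(X,\rho)\otimes A=(W\otimes X)\otimes A\cong X^{\oplus \dim W}\otimes A$.

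\textbf{Your route.} You do the splitting one level down, in $\C_{A_X}$, and only then induce. You transport the problem through the equivalence $\mathbf{L}(X,-)$ of Lemma~\ref{lem:functor-LX}. You prove Maschke's theorem for the twisted group algebra by averaging; the scalars do cancel as you say, since $\rho(t)^{-1}\rho(s)^{-1}=\xi(s,t)^{-1}\rho(st)^{-1}$. Finally, the free-module adjunction gives $\Hom(\rho_0,\rho)\cong W$ naturally in $\rho$, which identifies $\rho_0$ with the regular module.

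\textbf{Comparison.} The hypothesis $|S|\neq 0$ enters both proofs in essentially the same way, since separability of $A_X$ is the same averaging. The paper's argument is shorter and applies verbatim to arbitrary $A_X$-modules, without the classification machinery of Lemma~\ref{lem:functor-LX}. Yours is self-contained: it needs neither the separability lemma nor naturality of the section in the ind-completion. It also gives a sharper conclusion. For irreducible $\rho$ you may take $m=1$. More precisely, semisimplicity of the regular module yields
\[
X\otimes A\cong\bigoplus_{[\rho]}\mathbf{M}(X,\rho)^{\oplus\dim\rho},
\]
with $[\rho]$ running over isomorphism classes of irreducibles in $\Rep(S^{\op},\xi_X^{\op})$. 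This decomposition of free modules into simples does not fall out of the paper's argument.
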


\begin{proof}
  Since $A_X$ is separable under the assumption of this theorem \cite[Lemma~3.1]{fuchs2004tft}, the quotient morphism $M \otimes N \to M \otimes_{A_X} N$ for a right $A_X$-module $M$ and a left $A_X$-module $N$ in $\iC$ has a section that is natural in $M$ and $N$.
  Thus, when $N$ is an $A_X$-$B$-bimodule for some algebra $B$ in $\iC$, the right $B$-module $M \otimes_{A_X} N$ is a direct summand of the right $B$-module $M \otimes N$. By applying this argument to $M = \mathbf{L}(X, \rho)$, $N = A$ and $B = A$, the claim follows.
\end{proof}

\begin{corollary}
  Suppose that $X \in \C$ is rigid and $A$ is a central commutative algebra in $\iC$. If $|S| \ne 0$ in $\kk$, $\mathbf{M}(X, \rho)$ is a rigid object in the category of right $A$-modules in $\iC$.
\end{corollary}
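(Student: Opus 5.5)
The plan is to combine the preceding theorem with two facts: free modules on rigid objects are rigid, and direct summands of rigid objects are rigid. By the theorem just proved, since $|S|\ne 0$ in $\kk$, there are a positive integer $m$ and morphisms of right $A$-modules
\[
  \iota : \mathbf{M}(X,\rho) \to X^{\oplus m}\otimes A,
  \qquad
  \pi : X^{\oplus m}\otimes A \to \mathbf{M}(X,\rho),
  \qquad
  \pi\iota = \id .
\]
Thus it suffices to prove two things in the monoidal category $(\iC_A,\otimes_A,A)$, which is available because $A$ is central commutative:
\begin{enumerate}
\item the free module $Y\otimes A$ is rigid whenever $Y\in\C$ is rigid;
\item a retract of a rigid object is rigid.
\end{enumerate}

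For the first point, I would use the free module functor $F=-\otimes A:\iC\to\iC_A$. The half-braiding $\sigma$ on $A$ makes $F$ a strong monoidal functor. Explicitly, the isomorphism
\[
  (Y\otimes A)\otimes_A(Z\otimes A)\;\cong\;(Y\otimes Z)\otimes A
\]
is built from $\sigma$ together with the multiplication of $A$. This is the same isomorphism already used in the lemma showing that $\fg\text{-}\iC_A$ is closed under $\otimes_A$. Strong monoidal functors preserve duals, so $F(Y^*)$ with $F(\ev_Y)$ and $F(\coev_Y)$, transported along these structure isomorphisms, is a left dual of $F(Y)$. The right dual is handled in the same way. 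Now $X$ is rigid in $\C$, hence so is $X^{\oplus m}$, and therefore $X^{\oplus m}\otimes A$ is rigid in $\iC_A$.

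For the second point, I would use the standard idempotent argument. Let $P$ be rigid with dual $P^*$, and let $e=\iota\pi$, an idempotent on $P$. The transpose $e^*$ is an idempotent on $P^*$. Since $\iC_A$ is abelian, $e^*$ splits, and its image $Q$ serves as the candidate dual of $M=\mathbf{M}(X,\rho)$. Evaluation and coevaluation for $M$ are obtained by restricting $\ev_P$ and $\coev_P$ along the splitting maps. The zig-zag identities for $M$ then follow from those for $P$, using $\pi\iota=\id$ together with the naturality identity that intertwines $e$ and $e^*$ through $\ev_P$ and $\coev_P$. The right dual is treated symmetrically.

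I expect the main obstacle to be the first point, namely checking carefully that the structure isomorphism $F(Y)\otimes_A F(Z)\cong F(Y\otimes Z)$ satisfies the monoidal coherence conditions in the non-braided, central setting, since this depends on the half-braiding axioms for $(A,\sigma)$. If that becomes messy, I would fall back on the adjunction of Lemma~\ref{lem:free-right-adj-1}-type, combined with the internal Hom description in \cite{shimizu2024commutative}, to identify $\iHom_A(Y\otimes A,-)\cong (-)\otimes Y^*$ directly. Rigidity of $Y\otimes A$ would then follow from that identification.
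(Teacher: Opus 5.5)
Your proposal is correct and follows the paper's proof. The paper likewise uses the preceding theorem to exhibit $\mathbf{M}(X,\rho)$ as a right $A$-module direct summand of $X^{\oplus m}\otimes A$, notes that $X\otimes A$ is rigid (with duals $X^*\otimes A$ and ${}^*X\otimes A$), and concludes because a direct summand of a rigid object is rigid. You simply spell out the two standard ingredients that the paper takes for granted: strong monoidality of $-\otimes A$ via the half-braiding, and the idempotent-splitting argument.
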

\begin{proof}
  The previous theorem says that $\mathbf{M}(X, \rho)$ is a direct summand of a free module $X^{\oplus m} \otimes A$ for some $m$. Since $X \otimes A$ is rigid (with left dual $X^* \otimes A$ and right dual ${}^*X \otimes A$), the claim follows from the fact that a direct summand of a rigid object is rigid.
\end{proof}

In view of the above corollary, we consider the following condition:
\begin{align}\label{eq:condition-diamond}
  \text{$|\Stab_{\Gamma}(X)| \ne 0$ in $\kk$ for every simple object $X \in \C$}.
\end{align}

The condition \eqref{eq:condition-diamond} holds if, for example, $\kk$ is of characteristic zero. Even in the case where $\kk$ has positive characteristic $p > 0$, the condition \eqref{eq:condition-diamond} holds if $g^p \ne 1$ for every non-identity element $g \in \Gamma$. In particular, \eqref{eq:condition-diamond} holds if $\Gamma$ is torsion free.

\begin{corollary}\label{cor:rigidity-simple-modules}
  If $\C$ is a tensor category and the simple current algebra $A$ is  central commutative such that the condition \eqref{eq:condition-diamond} is satisfied, then $\fg\text{-}\iC_A$ is a tensor category.
\end{corollary}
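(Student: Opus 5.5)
The plan is to assemble the statement from Theorem~\ref{thm:indexact-multitensor} together with the equivalence of conditions in Theorem~\ref{thm:ind-exact}. That requires $A$ to be haploid, Artinian and central, and every simple $A$-module to be rigid.

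First, $A$ is Artinian by Proposition~\ref{prop:A-artinian}. Second, $A$ is haploid. Since $\unit$ is compact in $\iC$, we have $\Hom_{\iC}(\unit,A)\cong\bigoplus_{g\in\Gamma}\Hom_{\C}(\unit,E_g)$, and only the summand $g=1$ is nonzero, because the $E_g$ are pairwise non-isomorphic simple objects. Hence $\Hom_{\iC}(\unit,A)\cong\kk$. Centrality is part of the hypothesis.

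For rigidity of simples, let $M$ be a simple right $A$-module. By Theorem~\ref{thm:classification-1}, $M\cong\mathbf{M}(X,\rho)$ for some simple subobject $X\in\C$ and some irreducible $\rho\in\Rep(S^{\op},\xi_X^{\op})$ with $S=\Stab_\Gamma(X)$. Since $\C$ is a tensor category, $X$ is rigid. Condition \eqref{eq:condition-diamond} gives $|S|\neq0$ in $\kk$. The preceding corollary then shows that $\mathbf{M}(X,\rho)$ is rigid in $\iC_A$.

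It remains to check that the duals lie in $\fg\text{-}\iC_A$. The duals produced there are direct summands of $X^{*\oplus m}\otimes A$ and ${}^*X^{\oplus m}\otimes A$, so they are finitely generated. Indeed, a direct summand of a dual of $X^{\oplus m}\otimes A$ serves as the dual of a summand, being the image of the idempotent transported through duality. Therefore $M$ is rigid in $\fg\text{-}\iC_A$, and condition (c) of Theorem~\ref{thm:ind-exact} holds.

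Theorem~\ref{thm:indexact-multitensor} now yields that $\fg\text{-}\iC_A$ is a tensor category. The only mildly delicate point is this last bookkeeping: the duals must be taken within $\fg\text{-}\iC_A$ rather than merely within $\iC_A$. This follows because the dual of a summand is a summand of the dual $X^*\otimes A$ or ${}^*X\otimes A$, and these are finitely generated.
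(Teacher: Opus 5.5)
Your proof is correct and follows the paper's route: $A$ is Artinian (Proposition~\ref{prop:A-artinian}) and haploid, every simple module is some $\mathbf{M}(X,\rho)$ and is rigid by the preceding corollary under \eqref{eq:condition-diamond}, and Theorem~\ref{thm:indexact-multitensor} then gives the conclusion. Your additional check that the duals are finitely generated, being direct summands of $X^{*\oplus m}\otimes A$ and ${}^*X^{\oplus m}\otimes A$, is a detail the paper leaves implicit, and you handle it correctly.
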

\begin{proof}
  By the previous corollary, every simple object of $\fg\text{-}\iC_A$ is rigid. Moreover, $A$ is a haploid commutative algebra. Thus, by Theorem~\ref{thm:indexact-multitensor}, $\fg\text{-}\iC_A$ is a tensor category.
\end{proof}


\subsubsection{Existence of enough projectives and finiteness}\label{subsubsec:finiteness}
Assume that $\C$ has enough projective objects. Then, by Theorem~\ref{thm:artinian-locally-finite}, Artinian property of $A$ implies that $\fg\text{-}\iC_A$ has enough projective objects. We give a more explicit construction of projective objects.

Let $X \in \C$ be a simple object with
projective cover $\pi : P \to X$, and let $\rho$ be an irreducible projective representation of
$\Stab_{\Gamma}(X)^{\op}$ with multiplier $\xi_X^{\op}$. We set
$\mathbf{P}'(X, \rho) := (P \otimes W) \otimes A_X$ (the free right $A_X$-module), where $W$ is the
representation space of $\rho$. Then $\mathbf{P}'(X, \rho)$ is a projective right $A_X$-module and there is an
epimorphism
\begin{equation*}
  \mathbf{P}'(X, \rho)
  \xrightarrow{\quad (\pi \otimes \id) \otimes \id \quad}
  \mathbf{L}(X, \rho) \otimes A_X
  \xrightarrow{\quad \text{action} \quad} \mathbf{L}(X, \rho)
\end{equation*}
of right $A_X$-modules. Now we set
\begin{equation*}
  \mathbf{P}(X, \rho) := \mathbf{P}'(X, \rho) \otimes_{A_X} A
  \cong (P \otimes W) \otimes A.
\end{equation*}
Then $\mathbf{P}(X, \rho)$ is a projective object in the category of right $A$-modules in $\iC$ and the simple module $\mathbf{M}(X, \rho) = \mathbf{L}(X, \rho) \otimes_{A_X} A$ is a quotient of $\mathbf{P}(X, \rho)$.

Thus $\fg\text{-}\iC_A$ is a locally finite abelian category where every simple object is a quotient of a projective object. This implies that $\fg\text{-}\iC_A$ has enough projective objects.

\begin{theorem}\label{thm:finite-tensor-category-criterion}
  Let $\C$ be a Frobenius tensor category. Suppose that $A$ is a central commutative algebra in $\iC$ such that the condition \eqref{eq:condition-diamond} is satisfied. Then, $\fg\text{-}\iC_A$ is a Frobenius tensor category. Moreover, $\fg\text{-}\iC_A$ is a finite tensor category if and only if $\mathrm{Irr}(\C)/\Gamma$ is finite.
\end{theorem}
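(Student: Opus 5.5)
The argument has two parts: first that $\fg\text{-}\iC_A$ is a Frobenius tensor category, then the finiteness criterion.

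\textbf{The Frobenius tensor structure.} Corollary~\ref{cor:rigidity-simple-modules} already shows that $\fg\text{-}\iC_A$ is a tensor category under the standing hypotheses ($A$ central commutative, condition \eqref{eq:condition-diamond}). Recall that $A$ is Artinian (Proposition~\ref{prop:A-artinian}) and haploid, since $\Hom_{\iC}(\unit,A)=\Hom(\unit,E_1)=\kk$. Because $\C$ is Frobenius, it has enough projectives by \cite{shibata2023nakayama}. Theorem~\ref{thm:artinian-locally-finite} then gives enough projectives in $\fg\text{-}\iC_A$; alternatively one can use the explicit modules $\mathbf{P}(X,\rho)\cong (P\otimes W)\otimes A$ constructed above. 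Applying the characterization of \cite{shibata2023nakayama} once more, a tensor category with enough projectives is Frobenius. This is also the second half of Theorem~\ref{thm:indexact-multitensor}.

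\textbf{Finiteness.} A Frobenius locally finite tensor category is finite exactly when it has finitely many simple objects. In that case each simple has a projective cover of finite length, and the direct sum of these projective covers is a projective generator. So we are reduced to counting simples.

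The simple objects of $\fg\text{-}\iC_A$ are the simple objects of $\iC_A$. Indeed, every simple $A$-module has finite length and is therefore finitely generated by Lemma~\ref{lem:finite-length-fg}. Conversely, any $\fg$-module that is simple in $\fg\text{-}\iC_A$ is simple in $\iC_A$, because $\fg\text{-}\iC_A=\fl\text{-}\iC_A$ is a Serre subcategory. Corollary~\ref{cor:number-of-simples}, together with the remark following it (each $I_X$ is nonempty and finite because $\Stab_\Gamma(X)$ is finite), then shows that $\Irr(\fg\text{-}\iC_A)$ is finite if and only if $\Irr(\C)/\Gamma$ is finite.

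\textbf{Main obstacle.} The delicate point is the equivalence between ``finite'' and ``finitely many simples plus enough projectives''. In particular, one must check that projective covers exist and have finite length, so that a finite direct sum of them is a progenerator and the category is equivalent to modules over a finite-dimensional algebra. This follows because $\fg\text{-}\iC_A$ is locally finite (Hom-finite by Lemma~\ref{lem:hom-finite}) and every object has finite length. For the converse direction, a finite category obviously has finitely many simples.
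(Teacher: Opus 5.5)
Your proposal is correct and follows essentially the same route as the paper: the tensor structure comes from Corollary~\ref{cor:rigidity-simple-modules}, enough projectives come from Theorem~\ref{thm:artinian-locally-finite} via the Artinian property, and the finiteness criterion comes from Corollary~\ref{cor:number-of-simples}. You also spell out two points the paper leaves implicit, namely that the simple objects of $\fg\text{-}\iC_A$ coincide with those of $\iC_A$, and that a Frobenius locally finite tensor category is finite exactly when it has finitely many simples; both are argued correctly.
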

\begin{proof}
By Corollary~\ref{cor:rigidity-simple-modules}, $\fg\text{-}\iC_A$ is a tensor category. As $\C$ is Frobenius and $A$ is Artinian, by Theorem~\ref{thm:artinian-locally-finite}, $\fg\text{-}\iC_A$ has enough projective objects. Thus, it is a Frobenius tensor category.
The claim about finiteness follows by Corollary~\ref{cor:number-of-simples}. 
\end{proof}

\begin{question}\label{question:fpdim}
Assuming that $\fg\text{-}\iC_A$ is a finite tensor category, what are the Frobenius-Perron dimensions of simple objects and their projective covers? What is the FPdim of the category?
\end{question}


\subsection{The subcategory of local modules}
From now on, $\C$ is a braided tensor category and $A$ is a commutative simple current algebra over a subgroup $\Gamma < \Inv(\C)$. By Lemma~\ref{lem:group-algebra-comm} and equation \eqref{eq:comm-implies-trivial-monodromy}, this implies that the invertible summands of $A$ centralize each other, that is, the following equation holds:
\begin{equation}\label{eq:Eg-Eh-commutative}
  c_{E_h,E_g} c_{E_g, E_h} = \id_{E_g \otimes E_h} \quad (g,h\in \Gamma).
\end{equation}


\subsubsection{Classification of simple local modules}
Fix a simple object $X\in \C$ and an irreducible projective representation $\rho$ of the stabilizer $S=\Stab_\Gamma(X)^{\op}$. Let
\begin{equation}\label{eq:C-Gamma-def}
\C_\Gamma:=\{\,Y\in\C \mid c_{E_g,Y}c_{Y,E_g}=\id_{Y\otimes E_g}\ \text{for all } g\in \Gamma\,\}  
\end{equation}
be the M\"uger centralizer of the pointed tensor subcategory
$\langle E_g \mid g\in \Gamma\rangle\subset \C$.

\begin{proposition}\label{prop:locality-criterion}
Then $\mathbf{M}(X,\rho)$ is a local $A$-module if and only if $X\in \C_\Gamma$.
\end{proposition}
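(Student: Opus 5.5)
The plan is to reduce locality of $\mathbf{M}(X,\rho)$ to a condition on its generating piece $\mathbf{L}(X,\rho)=W\otimes X$. We use the standard fact (Pareigis) that for a commutative algebra $A$ and any $M_0\in\iC$, the free module $M_0\otimes A$ is local if and only if $c_{A,M_0}c_{M_0,A}=\id_{M_0\otimes A}$. More generally, locality of $(M,\rho_M)$ can be tested on a generating subobject. The monodromy $c_{A,M}c_{M,A}$ is natural, and the relation $\rho_M\circ(\rho_M\otimes\id)=\rho_M\circ(\id\otimes\mu)$ holds together with commutativity of $A$, in particular $\mu\circ c_{A,A}c_{A,A}=\mu$ via \eqref{eq:Eg-Eh-commutative}. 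Combining these, one checks that if $N\subset M$ is a subobject whose image under the action $N\otimes A\to M$ is all of $M$, then $M$ is local iff $\rho_M\circ c_{A,N}c_{N,A}=\rho_M|_{N\otimes A}$. I would record this as a small lemma, proved via the hexagon identity $c_{A,N\otimes A}c_{N\otimes A,A}=(\id\otimes c_{A,A})(c_{A,N}c_{N,A}\otimes\id)(\id\otimes c_{A,A})$ up to associators, together with \eqref{eq:Eg-Eh-commutative}.

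\emph{If direction.} Suppose $X\in\C_\Gamma$. Then $W\otimes X\in\C_\Gamma$ too, since $\C_\Gamma$ is closed under direct sums. The monodromy with $A=\bigoplus_g E_g$ is computed summandwise, so $c_{A,W\otimes X}c_{W\otimes X,A}=\id$. The subobject $N=(W\otimes X)\otimes E_1$ of $\mathbf{M}(X,\rho)$, in the decomposition \eqref{eq:M-X-rho-decomposition}, generates $\mathbf{M}(X,\rho)$ as an $A$-module, by the action formula \eqref{eq:M-X-rho-action} or simply by construction of the induction. The lemma then gives locality.

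\emph{Only if direction.} Assume $\mathbf{M}(X,\rho)$ is local and fix $g\in\Gamma$. Restrict the locality identity to $N\otimes E_g\subset \mathbf{M}(X,\rho)\otimes A$, where $N=(W\otimes X)\otimes E_1$. The action map restricted to $(W\otimes X)\otimes E_g$ is an isomorphism onto its image. For $g=g_i s$-type elements this is clear from \eqref{eq:M-X-rho-action}: the map is a composite of associators, $\mu^{\pm1}$'s, and the invertible $\rho(s)\otimes\tilde\phi_s$. Locality then gives $(\text{iso})\circ c_{E_g,W\otimes X}c_{W\otimes X,E_g}=(\text{iso})$, hence $c_{E_g,W\otimes X}c_{W\otimes X,E_g}=\id$. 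Since $W\otimes X\cong X^{\oplus\dim W}$ and the monodromy is natural, we get $c_{E_g,X}c_{X,E_g}=\id$ for all $g$, i.e. $X\in\C_\Gamma$.

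The main obstacle is the generating-subobject lemma in the first paragraph. One must carefully check that the monodromy of $N\otimes A$ with the extra copy of $A$ passes through the action map. This is where $c_{A,A}^2=\id$ on $A\otimes A$, i.e. \eqref{eq:Eg-Eh-commutative}, is used, and where associators must be tracked. I would do this with string diagrams in $\iC$ rather than the explicit tikz-style diagrams, since only naturality, hexagons and commutativity are involved. A second, minor point is confirming that the action restricted to each summand $(W\otimes X)\otimes E_g$ is a monomorphism, which follows from \eqref{eq:M-X-rho-action}.
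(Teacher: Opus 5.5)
Your proposal is correct and is essentially the paper's proof. For the `if' direction, the paper likewise notes that the free module on $X$ (you use $W\otimes X$) is local and that $\mathbf{M}(X,\rho)$ is a quotient of it, invoking closure of $\iC_A^{\loc}$ under cokernels; your generating-subobject lemma reproves that closure by hand, and it only needs $\mu\circ c_{A,A}=\mu$, not \eqref{eq:Eg-Eh-commutative}. For the `only if' direction, the paper restricts the locality identity to the block $((W\otimes X)\otimes E_1)\otimes E_g$, on which the action is an isomorphism, and cancels it exactly as you do.
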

\begin{proof}
Put $M:=\mathbf{M}(X,\rho)$ and let $a_M:M\otimes A\to M$ be its action.

Assume $c_{E_g,X}c_{X,E_g}=\id_{X\otimes E_g}$ for all $g\in \Gamma$.
Then the free module $X\otimes A$ is local.
As $M$ is a quotient of $X\otimes A$ and $\iC_A^{\loc}$ is closed under cokernels in $\iC_A$, $M$ is local.

Conversely, assume first that $M$ is local. By \eqref{eq:M-X-rho-decomposition}, choose coset representatives
$\{ g_i \}_{i \in I}$ of $S \backslash \Gamma$ with $g_1=1$, so one summand of $M$ is $W\otimes X$. For each $g\in \Gamma$, the $(1,g)$-block
of the action is an isomorphism
\[
a_{1,g}:((W\otimes X)\otimes E_1)\otimes E_g \xrightarrow{\ \sim\ } (W\otimes X)\otimes E_{g_j},
\]
where $Sg_j=Sg$ (for $S=\Stab_\Gamma(X)$). Restricting locality $a_M\circ c_{A,M}=a_M\circ c^{-1}_{M,A}$ to this block and postcomposing with $a_{1,g}^{-1}$ gives
\[
c_{E_g,W\otimes X}=c^{-1}_{W\otimes X,E_g}.
\]
Since $W$ is a finite-dimensional vector space object (hence a direct sum of copies of $\unit$),
this is equivalent to
\[
c_{E_g,X}=c^{-1}_{X,E_g},
\]
i.e. $c_{E_g,X}c_{X,E_g}=\id_{X\otimes E_g}$ for all $g\in \Gamma$.
\end{proof}

\begin{corollary}\label{cor:local-number-of-simples}
For brevity, we set $I_X = \Irr\bigl(\Rep(\Stab_\Gamma(X)^{\op},\,\xi_X^{\op})\bigr)$ for a simple object $X \in \C$, where $\xi_X$ is the $2$-cocycle determined by \eqref{eq:xi-X-def}. Then there is a bijection
\[ \bigsqcup_{X \in \cO_{\Gamma}} I_X \to \Irr(\iC_A^\loc),
\quad [\rho] \mapsto [\mathbf{M}(X, \rho)] \quad ([\rho] \in I_X), \]
where $\cO_{\Gamma}$ is orbit representatives of $\Irr(\C_{\Gamma}) / \Gamma$.
\end{corollary}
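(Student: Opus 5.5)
The bijection will be obtained by restricting the bijection of Corollary~\ref{cor:number-of-simples} to the simple objects of $\iC_A$ that happen to be local.

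First, every simple object of $\iC_A^\loc$ is simple in $\iC_A$. Indeed, $\iC_A^\loc$ is a full subcategory of $\iC_A$ closed under subobjects: a submodule $N\subset M$ of a local module $M$ satisfies the locality identity because $N\otimes A\to M\otimes A$ is mono and the action and double braiding are natural. Hence a nonzero proper submodule of a local $M$ in $\iC_A$ would already be a nonzero proper submodule in $\iC_A^\loc$. The converse inclusion is trivial, so
\[
\Irr(\iC_A^\loc)=\{[M]\in\Irr(\iC_A)\mid M \text{ local}\}.
\]

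By Corollary~\ref{cor:number-of-simples}, every class in $\Irr(\iC_A)$ is uniquely of the form $[\mathbf{M}(X,\rho)]$ with $X\in\cO$ and $[\rho]\in I_X$. By Proposition~\ref{prop:locality-criterion}, $\mathbf{M}(X,\rho)$ is local if and only if $X\in\C_\Gamma$. It remains to check that $\C_\Gamma$ is a union of $\Gamma$-orbits in $\Irr(\C)$, so that $\cO_\Gamma$ can be taken to be $\cO\cap\C_\Gamma$. Suppose $X\in\C_\Gamma$ and $h\in\Gamma$. The hexagon axioms give
\[
c_{E_g,X\otimes E_h}c_{X\otimes E_h,E_g}
\]
as a composite of $c_{E_g,X}c_{X,E_g}$ and $c_{E_g,E_h}c_{E_h,E_g}$, up to associators. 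The first is the identity by assumption and the second by \eqref{eq:Eg-Eh-commutative}, so $X\otimes E_h\in\C_\Gamma$. Also, $\C_\Gamma$ is closed under isomorphism, so membership depends only on the class $[X]$.

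Restricting the bijection of Corollary~\ref{cor:number-of-simples} to $\bigsqcup_{X\in\cO\cap\C_\Gamma}I_X$ therefore gives a bijection onto the local simples, which is $\Irr(\iC_A^\loc)$. If one prefers an arbitrary set of orbit representatives $\cO_\Gamma$, Theorem~\ref{thm:classification-2} lets us change representatives within an orbit, replacing $\rho$ by $\rho^g$.

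The main point needing care is the orbit-stability of $\C_\Gamma$, which is the hexagon computation above. The other nontrivial input, Proposition~\ref{prop:locality-criterion}, is already established.
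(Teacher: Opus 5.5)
Your proposal is correct and follows essentially the same route as the paper, which also just combines Proposition~\ref{prop:locality-criterion} with the argument of Corollary~\ref{cor:number-of-simples}. You additionally make explicit two points the paper leaves implicit: that simple local modules are simple in $\iC_A$, and that $\Irr(\C_\Gamma)$ is $\Gamma$-stable. One small slip: locality of a submodule $N\subset M$ follows by cancelling the monomorphism $N\hookrightarrow M$ on the left, not from $N\otimes A\to M\otimes A$ being mono; this does not affect the argument.
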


The bijection implies $\#\,\Irr(\iC_A^{\loc}) = \sum_{[X]\in \cO_{\Gamma}}\;\# I_X$. Since the set $I_X$ is non-empty and finite for all simple $X \in \C$, we have that $\Irr(\iC_A^{\loc})$ is finite if and only if $\Irr(\C_{\Gamma})/\Gamma$ is finite.

\begin{proof}
By Proposition~\ref{prop:locality-criterion}, the simple objects of $\iC_A^\loc$ are exactly those $\mathbf{M}(X,\rho)$ with $X\in \Irr(\C_\Gamma)$ and $\rho$ irreducible. The remaining argument is the same as Corollary~\ref{cor:number-of-simples}.
\end{proof}


\subsubsection{Braiding between simple local modules}

\begin{lemma}\label{lem:double-braiding}
Let $\mathbf{M}(X,\rho)$ and $\mathbf{M}(Y,\sigma)$ be simple local $A$-modules.
Then the double braiding in $\fg\text{-}\iC^{\loc}_A$ between $\mathbf{M}(X,\rho)$ and $\mathbf{M}(Y,\sigma)$ is
trivial if and only if the double braiding in $\C$ between $X$ and $Y$ is trivial.
\end{lemma}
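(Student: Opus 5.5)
The approach is to compare the double braiding of $M:=\mathbf{M}(X,\rho)$ and $N:=\mathbf{M}(Y,\sigma)$ in $\fg\text{-}\iC^{\loc}_A$ with double braidings in $\C$. Recall that the braiding on $\iC_A^\loc$ is induced from that of $\iC$: $c^A_{M,N}$ is the unique morphism $M\otimes_A N\to N\otimes_A M$ through which $\pi_{N,M}\circ c_{M,N}$ factors, where $\pi$ denotes the quotient map onto the relative tensor product. Hence the double braiding $c^A_{N,M}c^A_{M,N}$ is characterized by
\[ c^A_{N,M}c^A_{M,N}\circ\pi_{M,N}=\pi_{M,N}\circ c_{N,M}c_{M,N}. \]
Since $\pi_{M,N}$ is an epimorphism, it follows that $c^A_{N,M}c^A_{M,N}=\id$ if and only if $\pi_{M,N}\circ(c_{N,M}c_{M,N}-\id)=0$.

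The ``if'' direction is the easy one. By \eqref{eq:M-X-rho-decomposition}, $M\cong\bigoplus_i(W\otimes X)\otimes E_{g_i}$ and $N\cong\bigoplus_j(W'\otimes Y)\otimes E_{h_j}$ in $\iC$. Assume $c_{Y,X}c_{X,Y}=\id$. By Proposition~\ref{prop:locality-criterion}, both $X$ and $Y$ lie in $\C_\Gamma$, and by \eqref{eq:Eg-Eh-commutative} the $E_g$ centralize one another. The hexagon identities express the double braiding of a tensor product in terms of the double braidings of its factors, so every pair of simple summands of $M$ and $N$ has trivial double braiding. Therefore $c_{N,M}c_{M,N}=\id_{M\otimes N}$ already in $\iC$, and the double braiding over $A$ is trivial.

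For the converse, I would reduce to free modules. Since $M$ is a quotient of $X\otimes A$ and $N$ is a quotient of $Y\otimes A$, and since $(X\otimes A)\otimes_A(Y\otimes A)\cong(X\otimes Y)\otimes A$, the double braiding of $M$ and $N$ is the map induced, through the surjection $(X\otimes Y)\otimes A\twoheadrightarrow M\otimes_A N$, by $c_{Y,X}c_{X,Y}\otimes\id_A$. Here I use that $A$ centralizes $X$ and $Y$ and that the $E_g$ centralize one another. The cleaner route is to restrict to the summand $W\otimes X$ of $M$ and the summand $W'\otimes Y$ of $N$, both indexed by the trivial coset. The composite
\[ (W\otimes X)\otimes(W'\otimes Y)\hookrightarrow M\otimes N\xrightarrow{\pi_{M,N}} M\otimes_A N \]
is a monomorphism. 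This holds because $M\otimes_A N\cong \mathbf{L}(X,\rho)\otimes_{A_X}N$ restricted further, which I would check using the explicit decompositions together with the fact that $A$ is free over $A_X$. Granting this, triviality of the double braiding over $A$ gives $(\id\otimes\id)\circ(c_{Y,X}c_{X,Y})$ acting trivially on $W\otimes W'\otimes X\otimes Y$, and hence $c_{Y,X}c_{X,Y}=\id_{X\otimes Y}$.

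The main obstacle is justifying this injectivity on the trivial-coset summand. The issue is that the relative tensor product identifies $X\otimes E_s\otimes Y$ with $X\otimes E_s\otimes Y$ through the $A_X$-action, which could collapse components. I would handle it by choosing a natural splitting. When $|S|\ne0$, one can use the separability of $A_X$ as in the rigidity theorem above. In general, one can instead use the Frobenius property of $A$ (Theorem~\ref{thm:simple-current-ext-is-Fb}), noting that $M\otimes_A N\cong M\otimes_A(Y\otimes A)$-quotient and writing $M\otimes_A(Y\otimes A)\cong M\otimes Y$. After this, the composite with $W\otimes X\otimes W'\otimes Y\subset M\otimes Y$ is visibly a split monomorphism in $\iC$, and naturality of the braiding completes the argument.
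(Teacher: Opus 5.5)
Your ``if'' direction is correct and is the paper's argument. The ``only if'' direction, however, rests entirely on the claim that
\[
(W\otimes X)\otimes(W'\otimes Y)\hookrightarrow \mathbf{M}(X,\rho)\otimes\mathbf{M}(Y,\sigma)\xrightarrow{\ \pi\ }\mathbf{M}(X,\rho)\otimes_A\mathbf{M}(Y,\sigma)
\]
is a monomorphism. This fails in general when $U:=S\cap T\neq\{1\}$, where $S=\Stab_\Gamma(X)$ and $T=\Stab_\Gamma(Y)$. To see why, note that $M\otimes_A N\cong\mathbf{L}(X,\rho)\otimes_{A_X}N$, and the left action of $E_s$ ($s\in S$) permutes the summands $(W'\otimes Y)\otimes E_{y_j}$ of $N$ according to the action of $S$ on $\Gamma/T$, whose stabilizers are $U$. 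So the part of $N$ over the orbit of the trivial coset is induced from $A_U:=\bigoplus_{u\in U}E_u$; it is not free over $A_X$. The corresponding part of $M\otimes_A N$ is therefore $\mathbf{L}(X,\rho)\otimes_{A_U}\mathbf{L}(Y,\sigma)$, in general a proper quotient of $\mathbf{L}(X,\rho)\otimes\mathbf{L}(Y,\sigma)$. In the extreme case $S=T=\Gamma\neq\{1\}$, your summand is all of $M\otimes N$, so you would be claiming that $\pi$ itself is injective. Neither repair you propose helps. Separability of $A_X$ only exhibits $M\otimes_{A_X}N$ as a direct summand of $M\otimes N$. The free-module route embeds $(W\otimes X)\otimes Y$ into $M\otimes_A(Y\otimes A)\cong M\otimes Y$, of which $M\otimes_A N$ is only a quotient. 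The paper's proof rests on the same assertion, namely $\tilde\pi_{00}=\id$ via $A\cong\bigoplus_k A_Y\otimes(E_{z_k}\otimes A_X)$, and that decomposition also requires $U=\{1\}$.

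In fact the ``only if'' implication is false when $U\neq\{1\}$. Take $\C=SU(2)_8$ over $\bC$, with simples $V_0,\dots,V_8$ and $\theta_{V_j}=e^{2\pi i j(j+2)/40}$, and let $\Gamma=\{V_0,V_8\}$. Then $c_{V_8,V_8}=\id=\theta_{V_8}$ and $V_4\otimes V_8\cong V_4$, so $V_4^{\pm}:=\mathbf{M}(V_4,\rho_\pm)$ are simple local modules of dimension $\frac{1+\sqrt5}{2}$. With $B=V_2\oplus V_6$ we have $(V_4\otimes A)\otimes_A(V_4\otimes A)\cong(V_4\otimes V_4)\otimes A\cong A^{\oplus2}\oplus B^{\oplus2}\oplus V_4^+\oplus V_4^-$. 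Each of the four products $V_4^{\epsilon}\otimes_A V_4^{\epsilon'}$ has dimension $\frac{3+\sqrt5}{2}$, hence is either $B$ or $A\oplus V_4^{\pm}$. Since $V_4^+\otimes_A V_4^-\cong V_4^-\otimes_A V_4^+$, this forces $V_4^+\otimes_A V_4^-\cong B$, whose twist $e^{2\pi i/5}$ equals $\theta_{V_4}^2$. So $V_4^+$ and $V_4^-$ have trivial double braiding in $\fg\text{-}\iC_A^{\loc}$, whereas $c_{V_4,V_4}c_{V_4,V_4}$ acts on $V_0\subset V_4\otimes V_4$ by $\theta_{V_4}^{-2}\neq1$.

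Two things survive. First, your argument is correct when $U=\{1\}$: that part of $N$ is then free over $A_X$, and your composite is a split monomorphism. Second, the quantified statement ``$\mathbf{M}(X,\rho)$ double braids trivially with $\mathbf{M}(Y,\sigma)$ for all $\sigma$ implies $c_{Y,X}c_{X,Y}=\id$'' follows from your own observation $M\otimes_A(Y\otimes A)\cong M\otimes Y$, because under \eqref{eq:condition-diamond} $Y\otimes A$ is a direct sum of the $\mathbf{M}(Y,\sigma)$. That weaker version is all Theorem~\ref{thm:fg-local-simple-main-non-deg} actually uses.
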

\begin{proof} 
($\Rightarrow$) We first introduce some notations: We set $S = \Stab_{\Gamma}(X)$ and $T = \Stab_{\Gamma}(Y)$, and let $\{ x_i \}_{i \in I}$, $\{ y_j \}_{j \in J}$ and $\{ z_k \}_{k \in K}$ be representatives of $\Gamma / S$, $\Gamma / T$ and $T \backslash \Gamma / S$, respectively. We assume that the index sets $I$, $J$ and $K$ have a special element $0$ such that $x_0 = y_0 = z_0 = 1$.

By \eqref{eq:M-X-rho-decomposition} and the braiding, we obtain an isomorphism
\begin{equation}\label{eq:double-braiding-proof-1}
    \mathbf{M}(X, \rho) \otimes \mathbf{M}(Y, \sigma)
    \cong \bigoplus_{i \in I} \bigoplus_{j \in J}
    (\mathbf{L}(X, \rho) \otimes \mathbf{L}(Y, \sigma)) \otimes (E_{x_i} \otimes E_{y_j}).
\end{equation}

Since $A \cong \bigoplus_{k \in K} A_Y \otimes (E_{z_k} \otimes A_X)$ as $A_Y$-$A_X$-bimodules, we have
\begin{equation*}
    \mathbf{M}(Y, \sigma)
    = \mathbf{L}(Y, \sigma) \otimes_{A_Y} A
    \cong \bigoplus_{k \in K} \mathbf{L}(Y, \sigma) \otimes (E_{z_k} \otimes A_X)
    \cong \bigoplus_{k \in K} A_X \otimes (\mathbf{L}(Y, \sigma) \otimes E_{z_k})
\end{equation*}
as $A_X$-modules. Hence,
\begin{equation}\label{eq:double-braiding-proof-2}
\begin{aligned}
    \mathbf{M}(X, \rho) \otimes_A \mathbf{M}(Y, \sigma)
    & = (\mathbf{L}(X, \rho) \otimes_{A_X} A) \otimes_A \mathbf{M}(Y, \sigma)
    \cong \mathbf{L}(X, \rho) \otimes_{A_X} \mathbf{M}(Y, \sigma) \\
    & \cong \bigoplus_{k \in K} (\mathbf{L}(X, \rho) \otimes \mathbf{L}(Y, \sigma)) \otimes E_{z_k}
\end{aligned}
\end{equation}
as objects of $\iC$. Let $\pi : \mathbf{M}(X, \rho) \otimes \mathbf{M}(Y, \sigma) \to \mathbf{M}(X, \rho) \otimes_A \mathbf{M}(Y, \sigma)$ be the canonical epimorphism, and define the epimorphism $\tilde{\pi}$ so that the following diagram is commutative:
\begin{equation*}
\begin{tikzcd}[column sep = 64pt]
    \mathbf{M}(X, \rho) \otimes \mathbf{M}(Y, \sigma)
    \arrow[d, "{\pi}"'] \arrow[r, "{\eqref{eq:double-braiding-proof-1}}", "{\cong}"']
    & \bigoplus_{i \in I} \bigoplus_{j \in J} (\mathbf{L}(X, \rho) \otimes \mathbf{L}(Y, \sigma)) \otimes (E_{x_i} \otimes E_{y_j})
    \arrow[d, "{\tilde{\pi}}"]  \\
    \mathbf{M}(X, \rho) \otimes_A \mathbf{M}(Y, \sigma)
    \arrow[r, "{\eqref{eq:double-braiding-proof-2}}", "{\cong}"']
    & \bigoplus_{k \in K} (\mathbf{L}(X, \rho) \otimes \mathbf{L}(Y, \sigma)) \otimes E_{z_k}
\end{tikzcd}    
\end{equation*}
The epimorphism $\widetilde{\pi}$ maps the $(i, j)$-th component $(\mathbf{L}(X, \rho) \otimes \mathbf{L}(Y, \sigma)) \otimes (E_{x_i} \otimes E_{y_j})$ to the $k$-th component, where $k$ is the unique index such that $x_i y_j \in T z_k S$. We let
\begin{equation*}
    \tilde{\pi}_{i,j}:
    (\mathbf{L}(X, \rho) \otimes \mathbf{L}(Y, \sigma)) \otimes (E_{x_i} \otimes E_{y_j})
    \to (\mathbf{L}(X, \rho) \otimes \mathbf{L}(Y, \sigma)) \otimes E_{z_k}
    \quad (x_i y_j \in T z_k S)
\end{equation*}
be the morphism induced by $\tilde{\pi}$. Although the morphism $\tilde{\pi}_{i j}$ may be tedious to be written down explicitly in general (as it involves the actions of $t \in T$ and $s \in S$ such that $x_i y_j = t z_k s$), one can easily verify that $\tilde{\pi}_{0 0}$ is the identity morphism.

We consider the double braiding $M$ for $\mathbf{M}(X, \rho)$ and $\mathbf{M}(Y, \sigma)$. Since $X, Y \in \mathcal{C}_{\Gamma}$, and since the invertible summands of $A$ centralize each other, we have a commutative diagram
\begin{equation*}
\begin{tikzcd}[column sep = 32pt]
    \mathbf{M}(X, \rho) \otimes \mathbf{M}(Y, \sigma)
    \arrow[d, "{M_{\mathbf{M}(X, \rho), \mathbf{M}(Y, \sigma)}}"']
    \arrow[r, "{\eqref{eq:double-braiding-proof-1}}", "{\cong}"']
    & \bigoplus_{i \in I} \bigoplus_{j \in J} (\mathbf{L}(X, \rho) \otimes \mathbf{L}(Y, \sigma)) \otimes (E_{x_i} \otimes E_{y_j})
    \arrow[d, "{\bigoplus_{i \in I} \bigoplus_{j \in J} M_{\mathbf{L}(X, \rho), \mathbf{L}(Y, \sigma)} \otimes \id}"]  \\
    \mathbf{M}(X, \rho) \otimes \mathbf{M}(Y, \sigma)
    \arrow[r, "{\eqref{eq:double-braiding-proof-1}}", "{\cong}"']
    & \bigoplus_{i \in I} \bigoplus_{j \in J} (\mathbf{L}(X, \rho) \otimes \mathbf{L}(Y, \sigma)) \otimes (E_{x_i} \otimes E_{y_j})
\end{tikzcd}
\end{equation*}

Let $M^A$ denote the double braiding of the category $\iC_A^{\loc}$ of local modules.
By definition, $M^A$ is characterized by the equation $\pi \circ M_{\mathbf{M}(X, \rho), \mathbf{M}(Y, \sigma)} = M^A_{\mathbf{M}(X, \rho), \mathbf{M}(Y, \sigma)} \circ \pi$. By the above commutative diagram, we see that there are isomorphisms $m_k$ ($k \in K$) such that the following diagram is commutative:
\begin{equation*}
\begin{tikzcd}
    \mathbf{M}(X, \rho) \otimes_A \mathbf{M}(Y, \sigma)
    \arrow[d, "{M^A_{\mathbf{M}(X, \rho), \mathbf{M}(Y, \sigma)}}"']
    \arrow[r, "{\eqref{eq:double-braiding-proof-2}}", "{\cong}"']
    & \bigoplus_{k \in K} (\mathbf{L}(X, \rho) \otimes \mathbf{L}(Y, \sigma)) \otimes E_{z_k}
    \arrow[d, "{\bigoplus_{k \in K} m_k}"] \\
    \mathbf{M}(X, \rho) \otimes_A \mathbf{M}(Y, \sigma)
    \arrow[r, "{\eqref{eq:double-braiding-proof-2}}", "{\cong}"']
    & \bigoplus_{k \in K} (\mathbf{L}(X, \rho) \otimes \mathbf{L}(Y, \sigma)) \otimes E_{z_k}
\end{tikzcd}
\end{equation*}
By the above commutative diagrams and the definition of $M^A$, we have
\begin{equation*}
    m_0 = M_{\mathbf{L}(X, \rho), \mathbf{L}(Y, \sigma)} \otimes \id_{\unit}.
\end{equation*}

Now we assume that $M^A_{\mathbf{M}(X, \rho), \mathbf{M}(Y, \sigma)}$ is the identity morphism. Then $m_k$ is the identity morphism for all $k \in K$. Thus, in particular, $m_0 = \id$. Since $\mathbf{L}(X, \rho)$ and $\mathbf{L}(Y, \sigma)$ are direct sums of finitely many copies of $X$ and $Y$, respectively, $m_0 = \id$ implies that $M_{X,Y} = \id$. We have proved the `only if' part of this theorem.

($\Leftarrow$) For the converse, assume that $M_{X,Y}=\id_{X\otimes Y}$. Since $\mathbf{L}(X,\rho)$ and $\mathbf{L}(Y,\sigma)$ are direct sums of finitely many copies of $X$ and $Y$, respectively, we
have $M_{\mathbf{L}(X,\rho),\mathbf{L}(Y,\sigma)}=\id$. Also, by \eqref{eq:Eg-Eh-commutative}, $M_{E_{x_i},E_{y_j}}=\id$ for all $i\in I$ and $j\in
J$. Hence the right vertical morphism in the above commutative diagram is the identity, and
therefore $M_{\mathbf{M}(X,\rho),\mathbf{M}(Y,\sigma)}=\id$. Now the relation
\[
\pi \circ M_{\mathbf{M}(X,\rho),\mathbf{M}(Y,\sigma)}
=
M^A_{\mathbf{M}(X,\rho),\mathbf{M}(Y,\sigma)} \circ \pi
\]
implies, since $\pi$ is an epimorphism, that
\[
M^A_{\mathbf{M}(X,\rho),\mathbf{M}(Y,\sigma)}=\id.
\]
This proves the `if' part.
\end{proof}


\subsubsection{Ribbon structure and non-degeneracy}
Let $\G:=\langle E_g\mid g\in \Gamma\rangle\subset \C$. By \eqref{eq:Eg-Eh-commutative}, $\G$ is symmetric, hence $\G\subset \C_\Gamma=\Z_{(2)}(\G\subset \C)$. Now we can state the main result of this section.

\begin{theorem}\label{thm:fg-local-simple-main}
  Let $\C$ be a (Frobenius) braided tensor category such that condition \eqref{eq:condition-diamond} is satisfied. 
  \begin{enumerate}
    \item Then, $\fg\text{-}\iC_A^{\loc}$ is a (Frobenius) braided tensor category. 
    \item If $\C$ is ribbon and $\theta_{E_g}=\id_{E_g}$ for all $g\in \Gamma$, then $\fg\text{-}\iC_A^{\loc}$ is a ribbon tensor category. 
    \item When $\C$ has enough projectives, $\fg\text{-}\iC_A^{\loc}$ is a braided finite tensor category if and only if $\Irr(\C_\Gamma)/\Gamma$ is finite.
  \end{enumerate}
\end{theorem}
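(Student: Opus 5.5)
The plan is to reduce everything to the general criteria of Section~\ref{sec:framework}, using the simple current results already established. Since $\C$ is braided and $A$ is commutative, the half-braiding $\sigma=c_{-,A}$ (extended to $\iC$) makes $A$ central commutative in $\Z(\iC)$. So Corollary~\ref{cor:rigidity-simple-modules} applies under condition \eqref{eq:condition-diamond}: every simple right $A$-module $\mathbf{M}(X,\rho)$ is a direct summand of a free module $X^{\oplus m}\otimes A$ and is therefore rigid. Theorem~\ref{thm:ind-exact} then shows that $A$ is ind-exact. By Proposition~\ref{prop:A-artinian}, $A$ is Artinian. It is haploid, since $\Hom_{\iC}(\unit,A)=\bigoplus_g\Hom(\unit,E_g)=\Hom(\unit,E_1)\cong\kk$, because $E_g\not\cong\unit$ for $g\neq 1$. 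Theorem~\ref{thm:indexact-multitensor-local} now gives that $\fg\text{-}\iC_A^{\loc}$ is a braided tensor category, and that it is Frobenius whenever $\C$ is. This proves (a).

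For (b), I would apply Theorem~\ref{thm:local-modules-ribbon}. Theorem~\ref{thm:simple-current-ext-is-Fb} shows that $A$ is Frobenius, and part (a) gives rigidity of $\D$. It remains to check $\theta_A=\id_A$. Since the twist on $\iC$ is the filtered-colimit extension of the twist on $\C$, and $A=\bigoplus_{g}E_g$ is a coproduct, naturality gives $\theta_A=\bigoplus_g\theta_{E_g}=\id_A$. One point to verify is that $\theta^A_M=\theta_M$ is a morphism of $A$-modules for local $M$. This is the Kirillov--Ostrik argument already invoked in the proof of Theorem~\ref{thm:local-modules-ribbon}: it uses $\theta_{M\otimes A}=c_{A,M}c_{M,A}(\theta_M\otimes\theta_A)$, locality, and $\theta_A=\id$.

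For (c), assume $\C$ has enough projectives, so $\C$ is Frobenius. By (a), $\fg\text{-}\iC_A^{\loc}$ is a Frobenius braided tensor category, and in particular it has enough projectives. A locally finite abelian category with enough projectives is finite exactly when it has finitely many simple objects, since each simple then has a projective cover of finite length. By Theorem~\ref{thm:artinian-locally-finite}, every simple of $\iC_A$ is finitely generated, so the simple objects of $\fg\text{-}\iC_A^{\loc}$ are exactly those of $\iC_A^{\loc}$. Corollary~\ref{cor:local-number-of-simples}, together with finiteness and nonemptiness of each $I_X$, shows that $\Irr(\iC_A^{\loc})$ is finite if and only if $\Irr(\C_\Gamma)/\Gamma$ is finite.

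The main obstacle I expect is the justification of centrality in (a), namely that the commutative algebra $A\in\iC$, equipped with the braiding as half-braiding, is a commutative algebra in $\Z(\iC)$. This follows from the hexagon axioms and commutativity $\mu c_{A,A}=\mu$, which is standard. A smaller point to check is that the rigid duals of free modules $X\otimes A$ in $\fg\text{-}\iC_A$ are $X^*\otimes A$, which is what the rigidity corollary uses.
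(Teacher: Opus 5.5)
Your proposal is correct and follows the paper's proof essentially step for step. Part (a) goes through rigidity of the simples $\mathbf{M}(X,\rho)$, the Artinian and haploid properties, and Theorem~\ref{thm:indexact-multitensor-local}; (b) through Theorem~\ref{thm:simple-current-ext-is-Fb} and Theorem~\ref{thm:local-modules-ribbon}; (c) through enough projectives together with Corollary~\ref{cor:local-number-of-simples}. The paper leaves several points implicit that you handle correctly: centrality of $A$ via the braiding, haploidness, $\theta_A=\bigoplus_g\theta_{E_g}$, and the identification of simples of $\fg\text{-}\iC_A^{\loc}$ with those of $\iC_A^{\loc}$.
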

\begin{proof}
(a) Recall that $A$ is Artinian and haploid. Moreover, by Corollary~\ref{cor:rigidity-simple-modules}, all simple $A$-modules are rigid. Thus, by Theorem~\ref{thm:indexact-multitensor-local}, $\fg\text{-}\iC_A^\loc$ is a braided (Frobenius) tensor category. 

(b) Note that $A$ is Frobenius by Theorem~\ref{thm:simple-current-ext-is-Fb}, and that $\theta_A=\id_A$ by the assumption on twists. Since $\fg\text{-}\iC_A^\loc$ is rigid by part (a), Theorem~\ref{thm:local-modules-ribbon} implies that it is ribbon. 

(c) By part (a), $\fg\text{-}\iC_A^\loc$ is a locally finite abelian category with enough projectives. Hence, it is braided finite if and only if it has finitely many simple objects. The latter is equivalent to $\Irr(\C_\Gamma)/\Gamma$ being finite by Corollary~\ref{cor:local-number-of-simples}.
\end{proof}

\begin{theorem}\label{thm:fg-local-simple-main-non-deg}
  Let $\C$ be a Frobenius braided tensor category and \textup{char}$(\kk)=0$. If the simple objects in $\C_\Gamma$ that trivially double braid with all other simples in $\C_\Gamma$ are precisely $E_g$ for $g\in \Gamma$, then $\fg\text{-}\iC^{\loc}_A$ is non-degenerate.
\end{theorem}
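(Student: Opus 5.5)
The plan is to reduce to Lemma~\ref{lem:non-deg-Muger-center}. Since $\mathrm{char}(\kk)=0$, condition \eqref{eq:condition-diamond} holds. Theorem~\ref{thm:fg-local-simple-main}(a) then says that $\D:=\fg\text{-}\iC_A^{\loc}$ is a braided Frobenius tensor category. By Lemma~\ref{lem:non-deg-Muger-center}, it therefore suffices to show that every simple object of the M\"uger center $\D'$ is isomorphic to the unit object of $\D$, namely $A$.

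The first step is to describe the simples of $\D$. By Corollary~\ref{cor:local-number-of-simples}, every simple local module has the form $\mathbf{M}(X,\rho)$ with $X\in\Irr(\C_\Gamma)$ and $\rho$ an irreducible projective representation. Every simple of $\C_\Gamma$ arises as such an $X$, since the sets $I_X$ are nonempty.

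The second step is to identify the simples of $\D'$. Let $\mathbf{M}(X,\rho)$ be a simple object of $\D'$. A standard argument via composition series shows that an object lies in $\D'$ if and only if it double braids trivially with all simple objects of $\D$. (This uses exactness of $\otimes_A$ in $\D$, which holds by rigidity, together with naturality of the double braiding; strictly, it is trivial double braiding with all objects that characterizes $\D'$, but membership is tested on generators.) So $\mathbf{M}(X,\rho)$ double braids trivially with $\mathbf{M}(Y,\sigma)$ for every simple $Y\in\C_\Gamma$ and every $\sigma$. By Lemma~\ref{lem:double-braiding}, this means $c_{Y,X}c_{X,Y}=\id$ for all simple $Y\in\C_\Gamma$. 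Hence $X$ is a simple of $\C_\Gamma$ that double braids trivially with all simples of $\C_\Gamma$. By hypothesis $X\cong E_g$ for some $g\in\Gamma$, and then Lemma~\ref{lem:M-X-rho-Eg} gives $\mathbf{M}(X,\rho)\cong A$, the unit of $\D$.

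The last step is to conclude. By Lemma~\ref{lem:non-deg-Muger-center}, $\D'\simeq\vect$, so $\D$ is non-degenerate.

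The main obstacle I expect is the step in the second paragraph that passes from trivial double braiding with simples only to membership in $\D'$. Trivial double braiding with simples may not propagate to non-split extensions, because the double braiding is not determined by its restriction to composition factors. To handle this, I would use the hypothesis differently: argue that the condition on $\mathbf{M}(X,\rho)$ needs only the ``only if'' direction. A simple object of $\D'$ certainly double braids trivially with all simples, so the argument above is valid in the direction actually used. Lemma~\ref{lem:non-deg-Muger-center} only asks about the simples of $\D'$, so no propagation to extensions is needed, and the obstacle disappears.
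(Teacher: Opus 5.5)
Your argument is correct and is essentially the paper's proof: reduce via Theorem~\ref{thm:fg-local-simple-main}(a) and Lemma~\ref{lem:non-deg-Muger-center} to the simples of the M\"uger center, use the \emph{only if} direction of Lemma~\ref{lem:double-braiding} to see that $X$ double braids trivially with every simple of $\C_\Gamma$, and conclude with Lemma~\ref{lem:M-X-rho-Eg}. You should simply delete the claimed ``if and only if'' characterization of $\D'$ by double braiding with simples in your second paragraph. It is false in general for non-semisimple categories, and, as you note yourself, only the trivial direction (a simple object of $\D'$ double braids trivially with all simples of $\D$) is ever used.
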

\begin{proof}
As char$(\kk)=0$, condition \eqref{eq:condition-diamond} is satisfied. Thus, Theorem~\ref{thm:fg-local-simple-main}(a) implies that $\fg\text{-}\iC^{\loc}_A$ is a Frobenius tensor category. Hence, by Lemma~\ref{lem:non-deg-Muger-center}, $\fg\text{-}\iC^{\loc}_A$ is non-degenerate if and only if the only simple object in its M\"uger center is $A$. But, simple objects in $\fg\text{-}\iC_A^\loc$ are $\mathbf{M}(X,\rho)$ for $X\in\C_\Gamma$ and for such an object to lie in the M\"uger center, it has to double braid trivially with all other simple objects. By Lemma~\ref{lem:double-braiding}, this implies that $X$ trivially double braids with all simple objects of $\C_\Gamma$. By assumption of the theorem, $X\cong E_g$ for some $g\in \Gamma$. Now, Lemma~\ref{lem:M-X-rho-Eg} implies that $\mathbf{M}(X,\rho)\cong A$. Thus, the M\"uger center of $\fg\text{-}\iC^{\loc}_A$ is trivial, i.e. $\fg\text{-}\iC^{\loc}_A$ is non-degenerate.
\end{proof}


\section{Examples from quantum (super)group categories}\label{sec:examples}
In this section we study two families of examples coming from unrolled quantum groups $U^H(\mathfrak{g})$ and the unrolled quantum supergroups $\mathfrak{gl}(1|1)$. 
We analyze the corresponding categories of finite-dimensional weight modules and the simple-current local-module categories they produce. We follow the same program:
\begin{enumerate}
\item identify the invertible simple objects and their tensor product law;
\item compute the relevant braiding and twist data on these invertibles;
\item choose commutative simple current algebras and extract consequences for the corresponding category of local modules.
\end{enumerate}
The categorical consequences in step (c) are obtained by applying Theorem~\ref{thm:fg-local-simple-main} to the explicit simple-current data. 

Let $\bZ/2\bZ=\{\bar 0,\bar 1\}$ be the additive group of order $2$. We will sometimes write $M_{S,T}$ to denote the double braiding between two objects $S$ and $T$.


\subsection{Unrolled quantum group categories}
The categorical input follows Creutzig--Rupert \cite{creutzig2022uprolling} (and references therein); we recall only the data needed for applying the simple-current machinery of Section~\ref{sec:simple-current-algebras}. The setting for general sublattices $L'\subseteq \Lambda_{\mathrm{inv}}$ (Proposition~\ref{prop:unrolled-ALprime} and its corollaries) is new.

Fix a simple complex Lie algebra $\mathfrak g$ with Cartan matrix $A=(a_{ij})$,
root lattice $Q\subset \mathfrak h^*$, and weight lattice $P\subset \mathfrak h^*$.
Assume $q$ is a primitive $\ell$-th root of unity such that ord$(q^2)>d_i$ for all $i$. We fix $\log q$ and set $q^z = \exp(z \log q)$ for $z \in \mathbb{C}$. We also set \begin{equation}\label{eq:unrolled-r}
r=\ell \text{ if }\ell\text{ is odd},\qquad r=\ell/2 \text{ if }\ell\text{ is even}. 
\end{equation}
Let $\mathcal C:=\Rep^{\mathrm{wt}}(U_q^H(\mathfrak g))$ be the category of finite-dimensional weight modules for Rupert's unrolled restricted quantum group, obtained from the unrolled quantum group by quotienting by the Hopf ideal generated by the prescribed powers of the root vectors. Concretely, $U_q^H(\mathfrak g)$ is generated by $E_i,F_i,H_i$ together with group-like elements $K_\gamma$ indexed by a fixed lattice between $Q$ and $P$, and a weight module is a finite-dimensional module that decomposes into common $H_i$-eigenspaces such that, for $\gamma=\sum_i k_i\alpha_i$, the operator $K_\gamma$ acts on the $\lambda$-weight space by the scalar $q^{\sum_i k_i d_i\lambda(H_i)}$.
Let $\lambda(H_i)$ denote evaluation of $\lambda\in\mathfrak h^*$ on $H_i$.
By Rupert \cite{rupert2022categories} (see also \cite{creutzig2022uprolling}), $\mathcal C$ is a ribbon Frobenius tensor category with enough projectives (equivalently, projective covers), and its M\"uger center is trivial.


Let $M_{\lambda}$ denote the simple quotient of the Verma module with highest weight $
\lambda\in\mathfrak h^*$. Then
\[
\Irr(\mathcal C)=\{\,M_\lambda \mid \lambda\in\mathfrak h^*\,\}.
\]
The module $M_\lambda$ is one-dimensional, hence invertible, if and only if $E_i$ and $F_i$ act by
zero for all $i$, which is equivalent to $\lambda(H_i)\in \frac{\ell}{2d_i}\bZ$ for all $i$. In that case we write $C_\lambda:=M_\lambda$. Hence
\[
\Lambda_{\mathrm{inv}}
:=
\left\{\lambda\in\mathfrak h^* \,\middle|\, \lambda(H_i)\in \frac{\ell}{2d_i}\bZ\, \forall\,
i\right\}
\]
is isomorphic to the additive group of invertible simple objects in $\mathcal C$, via $\lambda\mapsto C_\lambda$, and for $\lambda,\mu\in \Lambda_{\mathrm{inv}}$ one has $C_\lambda\otimes C_\mu \cong C_{\lambda+\mu}$. On invertibles one has
\[
c_{C_\lambda,C_\mu}=q^{\langle\lambda,\mu\rangle}\tau,
\qquad
M_{C_\lambda,C_\mu}=q^{2\langle\lambda,\mu\rangle}\id.
\]
where $\tau$ is the braiding of $\Vect$ and
\[
\langle\lambda,\mu\rangle:=\sum_{i,j} d_i(A^{-1})_{ij}\lambda(H_i)\mu(H_j)
\]
for the invariant bilinear form on $\mathfrak h^*$. Here $d_i = \langle\alpha_i,\alpha_i\rangle/2$ are the symmetrizers. The twist is \cite[Eq.~(4.12)]{creutzig2022uprolling}:
\begin{equation}\label{eq:unrolled-twist}
\theta_{C_\lambda}=q^{\langle\lambda,\lambda+2(1-r)\rho\rangle}\id_{C_\lambda},
\end{equation}
where $\rho$ is the Weyl vector and $r$ is as in \eqref{eq:unrolled-r}. Thus, for an additive subgroup
$L'\subseteq \Lambda_{\mathrm{inv}}$, the corresponding simple current algebra is
\[
A_{L'}=\bigoplus_{\lambda\in L'} C_\lambda\in \widehat{\mathcal C}.
\]


In the notation of Section~\ref{sec:simple-current-algebras}, write
\[
\beta(\lambda,\mu)=q^{\langle\lambda,\mu\rangle},
\qquad
b(\lambda,\mu)=q^{2\langle\lambda,\mu\rangle},
\qquad
\mathsf{q}(\lambda)=q^{\langle\lambda,\lambda\rangle}=\beta(\lambda,\lambda).
\]
For an additive subgroup $L'\subseteq \mathfrak h^*$, we write
\[
(L')^*:=\{\gamma\in \mathfrak h^* \mid \langle \gamma,\lambda\rangle\in \bZ \text{ for all } \lambda\in L'\}
\]
for the dual lattice with respect to $\langle-,-\rangle$. 

\begin{proposition}\label{prop:unrolled-ALprime}
\begin{enumerate}
\item[\textup{(a)}] The object $A_{L'}$ is a haploid algebra in $\widehat{\mathcal C}$. It is commutative if and only if $\mathsf{q}|_{L'}\equiv 1$. Equivalently, $\langle \lambda,\lambda\rangle\in \ell\bZ$ for $\lambda\in L'$.

\item[\textup{(b)}] Let
\[
\mathcal C_{L'}
:=
\left\{X\in\mathcal C\ \middle|\ c_{C_\lambda,X}\circ c_{X,C_\lambda}=\id_{X\otimes C_\lambda}\ \ \forall\,\lambda\in L'\right\}.
\]
Then $\mathcal C_{L'}=\mathcal C^{\mathrm{int}}_{L'}$, where $\mathcal C^{\mathrm{int}}_{L'}$ is the full subcategory of objects all of whose weights lie in $\frac{\ell}{2}(L')^*$.

\item[\textup{(c)}] Assume that $A_{L'}$ is commutative. Then the orbit set $\Irr(\mathcal C_{L'})/L'$, where $L'$ acts by tensoring with the invertibles $C_\lambda$, is identified with
\[
\Lambda(L')
:=
\frac{\frac{\ell}{2}(L')^*}{L'}.
\]
\end{enumerate}
\end{proposition}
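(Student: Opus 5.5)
For part (a), I will apply Lemma~\ref{lem:group-algebra-comm}. The associator of $\mathcal C$ restricted to one-dimensional modules $C_\lambda$ is the trivial associator of vector spaces. Choosing $\phi_{\lambda,\mu}$ to be the canonical identifications $\kk\otimes\kk\cong\kk$, I get $\omega|_{L'}\equiv 1$, so $\eta\equiv 1$ gives the simple current algebra structure. Haploidity is immediate: $\Hom_{\widehat{\mathcal C}}(\unit,A_{L'})=\bigoplus_{\lambda\in L'}\Hom(C_0,C_\lambda)$, and this is $\kk$ because $C_\lambda\not\cong C_0$ for $\lambda\neq 0$. By Lemma~\ref{lem:group-algebra-comm}(b), commutativity is equivalent to $\mathsf q(\lambda)=q^{\langle\lambda,\lambda\rangle}=1$ for all $\lambda\in L'$. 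Since $q$ is a primitive $\ell$-th root of unity and $q^z=\exp(z\log q)$, this is equivalent to $\langle\lambda,\lambda\rangle\in\ell\bZ$. One subtlety needs care here: $\langle\lambda,\lambda\rangle$ may be non-integral, so I would write $\log q=2\pi i k/\ell$ with $\gcd(k,\ell)=1$. Then $q^{z}=1$ iff $zk/\ell\in\bZ$, i.e. $z\in\ell\bZ$ after accounting for $z$ rational. I expect this to reduce cleanly to the stated form.

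For part (b), I will compute the double braiding $M_{X,C_\lambda}$ on a general weight module $X$. The R-matrix of $U^H_q$ has the form $q^{\sum H\otimes H}\cdot\Theta$, where $\Theta$ involves $E_i\otimes F_i$ terms. Since $E_i,F_i$ act by zero on $C_\lambda$, the quasi-R-matrix part contributes trivially. So on a weight vector of weight $\nu$ in $X$ tensored with $C_\lambda$, the braiding is $q^{\langle\nu,\lambda\rangle}\tau$, and the double braiding is the scalar $q^{2\langle\nu,\lambda\rangle}$. Hence $M_{X,C_\lambda}=\id$ for all $\lambda\in L'$ iff every weight $\nu$ of $X$ satisfies $2\langle\nu,\lambda\rangle\in\ell\bZ$ for all $\lambda\in L'$. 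This is the condition $\nu\in\frac{\ell}{2}(L')^*$, which gives $\mathcal C_{L'}=\mathcal C^{\mathrm{int}}_{L'}$. The main point to verify here is that the R-matrix formula recalled from \cite{creutzig2022uprolling} gives exactly the stated scalar on mixed weight spaces, consistent with $c_{C_\lambda,C_\mu}=q^{\langle\lambda,\mu\rangle}\tau$.

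For part (c), simple objects are $M_\nu$, whose weights are $\nu$ minus elements of the root lattice $Q$. I will first argue that $M_\nu\in\mathcal C_{L'}$ iff $\nu\in\frac{\ell}{2}(L')^*$. This requires $Q\subseteq\frac{\ell}{2}(L')^*$, i.e. $2\langle\alpha,\lambda\rangle\in\ell\bZ$ for $\alpha\in Q$ and $\lambda\in L'$. This follows from the definition of $\Lambda_{\mathrm{inv}}$: $\langle\alpha_i,\lambda\rangle=d_i\lambda(H_i)\in\frac{\ell}{2}\bZ$. Next, $M_\nu\otimes C_\lambda\cong M_{\nu+\lambda}$, because tensoring with a one-dimensional module shifts weights and preserves simplicity and highest weight. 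So the $L'$-action on $\Irr(\mathcal C_{L'})\cong\frac{\ell}{2}(L')^*$ is translation, and it is well defined since $L'\subseteq\frac{\ell}{2}(L')^*$. The latter inclusion holds by part (a): commutativity gives $q^{2\langle\lambda,\mu\rangle}=1$ via \eqref{eq:comm-implies-trivial-monodromy}, so $2\langle\lambda,\mu\rangle\in\ell\bZ$. The orbit set is therefore the quotient $\Lambda(L')$. I expect the main obstacle to be bookkeeping with non-integral exponents of $q$ in parts (a) and (b).
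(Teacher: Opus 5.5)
Your argument is correct and is essentially the paper's proof. For (a) you use Lemma~\ref{lem:group-algebra-comm} together with the trivial associator on the one-dimensional $C_\lambda$. For (b) the Cartan factor of the $R$-matrix gives the scalar $q^{2\langle\nu,\lambda\rangle}$ on weight vectors. For (c) you use $Q\subseteq\frac{\ell}{2}(L')^*$ and $M_\nu\otimes C_\lambda\cong M_{\nu+\lambda}$. You obtain $L'\subseteq\frac{\ell}{2}(L')^*$ from \eqref{eq:comm-implies-trivial-monodromy} rather than by polarizing $\langle\lambda,\lambda\rangle\in\ell\bZ$, which is an inessential variation.

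The one step that is wrong is your resolution of the $\log q$ subtlety.
\begin{itemize}
\item From $kz/\ell\in\bZ$ and $\gcd(k,\ell)=1$ you may conclude $z\in\ell\bZ$ only for integral $z$. For rational $z$ the condition is $z\in\frac{\ell}{k}\bZ$, which equals $\ell\bZ$ only when $k=\pm1$.
\item Non-integral values really occur. For $\mathfrak{sl}_2$ with $\ell$ odd and $\lambda(H)=\ell$, one has $\langle\lambda,\lambda\rangle=\ell^2/2$. With $\log q=4\pi i/\ell$ this gives $q^{\langle\lambda,\lambda\rangle}=e^{2\pi i\ell}=1$, although $\ell^2/2\notin\ell\bZ$.
\end{itemize}

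The right fix is to normalize $\log q=\pm2\pi i/\ell$, as the paper tacitly does. Its description of $\Lambda_{\mathrm{inv}}$ by $\lambda(H_i)\in\frac{\ell}{2d_i}\bZ$ already presupposes this, since one-dimensionality of $M_\lambda$ means $q^{2d_i\lambda(H_i)}=1$. With this normalization $q^z=1\iff z\in\ell\bZ$ for every $z\in\bC$. This single remark also justifies the conditions $2\langle\nu,\lambda\rangle\in\ell\bZ$ you use in (b) and (c).
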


Note that when $A_{L'}$ is commutative, equivalently $\langle \lambda,\lambda\rangle\in \ell\bZ$ for all $\lambda\in L'$, we get $L'\subseteq \frac{\ell}{2}(L')^*$, so the quotient $\Lambda(L')$ is well defined.

\begin{proof}
\textup{(a)} On invertibles, the braiding scalar is $\beta(\lambda,\mu)=q^{\langle\lambda,\mu\rangle}$, and this is a bicharacter on $\Lambda_{\mathrm{inv}}$. Hence the associated abelian $3$-cocycle on the pointed subcategory of invertibles is represented by $(1,\beta)$, so the ordinary associator cocycle $\omega$ restricts trivially to $L'$. By Lemma~\ref{lem:group-algebra-comm}(a), this gives an algebra structure on $A_{L'}$ for any additive subgroup $L'\subseteq \Lambda_{\mathrm{inv}}$. Then Lemma~\ref{lem:group-algebra-comm}(b) implies that $A_{L'}$ is commutative if and only if $\mathsf{q}|_{L'}\equiv 1$. Since $\mathsf{q}(\lambda)=q^{\langle\lambda,\lambda\rangle}$, this is equivalent to $\langle\lambda,\lambda\rangle\in \ell\bZ$ for all $\lambda\in L'$. That $A$ is haploid follows from $\Hom_{\mathcal C}(\unit,C_\lambda)=0$ for $\lambda\neq 0$, hence
$\Hom_{\widehat{\mathcal C}}(\unit,A_{L'})\cong\Hom_{\mathcal C}(\unit,C_0)\cong\kk$.

\textup{(b)} Let $X\in\C$ be a weight module. For a weight vector $w_\gamma\in X$ one has
\[
\bigl(c_{C_\lambda,X}\circ c_{X,C_\lambda}\bigr)(v_\lambda\otimes w_\gamma)
=q^{2\langle\lambda,\gamma\rangle}(v_\lambda\otimes w_\gamma).
\]
Hence $X\in\mathcal C_{L'}$ iff $2\langle\lambda,\gamma\rangle\in \ell\bZ$ for every $\lambda\in L'$ and every weight $\gamma$ of $X$, i.e.\ iff all weights lie in $\frac{\ell}{2}(L')^*$.

\textup{(c)} Since $A_{L'}$ is commutative by \textup{(a)}, Theorems~\ref{thm:classification-1} and \ref{thm:classification-2} together with Proposition~\ref{prop:locality-criterion} show that simple local $A_{L'}$-modules are classified by pairs $(M_\gamma,\rho)$, where $M_\gamma\in \mathcal C_{L'}$ and $\rho$ is an irreducible projective representation of
\[
\Stab_{L'}(M_\gamma)
:=
\{\,\eta\in L' \mid M_\gamma\otimes C_\eta\cong M_\gamma\,\}^{\op}.
\]
Since $M_\gamma\otimes C_\eta\cong M_{\gamma+\eta}$ and $M_\lambda\cong M_\mu$ only for $\lambda=\mu$, this stabilizer is trivial. Hence $\rho$ is necessarily trivial, so simple local $A_{L'}$-modules are parametrized exactly by $L'$-orbits in $\Irr(\mathcal C_{L'})$.

Moreover, $Q\subseteq \tfrac{\ell}{2}(L')^*$ since $\langle \eta,\alpha_i\rangle=d_i\eta(H_i)\in \tfrac{\ell}{2}\bZ$ for $\eta\in L'\subseteq\Lambda_{\mathrm{inv}}$ and each simple root $\alpha_i$; hence if $\gamma\in \tfrac{\ell}{2}(L')^*$, then every weight of $M_\gamma$ also lies in $\tfrac{\ell}{2}(L')^*$.

As $\Irr(\mathcal C)=\{\,M_\gamma\mid \gamma\in\mathfrak h^*\,\}$, part~\textup{(b)} gives that the simple objects of $\mathcal C_{L'}$ are exactly
\[
\{\,M_\gamma\mid \gamma\in \tfrac{\ell}{2}(L')^*\,\}.
\]
Moreover, $M_\gamma\otimes C_\eta \cong M_{\gamma+\eta}$ for $\eta\in L'$,
so two such simples lie in the same $L'$-orbit if and only if their parameters differ by an element of $L'$. This identifies $\Irr(\mathcal C_{L'})/L'$ with $\frac{\frac{\ell}{2}(L')^*}{L'}=\Lambda(L')$.
\end{proof}

\begin{lemma}\label{lem:internal-detector}
Let $M_\gamma\in \C_{L'}$ be simple. If $M_\gamma$ is non-invertible, then there exists
a simple object $M_\alpha\in \C_{L'}$ such that
\[
c_{M_\alpha,M_\gamma}\circ c_{M_\gamma,M_\alpha}\neq \id.
\]
In particular, every simple object of $\Z_{(2)}(\C_{L'})$ is invertible.
\end{lemma}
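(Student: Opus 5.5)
We argue with weights, using the fact that double braiding with one-dimensional invertibles acts by scalars determined by the pairing $\langle-,-\rangle$. Let $M_\gamma\in\C_{L'}$ be simple and non-invertible, so by Proposition~\ref{prop:unrolled-ALprime}(b) and (c) its highest weight satisfies $\gamma\in\frac{\ell}{2}(L')^*$. Being non-invertible means $\gamma\notin\Lambda_{\mathrm{inv}}$. Hence there is an index $i$ with $\gamma(H_i)\notin\frac{\ell}{2d_i}\bZ$, and this is equivalent to $\langle\gamma,\alpha_i\rangle=d_i\gamma(H_i)\notin\frac{\ell}{2}\bZ$. The same computation shows that every weight of $M_\gamma$ lies in $\gamma+Q$, and we already know $Q\subseteq\frac{\ell}{2}(L')^*$.

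The detector will be a suitable simple $M_\alpha$, and the first candidate is an invertible one. Take $\alpha$ in $\Lambda_{\mathrm{inv}}\cap\frac{\ell}{2}(L')^*$, so that $C_\alpha\in\C_{L'}$. Double braiding with the one-dimensional object $C_\alpha$ is computed weightwise: on a weight vector of weight $\mu$ in $M_\gamma$ it acts by $q^{2\langle\alpha,\mu\rangle}$, exactly as in the proof of Proposition~\ref{prop:unrolled-ALprime}(b). It therefore suffices to find such an $\alpha$ with $2\langle\alpha,\gamma\rangle\notin\ell\bZ$.

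If no invertible works, we use a non-invertible detector. Because $M_\gamma$ is non-invertible, $F_i$ acts nontrivially on the highest weight vector for the index $i$ chosen above, so $M_\gamma$ has the weights $\gamma$ and $\gamma-\alpha_i$. Take $\alpha$ to be, for example, a small multiple or fundamental-type weight in $\frac{\ell}{2}(L')^*$ with $2\langle\alpha,\alpha_i\rangle\notin\ell\bZ$. Then the double braiding with $M_\alpha$ cannot be the identity, because it would have to act by compatible scalars on these weight spaces. Concretely, I would use the standard fact from \cite{creutzig2022uprolling} that the double braiding on the highest-weight component of $M_\alpha\otimes M_\gamma$ is $q^{2\langle\alpha,\gamma\rangle}$ times a unipotent correction. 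One then compares two weight components whose scalars differ by $q^{2\langle\alpha,\alpha_i\rangle}\neq1$.

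The main obstacle is the existence of a weight $\alpha\in\frac{\ell}{2}(L')^*$ with $2\langle\alpha,\alpha_i\rangle\notin\ell\bZ$. The set $\frac{\ell}{2}(L')^*$ is a full-rank lattice containing $Q$. So $\alpha=\alpha_i$ itself is available, provided $2\langle\alpha_i,\alpha_i\rangle=4d_i\notin\ell\bZ$, and this follows from the hypothesis $\mathrm{ord}(q^2)>d_i$. The careful part is checking that the monodromy cannot be the identity, given the non-semisimple nature of the tensor product. I would handle this by looking at the highest-weight vector $v_{\alpha_i}\otimes v_\gamma$ and its image under $\Delta(F_i)$, and by using the explicit $R$-matrix $q^{H\otimes H}\Theta$.

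The final claim follows directly: a simple object of $\Z_{(2)}(\C_{L'})$ double braids trivially with every simple of $\C_{L'}$, so by the first part it cannot be non-invertible.
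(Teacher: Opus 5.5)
There is a genuine gap. The step meant to show that the monodromy is nontrivial aims at the wrong quantity, and the specific claims you make there fail.

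First, the claim that $4d_i\notin\ell\bZ$ ``follows from $\mathrm{ord}(q^2)>d_i$'' is false. Here $\mathrm{ord}(q^2)=r$, and $r>d_i$ still allows $r=2d_i$, i.e.\ $\ell=4d_i$ (e.g.\ $\mathfrak{sl}_2$ at $q=\pm i$). In that case $q^{2\langle\alpha_i,\alpha_i\rangle}=q^{4d_i}=1$.

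Second, the comparison of two weight components does not work as described. The weight space of weight $\alpha+\gamma-\alpha_i$ in $M_\alpha\otimes M_\gamma$ is spanned by $v_\alpha\otimes F_iv_\gamma$ and $F_iv_\alpha\otimes v_\gamma$. It is one-dimensional, so that the monodromy acts there by the naive scalar $q^{2\langle\alpha,\gamma-\alpha_i\rangle}$, only when $F_iv_\alpha=0$. That happens exactly when $q^{2\langle\alpha,\alpha_i\rangle}=1$, i.e.\ exactly when your ratio is trivial. Otherwise $\widetilde R$ and $\widetilde R_{21}$ mix the two vectors. One eigenvector is $\Delta(F_i)(v_\alpha\otimes v_\gamma)$, with eigenvalue $q^{2\langle\alpha,\gamma\rangle}$. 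A determinant count ($\widetilde R$ and $\widetilde R_{21}$ are each unipotent on this space) then gives the other eigenvalue as $q^{2\langle\alpha,\gamma\rangle-2\langle\alpha_i,\alpha+\gamma\rangle}$. So no ratio $q^{2\langle\alpha,\alpha_i\rangle}$ appears. The remaining sentences (``I would handle this by \dots'') leave the decisive step unproved.

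The missing observation is that no second component is needed. On the tensor product of two highest weight vectors, every nontrivial term of $\widetilde R$ (and, after the flip, of $\widetilde R_{21}$) puts a positive root vector on a highest weight vector. Hence the monodromy acts on $v_\gamma\otimes v_\alpha$ by exactly $q^{2\langle\alpha,\gamma\rangle}$, which is the fact you yourself quote. Now take the detector $\alpha=\alpha_i$. Then $M_{\alpha_i}\in\C_{L'}$ because its weights lie in $Q\subseteq\frac{\ell}{2}(L')^*$. The scalar is $q^{2\langle\alpha_i,\gamma\rangle}=q^{2d_i\gamma(H_i)}\neq 1$, precisely because $\gamma(H_i)\notin\frac{\ell}{2d_i}\bZ$, which you already derived in your first paragraph.

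This is the paper's argument. The condition to verify is on $\langle\alpha_i,\gamma\rangle$, and the non-invertibility of $M_\gamma$ supplies it directly; it is not a condition on $\langle\alpha,\alpha_i\rangle$. The detour through invertible detectors is also unnecessary.
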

\begin{proof}
As observed in the proof of Proposition~\ref{prop:unrolled-ALprime}\textup{(c)}, one has
$Q\subseteq \frac{\ell}{2}(L')^*$.

Now let $v_\gamma$ and $v_{\alpha_i}$ be highest weight vectors of $M_\gamma$ and $M_{\alpha_i}$.
Write the braiding as $\tau\circ \check R$ with $\check R=\mathcal H\,\widetilde R$, where
$\mathcal H$ acts on weight vectors by $q^{\langle-,-\rangle}$ and
\[
\widetilde R
=
1+\sum_{\nu\in Q_{>0}}\sum_s c_{\nu,s}\,E_{\nu,s}\otimes F_{\nu,s}
\]
is a PBW expansion with each $E_{\nu,s}$ of positive root weight $\nu>0$. Since $v_\gamma$ and
$v_{\alpha_i}$ are highest weight vectors, every nontrivial PBW term kills the first tensor factor, so
\[
\widetilde R(v_\gamma\otimes v_{\alpha_i})=v_\gamma\otimes v_{\alpha_i},
\qquad
\widetilde R(v_{\alpha_i}\otimes v_\gamma)=v_{\alpha_i}\otimes v_\gamma.
\]
Therefore
\[
\bigl(c_{M_{\alpha_i},M_\gamma}\circ c_{M_\gamma,M_{\alpha_i}}\bigr)(v_\gamma\otimes v_{\alpha_i})
=
q^{2\langle\gamma,\alpha_i\rangle}(v_\gamma\otimes v_{\alpha_i})
=
q_i^{2\gamma(H_i)}(v_\gamma\otimes v_{\alpha_i}).
\]
If $M_\gamma$ is non-invertible, then $\gamma(H_i)\notin \frac{\ell}{2d_i}\bZ$ for some $i$, hence
$q_i^{2\gamma(H_i)}\neq 1$. For this $i$, we have $\alpha_i\in Q\subseteq \frac{\ell}{2}(L')^*$, so
$M_{\alpha_i}\in \C_{L'}$ and the above monodromy is not the identity. The final statement
follows immediately.
\end{proof}

\begin{corollary}\label{cor:unrolled-ALprime-combined}
Assume that $A_{L'}$ is commutative.
\begin{enumerate}
\item\label{cor:unrolled-ALprime-ribbon} $\fg\text{-}\widehat{\mathcal C}^{\loc}_{A_{L'}}$ is ribbon if and only if
$2(1-r)\langle\lambda,\rho\rangle\in \ell\bZ$ for all $\lambda\in L'$.
\item\label{cor:unrolled-ALprime-finite} $\fg\text{-}\widehat{\mathcal C}^{\loc}_{A_{L'}}$ is braided finite if and only if $\Lambda(L')$ is finite, equivalently $\rank(L')=\rank(P)$.
\item\label{cor:unrolled-ALprime-nondeg} $\fg\text{-}\widehat{\mathcal C}^{\loc}_{A_{L'}}$ is non-degenerate if the induced bicharacter on $\Lambda(L')$ given by
$\overline b([\lambda],[\mu])=q^{2\langle\lambda,\mu\rangle}$
has trivial radical.
\end{enumerate}
\end{corollary}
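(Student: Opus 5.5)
The three parts follow from Theorem~\ref{thm:fg-local-simple-main}, Theorem~\ref{thm:fg-local-simple-main-non-deg} and Proposition~\ref{prop:unrolled-ALprime}; we take them in turn. Throughout, $\mathcal C$ is a ribbon Frobenius tensor category and $\mathrm{char}\,\bC=0$, so condition \eqref{eq:condition-diamond} holds. Since $A_{L'}$ is commutative and haploid, Theorem~\ref{thm:fg-local-simple-main}(a) shows that $\fg\text{-}\widehat{\mathcal C}^{\loc}_{A_{L'}}$ is a Frobenius braided tensor category.

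For (a), Theorem~\ref{thm:fg-local-simple-main}(b) gives ribbon as soon as $\theta_{C_\lambda}=\id$ for all $\lambda\in L'$. By \eqref{eq:unrolled-twist} and commutativity, $\langle\lambda,\lambda\rangle\in\ell\bZ$, so
\[
\theta_{C_\lambda}=q^{\langle\lambda,\lambda\rangle+2(1-r)\langle\lambda,\rho\rangle}\,\id=q^{2(1-r)\langle\lambda,\rho\rangle}\,\id .
\]
This is the identity exactly when $2(1-r)\langle\lambda,\rho\rangle\in\ell\bZ$, because $q$ is a primitive $\ell$-th root of unity and the exponent is an integer combination of the relevant pairings. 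That settles the "if" direction.

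For "only if", the twist is an automorphism of the identity functor, so we should read "ribbon" as "ribbon with the twist inherited from $\mathcal C$", namely $\theta^A_M=\theta_M$. The algebra $A_{L'}$ is itself a local module, and it is the unit of the category. A ribbon twist satisfies $\theta_\unit=\id$, so $\theta_A$ must be the identity. Restricting to the summand $C_\lambda$ then forces $\theta_{C_\lambda}=\id$ for every $\lambda\in L'$. We expect this direction to be the main subtlety: one has to check that the twist candidate is a well-defined natural transformation on $\fg\text{-}\widehat{\mathcal C}^{\loc}_{A_{L'}}$ only when $\theta_A$ is an $A$-module map. We will argue that any ribbon structure in question is the inherited one, which is how Theorem~\ref{thm:local-modules-ribbon} builds it.

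For (b), Theorem~\ref{thm:fg-local-simple-main}(c) reduces braided finiteness to finiteness of $\Irr(\mathcal C_{L'})/L'$. By Proposition~\ref{prop:unrolled-ALprime}(c) this set is $\Lambda(L')$. The group $\frac{\ell}{2}(L')^*$ is a discrete subgroup of $\mathfrak h^*$ only up to a divisible part. If $\rank L'<\rank P$, then $(L')^*$ contains a real subspace of positive dimension and $\Lambda(L')$ is infinite (indeed uncountable). If $\rank L'=\rank P$, then $L'$ is a full lattice, so $(L')^*$ is a lattice of the same rank and the quotient of two full-rank lattices is finite. Here we must remember that $\mathfrak h^*$ is a complex vector space and the pairing is taken over $\bC$; full rank should be understood with respect to the real span.

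For (c), we apply Theorem~\ref{thm:fg-local-simple-main-non-deg}. Let $M_\gamma$ be a simple object of $\mathcal C_{L'}$ that double braids trivially with all simples of $\mathcal C_{L'}$. By Lemma~\ref{lem:internal-detector}, $M_\gamma=C_\gamma$ is invertible with $\gamma\in\frac{\ell}{2}(L')^*$. The monodromy of $C_\gamma$ with $C_\mu$ is $q^{2\langle\gamma,\mu\rangle}$, so $[\gamma]$ lies in the radical of $\overline b$, which we assume trivial. Hence $\gamma\in L'$, and the hypothesis of Theorem~\ref{thm:fg-local-simple-main-non-deg} holds, giving non-degeneracy.

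One point needs care: we only use monodromy against invertibles $C_\mu$ with $\mu\in\frac{\ell}{2}(L')^*$. Such $C_\mu$ exist only when $\mu\in\Lambda_{\mathrm{inv}}$, so $\overline b$ must be interpreted on classes of parameters via highest-weight vectors. We will compute the monodromy of $C_\gamma$ with a general $M_\mu$ on its highest weight vector as $q^{2\langle\gamma,\mu\rangle}$, exactly as in the proof of Lemma~\ref{lem:internal-detector}.
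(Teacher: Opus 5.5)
Your proposal is correct and follows the paper's proof essentially step for step: (a) via Theorem~\ref{thm:fg-local-simple-main}(b) and the twist formula \eqref{eq:unrolled-twist}, (b) via Theorem~\ref{thm:fg-local-simple-main}(c) and Proposition~\ref{prop:unrolled-ALprime}(c), and (c) via Lemma~\ref{lem:internal-detector} and Theorem~\ref{thm:fg-local-simple-main-non-deg}, using monodromy on highest-weight vectors. The differences are only elaborations: you prove the rank criterion in (b) directly where the paper cites Creutzig--Rupert, and you state explicitly that the ``only if'' in (a) refers to the inherited twist and follows from $\theta^A_A=\id_A$ on the unit $A$, whereas the paper simply asserts that Theorem~\ref{thm:fg-local-simple-main}(b) gives an equivalence.
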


\begin{proof}
(a) By Theorem~\ref{thm:fg-local-simple-main}(b), being ribbon is equivalent to $\theta_{C_\lambda}=\id$ for all $\lambda\in L'$.
From \eqref{eq:unrolled-twist},
$\theta_{C_\lambda}
=
q^{\langle\lambda,\lambda\rangle}\,q^{2(1-r)\langle\lambda,\rho\rangle}\id_{C_\lambda}$.
By commutativity of $A_{L'}$, $\langle\lambda,\lambda\rangle\in \ell\bZ$. So, the first factor is $1$, and the criterion is exactly
$2(1-r)\langle\lambda,\rho\rangle\in \ell\bZ$ for all $\lambda\in L'$.

(b) The general criterion is Theorem~\ref{thm:fg-local-simple-main}(c):
braided finiteness is equivalent to finiteness of $\Irr(\mathcal C_{L'})/L'$.
By Proposition~\ref{prop:unrolled-ALprime}(c), this orbit set is $\Lambda(L')$.
The equivalence $\Lambda(L')$ finite $\Longleftrightarrow \rank(L')=\rank(P)$ is the lattice computation in \cite[Theorem~4.2]{creutzig2022uprolling}.

(c) By Theorem~\ref{thm:fg-local-simple-main-non-deg}, non-degeneracy holds if every
simple object of $\Z_{(2)}(\C_{L'})$ lies in $\langle C_\lambda\mid \lambda\in L'\rangle$.
By Lemma~\ref{lem:internal-detector}, every simple object of $\Z_{(2)}(\C_{L'})$ is invertible. Thus
$\Z_{(2)}(\C_{L'})$ consists only of invertibles $C_\gamma$ with
$\gamma\in \Lambda_{\mathrm{inv}}\cap \frac{\ell}{2}(L')^*$. For such $C_\gamma$, the double braiding with
$M_\delta\in\C_{L'}$ is $q^{2\langle\gamma,\delta\rangle}\id$, so
$C_\gamma\in\Z_{(2)}(\C_{L'})$ if and only if $q^{2\langle\gamma,\delta\rangle}=1$ for all
$\delta\in\frac{\ell}{2}(L')^*$, i.e.\ $[\gamma]$ lies in the radical of $\overline b$ on $\Lambda(L')$.
Hence, if the radical of $\overline b$ is trivial, then every simple object of $\Z_{(2)}(\C_{L'})$
lies in $\langle C_\lambda\mid\lambda\in L'\rangle$, and the claim follows from
Theorem~\ref{thm:fg-local-simple-main-non-deg}.
\end{proof}


\subsection{General linear supergroup \texorpdfstring{$\fgl(1|1)$}{gl(1|1)}}\label{subsec:example-glsupergroup}
Let $U^E_q(\fgl(1|1))$ denote the unrolled quantum supergroup of $\fgl(1|1)$ at a root of unity $q=e^{2\pi i/r}$ ($r\geq 3$) \cite[\S2.2]{geer2025three}; it is a Hopf superalgebra with even generators $E,G,K^{\pm 1}$ and odd generators $X,Y$.
Let $\C:=\D^{q,\mathrm{int}}$ denote the category of finite-dimensional integral weight $U^E_q(\fgl(1|1))$-supermodules.
By a weight module we mean a finite-dimensional supermodule on which the commuting even generators $E$ and $G$ act semisimply; thus every weight vector $v$ has a weight $\lambda=(\lambda_E,\lambda_G)\in\bC^2$ characterized by
\[
Ev=\lambda_E v,\qquad Gv=\lambda_G v.
\]
The adjective ``integral'' means that all $G$-weights satisfy $\lambda_G\in\bZ$.
Then $\C$ is a ribbon tensor category with enough projectives \cite[Theorem~2.14]{geer2025three}. Throughout, $q^z:=\exp(z\,\log q)$ with $\log q=2\pi i/r$.


\subsubsection{Simple objects and their fusion}
The simple objects of $\C$ are \cite[\S2.3.2]{geer2025three}:
\begin{itemize}
\item the one-dimensional modules $\varepsilon\!\left(\frac{n r}{2},\,b\right)_{\bar p}$, where $n\in\bZ$, $b\in\bZ$ and $\bar p\in\bZ/2\bZ$: it has a basis vector $v$ of parity $\bar p$ such that
\[
  Ev=\tfrac{n r}{2}v,\qquad   
  Gv=bv,\qquad
  Xv=0,\qquad
  Yv=0.
\]
\item the $2$-dimensional quantum Kac modules $V(\alpha,a)_{\bar p}$, where $\alpha\in\bC\setminus\frac{r}{2}\bZ$, $a\in\bZ$ and $\bar p\in\bZ/2\bZ$: it has two basis vectors $v$ (of degree $\bar p$) and $v'$ (of degree $\bar p+\bar 1$) such that
\begin{equation}\label{eq:gl11-Kac-module}
\begin{aligned}
  Ev=\alpha v,\qquad Gv=av,\qquad & Xv=0,\qquad Yv=v',\\
  Ev'=\alpha v',\qquad Gv'=(a-1)v',\qquad & Xv'=[\alpha]_q v,\qquad Yv'=0.
\end{aligned}
\end{equation}
\end{itemize}
Note that the unit object is $\varepsilon(0,0)_{\bar 0}$.

The tensor products between simple objects are computed in \cite[\S2.3.3]{geer2025three}. Here we only recall the formulas relevant for the subsequent analysis of simple current extensions. For $n,n',b,b',a\in\bZ$, $\bar p,\bar p',\bar q\in\bZ/2\bZ$ and $\alpha\in\bC\setminus\frac r2\bZ$ one has
\begin{align*}
\varepsilon\!\left(\frac{n r}{2},b\right)_{\bar p}\otimes
\varepsilon\!\left(\frac{n' r}{2},b'\right)_{\bar p'}
&\cong
\varepsilon\!\left(\frac{(n+n')r}{2},\,b+b'\right)_{\bar p+\bar p'},\\
V(\alpha,a)_{\bar p}\otimes \varepsilon\!\left(\frac{n r}{2},b\right)_{\bar q}
&\cong
V\!\left(\alpha+\frac{n r}{2},\,a+b\right)_{\bar p+\bar q},
\end{align*}
where all sums of $\bar p$'s are taken in $\bZ/2\bZ$.


\subsubsection{Braiding between simples}
The braiding in $\C$ is given by
\[
  c_{V,W}=\tau_{V,W}\circ \widetilde R_{V,W}\circ \Upsilon_{V,W},
\]
where, for weight vectors $v\in V$ of weight $\lambda$ and $w\in W$ of weight $\mu$, we have
\[
  \Upsilon_{V,W}(v\otimes w)
  = q^{-\lambda_E\mu_G-\lambda_G\mu_E}(v\otimes w),
  \qquad
  \tau_{V,W}(v\otimes w)=(-1)^{\bar v\,\bar w}(w\otimes v),
\]
and $\widetilde R$ is left multiplication by
$1+(q-q^{-1})(X\otimes Y)(K\otimes K^{-1})$.

For the one-dimensional modules we have $X=Y=0$, hence $\widetilde R=\id$.
Thus for $\varepsilon=\varepsilon(nr/2,b)_{\bar p}$ and $\varepsilon'=\varepsilon(n'r/2,b')_{\bar p'}$
with basis vectors $v,v'$, respectively, we obtain
\begin{equation}\label{eq:Dq-eps-braiding}
  c_{\varepsilon,\varepsilon'}(v\otimes v')
  = (-1)^{\bar p\,\bar p'} q^{\frac{(-n b'-bn')r}{2}} (v'\otimes v).
\end{equation}
In particular, the double braiding is scalar:
\begin{equation}\label{eq:Dq-eps-double-braiding}
  c_{\varepsilon',\varepsilon}\circ c_{\varepsilon,\varepsilon'}
  = q^{-(n b' + n' b)r}\,\id_{\varepsilon\otimes \varepsilon'}
  = \id_{\varepsilon\otimes \varepsilon'}.
\end{equation}

Let $V=V(\alpha,a)_{\bar p}$ be a $2$-dimensional quantum Kac module and let
$\varepsilon=\varepsilon(nr/2,b)_{\bar q}$ be $1$-dimensional with basis vector $w$.
Choose a homogeneous weight basis $\{v,v'\}$ of $V$ satisfying \eqref{eq:gl11-Kac-module}. 
Since $Xw=Yw=0$, the correction term in $\widetilde R$ vanishes on both $V\otimes \varepsilon$ and
$\varepsilon\otimes V$, hence $\widetilde R=\id$ in both cases. Therefore
\begin{align*}
c_{V,\varepsilon}(v\otimes w)
=(-1)^{\bar p\,\bar q}\,q^{-\alpha b-a\frac{nr}{2}}\,(w\otimes v),
\;\;&
c_{\varepsilon,V}(w\otimes v)
=(-1)^{\bar p\,\bar q}\,q^{-\frac{nr}{2}a-b\alpha}\,(v\otimes w),
\\
c_{V,\varepsilon}(v'\otimes w)
=(-1)^{(\bar p+\bar1)\,\bar q}\,q^{-\alpha b-(a-1)\frac{nr}{2}}\,(w\otimes v'),
\;\;&
c_{\varepsilon,V}(w\otimes v')
=(-1)^{(\bar p+\bar1)\,\bar q}\,q^{-\frac{nr}{2}(a-1)-b\alpha}\,(v'\otimes w).
\end{align*}
In particular, the parity signs cancel in the double braiding. Thus, the double braiding is diagonal on
the weight basis, and since $q^r=1$ we get
\begin{equation}\label{eq:Dq-Kac-eps-double-braiding}
c_{\varepsilon,V}\circ c_{V,\varepsilon}
=
q^{-2\alpha b}\,\id_{V\otimes \varepsilon}.  
\end{equation}


\subsubsection{M\"uger center of $\C$}
\begin{theorem}\label{thm:gl11-muger-center}
The M\"uger center $\Z_{(2)}(\C)$ is a full tensor subcategory whose simple objects are
\[
\varepsilon\!\left(\frac{nr}{2},0\right)_{\bar p}\qquad (n\in\bZ,\ \bar p\in\bZ/2\bZ).
\]
\end{theorem}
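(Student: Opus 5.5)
The plan is to decide, for each simple object of $\C$ in the list from \cite[\S2.3.2]{geer2025three}, whether it double braids trivially with \emph{every} object of $\C$, not only with the simples. For the inclusion ``$\supseteq$'' I will check the double braiding of $\varepsilon(nr/2,0)_{\bar p}$ directly against an arbitrary weight module $W$. For ``$\subseteq$'' it suffices to find, for every other simple, a single simple object with which its double braiding is nontrivial.

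First, let $\varepsilon=\varepsilon(nr/2,0)_{\bar p}$ with basis vector $w$, and let $W\in\C$ be arbitrary with a homogeneous weight vector $u$ of weight $\mu$.
\begin{itemize}
\item In $\widetilde R_{\varepsilon,W}$ and $\widetilde R_{W,\varepsilon}$ the correction term $(q-q^{-1})(X\otimes Y)(K\otimes K^{-1})$ acts by zero. The reason is that $K^{\pm1}$ act on weight vectors by scalars, while $X$ (respectively $Y$) annihilates $w$ whenever it hits the $\varepsilon$-factor. So in both orders the operator $\widetilde R$ is the identity.
\item The super signs from $\tau$ cancel in the composite $c_{W,\varepsilon}\circ c_{\varepsilon,W}$, exactly as in the computation leading to \eqref{eq:Dq-Kac-eps-double-braiding}.
\item The two $\Upsilon$ factors each contribute $q^{-\frac{nr}{2}\mu_G-0\cdot\mu_E}$.
\end{itemize}
Hence the double braiding acts on $w\otimes u$ by $q^{-nr\mu_G}$. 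This equals $1$ because $\mu_G\in\bZ$ (integrality) and $q^r=1$. So $\varepsilon(nr/2,0)_{\bar p}\in\Z_{(2)}(\C)$.

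Second, I exclude the remaining simples.
\begin{itemize}
\item For $\varepsilon(nr/2,b)_{\bar p}$ with $b\neq 0$, I use \eqref{eq:Dq-Kac-eps-double-braiding}. Its double braiding with $V(\alpha,a)_{\bar 0}$ is $q^{-2\alpha b}$. Choosing, say, $\alpha=r/(4b)+r/2\cdot 0$ perturbed to avoid $\frac r2\bZ$ (for instance $\alpha\notin\frac{r}{2}\bZ\cup\frac{r}{2b}\bZ$ generic), we get $2\alpha b\notin r\bZ$, so $q^{-2\alpha b}=e^{-4\pi i\alpha b/r}\neq 1$.
\item For a Kac module $V(\alpha,a)_{\bar p}$, again by \eqref{eq:Dq-Kac-eps-double-braiding}, the double braiding with $\varepsilon(0,1)_{\bar 0}$ is $q^{-2\alpha}$. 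Since $\alpha\notin\frac r2\bZ$, we have $2\alpha/r\notin\bZ$, hence $q^{-2\alpha}\neq1$.
\end{itemize}
Thus the simple objects of $\Z_{(2)}(\C)$ are exactly those listed.

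Finally, that $\Z_{(2)}(\C)$ is a full tensor subcategory is standard and I will only sketch it. The hexagon axioms give that trivial double braiding is preserved under $\otimes$. Naturality of the braiding gives closure under subquotients. Duals are handled by the usual rigidity argument, or directly: the dual of $\varepsilon(nr/2,0)_{\bar p}$ is $\varepsilon(-nr/2,0)_{\bar p}$.

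The main point requiring care is the first step. One must verify that $\widetilde R$ is trivial on $\varepsilon\otimes W$ and on $W\otimes\varepsilon$ for a \emph{non-simple} $W$, including the ordering of $X\otimes Y$ against the $K$-factors, and that the super signs in $\tau$ genuinely cancel for arbitrary homogeneous $u$. This is where integrality of $G$-weights is essential.
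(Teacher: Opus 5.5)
Your proposal is correct and follows essentially the same route as the paper. Both arguments use a direct weight-vector computation, with $\widetilde R$ trivial and the super signs cancelling, to show that $\varepsilon(\frac{nr}{2},0)_{\bar p}$ double braids trivially with every object via $\mu_G\in\bZ$ and $q^r=1$; both then exclude the Kac modules using $\varepsilon(0,1)_{\bar 0}$ and exclude $\varepsilon(\frac{nr}{2},b)_{\bar p}$ with $b\neq 0$ using a Kac module and \eqref{eq:Dq-Kac-eps-double-braiding}. Your choice of $\alpha$ is phrased clumsily, but $\alpha=r/(4b)$ already lies outside $\frac{r}{2b}\bZ\supseteq\frac{r}{2}\bZ$, so no perturbation is needed.
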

\begin{proof}
Let $S$ be a transparent simple object.  If $S=V(\alpha,a)_{\bar p}$, take $T=\varepsilon(0,1)_{\bar 0}$. By \eqref{eq:Dq-Kac-eps-double-braiding},
\[
M_{S,T}=q^{-2\alpha}\,\id_{S\otimes T}.
\]
Since $\alpha\notin \frac r2\bZ$, we have $q^{-2\alpha}\neq 1$, contradiction. 
Hence $S\cong \varepsilon(\frac{nr}{2},b)_{\bar p}$.
Using \eqref{eq:Dq-Kac-eps-double-braiding} with $T=V(\alpha,0)_{\bar 0}$ gives
\[
M_{S,T}=q^{-2\alpha b}\,\id_{S\otimes T}.
\]
Transparency for all such $T$ forces $b=0$. So every transparent simple is of the form
$\varepsilon(\frac{nr}{2},0)_{\bar p}$.

Conversely, let $S=\varepsilon\!\left(\frac{nr}{2},0\right)_{\bar p}$,  and let $T\in \C$ be arbitrary. Choose a basis vector $v$ of $S$, and let $w\in T$ be a homogeneous weight vector of weight $(\alpha,a)$ and parity $\bar q$. Since $X$ and $Y$ act trivially on $S$, the correction term in $\widetilde R$ vanishes on both $S\otimes T$ and $T\otimes S$. Hence
\[
c_{S,T}(v\otimes w)
=
(-1)^{\bar p\,\bar q}q^{-\frac{nr}{2}a}\,(w\otimes v),
\qquad
c_{T,S}(w\otimes v)
=
(-1)^{\bar p\,\bar q}q^{-a\frac{nr}{2}}\,(v\otimes w).
\]
Therefore
\[
M_{S,T}(v\otimes w)
=
q^{-nra}(v\otimes w)
=
(q^r)^{-na}(v\otimes w)
=
v\otimes w.
\]
Since weight vectors span $T$, it follows that $M_{S,T}=\id_{S\otimes T}$. Thus $S\in \Z_{(2)}(\C)$.
\end{proof}


\subsubsection{The group of invertibles}
The invertible simple objects of $\C$ are the one-dimensional modules $\varepsilon\!\left(\tfrac{n r}{2}, b\right)_{\bar p}$ with $n,b\in\bZ$ and $\bar p\in\{\bar 0,\bar 1\}$. 
The group of simple invertible objects of $\C$ is (non-canonically)
isomorphic to $\bZ\times \bZ \times \bZ/2\bZ$ under addition of parameters $(n,b,\bar p)$.

For $\varepsilon=\varepsilon\!\left(\tfrac{n r}{2},b\right)_{\bar p}$ with $n,b\in\bZ$,
\begin{equation}
  c_{\varepsilon,\varepsilon}
  = (-1)^{\bar p} q^{-nrb}\,\id_{\varepsilon\otimes \varepsilon}
  = (-1)^{\bar p}\,\id_{\varepsilon\otimes \varepsilon}.
\end{equation}

Let $\theta$ denote the ribbon twist of $\C$.
For $\varepsilon(\alpha,b)_{\bar p}$, one can compute $\theta$ directly from
the definition using the explicit right evaluation/coevaluation maps:
\[
  \ev_V(v\otimes f)=(-1)^{\bar f\,\bar v}f(Kv),
  \qquad
  \coev_V(1)=\sum_i (-1)^{\bar v_i} v_i^*\otimes K^{-1}v_i.
\]
Using the same calculation as in \cite[Theorem~2.14]{geer2025three}, we obtain
\begin{equation}\label{eq:Dq-eps-twist}
  \theta_{\varepsilon(\alpha,b)_{\bar p}} = q^{-2\alpha b+\alpha}\,\id_{\varepsilon(\alpha,b)_{\bar p}}.
\end{equation}
For $\alpha=\frac{nr}{2}$ (with $n,b\in\bZ$), this becomes
\[
  \theta_{\varepsilon(\frac{nr}{2},b)_{\bar p}}
  = q^{-nrb+\frac{nr}{2}}\,\id
  = q^{\frac{nr}{2}}\,\id
  = (-1)^n\,\id,
\]
where we used $q^r=1$, so $q^{-nrb}=1$, and $q^{nr/2}=e^{\pi i n}=(-1)^n$.

We now determine exactly which simple current algebras are commutative.
Define the quadratic form $\mathsf{q}:\bZ\times \bZ \times \bZ/2\bZ\to \bC^\times$ by
\begin{equation}\label{eq:gl11-quad-form}
  \mathsf{q}(n,b,\bar p) = (-1)^{\bar p}.
\end{equation}
Hence
\begin{equation}\label{eq:gl11-comm-subgroups}
\mathsf{q}|_{\Gamma}\equiv 1
\iff
\Gamma=L\times\{\bar 0\}\ \text{for some }L\le \bZ^2.
\end{equation}
Thus the commutative simple current algebras in $\widehat{\C}$ are precisely those indexed by subgroups $L\le\bZ^2$:
\[
A_\Gamma=\bigoplus_{(n,b)\in L}\varepsilon\!\left(\tfrac{n r}{2},b\right)_{\bar 0}
\quad
(\Gamma=L\times\{\bar 0\}).
\]
We denote by $\pi_n$ and $\pi_b$ the projections onto the first and second coordinate of $\bZ\times \bZ \times \bZ/2\bZ$.

\begin{proposition}\label{prop:gl11-local-ribbon-criterion}
Let $\Gamma=L\times\{\bar 0\}\le \Inv(\C)$ with $L\le\bZ^2$, and set $A_\Gamma=\bigoplus_{\gamma\in\Gamma}\gamma\in\widehat{\C}$.
Write each $\gamma\in\Gamma$ as
$\gamma=\varepsilon\!\left(\frac{n(\gamma)r}{2},b(\gamma)\right)_{\bar 0}$. Then,
$\fg\text{-}\widehat{\C}_{A_\Gamma}^{\loc}$ is braided tensor.
It is ribbon iff $\theta_\gamma=\id_\gamma$ for all $\gamma\in\Gamma$, equivalently iff
$n(\gamma)$ is even for every $\gamma\in\Gamma$.
\end{proposition}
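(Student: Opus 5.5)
The plan is to apply Theorem~\ref{thm:fg-local-simple-main} to $\Gamma=L\times\{\bar 0\}$. The hypotheses are easy to check. $\C$ is a ribbon (Frobenius) tensor category with enough projectives. We work over $\bC$, so $\mathrm{char}(\kk)=0$ and condition~\eqref{eq:condition-diamond} holds automatically. To produce a commutative simple current algebra structure on $A_\Gamma$, we use Lemma~\ref{lem:group-algebra-comm}(b) and need $\mathsf{q}|_\Gamma\equiv1$. Formula \eqref{eq:Dq-eps-braiding} gives $c_{\varepsilon,\varepsilon}=(-1)^{\bar p}q^{-nrb}\,\id=(-1)^{\bar p}\,\id$, so $\beta(\gamma,\gamma)=1$ for every $\gamma$ of parity $\bar 0$. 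This is exactly \eqref{eq:gl11-comm-subgroups}. One small point: the associator 3-cocycle restricted to $\Gamma$ should be a coboundary. This is implied by the abelian-cocycle argument in the proof of Lemma~\ref{lem:group-algebra-comm}(b). Alternatively, the invertibles are one-dimensional supermodules with trivial associator, so $\omega\equiv1$ directly.

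Theorem~\ref{thm:fg-local-simple-main}(a) then gives that $\fg\text{-}\widehat{\C}^{\loc}_{A_\Gamma}$ is a (Frobenius) braided tensor category. For the ribbon part, Theorem~\ref{thm:fg-local-simple-main}(b) shows that $\theta_\gamma=\id_\gamma$ for all $\gamma\in\Gamma$ suffices. By \eqref{eq:Dq-eps-twist} with $\alpha=n(\gamma)r/2$, we have $\theta_\gamma=(-1)^{n(\gamma)}\,\id$. So the twist condition is equivalent to $n(\gamma)$ being even for every $\gamma$, i.e.\ $\pi_n(L)\subseteq2\bZ$.

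The converse is the main obstacle: if the local module category is ribbon, then $\theta_\gamma=\id$ on $\Gamma$. Here "ribbon" should be read as ribbon with the twist induced from $\C$, since $\theta^A_M=\theta_M$ is well defined only when $\theta$ is compatible with the $A$-action. The argument goes as follows. $A$ itself is the unit of $\fg\text{-}\widehat{\C}^{\loc}_{A_\Gamma}$, and any twist satisfies $\theta_{\unit}=\id$. Suppose $\theta_M$ is an $A$-module map for all local $M$. Take $M=A$. The relation $\theta_{M\otimes A}=c_{A,M}c_{M,A}(\theta_M\otimes\theta_A)$, combined with locality, gives $\rho_M\circ(\theta_M\otimes\theta_A)=\theta_M\circ\rho_M$. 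Applying this to $M=A$ and restricting to the summand $\unit\otimes E_g$ (where $\theta_\unit=\id$) yields $\theta_{E_g}=\id$.

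If I am unsure about this interpretation, the fallback is to state the converse in the form used in Corollary~\ref{cor:unrolled-ALprime-combined}(a): there, "ribbon iff twist trivial" is likewise read off from Theorem~\ref{thm:fg-local-simple-main}(b) together with the observation that the induced twist is an $A$-module map only when $\theta_A=\id_A$.
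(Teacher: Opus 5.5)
Your proposal is correct and follows the paper's proof. Theorem~\ref{thm:fg-local-simple-main}(a) gives the braided tensor structure, Theorem~\ref{thm:fg-local-simple-main}(b) gives sufficiency of $\theta_\gamma=\id_\gamma$, and \eqref{eq:Dq-eps-twist} gives $\theta_\gamma=(-1)^{n(\gamma)}\id_\gamma$.

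You go beyond the paper on the converse. The paper simply attributes it to Theorem~\ref{thm:fg-local-simple-main}(b), although that theorem only provides sufficiency. Your reading of ``ribbon'' as ``ribbon with the twist induced from $\C$'' is needed for the statement to be true. For instance, take $L=\bZ^2$:
\begin{itemize}
\item All local modules then have $E$-weights in $\tfrac r2\bZ$.
\item Let $\kappa$ be the monoidal natural automorphism of $\id_{\widehat{\C}}$ acting by $q^{\lambda_E}$ on $E$-weight $\lambda_E$. It satisfies $\kappa_M^2=\id$ on local modules and $\kappa_A=\theta_A$.
\item Hence $\theta\kappa^{-1}$ is $A$-linear on local modules, because $(\theta\kappa^{-1})_A=\id_A$.
\item It also satisfies the duality axiom, because $\kappa_{M^*}=(\kappa_M^{-1})^*$ and $\kappa_M^2=\id$.
\end{itemize}
So $\theta\kappa^{-1}$ is a ribbon twist on $\fg\text{-}\widehat{\C}^{\loc}_{A_\Gamma}$ even though $n(\gamma)=1$ occurs. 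Under your reading, the converse is just your unit observation: $A$ is the unit, so the induced twist forces $\theta_A=\id_A$.

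One slip: the displayed identity $\rho_M(\theta_M\otimes\theta_A)=\theta_M\rho_M$ holds for every local module. For $M=A$ restricted to $\unit\otimes E_g$ it is a tautology. What actually yields $\theta_{E_g}=\id$ is restricting the right $A$-linearity $\theta_A\mu=\mu(\theta_A\otimes\id_A)$ to $\unit\otimes E_g$, or combining that linearity with your display.
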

\begin{proof}
By Theorem~\ref{thm:fg-local-simple-main}(a),
$\fg\text{-}\widehat{\C}_{A_\Gamma}^{\loc}$ is braided tensor.
By Theorem~\ref{thm:fg-local-simple-main}(b), it is ribbon iff
$\theta_\gamma=\id_\gamma$ for all $\gamma\in\Gamma$.
For $\gamma=\varepsilon(\frac{n(\gamma)r}{2},b(\gamma))_{\bar 0}$, the twist formula above gives
$\theta_\gamma=(-1)^{n(\gamma)}\id_\gamma$, so this is equivalent to
$n(\gamma)$ being even for every $\gamma\in\Gamma$.
\end{proof}


\subsubsection{Finiteness of $\fg\text{-}\widehat{\C}_A$ and $\fg\text{-}\widehat{\C}_A^{\loc}$}
The full module category $\fg\text{-}\widehat{\C}_A$ is never finite, but the local subcategory may be.
Let $\Gamma=L\times\{\bar 0\}\le \Inv(\C)$ with $L\le\bZ^2$, and let
$A_\Gamma=\bigoplus_{\gamma\in\Gamma}\gamma$.
The $\Gamma$-orbits on Kac simples have a continuous $\alpha$-parameter modulo the
discrete subgroup $\frac r2\pi_n(L)$, hence $\Irr(\C)/\Gamma$ is infinite
and $\fg\text{-}\widehat{\C}_{A_\Gamma}$ has infinitely many simple objects by Theorem~\ref{thm:finite-tensor-category-criterion}.

For $\gamma=\varepsilon(\frac{nr}{2},b)_{\bar 0}\in\Gamma$ and a Kac simple $V(\alpha,a)_{\bar p}$,
\eqref{eq:Dq-Kac-eps-double-braiding} gives $M_{\gamma,V}=q^{-2\alpha b}\id$. Write
\[
B:=\pi_b(\Gamma)\le \bZ.
\]
Hence $V(\alpha,a)_{\bar p}$ is local iff $q^{-2\alpha b}=1$ for all $b\in B$.
If $B=d\bZ$ ($d>0$), this is equivalent to
\[
\alpha\in \frac{r}{2d}\bZ,
\]
while for $B=0$ there is no restriction on $\alpha$.
So locality replaces the continuous $\alpha$-family by an arithmetic condition determined by $B$. In our $\fgl(1|1)$ setting, $\C_\Gamma$ contains all one-dimensional simples $\varepsilon(\frac{nr}{2},b)_{\bar p}$. It contains Kac simples $V(\alpha,a)_{\bar p}$ exactly for $\alpha\in\frac{r}{2d}\bZ\setminus\frac r2\bZ$ when $B=d\bZ$ and $d>0$, while for $B=0$ there is no extra restriction beyond $\alpha\notin\frac r2\bZ$.
Thus locality removes the continuous $\alpha$-parameter exactly when $\pi_b(L)\neq 0$. Finiteness of $\fg\text{-}\widehat{\C}_{A_\Gamma}^{\loc}$ is stronger, and will hold exactly when $L$ has finite index in $\bZ^2$.

\begin{proposition}\label{prop:gl11-local-finite-criterion}
Let $\Gamma\le \Inv(\C)$ satisfy $\mathsf{q}|_\Gamma\equiv 1$, write
$\Gamma=L\times\{\bar 0\}$ with $L\le\bZ^2$, and let
$A_\Gamma=\bigoplus_{\gamma\in\Gamma}\gamma$.
Then $\fg\text{-}\widehat{\C}_{A_\Gamma}^{\loc}$ is finite if and only if $[\bZ^2:L]<\infty$.
\end{proposition}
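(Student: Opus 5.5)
By Theorem~\ref{thm:fg-local-simple-main}(c), since $\C$ has enough projectives, $\fg\text{-}\widehat{\C}_{A_\Gamma}^{\loc}$ is finite if and only if $\Irr(\C_\Gamma)/\Gamma$ is finite. So the plan is to count $\Gamma$-orbits on the simple objects of $\C_\Gamma$, using the description of $\C_\Gamma$ given just above the statement together with the fusion rules
\[
V(\alpha,a)_{\bar p}\otimes\varepsilon\!\left(\tfrac{nr}{2},b\right)_{\bar 0}\cong V\!\left(\alpha+\tfrac{nr}{2},a+b\right)_{\bar p},
\qquad
\varepsilon\!\left(\tfrac{n'r}{2},b'\right)_{\bar p}\otimes\varepsilon\!\left(\tfrac{nr}{2},b\right)_{\bar 0}\cong\varepsilon\!\left(\tfrac{(n+n')r}{2},b+b'\right)_{\bar p}.
\]
The parity is preserved because $\Gamma=L\times\{\bar 0\}$. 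I would treat the two kinds of simples separately.

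\emph{One-dimensional simples.} All of them lie in $\C_\Gamma$. Their $\Gamma$-orbits are in bijection with $(\bZ^2/L)\times\bZ/2\bZ$. Hence finiteness of $\Irr(\C_\Gamma)/\Gamma$ already forces $[\bZ^2:L]<\infty$, which proves the `only if' direction.

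\emph{Kac simples, assuming $[\bZ^2:L]<\infty$.} Then $L$ contains $N\bZ^2$ for $N=[\bZ^2:L]$. In particular $B=\pi_b(L)=d\bZ$ with $d>0$, and $\pi_n(L)\neq0$. By the locality analysis preceding the statement, the Kac simples in $\C_\Gamma$ are exactly the $V(\alpha,a)_{\bar p}$ with
\[
\alpha\in\tfrac{r}{2d}\bZ\setminus\tfrac r2\bZ,\qquad a\in\bZ,\qquad \bar p\in\bZ/2\bZ .
\]
This parameter set is $\tfrac{r}{2d}\bZ\times\bZ\times\bZ/2\bZ$ with a union of cosets of $\tfrac r2\bZ$ removed. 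The group $\Gamma$ acts on it through the translations $(\alpha,a)\mapsto(\alpha+\tfrac{nr}{2},a+b)$ for $(n,b)\in L$. Let $\Lambda$ be the image of $L$ under $(n,b)\mapsto(\tfrac{nr}{2},b)$. Since $L$ has finite index in $\bZ^2$, $\Lambda$ has finite index in $\tfrac r2\bZ\times\bZ$, which in turn has index $d$ in $\tfrac{r}{2d}\bZ\times\bZ$. So $\Lambda$ has finite index in $\tfrac{r}{2d}\bZ\times\bZ$, and there are finitely many orbits on the admissible Kac parameters. Counting orbits via this finite quotient of the parameter lattice also needs the admissible set to be $\Lambda$-stable, which holds because translating $\alpha$ by an element of $\tfrac r2\bZ$ preserves the condition $\alpha\notin\tfrac r2\bZ$. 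Combining the two kinds of simples gives the `if' direction.

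The only step needing care is the identification of $\C_\Gamma$ on Kac modules. The condition was derived from the double braiding with the highest-weight vector; I would double-check it on both weight vectors $v,v'$, where the double braiding is the scalar $q^{-2\alpha b}$ independently of $a$, so this is fine. I also need the Kac simples $V(\alpha,a)_{\bar p}$ to be pairwise non-isomorphic for distinct parameters, so that orbits correspond exactly to lattice cosets; this follows from the classification of simples recalled from \cite{geer2025three}. I do not expect any genuine obstacle beyond this bookkeeping.
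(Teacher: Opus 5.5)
Your proposal is correct and follows essentially the same route as the paper. Both arguments reduce via Theorem~\ref{thm:fg-local-simple-main}(c) to finiteness of $\Irr(\C_\Gamma)/\Gamma$. Both get the \emph{only if} direction from the one-dimensional simples, whose orbit set is $(\bZ^2/L)\times\bZ/2\bZ$. For the \emph{if} direction, both show that the image of $L$ has finite index in $\tfrac{r}{2d}\bZ\times\bZ$ and that the admissible Kac parameter set is stable under it.

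Your extra checks are details the paper leaves implicit: that the double braiding with a Kac module is the scalar $q^{-2\alpha b}$ on both weight vectors, and that distinct Kac parameters give non-isomorphic simples.
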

\begin{proof}
By \eqref{eq:Dq-eps-double-braiding}, every one-dimensional simple
$\varepsilon(\frac{nr}{2},b)_{\bar p}$ centralizes $\Gamma$, hence belongs to $\C_\Gamma$.
The $\Gamma$-action on these simples is translation by $L$ on $(n,b)\in\bZ^2$, so their
orbit set is
\[
(\bZ^2/L)\times (\bZ/2\bZ).
\]
Therefore, if $[\bZ^2:L]=\infty$, then $\Irr(\C_\Gamma)/\Gamma$ is already infinite.

Conversely, assume $[\bZ^2:L]<\infty$. Then
\[
N:=\pi_n(L)=n_0\bZ
\qquad\text{and}\qquad
B:=\pi_b(L)=d\bZ
\]
with $n_0,d>0$. For a Kac simple $V(\alpha,a)_{\bar p}$,
\eqref{eq:Dq-Kac-eps-double-braiding} gives locality if and only if
$q^{-2\alpha b}=1$ for all $b\in B$, equivalently
\[
\alpha\in \frac{r}{2d}\bZ .
\]
Thus the local Kac simples are parametrized by
\[
\left(\frac{r}{2d}\bZ\setminus \frac r2\bZ\right)\times \bZ \times \bZ/2\bZ .
\]

Now the $\Gamma$-action on Kac parameters is induced by
\[
(n,b)\in L
\quad\mapsto\quad
\left(\alpha,a\right)\longmapsto \left(\alpha+\frac{nr}{2},\,a+b\right).
\]
Let
\[
L' := \left\{\left(\frac{nr}{2},\,b\right)\in \frac r2\bZ\times \bZ \,\middle|\, (n,b)\in L\right\}
\subset \frac{r}{2d}\bZ\times \bZ .
\]
Since $L$ has finite index in $\bZ^2$, it has rank $2$, hence $L'$ is a rank-$2$ sublattice of
\[
\frac r2\bZ\times \bZ .
\]
Moreover,
\[
\left[\frac{r}{2d}\bZ\times \bZ : \frac r2\bZ\times \bZ\right]=d<\infty,
\]
so $L'$ is also a finite-index sublattice of $\frac{r}{2d}\bZ\times \bZ$. Therefore
\[
\left(\frac{r}{2d}\bZ\times \bZ\right)\big/ L'
\]
is finite. Since the excluded subset $\frac r2\bZ\times \bZ$ is $L'$-stable, the same is true after restricting to
\[
\left(\frac{r}{2d}\bZ\setminus \frac r2\bZ\right)\times \bZ .
\]
Hence the local Kac simples contribute only finitely many $\Gamma$-orbits; the parity parameter contributes only the finite factor $\bZ/2\bZ$.

The one-dimensional part also contributes finitely many $\Gamma$-orbits because
\[
(\bZ^2/L)\times (\bZ/2\bZ)
\]
is finite. Hence $\Irr(\C_\Gamma)/\Gamma$ is finite.

Now apply Theorem~\ref{thm:fg-local-simple-main}(c).
\end{proof}

In summary, finite-index subgroups $L\le\bZ^2$ yield explicit finite braided (and, when all $n(\gamma)$ are even, ribbon) local-module categories in the $\fgl(1|1)$ setting.


\appendix 


\section{Non-braided version of Etingof-Penneys' lemma}\label{app:EP-lemma}
In Appendix \ref{app:EP-lemma}, by an {\em abelian monoidal category}, we mean an abelian category endowed with a structure of a monoidal category such that the monoidal product is additive and right exact in each variable. 


\subsection{Etingof-Penneys' Lemma}
An object $X$ of an abelian monoidal category $\C$ is said to be {\em left flat} if the endofunctor $X \otimes (-)$ on $\C$ is exact. Similarly, $X$ is said to be {\em right flat} if $(-) \otimes X$ is exact. We say that $\C$ has enough left (right) flat objects if every object of $\C$ is a quotient of a left (right) flat object of $\C$. A {\em flat} object is a left and right flat object.

\begin{lemma}
  \label{lem:Etingof-Penneys}
  Let $\C$ be an abelian monoidal category, and let
  $0 \to Y \xrightarrow{i} Z \xrightarrow{p} X \to 0$ be a short exact sequence in $\C$.
  Assume that $\C$ has enough left flat objects and enough right flat objects.
  \begin{enumerate}
  \item If $X$ and $Y$ are left rigid and right flat, then $Z$ is left rigid.
  \item If $X$ and $Z$ are left rigid, then $Y$ is left rigid.
  \item If $Y$ and $Z$ are left rigid and $i^*$ is an epimorphism, then $X$ is left rigid.
  \end{enumerate}
\end{lemma}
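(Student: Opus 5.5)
We will use the standard characterization of left duals by representability. An object $V$ is left rigid if and only if the functor $\Hom_\C(- \otimes V, \unit)$, or more usefully the family of functors $\Hom_\C(T \otimes V, -)$, is represented by $\Hom_\C(T, - \otimes V^*)$ naturally in $T$. Concretely, a left dual of $V$ exists if and only if the functor $F_V := (-)\otimes V$ has a right adjoint of the form $(-)\otimes V^*$ that is compatible with the monoidal structure. In an abelian monoidal category with enough flat objects, I would work with a "candidate dual" defined by a (co)kernel and check the triangle identities.

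Part (b) is the model case. Dualize the exact sequence $0\to Y\to Z\to X\to 0$: $Z^*$ and $X^*$ exist, and $p^*:X^*\to Z^*$. Set $Y^\circ:=\Cok(p^*)$, with the induced map $Z^*\to Y^\circ$. Define $\ev_Y$ by descending $\ev_Z\circ(\id\otimes i)\colon Z^*\otimes Y\to\unit$ through $Z^*\otimes Y\twoheadrightarrow Y^\circ\otimes Y$. This uses right exactness of $-\otimes Y$ and the vanishing of $\ev_Z(p^*\otimes i)=\ev_X(\id\otimes pi)=0$. For $\coev_Y$, lift $\coev_Z$ composed with $\id_Z\otimes(Z^*\to Y^\circ)$ into $Y\otimes Y^\circ$. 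This requires that $Y\otimes Y^\circ\to Z\otimes Y^\circ\to X\otimes Y^\circ$ be left exact at the first spot and that the composite to $X\otimes Y^\circ$ vanish. For the left exactness I would use the enough-flat hypothesis. Resolve $Y^\circ$ by a right flat object $F\twoheadrightarrow Y^\circ$. Exactness of $0\to Y\otimes F\to Z\otimes F\to X\otimes F$ then holds, since $-\otimes F$ is exact after swapping sides appropriately; this is where left versus right flatness must be matched carefully. Using the defining map $Z\otimes Z^*\to X\otimes X^*$ together with naturality, one then pushes down. The triangle identities for $Y$ follow from those of $Z$, since all maps are induced and the relevant morphisms to $Y$ and $Y^\circ$ are mono and epi respectively.

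Part (a) is the dual-style construction. Given duals $X^*$ and $Y^*$, we need $Z^*$ as an extension $0\to X^*\to Z^*\to Y^*\to 0$. Equivalently, we represent the functor $T\mapsto\Hom(T\otimes Z,\unit)$ and, more generally, $\Hom(T\otimes Z,U)$. I would show that $\Hom(-\otimes Z,U)$ sits in a long exact sequence between $\Hom(-\otimes X,U)\cong\Hom(-,U\otimes X^*)$ and $\Hom(-\otimes Y,U)$. Right flatness of $X$ and $Y$ ensures that $T\otimes Y\to T\otimes Z\to T\otimes X\to0$ is also left exact on flat $T$. The connecting data are then governed by an Ext class, which is realized by the object $Z^*$ obtained as an extension of $Y^*$ by $X^*$ corresponding, via rigidity, to the class of the original sequence in $\Ext^1(X,Y)\cong\Ext^1(Y^*\otimes X,\unit)$, suitably interpreted. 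The evaluation and coevaluation are then assembled by a five-lemma argument on flat test objects, extended to all objects by right exactness and the enough-flat hypothesis.

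Part (c) is symmetric to (b). Set $X^\circ:=\Ker(i^*\colon Z^*\to Y^*)$; the hypothesis that $i^*$ is epi gives the short exact sequence $0\to X^\circ\to Z^*\to Y^*\to0$. Define $\coev_X$ by pushing $\coev_Z$ forward, and define $\ev_X$ by factoring through the cokernel. The epimorphicity of $i^*$ replaces the flatness needed to preserve exactness.

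The main obstacle is part (a): constructing $Z^*$ and proving the zig-zag identities requires genuine control of exactness. I would try first to reduce it to (b) applied in the opposite category, or to the Yoneda/representability statement on flat objects, where all the relevant sequences are exact.
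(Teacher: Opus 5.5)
Your part (a), which is the substantive part of the lemma, is not actually proved, and neither of your fallbacks can work. Passing to $\C^{\op}$ turns the sequence into $0\to X\to Z\to Y\to 0$ and exchanges left and right rigidity. It therefore swaps the kernel statement (b) with the cokernel statement (c), but it sends the extension statement (a) to another extension statement. Moreover, $\C^{\op}$ is not abelian monoidal in the required sense, since its tensor product is left rather than right exact.

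The representability route presupposes what has to be built. A five-lemma comparison of $\Hom(T\otimes Z,U)$ with $\Hom(T,U\otimes Z^{\vee})$ needs a natural map between them in the first place, i.e.\ an evaluation $Z^{\vee}\otimes Z\to\unit$. It also needs the connecting maps to be compatible with the $\Ext$ identifications. Even then, a bare representing object would need compatibility with tensoring on the left to give a dual.

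The missing idea is an explicit construction, which goes as follows.
\begin{enumerate}
\item Let $\alpha\in\Ext^1(X,Y)$ be the class of the given sequence. Choose $0\to X^*\xrightarrow{j}Z^{\vee}\xrightarrow{q}Y^*\to 0$ representing the class that corresponds to $-\alpha$ under $\Ext^1(X,Y)\cong\Ext^1(\unit,Y\otimes X^*)\cong\Ext^1(Y^*,X^*)$. Already here you must check that $Y^*\otimes(-)$, which need not be exact without a braiding, induces well-defined homomorphisms on $\Ext^1$. Right flatness of $X$ and enough left flat objects make this work.
\item Tensor the two sequences to get a $3\times 3$ diagram with exact rows and columns. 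This uses that $Z$ is left flat and $Z^{\vee}$ is right flat, by closure of flatness under extensions.
\item This yields an extension $0\to Y\otimes X^*\to\Ker(p\otimes q)\to(X\otimes X^*)\oplus(Y\otimes Y^*)\to 0$. Its pullback along $(\coev_X,\coev_Y)$ has class equal to the image of $\alpha+(-\alpha)=0$, so it splits.
\item A splitting gives $\eta_Z\colon\unit\to Z\otimes Z^{\vee}$ compatible with $\coev_X$ and $\coev_Y$. Dually one obtains $\varepsilon_Z$.
\end{enumerate}
The sign is essential: with $+\alpha$ you get the image of $2\alpha$, which need not split, and your ``suitably interpreted'' leaves exactly this open. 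Finally, the zig-zag composites are not shown to be identities. They are only shown to be endomorphisms of the extensions $Z^{\vee}$ and $Z$ that induce identities on the outer terms. Hence they are invertible by the five lemma, and one then corrects $\varepsilon_Z$ as in Lemma~\ref{lem:duality-modification}.

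In (c), your claim that epimorphicity of $i^*$ ``replaces the flatness'' is incorrect. Epimorphicity gives $0\to X^{\circ}\xrightarrow{\iota}Z^*\xrightarrow{i^*}Y^*\to 0$. Since $(\id_X\otimes i^*)(p\otimes\id_{Z^*})\coev_Z=(pi\otimes\id_{Y^*})\coev_Y=0$, right exactness lets $(p\otimes\id_{Z^*})\coev_Z$ factor through the image of $\id_X\otimes\iota$. To obtain $\coev_X\colon\unit\to X\otimes X^{\circ}$, however, you need $\id_X\otimes\iota$ to be monic, because $\unit$ is not projective, and $X$ is not known to be flat. The actual reason it is monic is that $Y^*$ has the right dual $Y$, hence is right flat, while $X$ is a quotient of a left flat object. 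The snake-lemma argument of Lemma~\ref{lem:flatness-criterion-3}(a) then makes $0\to X\otimes X^{\circ}\to X\otimes Z^*\to X\otimes Y^*\to 0$ exact.

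Your (b) is the paper's argument, but the same mechanism should be stated there. Take a right flat cover $F\twoheadrightarrow Y^{\circ}$ with kernel $K$. The snake lemma yields $Y\otimes Y^{\circ}\hookrightarrow Z\otimes Y^{\circ}$ because $X\otimes K\to X\otimes F$ is monic, which holds since $X$ is left rigid and hence left flat. With these points supplied, (b) and (c) agree with the paper, including your transfer of the triangle identities from $Z$ along the relevant monomorphism and epimorphism.
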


This lemma was established by Etingof and Penneys under the assumption that $\C$ is braided \cite[Lemma 4.2]{etingof2024rigidity}. They also remarked that the use of the braiding is not essential \cite[Remark 4.4]{etingof2024rigidity}. One of the aims of this Appendix is to demonstrate that \cite[Lemma 4.2]{etingof2024rigidity} holds in the above form in the absence of a braiding by tracing the original proof carefully.

As is well-known, a left (right) rigid object is left (right) flat (see Lemma~\ref{lem:rigid-implies-flat} below). One technical difficulty in the non-braided case is that a left (right) rigid object need not be right (left) rigid. For example, given a left rigid object $Y$, a map between Yoneda $\Ext^1$ groups induced by the functor $Y^* \otimes (-)$ is considered in the proof of \cite[Lemma 4.2]{etingof2024rigidity}.  However, since the functor $Y^* \otimes (-)$ is not necessarily exact in our non-braided setting, we will face some technical problems. For completeness, in \S\ref{subsec:Yoneda-Ext1}, we discuss maps between Yoneda $\Ext^1$ groups induced by an additive functor which is not necessarily exact.

Another point to note is that techniques of the Tor functor were used in the proof of \cite[Lemma 4.2]{etingof2024rigidity}. Instead of justifying the use of the Tor functor in our setting, we have chosen to assume that $\C$ has enough left flat objects and enough right flat objects, and to verify some necessary technical lemmas on flat objects in a direct way in \S\ref{subsec:EP-lemma-Tor}.
The assumption on $\C$ does not matter for our main application to the category of finitely generated modules over a central commutative algebra in a braided tensor category in \S\ref{sec:framework}, since every object of such a category is a quotient of a flat (in fact, rigid) object.

An important consequence of the above Lemma is:

\begin{theorem}\label{thm:EP-rigidity-main}       
Let $\C$ be an abelian monoidal category in which every object has finite length. Assume that $\C$ has enough left flat objects and enough right flat objects.
If every simple object of $\C$ is rigid, then $\C$ is rigid.
\end{theorem}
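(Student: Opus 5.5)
We argue by induction on the length $\ell(Z)$ of an object $Z \in \C$, showing that every object is left rigid; right rigidity follows by the symmetric argument, i.e.\ by applying the same reasoning to $\C^{\rev}$ (or $\C^{\op}$ with reversed tensor product), where left and right duals are interchanged and the hypotheses are symmetric. Objects of length $0$ are zero, hence rigid. Objects of length $1$ are simple, hence rigid by assumption.

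For the inductive step, let $\ell(Z) > 1$. Choose a simple subobject $Y \subset Z$ and set $X := Z/Y$. This gives a short exact sequence $0 \to Y \to Z \to X \to 0$ with $\ell(X) < \ell(Z)$ and $\ell(Y) = 1$. By the induction hypothesis, $X$ is rigid; $Y$ is rigid since it is simple. In particular $X$ and $Y$ are left rigid. Since $X$ and $Y$ are also right rigid, they are right flat: a right rigid object $W$ has $(-) \otimes W$ admitting both adjoints, which is the content of Lemma~\ref{lem:rigid-implies-flat}. Now Lemma~\ref{lem:Etingof-Penneys}(a) applies, using the standing assumption of enough left and right flat objects, and shows that $Z$ is left rigid.

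Applying the same argument in the reversed monoidal category shows that $Z$ is right rigid, where Lemma~\ref{lem:Etingof-Penneys}(a) there requires $X$ and $Y$ to be right rigid and left flat. So the induction should carry both properties simultaneously: at each step we use that $X$ and $Y$ are fully rigid. This gives both flatness conditions, and $Z$ comes out both left and right rigid.

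The main obstacle is not in this reduction, which is formal, but in Lemma~\ref{lem:Etingof-Penneys}(a) itself: constructing a dual of an extension from duals of $X$ and $Y$ without a braiding. This is the content of the appendix, which we take as given. The only other point to check is that the hypotheses of the appendix (tensor product right exact in each variable, enough left and right flat objects) are invariant under passing to $\C^{\rev}$. This holds because they are stated symmetrically.
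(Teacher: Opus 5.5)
Your proof is correct and is essentially the paper's: induction on length, a short exact sequence with one simple term and one term of smaller length (hence two-sided rigid, hence left and right flat), then Lemma~\ref{lem:Etingof-Penneys}(a) in $\C$ for left rigidity and in $\C^{\rev}$ for right rigidity. The only difference is that you split off a simple subobject where the paper splits off a simple quotient. This is immaterial, since part (a) of that lemma treats $X$ and $Y$ symmetrically.
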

\begin{proof}
It is obvious that the zero object of $\C$ is rigid. Let $M$ be a non-zero object of $\C$.
We prove the rigidity of $M$ by induction on the length of $M$.
If $M$ has length $1$, then $M$ is simple and hence rigid by assumption.

Now let $M$ have length $n>1$ and assume that every object of $\C$ of length $<n$ is rigid.
Choose a short exact sequence $0\to N\to M\to S\to 0$ with $S$ simple.
By induction, $N$ is rigid, and by assumption $S$ is rigid.
In particular, $N$ and $S$ are flat (see Lemma~\ref{lem:rigid-implies-flat} below).
Applying Lemma~\ref{lem:Etingof-Penneys} to this exact sequence shows that $M$ is left rigid.
Applying Lemma~\ref{lem:Etingof-Penneys} in the reversed monoidal category $\C^{\mathrm{rev}}$
shows that $M$ is right rigid.
Hence $M$ is rigid.
\end{proof}


\subsection{Lemmas on rigid objects}
We first collect useful lemmas on rigid objects. The following lemma is found in the proof of \cite[Lemma 4.2]{etingof2024rigidity}:

\begin{lemma}
  \label{lem:duality-modification}
  Let $\C$ be a monoidal category, let $L$ and $R$ be objects of $\C$, and let $\varepsilon : L \otimes R \to \unit$ and $\eta : \unit \to R \otimes L$ be morphisms in $\C$. Suppose that
  \begin{equation*}
    \xi := (\varepsilon \otimes \id_L) \circ (\id_L \otimes \eta)
    \quad \text{and} \quad
    \zeta := (\id_R \otimes \varepsilon) \circ (\eta \otimes \id_R)
  \end{equation*}
  are invertible in $\C$. Then we have
  \begin{equation*}
    \varepsilon \circ (\xi^{-1} \otimes \id_R)
    = \varepsilon \circ (\id_L \otimes \zeta^{-1}),
    \quad (\zeta^{-1} \otimes \id_L) \circ \eta
    = (\id_R \otimes \xi^{-1}) \circ \eta.
  \end{equation*}
  Let $\varepsilon'$ and $\eta'$ be both sides of the former and the latter equation, respectively. Then the triples $(L, \varepsilon', \eta)$ and $(L, \varepsilon, \eta')$ are left dual objects of $R$.
\end{lemma}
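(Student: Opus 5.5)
We will prove the two displayed equalities by direct string-diagram style manipulations using only the snake-type morphisms $\xi$ and $\zeta$. Then we will verify that the modified pairs satisfy the zig-zag identities. By symmetry it suffices to treat $(L,\varepsilon',\eta)$; the pair $(L,\varepsilon,\eta')$ is handled by the mirror argument.

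The key preliminary identities are the naturality-type relations
\begin{equation*}
  \varepsilon \circ (\xi \otimes \id_R) = \varepsilon \circ (\id_L \otimes \zeta),
  \qquad
  (\zeta \otimes \id_L) \circ \eta = (\id_R \otimes \xi) \circ \eta .
\end{equation*}
To get the first, expand both sides as morphisms $L \otimes R \to \unit$. Each equals the composite
\begin{equation*}
  L\otimes R \xrightarrow{\ \id\otimes\eta\otimes\id\ } L\otimes R\otimes L\otimes R \xrightarrow{\ \varepsilon\otimes\varepsilon\ } \unit .
\end{equation*}
This follows by the interchange law: applying $\varepsilon$ to the first two factors and then to the last two is the same as doing so in the opposite order. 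Associators are suppressed by coherence. The second identity follows in the same way, with both sides equal to $(\id\otimes\varepsilon\otimes\id)\circ(\eta\otimes\eta)$.

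Next we deduce the stated equalities. Precompose the first identity with $\xi^{-1}\otimes\zeta^{-1}$. Since $\varepsilon\circ(\xi\otimes\id_R)\circ(\xi^{-1}\otimes\zeta^{-1}) = \varepsilon\circ(\id_L\otimes\zeta^{-1})$ and likewise $\varepsilon\circ(\id_L\otimes\zeta)\circ(\xi^{-1}\otimes\zeta^{-1})=\varepsilon\circ(\xi^{-1}\otimes\id_R)$, we get the first equality. The second is obtained by postcomposing with $\zeta^{-1}\otimes\xi^{-1}$.

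For the duality, we check the two zig-zags for $(\varepsilon',\eta)$. First, $(\varepsilon'\otimes\id_L)(\id_L\otimes\eta)$, using the form $\varepsilon'=\varepsilon(\xi^{-1}\otimes\id_R)$, equals $(\varepsilon\otimes\id_L)(\id_L\otimes\eta)\circ\xi^{-1}=\xi\xi^{-1}=\id_L$ by interchange. Second, using the form $\varepsilon'=\varepsilon(\id_L\otimes\zeta^{-1})$, we get $(\id_R\otimes\varepsilon')(\eta\otimes\id_R)=(\id_R\otimes\varepsilon)(\eta\otimes\id_R)\circ\zeta^{-1}=\zeta\zeta^{-1}=\id_R$, again by interchange since $\zeta^{-1}$ acts on the last factor $R$. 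Thus $(L,\varepsilon',\eta)$ is a left dual of $R$, with evaluation $L\otimes R\to\unit$ and coevaluation $\unit\to R\otimes L$. The case of $(L,\varepsilon,\eta')$ is identical, using the two forms of $\eta'$.

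The only real obstacle is bookkeeping. One must confirm that the interchange moves used in the zig-zag computations are legitimate, namely that $\xi^{-1}$ and $\zeta^{-1}$ act on factors disjoint from those that $\eta$ and $\varepsilon$ create or consume. One must also insert associators correctly in a non-strict category. We handle this by invoking Mac Lane coherence to work strictly.
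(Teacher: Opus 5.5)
Your proof is correct and follows essentially the same route as the paper. You both use the interchange identity $\varepsilon \circ (\xi \otimes \id_R) = (\varepsilon \otimes \varepsilon) \circ (\id_L \otimes \eta \otimes \id_R) = \varepsilon \circ (\id_L \otimes \zeta)$ and then precompose with $\xi^{-1} \otimes \zeta^{-1}$; the zig-zags are verified by picking whichever of the two expressions for $\varepsilon'$ (resp.\ $\eta'$) lets the inverse slide past $\eta$ (resp.\ $\varepsilon$), which is exactly the paper's computation.
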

\begin{proof}
  By the functorial property of $\otimes$, it is easy to verify
  \begin{equation*}
    \varepsilon \circ (\xi \otimes \id_R)
    = (\varepsilon \otimes \varepsilon) \circ (\id_L \otimes \eta \otimes \id_R)
    = \varepsilon \circ (\id_L \otimes \zeta).
  \end{equation*}
  By composing $\xi^{-1} \otimes \zeta^{-1}$ from the right, we obtain the first equation of the statement. The second one is proved in a similar way. Now we have
  \begin{gather*}
    (\varepsilon' \otimes \id_L) \circ (\id_L \otimes \eta)
    = (\varepsilon \otimes \id_L) \circ (\xi^{-1} \otimes \eta)
    = \xi \circ \xi^{-1} = \id_L, \\
    (\id_R \otimes \varepsilon') \circ (\eta \otimes \id_R)
    = (\id_R \otimes \varepsilon) \circ (\eta \otimes \zeta^{-1})
    = \zeta \circ \zeta^{-1} = \id_R,
  \end{gather*}
  which means that $(L, \varepsilon', \eta)$ is a left dual object of $R$.
  The case of $(L, \varepsilon, \eta')$ is proved in a similar way.
\end{proof}

Let $A$ and $B$ be objects of $\C$, and let $f: A \to B$ be a morphism in $\C$. To discuss the rigidity of $B$ from that of $A$ or its converse, we note the following lemma: Let $A'$ and $B'$ also be objects of $\C$, and let
\begin{equation*}
  \varepsilon_A : A' \otimes A \to \unit,
  \quad \eta_A : \unit \to A \otimes A',
  \quad \varepsilon_B : B' \otimes B \to \unit,
  \quad \eta_B : \unit \to B \otimes B'
\end{equation*}
and $f' : B' \to A'$ be morphisms in $\C$ such that
\begin{equation*}
  \varepsilon_B \circ (\id_{B'} \otimes f)
  = \varepsilon_A \circ (f' \otimes \id_A),
  \quad
  (f \otimes \id_{A'}) \circ \eta_A
  = (\id_B \otimes f') \circ \eta_B.
\end{equation*}

\begin{lemma}
  \label{lem:duality-transfer-by-morphism}
  In the above situation, we have
  \begin{gather*}
    f \circ (\id_A \otimes \varepsilon_A) \circ (\eta_A \otimes \id_A)
    = (\id_B \otimes \varepsilon_B) \circ (\eta_B \otimes \id_B) \circ f, \\
    (\varepsilon_A \otimes \id_{A'}) \circ (\id_{A'} \otimes \eta_A) \circ f'
    = f' \circ (\varepsilon_B \otimes \id_{B'}) \circ (\id_{B'} \otimes \eta_B).
  \end{gather*}
\end{lemma}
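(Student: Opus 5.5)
The first identity should follow from the two "zig-zag" maps being intertwined by $f$. This is a purely diagrammatic computation using only the two compatibility hypotheses, functoriality of $\otimes$, and naturality of the unit and associativity constraints. I will suppress the associator and unitors, which is harmless by coherence, and treat both identities symmetrically.

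For the first identity, I start from the right-hand side $(\id_B \otimes \varepsilon_B)\circ(\eta_B\otimes\id_B)\circ f$. By the interchange law, moving $f$ past $\eta_B\otimes\id_B$ rewrites it as
\[
(\id_B\otimes\varepsilon_B)\circ(\id_B\otimes\id_{B'}\otimes f)\circ(\eta_B\otimes\id_A).
\]
Now $\id_B\otimes(\varepsilon_B\circ(\id_{B'}\otimes f))$ appears. The first hypothesis replaces it by $\id_B\otimes(\varepsilon_A\circ(f'\otimes\id_A))$. This gives
\[
(\id_B\otimes\varepsilon_A)\circ\bigl(((\id_B\otimes f')\circ\eta_B)\otimes\id_A\bigr).
\]
The second hypothesis now turns $(\id_B\otimes f')\circ\eta_B$ into $(f\otimes\id_{A'})\circ\eta_A$. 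The result is
\[
(\id_B\otimes\varepsilon_A)\circ(f\otimes\id_{A'}\otimes\id_A)\circ(\eta_A\otimes\id_A).
\]
By interchange once more, $f$ slides out to the left, yielding $f\circ(\id_A\otimes\varepsilon_A)\circ(\eta_A\otimes\id_A)$. This is the left-hand side.

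The second identity is the mirror argument. I start from $f'\circ(\varepsilon_B\otimes\id_{B'})\circ(\id_{B'}\otimes\eta_B)$ and push $f'$ inward onto the right tensor factor, which produces $(\id_{B'}\otimes f')\circ\eta_B$ inside. The second hypothesis converts this to $(f\otimes\id_{A'})\circ\eta_A$. The resulting $f$ is then adjacent to $\varepsilon_B$ as $\varepsilon_B\circ(\id_{B'}\otimes f)$. The first hypothesis converts that to $\varepsilon_A\circ(f'\otimes\id_A)$. Finally, the leftover $f'$ on the leftmost factor slides out to the right, giving $(\varepsilon_A\otimes\id_{A'})\circ(\id_{A'}\otimes\eta_A)\circ f'$.

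There is no real obstacle here. The only point requiring care is bookkeeping of associators when inserting $\eta$ and contracting with $\varepsilon$ on the three-fold tensor products. I would handle this by invoking Mac Lane coherence to work strictly, or equivalently by checking the steps in string-diagram notation, where each step is an isotopy or a single application of one of the two hypotheses.
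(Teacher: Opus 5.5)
Your proof is correct and follows the same argument as the paper. Both slide $f$ (respectively $f'$) through the tensor product by the interchange law and apply each of the two compatibility hypotheses once; you simply run the chain of equalities from right to left, and you also write out the second identity, which the paper leaves as ``similar''.
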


For example, if $(A', \varepsilon_A, \eta_A)$ is a left dual object of $A$, $f$ is epic and $f'$ is monic, then we can conclude that $(B', \varepsilon_B, \eta_B)$ is a left dual object of $B$ by this lemma.

\begin{proof}
  The first equation is proved as follows:
  \begin{align*}
    & f \circ (\id_A \otimes \varepsilon_A) \circ (\eta_A \otimes \id_A) \\
    & = (\id_B \otimes \varepsilon_A \otimes \id_A)
      \circ (f \otimes \id_{A'} \otimes \id_A)
      \circ (\eta_A \otimes \id_A) \\
    & = (\id_B \otimes \varepsilon_A \otimes \id_A)
      \circ (\id_A \otimes f' \otimes \id_A)
      \circ (\eta_B \otimes \id_A) \\
    & = (\id_B \otimes \varepsilon_A \otimes \id_A)
      \circ (\id_A \otimes \id_{B'} \otimes f)
      \circ (\eta_B \otimes \id_A) \\
    & = (\id_B \otimes \varepsilon_B) \circ (\eta_B \otimes \id_B) \circ f.
  \end{align*}
  The second one is proved by a straightforward computation in a similar way.
\end{proof}

Let $X$ be a left rigid object of $\C$. As is well-known, the functor $X^* \otimes (-)$ is left adjoint to the functor $X \otimes (-)$. The unit and the counit of this adjunction are given by the coevaluation and the evaluation, respectively. This observation yields the following well-known fact:

\begin{lemma}
  \label{lem:rigid-implies-flat}
  In an abelian monoidal category, a left rigid object is left flat.
  Similarly, a right rigid object is right flat.
\end{lemma}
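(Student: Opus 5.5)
The plan is to use the adjunction remarked just before the statement. Let $X$ be left rigid with left dual $(X^*, \ev_X, \coev_X)$. First I would check the standard natural bijection
\[
\Hom_{\C}(X^* \otimes V, W) \cong \Hom_{\C}(V, X \otimes W),
\]
which sends $f$ to $(\id_X \otimes f)\circ(\coev_X \otimes \id_V)$ (up to associators). The inverse sends $g$ to $(\ev_X \otimes \id_W)\circ(\id_{X^*}\otimes g)$. That these two assignments are mutually inverse is exactly the pair of zig-zag identities for $\ev_X$ and $\coev_X$. This shows that $X \otimes (-)$ is a right adjoint to $X^* \otimes (-)$.

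Next, a right adjoint between abelian categories preserves all limits that exist, in particular kernels, so $X \otimes (-)$ is left exact. By the standing convention of the Appendix, the monoidal product is right exact in each variable, so $X \otimes (-)$ is also right exact. Hence $X \otimes (-)$ is exact, that is, $X$ is left flat.

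For the second assertion, let $X$ be right rigid with right dual ${}^*X$. The symmetric argument shows that $(-) \otimes X$ is right adjoint to $(-) \otimes {}^*X$, again via the zig-zag identities. Equivalently, one can apply the first part in $\C^{\mathrm{rev}}$, where right duals become left duals. Left exactness then combines with the assumed right exactness as before.

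None of these steps is a real obstacle. The only point needing care is getting the side conventions right, namely that a left dual makes $X \otimes (-)$ a right adjoint, rather than $(-) \otimes X$. The zig-zag verification settles this.
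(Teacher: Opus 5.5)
Your proposal is correct and follows essentially the same route as the paper. You get left exactness of $X\otimes(-)$ from the adjunction $X^*\otimes(-)\dashv X\otimes(-)$ and combine it with the standing right exactness; the right rigid case is symmetric, or follows by passing to $\C^{\mathrm{rev}}$. The only difference is that you verify the adjunction bijection explicitly via the zig-zag identities, which the paper simply cites as well known.
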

\begin{proof}
  The functor $X \otimes (-)$ is right exact by our definition of an abelian monoidal category. As it has a
  left adjoint $X^* \otimes (-)$, the functor $X \otimes (-)$ is also left exact. Thus $X$ is left flat. The right rigid case is
  similar.
\end{proof}

Let $f: X \to Y$ be a morphism in a monoidal category $\C$ between left rigid objects $X$ and $Y$. Then the left dual morphism of $f$ is defined and denoted by
\begin{equation*}
  f^* := (\ev_Y \otimes \id_{X^*})(\id_{Y^*} \otimes f \otimes \id_{X^*}) (\id_{Y^*} \otimes \coev_X) : Y^* \to X^*.
\end{equation*}
The zig-zag relations imply
\begin{equation*}
  \ev_X (f^* \otimes \id_{X^*}) = \ev_Y (\id_{Y^*} \otimes f),
  \quad (\id_{Y} \otimes f^*) \coev_Y = (f \otimes \id_{X^*}) \coev_X.
\end{equation*}
The following fact was remarked in the proof of \cite[Lemma 4.2]{etingof2024rigidity}:

\begin{lemma}
  \label{lem:dual-of-epi}
  If $f : X \to Y$ is an epimorphism in an abelian monoidal category $\C$ between left rigid objects $X$ and $Y$, then $f^*$ is monic.
\end{lemma}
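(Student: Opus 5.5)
Since $f$ is an epimorphism and $X$, $Y$ are left rigid, I will show $f^*\circ g=0$ implies $g=0$, using the adjunctions supplied by the left duals. The ingredient is the standard natural bijection
\[
\Hom_\C(T, Z^*) \cong \Hom_\C(T\otimes Z, \unit), \qquad g \mapsto \ev_Z\circ(g\otimes \id_Z),
\]
valid for any left rigid object $Z$ and any $T\in\C$. Its inverse is $h\mapsto (h\otimes\id_{Z^*})\circ(\id_T\otimes\coev_Z)$, and the zig-zag relations show that the two maps are mutually inverse. This is simply the adjunction $(-)\otimes Z \dashv (-)\otimes Z^*$, written out explicitly.

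Let $g: T\to Y^*$ satisfy $f^*\circ g = 0$. Under the bijection for $Z=X$, the morphism $f^*\circ g$ corresponds to $\ev_X\circ (f^*\otimes \id_X)\circ(g\otimes\id_X)$. By the first relation recorded just before the lemma (in the form $\ev_X(f^*\otimes\id)=\ev_Y(\id\otimes f)$, with the appropriate tensor ordering for left duals), this equals
\[
\ev_Y\circ(\id_{Y^*}\otimes f)\circ(g\otimes \id_X) = \ev_Y\circ(g\otimes\id_Y)\circ(\id_T\otimes f).
\]
Since the bijection is injective, $f^*g=0$ forces $\ev_Y\circ(g\otimes \id_Y)\circ(\id_T\otimes f)=0$.

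The tensor product is right exact in each variable, so $\id_T\otimes f$ is an epimorphism. Hence $\ev_Y\circ(g\otimes\id_Y)=0$, and the bijection for $Z=Y$ gives $g=0$. Therefore $f^*$ is monic.

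The only delicate point is bookkeeping: making sure the dual relation is applied with the correct tensor ordering, so that the composite becomes $(\id_T\otimes f)$ precomposed. Here only right exactness of $T\otimes(-)$ is used, which the standing convention of the appendix provides, and no flatness is needed.
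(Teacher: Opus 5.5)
Your proof is correct and is the same argument as the paper's: the paper draws the commutative square relating $\Hom_{\C}(T,Y^*)\cong\Hom_{\C}(T\otimes Y,\unit)$ and $\Hom_{\C}(T,X^*)\cong\Hom_{\C}(T\otimes X,\unit)$, where the right vertical map is precomposition with the epimorphism $\id_T\otimes f$ and is therefore injective, and you have unwound that square elementwise. One wording slip: passing from $f^*\circ g=0$ to the vanishing of the corresponding morphism $T\otimes X\to\unit$ uses only the additivity of $\otimes$, not the injectivity of the bijection; injectivity is needed only at the final step, for $Z=Y$.
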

\begin{proof}
  We fix an object $T \in \C$ and consider the commutative diagram
  \begin{equation*}
    \begin{tikzcd}
      \Hom_{\C}(T, Y^*)
      \arrow[r, "{\cong}"]
      \arrow[d, "{\Hom_{\C}(T, f^*)}"']
      & \Hom_{\C}(T \otimes Y, \unit)
      \arrow[d, "{\Hom_{\C}(\id_T \otimes f, \unit)}"] \\
      \Hom_{\C}(T, X^*)
      \arrow[r, "{\cong}"]
      & \Hom_{\C}(T \otimes X, \unit).
    \end{tikzcd}
  \end{equation*}
  Since $f$ is an epimorphism, and since the monoidal product in $\C$ is assumed to be right exact, the
  morphism $\id_T \otimes f$ is an epimorphism. This implies that the right vertical arrow of the above diagram
  is injective, and thus so is the left vertical arrow. The proof is done.
\end{proof}


\subsection{Lemmas on flat objects}
\label{subsec:EP-lemma-Tor}

Let $\C$ be an abelian monoidal category.
We provide some lemmas on flat objects.

\begin{lemma}
  \label{lem:flatness-criterion-3}
  Let $0 \to Y \to Z \to X \to 0$ be an exact sequence in $\C$, and let $A$ be an object of $\C$.
  \begin{enumerate}
  \item If $X$ is right flat and $A$ is a quotient of a left flat object, then $0 \to A \otimes Y \to A \otimes Z \to A \otimes X \to 0$ is exact.
  \item If $X$ is left flat and $A$ is a quotient of a right flat object, then $0 \to Y \otimes A \to Z \otimes A \to X \otimes A \to 0$ is exact.
  \end{enumerate}
\end{lemma}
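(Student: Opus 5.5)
We prove (a); part (b) is its mirror image, obtained by applying (a) in $\C^{\mathrm{rev}}$, where left and right flatness are interchanged. The plan is to resolve $A$ by a left flat object and run a snake-lemma argument. By hypothesis there is an epimorphism $q : F \to A$ with $F$ left flat. Let $K := \Ker(q)$, so that we have a short exact sequence $0 \to K \to F \to A \to 0$.

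We tensor this sequence on the right with each of $Y$, $Z$, $X$, and tensor $0 \to Y \to Z \to X \to 0$ on the left with each of $K$, $F$, $A$. This produces a $3\times 3$ commutative diagram whose entries are the objects $U \otimes V$ for $U \in \{K,F,A\}$ and $V \in \{Y,Z,X\}$. Since $\otimes$ is right exact in each variable, every row and every column is exact except possibly at its leftmost map. Two further exactness facts are available for free:
\begin{enumerate}
\item the column $0 \to K\otimes X \to F\otimes X \to A\otimes X \to 0$ is exact on the left, because $X$ is right flat;
\item the row $0 \to F\otimes Y \to F\otimes Z \to F\otimes X \to 0$ is exact on the left, because $F$ is left flat.
\end{enumerate}

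The goal is to show that $A\otimes Y \to A\otimes Z$ is a monomorphism. We apply the snake lemma to the map of short exact sequences from the row $K\otimes Y \to K\otimes Z \to K\otimes X \to 0$ to the row $0\to F\otimes Y \to F\otimes Z \to F\otimes X \to 0$. To keep the snake lemma applicable, we replace $K\otimes Y$ by its image $I$ in $K\otimes Z$, so that the top row becomes exact on the left as well. The vertical maps are induced by $K \to F$. Their cokernels are $A\otimes Y$, $A\otimes Z$ and $A\otimes X$, by right exactness of the columns; the first of these is an identification because $I$ is a quotient of $K\otimes Y$, so the cokernel of $I \to F\otimes Y$ agrees with the cokernel of $K\otimes Y \to F\otimes Y$.

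The snake lemma then yields an exact sequence
\[
\Ker(K\otimes X \to F\otimes X) \to A\otimes Y \to A\otimes Z .
\]
The left-hand term vanishes by fact (a), so $A\otimes Y \to A\otimes Z$ is a monomorphism. Right exactness gives the rest of the sequence, which proves (a).

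The main obstacle is keeping the snake lemma honest: the top row need not be left exact, and the connecting-map argument must be carried out in a general abelian category. Passing to the image $I$ as above resolves the first issue and leaves the cokernels unchanged, and the snake lemma holds in any abelian category.
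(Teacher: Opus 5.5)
Your proof is correct and follows the paper's argument: a left flat cover $F\to A$ with kernel $K$, the grid of tensor products, exactness of the $F$-row from left flatness of $F$, injectivity of $K\otimes X\to F\otimes X$ from right flatness of $X$, and the snake lemma, with (b) obtained by passing to $\C^{\mathrm{rev}}$. Replacing $K\otimes Y$ by its image $I$ is unnecessary, since the snake lemma only needs the top row to be right exact and the bottom row left exact, which is how the paper applies it. If you keep $I$, you should say explicitly that the vertical map $I\to F\otimes Y$ is well defined because $F\otimes Y\to F\otimes Z$ is monic, by left flatness of $F$.
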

\begin{proof}
(a) We choose an epimorphism $p : F \to A$ with $F$ left flat and let $i: K \to F$ be the kernel of $p$ so that we have an exact sequence $0 \to K \to F \to A \to 0$. We consider the following commutative diagram:
\begin{equation*}
  \begin{tikzcd}[row sep = 16pt]
    & \Ker(i \otimes \id_Y) \arrow[d, hook] \arrow[r]
    & \Ker(i \otimes \id_Z) \arrow[d, hook] \arrow[r]
    & \Ker(i \otimes \id_X) \arrow[d, hook] \\
    & K \otimes Y \arrow[r] \arrow[d, "{i \otimes \id_Y}"]
    & K \otimes Z \arrow[r] \arrow[d, "{i \otimes \id_Z}"]
    & K \otimes X \arrow[r] \arrow[d, "{i \otimes \id_X}"]
    & 0 \\
    0 \arrow[r]
    & F \otimes Y \arrow[r] \arrow[d, two heads, "{\, p \otimes \id_Y}"]
    & F \otimes Z \arrow[r] \arrow[d, two heads, "{\, p \otimes \id_Z}"]
    & F \otimes X \arrow[r] \arrow[d, two heads, "{\, p \otimes \id_X}"]
    & 0 \\
    & A \otimes Y \arrow[r] 
    & A \otimes Z \arrow[r] 
    & A \otimes X \arrow[r] 
    & 0 
  \end{tikzcd}
\end{equation*}
Here, for brevity, some of vertical arrows $0 \to$ and $\to 0$ are omitted, and the symbols $\hookrightarrow$ and $\twoheadrightarrow$ are used to mean a monomorphism and an epimorphism, respectively. The third row is exact since $F$ is left flat. The other rows and all columns are also exact by the right exactness of the tensor product.
Since $X$ is right flat, $i \otimes \id_X$ is monic. Thus the snake lemma yields the exact sequence $0 \to A \otimes Y \to A \otimes Z \to A \otimes X \to 0$.

(b) Apply (a) to $\mathcal{C}^{\rev}$.
\end{proof}

\begin{lemma}
  \label{lem:flatness-criterion-2}
  Let $0 \to Y \to Z \to X \to 0$ be an exact sequence in $\C$.
  \begin{itemize}
  \item [(a)] If $\C$ has enough left flat objects and both $X$ and $Y$ are right flat, then $Z$ is right flat.
  \item [(b)] If $\C$ has enough right flat objects and both $X$ and $Y$ are left flat, then $Z$ is left flat.
  \end{itemize}
\end{lemma}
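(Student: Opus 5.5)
To show $Z$ is right flat in (a), I take an arbitrary short exact sequence $0 \to A' \to B \to C \to 0$ and prove that $0 \to A'\otimes Z \to B \otimes Z \to C \otimes Z \to 0$ is exact. Because the tensor product is right exact, only injectivity of $A'\otimes Z \to B\otimes Z$ needs proof. The idea is a $3\times 3$ diagram: tensor the sequence $0\to Y\to Z\to X\to 0$ on the left by each of $A'$, $B$, $C$, and compare the rows with the columns coming from $0\to A'\to B\to C\to 0$.

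\textbf{Rows.} Every object is a quotient of a left flat object, since $\C$ has enough left flat objects. So Lemma~\ref{lem:flatness-criterion-3}(a) applies to each of $A'$, $B$, $C$ once we know $X$ is right flat. Hence for $T\in\{A',B,C\}$ the row $0 \to T\otimes Y \to T\otimes Z\to T\otimes X \to 0$ is exact.

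\textbf{Columns.} The columns are the sequences $A'\otimes W \to B\otimes W\to C\otimes W\to 0$ for $W\in\{Y,Z,X\}$. Because $Y$ and $X$ are right flat, the columns for $W=Y$ and $W=X$ are exact and start with $0$, i.e.\ their left maps are monomorphisms. The middle column $A'\otimes Z\to B\otimes Z\to C\otimes Z\to 0$ is exact at $B\otimes Z$ and at $C\otimes Z$ by right exactness.

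\textbf{Conclusion.} The rows are short exact and the outer columns are short exact, so the snake lemma (or the nine lemma) applies. Concretely, apply the snake lemma to the morphism from the $A'$-row to the $B$-row. The vertical maps on $Y$ and on $X$ are monic, so the exact sequence $\Ker_Y\to\Ker_Z\to\Ker_X$ has vanishing outer terms. Therefore $\Ker(A'\otimes Z\to B\otimes Z)=0$, and $Z$ is right flat.

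The main subtlety is confirming that Lemma~\ref{lem:flatness-criterion-3}(a) really covers each row, which needs the relevant object to be a quotient of a left flat object. That is exactly what the hypothesis "enough left flat objects" supplies, and beyond this the diagram chase is routine.

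Part (b) follows by applying (a) to $\C^{\rev}$. This swaps left and right flatness, and Lemma~\ref{lem:flatness-criterion-3}(b) takes the place of (a).
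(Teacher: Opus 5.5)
Your proof is correct and follows the paper's argument: the same $3\times 3$ diagram obtained by tensoring the two short exact sequences, with Lemma~\ref{lem:flatness-criterion-3}(a) giving the exact sequences $0\to T\otimes Y\to T\otimes Z\to T\otimes X\to 0$ and right flatness of $X$ and $Y$ giving the outer sequences. Your only differences are cosmetic: your rows and columns are transposed relative to the paper's, you conclude with the snake lemma rather than citing the nine lemma, and you handle part (b) via $\C^{\rev}$ exactly as the paper does.
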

\begin{proof}
(a) We assume that $\C$ has enough left flat objects and both $X$ and $Y$ are right flat.
    Let $0 \to B \to C \to A \to 0$ be an exact sequence in $\C$. We consider the following commutative diagram obtained by tensoring $0 \to B \to C \to A \to 0$ and $0 \to Y \to Z \to X \to 0$:
\begin{equation*}
  \begin{tikzcd}
    & B \otimes Y \arrow[r, hook] \arrow[d, hook]
    & C \otimes Y \arrow[r, two heads] \arrow[d, hook]
    & A \otimes Y \arrow[d, hook] \\ 
    & B \otimes Z \arrow[r, hook] \arrow[d, two heads]
    & C \otimes Z \arrow[r, two heads] \arrow[d, two heads]
    & A \otimes Z \arrow[d, two heads] \\ 
    & B \otimes X \arrow[r, hook] 
    & C \otimes X \arrow[r, two heads] 
    & A \otimes X 
  \end{tikzcd}
\end{equation*}
The first and the third row are exact since $X$ and $Y$ are right flat. As $\C$ has enough left flat objects and $X$ is right flat, by Lemma~\ref{lem:flatness-criterion-3}, the columns are exact. Thus, by the nine lemma, the middle row is also exact. Therefore $Z$ is right flat.

(b) Apply (a) to $\mathcal{C}^{\rev}$.
\end{proof}


\subsection{Reminder on the Yoneda Ext}
\label{subsec:Yoneda-Ext1}
The Ext functor is defined in an arbitrary abelian category by assigning the set of equivalence classes of exact sequences of a particular form (so-called Yoneda Ext). Here we give a reminder on basics on the Yoneda Ext functor, restricting ourselves to $\Ext^1$.

\subsubsection{Definition of the Yoneda Ext functor}

Let $\mathcal{A}$ be an abelian category.
For objects $X$ and $Y$ of $\mathcal{A}$, the set $\Ext^1_{\mathcal{A}}(X, Y)$ is defined to be the set of equivalence classes of exact sequences of the form $0 \to Y \to E \to X \to 0$ for some $E \in \mathcal{A}$. When $\mathcal{A}$ is clear from the context, we often write it as $\Ext^1(X, Y)$. The construction of the set $\Ext^1(X, Y)$ in fact extends to a functor from $\mathcal{A}^{\op} \times \mathcal{A}$ to the category $\mathbf{Ab}$ of abelian groups. The functorial property and the abelian group structure are explained as follows:
\begin{enumerate}
\renewcommand{\labelenumi}{\textup{(\arabic{enumi})}}
\item Given a morphism $f : X' \to X$ in $\mathcal{A}$ and an object $Y \in \mathcal{A}$, the map\footnote{To distinguish it from duals in a monoidal category, we use $\star$ to mean the maps between Ext groups induced by a morphism. More specifically, $(-)^{\star}$ is used in the contravariant case, while $(-)_{\star}$ is used in the covariant case.}
  \begin{equation*}
    f^\star : \Ext^1(X, Y) \to \Ext^1(X', Y)
  \end{equation*}
  is defined as follows: Let $e$ be an element of $\Ext^1(X, Y)$ represented by an exact sequence $0 \to Y \to E \to X \to 0$. We consider the commutative diagram
  \begin{equation}
    \label{eq:Ext1-pullback}
    \begin{tikzcd}
      0 \arrow[r]
      & Y \arrow[r] \arrow[d, equal]
      & E \times_X X' \arrow[r] \arrow[d] \arrow[rd, phantom, "\text{(PB)}"]
      & X' \arrow[r] \arrow[d, "{f}"] & 0 \\
      0 \arrow[r] &  Y \arrow[r] & E \arrow[r] & X \arrow[r] & 0
    \end{tikzcd}
  \end{equation}
  with exact rows, where (PB) means a pullback square. The element $f^\star(e)$ is defined to be the element represented by the first row of the above diagram.
\item Given a morphism $g : Y \to Y'$ in $\mathcal{A}$ and an object $X \in \mathcal{A}$, the map
  \begin{equation*}
    g_{\star} : \Ext^1(X, Y) \to \Ext^1(X, Y')
  \end{equation*}
  is defined dually to (1): if $e \in \Ext^1(X, Y)$ is represented by $0 \to Y \to E \to X \to 0$, consider the commutative diagram
  \begin{equation}
    \label{eq:Ext1-pushout}
    \begin{tikzcd}
      0 \arrow[r]
      & Y \arrow[r] \arrow[d, "{g}"']
      \arrow[rd, phantom, "\text{(PO)}"]
      & E \arrow[r] \arrow[d]
      & X \arrow[r] \arrow[d, equal] & 0 \\
      0 \arrow[r] & Y' \arrow[r] & E \amalg_Y Y' \arrow[r] & X \arrow[r] & 0
    \end{tikzcd}
  \end{equation}
  with exact rows, where (PO) means a pushout square. The element $g_{\star}(e)$ is defined to be the element represented by the second row of the above diagram.
\item For morphisms $f$ and $g$ as in (1) and (2), one can show that $f^\star$ and $g_{\star}$ commute. Now we define the map $\Ext^1(f, g)$ by
  \begin{equation*}
    \Ext^1(f, g) := f^\star \circ g_{\star} = g_{\star} \circ f^\star : \Ext^1(X, Y) \to \Ext^1(X', Y').
  \end{equation*}
\item For $e_1, e_2 \in \Ext^1(X, Y)$, their sum is defined by
  \begin{equation*}
    e_1 + e_2 := \Ext^1(\mathrm{diag}_X, \mathrm{sum}_Y)(e_1 \oplus e_2)
    \quad \text{(the Baer sum)},
  \end{equation*}
  where $\mathrm{diag}_X : X \to X \oplus X$ and $\mathrm{sum}_Y : Y \oplus Y \to Y$ are the diagonal morphism and the sum, respectively, and $e_1 \oplus e_2 \in \Ext^1(X \oplus X, Y \oplus Y)$ is the element represented by the exact sequence obtained by taking the direct sum of exact sequences representing $e_1$ and $e_2$.
\item If $e \in \Ext^1(X, Y)$ is represented by $\displaystyle 0 \to Y \mathrel{\mathop{\to}^i} E \to X \to 0$, then the inverse of $e$ (with respect to the Baer sum) is represented by the sequence
  \begin{equation*}
    \begin{tikzcd}
      0 \arrow[r] & Y \arrow[r, "-i"] & E \arrow[r] & X \arrow[r] & 0.
    \end{tikzcd}
  \end{equation*}
\end{enumerate}

The following standard lemma is useful for computing $f^{\star}$ and $g_{\star}$.

\begin{lemma}
\label{lem:Ext1-pullback-pushout}
Let $0 \to Y \to E \to X \to 0$ be an exact sequence in $\mathcal{A}$, and let $e$ be an element of $\Ext^1(X, Y)$ represented by this exact sequence.
\begin{itemize}
\item [(a)] If there is a commutative diagram
\begin{equation} \label{eq:Ext1-pullback-2}
    \begin{tikzcd}
      0 \arrow[r]
      & Y \arrow[r] \arrow[d, equal]
      & F \arrow[r] \arrow[d]
      & X' \arrow[r] \arrow[d, "{f}"] & 0 \\
      0 \arrow[r] &  Y \arrow[r] & E \arrow[r] & X \arrow[r] & 0
    \end{tikzcd}
\end{equation}
with exact rows, then the top row represents $f^{\star}(e)$.
\item[(b)] If there is a commutative diagram
\begin{equation} \label{eq:Ext1-pushout-2}
    \begin{tikzcd}
      0 \arrow[r]
      & Y \arrow[r] \arrow[d, "{g}"']
      & E \arrow[r] \arrow[d]
      & X \arrow[r] \arrow[d, equal] & 0 \\
      0 \arrow[r] & Y' \arrow[r] & G \arrow[r] & X \arrow[r] & 0
    \end{tikzcd}
\end{equation}
with exact rows, then the bottom row represents $g_{\star}(e)$.
\end{itemize}
\end{lemma}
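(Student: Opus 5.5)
The plan is to prove the two parts of Lemma~\ref{lem:Ext1-pullback-pushout} separately. Part~(b) is the formal dual of part~(a): it is obtained by passing to $\mathcal{A}^{\op}$, which swaps pullbacks with pushouts and $f^\star$ with $g_\star$. So the real work is part~(a). For (a), I will use the universal property of the pullback together with the fact that a morphism of short exact sequences that is the identity on both ends is an isomorphism (the short five lemma).

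For (a), suppose we are given the diagram \eqref{eq:Ext1-pullback-2}, with maps $j : Y \to F$, $q : F \to X'$ in the top row and $h : F \to E$ the middle vertical arrow. Write $\pi : E \to X$ for the second map of the bottom row.
\begin{enumerate}
\item Commutativity of the right square gives $\pi h = f q$. Hence there is a unique morphism $u : F \to E \times_X X'$ whose composites with the two projections are $h$ and $q$.
\item The top row of \eqref{eq:Ext1-pullback}, namely $0 \to Y \to E\times_X X' \to X' \to 0$, has first map induced by the pair $(Y \to E,\ 0 : Y \to X')$. Composing $u j$ with the two projections gives $h j$ and $q j = 0$. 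Since $h j$ equals the inclusion $Y \to E$ by the left square, uniqueness in the universal property shows that $u j$ equals the inclusion of $Y$ into the pullback.
\item By construction, $u$ composed with the projection to $X'$ is $q$.
\end{enumerate}
Thus $u$, together with $\id_Y$ and $\id_{X'}$, is a morphism from the top row of \eqref{eq:Ext1-pullback-2} to the top row of \eqref{eq:Ext1-pullback}. Both rows are exact, the second by the standard fact used in defining $f^\star$. The short five lemma then shows that $u$ is an isomorphism. So the two extensions are equivalent, and the top row of \eqref{eq:Ext1-pullback-2} represents $f^\star(e)$.

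For (b), apply the same argument in $\mathcal{A}^{\op}$. Concretely, the universal property of the pushout gives a morphism $E \amalg_Y Y' \to G$ compatible with the maps from $Y'$ and to $X$. The short five lemma then shows it is an isomorphism of extensions, so the bottom row of \eqref{eq:Ext1-pushout-2} represents $g_\star(e)$.

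The only point needing care is the claim that the top row of \eqref{eq:Ext1-pullback} is exact; in particular, that the projection $E\times_X X' \to X'$ is epic and that its kernel is $Y$. This is the standard fact underlying the definition of $f^\star$, so I will take it as given, or justify it briefly:
\begin{itemize}
\item pullbacks of epimorphisms are epimorphisms in an abelian category;
\item the kernel of a projection out of a pullback is identified with the kernel of the parallel map, here $\Ker(E\to X) = Y$.
\end{itemize}
Apart from this, the argument is routine diagram checking.
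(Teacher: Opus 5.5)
Your proposal is correct and follows essentially the same route as the paper: you obtain the comparison map $F \to E\times_X X'$ from the universal property of the pullback, and you check that it is compatible with the inclusions of $Y$ by comparing composites with both projections. The short five lemma then makes it an isomorphism of extensions, and you handle part (b) dually, which matches the paper's ``similar way.''
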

\begin{proof}
We only give a proof of (a) since (b) can be verified in a similar way.
We assume that there is a commutative diagram as in (a), and label the arrows in the diagrams \eqref{eq:Ext1-pullback} and \eqref{eq:Ext1-pullback-2} as follows:
\begin{equation*}
  \text{\eqref{eq:Ext1-pullback}: }
    \begin{tikzcd}
      Y \arrow[r, hook, "{a'}"] \arrow[d, equal]
      & E \times_X X' \arrow[r, two heads, "{\mathrm{pr}_2}"] \arrow[d, "{\mathrm{pr}_1}"]
      & X' \arrow[d, "{f}"] \\
      Y \arrow[r, hook, "{a}"] & E \arrow[r, two heads, "{b}"] & X
    \end{tikzcd} \qquad \qquad
  \text{\eqref{eq:Ext1-pullback-2}: }
    \begin{tikzcd}
      Y \arrow[r, hook, "{a''}"] \arrow[d, equal]
      & F \arrow[r, "{p_2}"] \arrow[d, "{p_1}"]
      & X' \arrow[d, "{f}"] \\
      Y \arrow[r, hook, "{a}"] & E \arrow[r, two heads, "{b}"] & X
    \end{tikzcd}
\end{equation*}
By the universal property of the pullback, there exists a unique morphism $s : F \to E \times_X X'$ such that $p_i = \mathrm{pr}_i \circ s$ ($i = 1, 2$). Since $\mathrm{pr}_1 s a'' = p_1 a'' = a = \mathrm{pr}_1 a'$ and $\mathrm{pr}_2 s a'' = p_2 a'' = 0 = \mathrm{pr}_2 a'$, we have $s a'' = a'$. Hence we obtain the commutative diagram
  \begin{equation*}
    \begin{tikzcd}
      0 \arrow[r] &  Y \arrow[r, "{a''}"] \arrow[d, equal]
      & F \arrow[r, "{p_2}"] \arrow[d, "{s}"] & X' \arrow[r] \arrow[d, equal] & 0 \\
      0 \arrow[r] & Y \arrow[r, "{a'}"] & E \times_X X' \arrow[r, "{\mathrm{pr}_2}"] & X' \arrow[r] & 0
    \end{tikzcd}
  \end{equation*}
  with exact rows. The five lemma implies that $s$ is an isomorphism. Since $f^{\star}(e)$ is represented by the top row of \eqref{eq:Ext1-pullback} by definition, $f^{\star}(e)$ is also represented by the top row of  \eqref{eq:Ext1-pullback-2}.
\end{proof}

It is known that the functor $\Ext^1 : \mathcal{A}^{\op} \times \mathcal{A} \to \mathbf{Ab}$ is additive in each variable. Thus, in particular, there are canonical isomorphisms
\begin{align*}
  \Ext^1(X \oplus Y, Z) & \cong \Ext^1(X, Z) \oplus \Ext^1(Y, Z), \\
  \Ext^1(X, Y \oplus Z) & \cong \Ext^1(X, Y) \oplus \Ext^1(X, Z)
\end{align*}
for $X, Y, Z \in \mathcal{A}$. We note the following well-known relation between the above isomorphisms and the Baer sum.

\begin{lemma}
  \label{lem:Ext1-Baer-sum}
  The following diagram is commutative:
  \begin{equation*}
    \begin{tikzcd}[column sep = 24pt, row sep = 8pt]
      & \Ext^1(X, Y) \oplus \Ext^1(X, Y)
      \arrow[ld, "{\cong}"'] \arrow[rd, "{\cong}"]
      \arrow[dd, "\text{the Baer sum}"] \\
      \Ext^1(X \oplus X, Y) \arrow[rd, "{(\mathrm{diag}_X)^\star}"']
      & & \Ext^1(X, Y \oplus Y) \arrow[ld, "{(\mathrm{sum}_Y)_{\star}}"] \\
      & \Ext^1(X, Y)
    \end{tikzcd}
  \end{equation*}
\end{lemma}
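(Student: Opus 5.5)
The plan is to identify explicitly the preimages of a pair $(e_1,e_2)$ under the two slanted isomorphisms. Then commutativity of each triangle reduces to the definition of the Baer sum, $e_1+e_2=(\mathrm{diag}_X)^\star(\mathrm{sum}_Y)_\star(e_1\oplus e_2)$, together with $f^\star g_\star=g_\star f^\star$ from item (3).

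First I fix the conventions for the additivity isomorphisms. Write $\iota_k : X\to X\oplus X$ and $p_k : Y\oplus Y\to Y$ for the canonical inclusions and projections. The isomorphism $\Ext^1(X\oplus X,Y)\to\Ext^1(X,Y)\oplus\Ext^1(X,Y)$ is $E\mapsto(\iota_1^\star E,\iota_2^\star E)$. Dually, $\Ext^1(X,Y\oplus Y)\to\Ext^1(X,Y)\oplus\Ext^1(X,Y)$ is $F\mapsto((p_1)_\star F,(p_2)_\star F)$. Here I take for granted the standard fact that these maps are bijective, which is the content of the additivity statement quoted above. I also use functoriality: $(gh)_\star=g_\star h_\star$ and $(fh)^\star=h^\star f^\star$.

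Step 1 (left triangle). Put $E:=(\mathrm{sum}_Y)_\star(e_1\oplus e_2)\in\Ext^1(X\oplus X,Y)$. I claim that
\[
\iota_k^\star(e_1\oplus e_2)=(\iota'_k)_\star(e_k),
\]
where $\iota'_k : Y\to Y\oplus Y$ is the inclusion. To see this, let $0\to Y\to E_k\to X\to 0$ represent $e_k$. The inclusion of this sequence as the $k$-th summand of the direct-sum sequence is a morphism of short exact sequences with components $\iota'_k$, the inclusion $E_k\to E_1\oplus E_2$, and $\iota_k$. Factoring this morphism through the pullback along $\iota_k$ and the pushout along $\iota'_k$, as in Lemma~\ref{lem:Ext1-pullback-pushout}, identifies both sides with the same class. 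This is the standard fact that a morphism of extensions $(g,\ast,f)$ yields $f^\star(e')=g_\star(e)$. I would prove it quickly by the usual factorization through $E\amalg_Y Y'$, applying parts (a) and (b) of Lemma~\ref{lem:Ext1-pullback-pushout}.

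Using the claim,
\[
\iota_k^\star E=(\mathrm{sum}_Y)_\star\iota_k^\star(e_1\oplus e_2)=(\mathrm{sum}_Y\circ\iota'_k)_\star e_k=e_k .
\]
So $E$ is the preimage of $(e_1,e_2)$, and its image under $(\mathrm{diag}_X)^\star$ is $(\mathrm{diag}_X)^\star(\mathrm{sum}_Y)_\star(e_1\oplus e_2)=e_1+e_2$.

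Step 2 (right triangle). This is dual. Put $F:=(\mathrm{diag}_X)^\star(e_1\oplus e_2)$. The dual claim is $(p_k)_\star(e_1\oplus e_2)=(p'_k)^\star e_k$, where $p'_k : X\oplus X\to X$ is the projection. It follows from the morphism of extensions given by projecting the direct-sum sequence onto its $k$-th summand. Then
\[
(p_k)_\star F=(\mathrm{diag}_X)^\star(p'_k)^\star e_k=(p'_k\circ\mathrm{diag}_X)^\star e_k=e_k .
\]
Hence $(\mathrm{sum}_Y)_\star F=e_1+e_2$, using item (3) once more.

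The main obstacle is the "morphism of extensions" compatibility $f^\star(e')=g_\star(e)$, together with the care needed to avoid circularity. Since the Baer sum is defined via $\oplus$, I must describe the additivity isomorphisms purely through the maps $\iota_k^\star$ and $(p_k)_\star$, and never through the group structure itself.
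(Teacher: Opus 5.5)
Your proof is correct. The paper gives no proof of this lemma (it is quoted as well known), so there is no argument to compare against. Yours supplies the omitted verification using only the tools the paper sets up: item (3), Lemma~\ref{lem:Ext1-pullback-pushout}, and the standard fact that a morphism of extensions $(g,h,f)$ from $e$ to $e'$ gives $g_\star e=f^\star e'$. That fact follows exactly as you indicate: map the pushout $E\amalg_Y Y'$ to the middle term of $e'$ and apply part (a); part (b) is not needed.

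The one input you take on faith is the bijectivity of $E\mapsto(\iota_1^\star E,\iota_2^\star E)$ and $F\mapsto((p_1)_\star F,(p_2)_\star F)$. You can obtain it with the same device, which settles the circularity worry you raise, since additivity of $\Ext^1$ is usually proved with the Baer sum already in hand. Let $E$ have middle term $M$, and let $M_k$ be the middle term of $\iota_k^\star E$. Then $(\mathrm{sum}_Y,\ M_1\oplus M_2\to M,\ \id_{X\oplus X})$ is a morphism of extensions from $\iota_1^\star E\oplus\iota_2^\star E$ to $E$. Hence $(\mathrm{sum}_Y)_\star(\iota_1^\star E\oplus\iota_2^\star E)=E$. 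Combined with your computation $\iota_k^\star(\mathrm{sum}_Y)_\star(e_1\oplus e_2)=e_k$, this shows the two maps are mutually inverse. The second variable is dual: use the morphism of extensions $(\id_{Y\oplus Y},\ N\to N_1\oplus N_2,\ \mathrm{diag}_X)$ from $F$ to $(p_1)_\star F\oplus(p_2)_\star F$.
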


\subsubsection{Maps induced by functors}

Let $F : \mathcal{A} \to \mathcal{B}$ be an additive functor between abelian categories. For brevity, we define $\mathcal{E}_{F}$ to be the class of pairs $(X, Y)$ consisting of objects of $\mathcal{A}$ such that $F$ preserves extensions of $X$ by $Y$, that is, for every exact sequence in $\mathcal{A}$ of the form $0 \to Y \to E \to X \to 0$, the sequence $0 \to F(Y) \to F(E) \to F(X) \to 0$ is exact. For each pair $(X, Y) \in \mathcal{E}_F$, we have an obvious map
\begin{equation*}
    \widetilde{F}_{X,Y} : \Ext^1_{\mathcal{A}}(X, Y) \to \Ext^1_{\mathcal{B}}(F(X), F(Y))
\end{equation*}
induced by the functor $F$ (when no confusion arises, we often write $\widetilde{F}_{X,Y}(e)$ simply as $F(e)$). Here, we exhibit properties of the map $\widetilde{F}_{X,Y}$.
All of the results should be well-known when the functor in question is exact. Since the exactness is too strong for our intended applications, we revisit the standard argument and explicitly demonstrate that the results continue to hold under weaker assumptions.

First, we discuss the naturality of $\widetilde{F}_{X,Y}$.

\begin{lemma}
  \label{lem:Ext1-and-functor-1}
    Let $F$ be as above.
    \begin{enumerate}
    \item For fixed $X \in \mathcal{A}$, the map $\widetilde{F}_{X,Y}$ is natural in $Y \in \mathcal{A}$ such that $(X, Y) \in \mathcal{E}_F$.
    \item For fixed $Y \in \mathcal{A}$, the map $\widetilde{F}_{X,Y}$ is natural in $X \in \mathcal{A}$ such that $(X, Y) \in \mathcal{E}_F$.
    \end{enumerate}
\end{lemma}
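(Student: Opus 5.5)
I will prove both parts by comparing the pullback and pushout constructions in $\mathcal{A}$ with their images under $F$. Lemma~\ref{lem:Ext1-pullback-pushout} is the key tool: it says that $f^{\star}(e)$ and $g_{\star}(e)$ may be represented by \emph{any} morphism of short exact sequences of the right shape, not only by the canonical pullback or pushout. This matters because $F$ need not preserve pullbacks or pushouts. It does, however, preserve commutative diagrams, and it preserves exactness of the particular rows we apply it to, since these rows are extensions between pairs lying in $\mathcal{E}_F$.

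For (a), I would fix $X$, a morphism $g : Y \to Y'$ with $(X,Y),(X,Y') \in \mathcal{E}_F$, and $e \in \Ext^1(X,Y)$ represented by $0 \to Y \to E \to X \to 0$. The pushout construction \eqref{eq:Ext1-pushout} gives a commutative diagram \eqref{eq:Ext1-pushout-2} with exact rows. The top row represents $e$ and the bottom row represents $g_\star(e)$. Applying $F$ yields a commutative diagram in $\mathcal{B}$. Its top row is exact because $(X,Y) \in \mathcal{E}_F$, and its bottom row is exact because $(X,Y') \in \mathcal{E}_F$. The left vertical map is $F(g)$ and the right vertical map is $\id_{F(X)}$. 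By Lemma~\ref{lem:Ext1-pullback-pushout}(b) in $\mathcal{B}$, the bottom row represents $F(g)_\star(\widetilde{F}_{X,Y}(e))$. It also represents $\widetilde{F}_{X,Y'}(g_\star e)$ by definition. Hence
\[
\widetilde{F}_{X,Y'} \circ g_\star = F(g)_\star \circ \widetilde{F}_{X,Y},
\]
which is the naturality in $Y$.

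For (b), the argument is dual. Take $f : X' \to X$ with $(X,Y),(X',Y) \in \mathcal{E}_F$. Form the pullback diagram \eqref{eq:Ext1-pullback} and apply $F$. Both rows of the image remain exact by the $\mathcal{E}_F$ hypothesis. Lemma~\ref{lem:Ext1-pullback-pushout}(a) then gives
\[
\widetilde{F}_{X',Y} \circ f^\star = F(f)^\star \circ \widetilde{F}_{X,Y}.
\]

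There is one point to check, and it is the only real subtlety. The map $\widetilde{F}_{X,Y}$ must be well defined on equivalence classes. If two extensions are equivalent via a middle isomorphism $E \to E'$, then applying $F$ gives an isomorphism $F(E) \to F(E')$ commuting with the outer maps. So the images are equivalent, and no exactness of $F$ beyond the $\mathcal{E}_F$ condition is used anywhere. The main obstacle is only to resist using the canonical pullback or pushout in $\mathcal{B}$ directly. I expect no difficulty beyond invoking Lemma~\ref{lem:Ext1-pullback-pushout}.
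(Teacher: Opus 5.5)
Your proposal is correct and follows essentially the same argument as the paper: apply $F$ to the canonical pushout (resp.\ pullback) diagram, use the $\mathcal{E}_F$ hypotheses to see that both rows stay exact, and then invoke Lemma~\ref{lem:Ext1-pullback-pushout} in $\mathcal{B}$ to identify the image row with $F(g)_\star(F(e))$ (resp.\ $F(f)^\star(F(e))$). Your extra check that $\widetilde{F}_{X,Y}$ is well defined on equivalence classes is a point the paper treats as obvious.
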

\begin{proof}
We only present the proof of (a), since the proof of (b) is analogous.
    We fix $X \in \mathcal{A}$ and let $g : Y \to Y'$ be a morphism in $\mathcal{A}$ such that both $(X, Y)$ and $(X, Y')$ belong to the class $\mathcal{E}_F$. Let $e \in \Ext^1(X, Y)$ be an element represented by an exact sequence $0 \to Y \to E \to X \to 0$. By applying $F$ to the diagram \eqref{eq:Ext1-pushout}, we obtain a commutative diagram
    \begin{equation*}
        \begin{tikzcd}
            0 \arrow[r]
          & F(Y) \arrow[r] \arrow[d, "{F(g)}"']
          & F(E) \arrow[r] \arrow[d]
          & F(X) \arrow[r] \arrow[d, equal] & 0 \\
          0 \arrow[r] & F(Y') \arrow[r]
          & F(E \amalg_Y Y') \arrow[r] & F(X) \arrow[r] & 0
        \end{tikzcd}
    \end{equation*}
    whose rows are exact since $(X, Y), (X, Y') \in \mathcal{E}_F$. The second row represents $F(g_{\star}(e))$. Since the first row represents $F(e)$, the second row also represents $F(g)_{\star} (F(e))$ by Lemma \ref{lem:Ext1-pullback-pushout}. Therefore we have $F(g_{\star}(e)) = F(g)_{\star}(F(e))$. This means that $\widetilde{F}_{X,Y}$ is natural as stated.
\end{proof}

The additivity of $\widetilde{F}_{X,Y}$ appears to require a somewhat subtle condition.

\begin{lemma}
  \label{lem:Ext1-and-functor-2}
Let $F$ be as above, and let $X, Y \in \mathcal{A}$. If both $(X, Y)$ and $(X, Y \oplus Y)$ belong to the class $\mathcal{E}_F$, then the map $\widetilde{F}_{X,Y}$ is additive.
\end{lemma}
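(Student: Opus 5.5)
We need to show that $\widetilde{F}_{X,Y}(e_1+e_2)=\widetilde{F}_{X,Y}(e_1)+\widetilde{F}_{X,Y}(e_2)$, where the Baer sum is written as $e_1+e_2=(\mathrm{sum}_Y)_\star(\mathrm{diag}_X)^\star(e_1\oplus e_2)$. The plan is to avoid the pair $(X\oplus X, Y\oplus Y)$, which is not assumed to lie in $\mathcal{E}_F$. Instead we use the factorization through $\Ext^1(X,Y\oplus Y)$ from Lemma~\ref{lem:Ext1-Baer-sum}. This yields $e_1+e_2=(\mathrm{sum}_Y)_\star(\tilde e)$, where $\tilde e\in\Ext^1(X,Y\oplus Y)$ is the element corresponding to $(e_1,e_2)$ under $\Ext^1(X,Y)\oplus\Ext^1(X,Y)\cong\Ext^1(X,Y\oplus Y)$.

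First we make this isomorphism explicit: $\tilde e$ is characterized by $(\mathrm{pr}_k)_\star(\tilde e)=e_k$ for $k=1,2$, where $\mathrm{pr}_k:Y\oplus Y\to Y$ are the projections. Concretely, $\tilde e$ is represented by $0\to Y\oplus Y\to E_1\times_X E_2\to X\to 0$ (fibre product over $X$). Next we check that $\widetilde F_{X,Y\oplus Y}(\tilde e)$ corresponds to $(F(e_1),F(e_2))$ under the analogous isomorphism $\Ext^1(FX,FY)\oplus\Ext^1(FX,FY)\cong\Ext^1(FX,F(Y)\oplus F(Y))$. Here we identify $F(Y\oplus Y)\cong F(Y)\oplus F(Y)$, which is allowed because $F$ is additive. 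Since $(X,Y)$ and $(X,Y\oplus Y)$ both lie in $\mathcal{E}_F$, the naturality of Lemma~\ref{lem:Ext1-and-functor-1}(a) applied to $\mathrm{pr}_k$ gives $F(\mathrm{pr}_k)_\star\bigl(F(\tilde e)\bigr)=F\bigl((\mathrm{pr}_k)_\star\tilde e\bigr)=F(e_k)$. The projections $F(\mathrm{pr}_k)$ are exactly the projections of $F(Y)\oplus F(Y)$, so this characterizes the corresponding element as $(F(e_1),F(e_2))$.

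Finally we apply naturality once more, this time to $\mathrm{sum}_Y:Y\oplus Y\to Y$, again using that both pairs are in $\mathcal{E}_F$:
\[ F(e_1+e_2)=F\bigl((\mathrm{sum}_Y)_\star\tilde e\bigr)=F(\mathrm{sum}_Y)_\star\bigl(F(\tilde e)\bigr)=(\mathrm{sum}_{FY})_\star\bigl(F(\tilde e)\bigr), \]
because $F(\mathrm{sum}_Y)=\mathrm{sum}_{F(Y)}$ by additivity. Lemma~\ref{lem:Ext1-Baer-sum} in $\mathcal{B}$ then identifies the right-hand side with $F(e_1)+F(e_2)$.

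The main obstacle is the first step. We must confirm that the isomorphism $\Ext^1(X,Y\oplus Y)\cong\Ext^1(X,Y)^{\oplus 2}$ in Lemma~\ref{lem:Ext1-Baer-sum} is the one given by $((\mathrm{pr}_1)_\star,(\mathrm{pr}_2)_\star)$; this is the standard additive structure. We must also confirm that it is injective, so that the characterization by projections determines $F(\tilde e)$. The injectivity holds because the inverse map is $(e_1,e_2)\mapsto(\iota_1)_\star e_1+(\iota_2)_\star e_2$ in $\mathcal{B}$, and this statement involves only $\mathcal{B}$, not $F$. If naturality under the projections seems delicate, the fallback is a direct check: apply $F$ to the explicit pushout diagrams for $\mathrm{pr}_k$ and invoke Lemma~\ref{lem:Ext1-pullback-pushout}(b).
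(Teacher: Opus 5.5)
Your argument is correct, but it takes a different route from the paper.

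The paper works directly from the definition $e_1+e_2=\Ext^1(\mathrm{diag}_X,\mathrm{sum}_Y)(e_1\oplus e_2)$. It builds a three-row diagram $D$: the direct sum $E_1\oplus E_2$, its pullback along $\mathrm{diag}_X$, and the pushout of that along $\mathrm{sum}_Y$. It then applies $F$. The top row of $F(D)$ is exact simply because it is the direct sum of the exact sequences representing $F(e_1)$ and $F(e_2)$. The two hypotheses give exactness of the other two rows, and Lemma~\ref{lem:Ext1-pullback-pushout} identifies the bottom row as $F(e_1)+F(e_2)$.

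You instead factor the Baer sum through $\Ext^1(X,Y\oplus Y)$ via Lemma~\ref{lem:Ext1-Baer-sum}. Everything then reduces to the naturality of Lemma~\ref{lem:Ext1-and-functor-1}(a) for the morphisms $\mathrm{pr}_1,\mathrm{pr}_2,\mathrm{sum}_Y\colon Y\oplus Y\to Y$. You also use two further facts: the canonical isomorphism $\Ext^1(FX,FY\oplus FY)\cong\Ext^1(FX,FY)^{\oplus 2}$ is given by $((\mathrm{pr}_1)_\star,(\mathrm{pr}_2)_\star)$, and $F(\mathrm{sum}_Y)=\mathrm{sum}_{FY}$ under $F(Y\oplus Y)\cong FY\oplus FY$.

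Your version is more conceptual. It shows that only morphisms $Y\oplus Y\to Y$ are involved, which explains why the hypothesis on $(X,Y\oplus Y)$ is the right one. The cost is that it relies on facts the paper only quotes as well known: Lemma~\ref{lem:Ext1-Baer-sum}, and the explicit form and invertibility of the additivity isomorphism. Their standard proofs involve the same pullback/pushout manipulations. The paper's proof needs only the definition of the Baer sum and Lemma~\ref{lem:Ext1-pullback-pushout}.

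The concern that motivated your detour is not a real obstacle. The paper never needs $(X\oplus X,Y\oplus Y)\in\mathcal{E}_F$: the only extension of $X\oplus X$ by $Y\oplus Y$ that appears is the direct sum $e_1\oplus e_2$, whose image under $F$ is visibly exact. Your proposed fallback (applying $F$ to the pushout diagrams for $\mathrm{pr}_k$ and invoking Lemma~\ref{lem:Ext1-pullback-pushout}(b)) is exactly the proof of Lemma~\ref{lem:Ext1-and-functor-1}(a), so it is not needed separately.
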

\begin{proof}
Let $e_i$ ($i = 1, 2$) be an element of $\Ext^1(X, Y)$ represented by an exact sequence $0 \to Y \to E_i \to X \to 0$. We consider the following commutative diagram with exact rows:
\begin{equation*}
    \begin{tikzcd}
        0 \arrow[r] & Y \oplus Y \arrow[r] & E_1 \oplus E_2 \arrow[r] & X \oplus X \arrow[r] & 0 \\
        0 \arrow[r] & Y \oplus Y \arrow[r] \arrow[u, equal] \arrow[d, "{\mathrm{sum}_Y}"']
        & \bullet \arrow[r] \arrow[u] \arrow[d] \arrow[ru, "{\text{(PB)}}", phantom]
        & X \arrow[r] \arrow[u, "{\mathrm{diag}_X}"'] & 0 \\
        0 \arrow[r] & Y \arrow[r]
        & \bullet \arrow[r] \arrow[lu, phantom, "\text{(PO)}"]
        & X \arrow[u, equal] \arrow[r] & 0
    \end{tikzcd}
\end{equation*}
We call this diagram $D$ and consider the diagram $F(D)$ obtained from $D$ by applying $F$.
Since $e_1 + e_2$ is represented by the third row of $D$, the third row of $F(D)$ represents $F(e_1 + e_2)$.
Now we compute the third row of $F(D)$ in a different way. Since $F$ is additive, we may identify $F(V \oplus W) = F(V) \oplus F(W)$ for $V, W \in \mathcal{A}$. Under this identification, we have $F(\mathrm{diag}_X) = \mathrm{diag}_{F(X)}$ and $F(\mathrm{sum}_Y) = \mathrm{sum}_{F(Y)}$. It is straightforward to verify that the first row of $F(D)$ is exact and represents $F(e_1) \oplus F(e_2)$. By the assumption that $(X, Y), (X, Y \oplus Y) \in \mathcal{E}_F$, the second and the third rows of $F(D)$ are also exact. Thus, by using Lemma \ref{lem:Ext1-pullback-pushout}, we see that the third row of $F(D)$ represents $F(e_1) + F(e_2)$. The proof is done.
\end{proof}

This lemma provides an application that is directly related to our objective:

\begin{lemma}
\label{lem:Ext1-and-functor-2-corollary}
Let $\mathcal{C}$ be an abelian monoidal category, and let $X, Y, A \in \mathcal{C}$. If $X$ is right flat and $A$ is a quotient of a left flat object, then the functor $A \otimes (-)$ induces a homomorphism \[ \Ext^1(X, Y) \to \Ext^1(A \otimes X, A \otimes Y) \] of abelian groups.
\end{lemma}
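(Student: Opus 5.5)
The plan is to apply Lemma~\ref{lem:Ext1-and-functor-2} to the additive functor $F = A \otimes (-)$ on $\C$. That lemma gives additivity of $\widetilde{F}_{X,Y}$ once two conditions hold: $(X, Y) \in \mathcal{E}_F$ and $(X, Y \oplus Y) \in \mathcal{E}_F$. So the whole proof reduces to checking that $A \otimes (-)$ preserves short exact sequences of the form $0 \to Y \to E \to X \to 0$ and $0 \to Y\oplus Y \to E \to X \to 0$. The map itself is well defined as soon as the first condition holds. Its value on the class of $0 \to Y \to E \to X \to 0$ is the class of the tensored sequence.

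Both conditions follow from Lemma~\ref{lem:flatness-criterion-3}(a). That lemma says: if $0 \to Y' \to Z \to X \to 0$ is exact, $X$ is right flat, and $A$ is a quotient of a left flat object, then $0 \to A \otimes Y' \to A \otimes Z \to A \otimes X \to 0$ is exact. The point to notice is that the hypothesis is placed only on the quotient term $X$, not on the subobject $Y'$. So we may take $Y' = Y$ and also $Y' = Y \oplus Y$, and the same hypotheses on $X$ and $A$ cover both cases. This shows that $(X, Y)$ and $(X, Y \oplus Y)$ both lie in $\mathcal{E}_F$.

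Lemma~\ref{lem:Ext1-and-functor-2} then gives that the induced map $\Ext^1(X,Y) \to \Ext^1(A\otimes X, A\otimes Y)$ is additive, i.e.\ a homomorphism of abelian groups. It also respects the identity element automatically: the split extension is sent to the split extension, because $F$ is additive.

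There is no real obstacle here. The only point needing care is that the Baer sum construction passes through the intermediate extension of $X$ by $Y\oplus Y$, which is why the second membership $(X, Y \oplus Y) \in \mathcal{E}_F$ is needed; Lemma~\ref{lem:flatness-criterion-3} handles it uniformly, since it imposes no condition on the subobject.
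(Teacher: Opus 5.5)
Your proposal is correct and matches the paper's proof: Lemma~\ref{lem:flatness-criterion-3}(a) shows that $A \otimes (-)$ preserves every extension of $X$ by an arbitrary object, so in particular $(X, Y)$ and $(X, Y \oplus Y)$ lie in $\mathcal{E}_F$. Additivity then follows from Lemma~\ref{lem:Ext1-and-functor-2}.
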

\begin{proof}
   We consider the functor $F = A \otimes (-)$. Lemma \ref{lem:flatness-criterion-3} implies that $F$ preserves extensions of $X$ by arbitrary object. Thus the functor $F$ induces a map between $\Ext^1$-groups as stated, and this map is in fact additive by Lemma \ref{lem:Ext1-and-functor-2}.
\end{proof}

\subsubsection{The adjunction isomorphism}
As before, $F : \mathcal{A} \to \mathcal{B}$ is an additive functor between abelian categories.
We assume that $F$ has a right adjoint $G: \mathcal{B} \to \mathcal{A}$. Let $\eta : \id_{\mathcal{A}} \to G F$ and $\varepsilon : F G \to \id_{\mathcal{B}}$ be the unit and the counit of the adjunction, respectively. Under a suitable condition for $X \in \mathcal{A}$ and $Y \in \mathcal{B}$, the following lemma gives an isomorphism $\Ext^1_{\mathcal{A}}(X, G(Y)) \cong \Ext^1_{\mathcal{B}}(F(X), Y)$ given by a formula similar to the adjunction isomorphism.

\begin{lemma}
  \label{lem:Ext1-adjunction}
  If $(X, G(Y)) \in \mathcal{E}_F$ and $(F(X), Y) \in \mathcal{E}_G$, then the map
  \begin{equation*}
    \begin{tikzcd}[column sep = 32pt]
      \Phi : \Ext^1_{\mathcal{A}}(X, G(Y)) \arrow[r, "{\widetilde{F}}"]
      & \Ext^1_{\mathcal{B}}(F(X), F G(Y))
      \arrow[r, "{(\varepsilon_Y)_{\star}}"]
      & \Ext^1_{\mathcal{B}}(F(X), Y)
    \end{tikzcd}
  \end{equation*}
  is invertible with the inverse
  \begin{equation*}
    \begin{tikzcd}[column sep = 32pt]
      \Psi : \Ext^1_{\mathcal{B}}(F(X), Y) \arrow[r, "{\widetilde{G}}"]
      & \Ext^1_{\mathcal{A}}(G F(X), G(Y))
      \arrow[r, "{(\eta_X)^{\star}}"]
      & \Ext^1_{\mathcal{A}}(X, G(Y)).
    \end{tikzcd}
  \end{equation*}
\end{lemma}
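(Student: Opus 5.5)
The plan is to verify $\Psi\circ\Phi=\id$ and $\Phi\circ\Psi=\id$ directly at the level of representing short exact sequences, using naturality of $\widetilde F$ and $\widetilde G$ (Lemma~\ref{lem:Ext1-and-functor-1}) together with the triangle identities. Before starting, one remark about well-definedness. The hypotheses $(X,G(Y))\in\mathcal{E}_F$ and $(F(X),Y)\in\mathcal{E}_G$ are exactly what make $\widetilde F_{X,G(Y)}$ and $\widetilde G_{F(X),Y}$ defined. The maps $(\varepsilon_Y)_\star$ and $(\eta_X)^\star$ are always defined, so $\Phi$ and $\Psi$ make sense.

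\emph{Showing $\Psi\Phi=\id$.} Take $e$ represented by $0\to G(Y)\xrightarrow{a} E\xrightarrow{b} X\to 0$. Applying $F$ gives an exact sequence representing $\widetilde F(e)$. Pushing out along $\varepsilon_Y$ gives $0\to Y\to P\to F(X)\to 0$ with $P=F(E)\amalg_{FG(Y)}Y$, together with a map $u:F(E)\to P$. Applying $G$ yields an exact row $0\to G(Y)\to G(P)\to GF(X)\to 0$, by the hypothesis $(F(X),Y)\in\mathcal{E}_G$. I then build a morphism of extensions from $e$ to $(\eta_X)^\star$ of this row. The map $E\to G(P)$ is $G(u)\circ\eta_E$. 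On the left, the composite $G(Y)\to E\to G(P)$ equals $G(u)\,\eta_E\,a=G(u)\,GF(a)\,\eta_{G(Y)}$. Since $u\circ F(a)$ is the pushout leg $Y\to P$ precomposed with $\varepsilon_Y$, this equals $G(\text{leg})\circ G(\varepsilon_Y)\circ\eta_{G(Y)}$. The triangle identity $G(\varepsilon_Y)\eta_{G(Y)}=\id$ reduces it to $G(\text{leg})$, so the left square commutes with the identity on $G(Y)$. On the right, $G(P)\to GF(X)$ composed with $G(u)\eta_E$ equals $GF(b)\eta_E=\eta_X b$ by naturality of $\eta$. So we obtain a diagram of the shape in Lemma~\ref{lem:Ext1-pullback-pushout}(a) with $f=\eta_X$, and hence $e=(\eta_X)^\star(\widetilde G(\Phi(e)))=\Psi\Phi(e)$.

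\emph{Showing $\Phi\Psi=\id$.} This is the dual argument. Start from $e'$ represented by $0\to Y\to E'\to F(X)\to 0$, apply $G$, and pull back along $\eta_X$ to get $Q=G(E')\times_{GF(X)}X$. Apply $F$, which is exact on this row by $(X,G(Y))\in\mathcal{E}_F$. Then use the map $\varepsilon_{E'}\circ F(\text{pr}_1):F(Q)\to E'$. Naturality of $\varepsilon$ gives the left square. The right square reduces, via the other triangle identity $\varepsilon_{F(X)}F(\eta_X)=\id$, to the identity on $F(X)$. Lemma~\ref{lem:Ext1-pullback-pushout}(b) then identifies $e'$ with $(\varepsilon_Y)_\star$ of the $F$-applied row, i.e.\ with $\Phi\Psi(e')$.

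The main obstacle is the bookkeeping in the two square checks: tracking which pushout or pullback leg appears and applying the correct triangle identity at each step. Additivity of $\Phi$ and $\Psi$ is not needed, since being mutually inverse maps of sets already gives bijectivity. If a homomorphism statement is wanted, it follows from Lemma~\ref{lem:Ext1-and-functor-2} under the extra hypothesis stated there.
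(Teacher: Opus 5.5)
Your proposal is correct and follows the paper's proof. For $\Psi\Phi=\id$ the paper uses the same comparison map $G(j)\circ\eta_E$ (your $G(u)\circ\eta_E$), the same triangle identity $G(\varepsilon_Y)\eta_{G(Y)}=\id$ and the same naturality of $\eta$; it factors this map through the pullback by hand and applies the five lemma, which is exactly what Lemma~\ref{lem:Ext1-pullback-pushout}(a) packages. Your explicit treatment of $\Phi\Psi=\id$ via $\varepsilon_{E'}\circ F(\mathrm{pr}_1)$ and Lemma~\ref{lem:Ext1-pullback-pushout}(b) is the dual argument that the paper only calls ``analogous''.
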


\begin{proof}
We show that $\Psi\circ \Phi=\id$; the proof that $\Phi\circ\Psi=\id$ is analogous.
Let $e\in \Ext^1_{\mathcal{A}}(X,G(Y))$ be an element represented by a short exact sequence
$0\to G(Y)\xrightarrow{a} E\xrightarrow{b} X\to 0$. By definition, $\Phi(e)$ is represented by the second row of the following commutative diagram with exact rows:
\begin{equation*}
\begin{tikzcd}
    0 \arrow[r] & FG(Y)\arrow[r,"{F(a)}"]  \arrow[rd, "\text{(PO)}", phantom] \arrow[d,"{\varepsilon_Y}"']
    & F(E)\arrow[d,"{j}"] \arrow[r, "{F(b)}"]
    & F(X) \arrow[r] \arrow[d, equal] & 0 \\
    0 \arrow[r] & Y\arrow[r,"{a'}"'] & E' \arrow[r, "{b'}"'] & F(X) \arrow[r] & 0
\end{tikzcd}
\end{equation*}
$\Psi(\Phi(e))$ is represented by the first row of the following commutative diagram with exact rows:
\begin{equation*}
\begin{tikzcd}
0 \arrow[r] & G(Y) \arrow[d, equal] \arrow[r, "{a''}"]
& E''  \arrow[rd, "\text{(PB)}", phantom]
\arrow[r,"{v}"]\arrow[d,"{u}"'] & X \arrow[d,"{\eta_X}"] \arrow[r] & 0\\
0 \arrow[r] & G(Y) \arrow[r, "{G(a')}"] & G(E') \arrow[r,"{G(b')}"'] & GF(X) \arrow[r] & 0
\end{tikzcd}
\end{equation*}
Consider the composite $t:=G(j)\circ \eta_E:E\to G(E')$.
By naturality of $\eta$, we have
\[
G(b')\circ t
=G(b')\circ G(j)\circ \eta_E
=GF(b)\circ \eta_E
=\eta_X\circ b.
\]
Hence, by the universal property of the pullback, there is a unique morphism $s:E\to E''$ such that $u\circ s=t$ and $v\circ s=b$.
Using naturality of $\eta$, we get
\begin{gather*}
u\circ s\circ a
=t\circ a
=G(j)\circ \eta_E\circ a
=G(j)\circ GF(a)\circ \eta_{G(Y)} \\
=G(a')\circ G(\varepsilon_Y)\circ \eta_{G(Y)}
=G(a') = u \circ a'',
\end{gather*}
where the last equality is the triangle identity for the unit and the counit.
Since also $v\circ s\circ a=b\circ a=0 = v \circ a''$, we have $s \circ a = a''$. Thus the map $s$ is a morphism of short exact sequences
\[
\begin{tikzcd}
0\arrow[r] & G(Y)\arrow[r,"a"]\arrow[d,equal] & E\arrow[r,"b"]\arrow[d,"s"] & X\arrow[r]\arrow[d,equal] & 0\\
0\arrow[r] & G(Y)\arrow[r, "{a''}"] & E''\arrow[r,"v"] & X\arrow[r] & 0.
\end{tikzcd}
\]
By the five lemma, $s$ is an isomorphism, hence $e$ and $\Psi(\Phi(e))$ are equivalent extensions.
Thus they define the same class in $\Ext^1_{\mathcal{A}}(X,G(Y))$.
\end{proof}


\subsection{Exercise in homological algebra}

Let $\mathcal{A}$ be an abelian category. Throughout this subsection, we consider the commutative diagram
\begin{equation}
  \label{eq:exercise-3x3-diagram}
  \begin{tikzcd}[column sep = 32pt]
    A_{11}   \arrow[d, hook', "{j_1}"'] \arrow[r, hook, "{i_1}"]
    & A_{12} \arrow[d, hook', "{j_2}"'] \arrow[r, two heads, "{p_1}"]
    & A_{13} \arrow[d, hook', "{j_3}"'] \\
    A_{21}   \arrow[d, two heads, "{q_1}"'] \arrow[r, hook, "{i_2}"]
    & A_{22} \arrow[d, two heads, "{q_2}"'] \arrow[r, two heads, "{p_2}"]
    & A_{23} \arrow[d, two heads, "{q_3}"'] \\
    A_{31}   \arrow[r, hook, "{i_3}"]
    & A_{32} \arrow[r, two heads, "{p_3}"]
    & A_{33}
  \end{tikzcd}
\end{equation}
in $\mathcal{A}$ with exact rows and exact columns. Here, we establish technical lemmas concerning this diagram.

\begin{lemma}
  \label{lem:exercise-lemma-1}
  There is an exact sequence
  \begin{equation}
    \label{eq:exercise-ex-seq-1}
    \begin{tikzcd}
      0 \arrow[r]
      & A_{11} \arrow[r, "{\iota}"]
      & \Ker(p_3 q_2) \arrow[r, "{\pi}"]
      & A_{13} \oplus A_{31} \arrow[r] & 0,
    \end{tikzcd}
  \end{equation}
  where $\iota$ is the restriction of $i_2 j_1 : A_{11} \to A_{22}$ and $\pi$ is the unique morphism making the following diagram commutative:
  \begin{equation*}
    \begin{tikzcd}
      A_{22} \arrow[d, two heads, "{p_2}"']
      & & \Ker(p_3q_2) \arrow[d, "{\pi}"]
      \arrow[ll, hook'] \arrow[rr, hook]
      & & A_{22} \arrow[d, two heads, "{q_2}"] \\
      A_{23} & A_{13} \arrow[l, hook', "{j_3}"]
      & A_{13} \oplus A_{31}
      \arrow[l, two heads] \arrow[r, two heads]
      & A_{31} \arrow[r, hook, "{i_3}"'] & A_{32}
    \end{tikzcd}
  \end{equation*}
\end{lemma}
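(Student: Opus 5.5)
We first check that $\pi$ is well defined. For $x\in\Ker(p_3q_2)$, the element $q_2(x)$ lies in $\Ker(p_3)=\Img(i_3)$. Since $i_3$ is monic, $q_2$ restricted to $\Ker(p_3q_2)$ factors uniquely through $i_3$, giving a morphism $\Ker(p_3q_2)\to A_{31}$. Symmetrically, $p_3q_2=q_3p_2$ by commutativity of the lower right square. So $p_2(x)\in\Ker(q_3)=\Img(j_3)$, and $p_2$ restricted to $\Ker(p_3q_2)$ factors uniquely through $j_3$, giving a morphism to $A_{13}$. Then $\pi$ is the pair of these two morphisms. For $\iota$, note that $p_3q_2i_2j_1=p_3i_3q_1j_1=0$, so $i_2j_1$ does land in $\Ker(p_3q_2)$. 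I would argue with elements throughout, using the Freyd--Mitchell embedding theorem or generalized elements, since every statement is finitary. Alternatively, everything can be phrased as diagram chases in an abelian category.

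Exactness would be checked at each spot in turn. \emph{Injectivity of $\iota$}: $i_2j_1$ is a composite of monomorphisms. \emph{$\pi\iota=0$}: we have $p_2i_2j_1=0$ and $q_2i_2j_1=i_2'$-type composite $i_3q_1j_1=0$, and the factorizations through the monomorphisms $j_3$, $i_3$ then vanish. \emph{$\Ker\pi\subseteq\Img\iota$}: if $x\in A_{22}$ satisfies $p_2(x)=0$ and $q_2(x)=0$, then $x=j_2(y)$ with $y\in A_{12}$. Then $j_3p_1(y)=p_2j_2(y)=0$, so $p_1(y)=0$ and $y=i_1(z)$. Hence $x=j_2i_1(z)=i_2j_1(z)$.

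The remaining and main step is \emph{surjectivity of $\pi$}. Since $\pi$ is additive, it suffices to hit $(a,0)$ and $(0,c)$ separately. Given $a\in A_{13}$, lift it via the epimorphism $p_1$ to $y\in A_{12}$ and take $x=j_2(y)$. Then $q_2(x)=0$ and $p_2(x)=j_3p_1(y)=j_3(a)$, so $x\in\Ker(p_3q_2)$ with $\pi(x)=(a,0)$. Given $c\in A_{31}$, lift it via $q_1$ to $w\in A_{21}$ and take $x=i_2(w)$. Then $p_2(x)=0$ and $q_2(x)=i_3q_1(w)=i_3(c)$, so $\pi(x)=(0,c)$.

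The only place requiring care is making these lifting arguments rigorous in a general abelian category. I expect no real obstacle: either invoke Freyd--Mitchell (the diagram is small), or replace elements by generalized elements $T\to A_{ij}$. In the latter case, lifts exist only after passing to an epimorphic cover $T'\to T$, which is harmless for proving that a morphism is epic or that a kernel is contained in an image.
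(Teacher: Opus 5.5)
Your proposal is correct and follows essentially the same route as the paper: an element chase via the Freyd--Mitchell embedding, with the two components of $\pi$ defined by factoring $p_2$ and $q_2$ through the monomorphisms $j_3$ and $i_3$, followed by checking injectivity of $\iota$, $\pi\iota=0$, $\Ker\pi\subseteq\Img\iota$, and surjectivity of $\pi$. The differences are minor. Your kernel chase runs through $j_2,p_1,i_1$ rather than the paper's $i_2,q_1,j_1$. Your surjectivity argument, which hits $(a,0)$ by $j_2(y)$ and $(0,c)$ by $i_2(w)$, is slightly cleaner than the paper's, which lifts $i_3(y)$ along $q_2$ and then corrects the $A_{13}$-component by subtracting some $j_2(w)$.
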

\begin{proof}
We use Mitchell's embedding theorem to treat objects of $\mathcal{A}$ as if they were abelian groups. We first define $\pi$.
Since the diagram \eqref{eq:exercise-3x3-diagram} commutes, we have
\[
q_3 p_2 = p_3 q_2 : A_{22}\to A_{33}.
\]
Thus, for $z\in \Ker(p_3 q_2)$ we have $q_3 p_2(z)=0$, so $p_2(z)\in \Ker(q_3)=\Img(j_3)$.
Since $j_3$ is monic, there is a unique morphism $\pi_{13}:\Ker(p_3 q_2)\to A_{13}$ such that
$j_3\circ \pi_{13}=p_2|_{\Ker(p_3 q_2)}$.

Similarly, if $z\in \Ker(p_3 q_2)$ then $p_3 q_2(z)=0$, so $q_2(z)\in \Ker(p_3)=\Img(i_3)$.
Since $i_3$ is monic, there is a unique morphism $\pi_{31}:\Ker(p_3 q_2)\to A_{31}$ such that
$i_3\circ \pi_{31}=q_2|_{\Ker(p_3 q_2)}$.
We set $\pi=(\pi_{13},\pi_{31}):\Ker(p_3q_2)\to A_{13}\oplus A_{31}$.

We prove that the sequence \eqref{eq:exercise-ex-seq-1} is exact.

Injectivity of $\iota$ is clear since $i_2j_1$ is monic.
Moreover, $\pi\circ \iota=0$ because $p_2 i_2=0$ and $q_2 i_2=i_3 q_1$.
Thus $\Img(\iota)\subseteq \Ker(\pi)$.

To show that $\pi$ is epic, let $(x,y)\in A_{13}\oplus A_{31}$.
Choose $z_0\in A_{22}$ with $q_2(z_0)=i_3(y)$ (possible since $q_2$ is epic).
Then $p_3 q_2(z_0)=0$, hence $z_0\in\Ker(p_3 q_2)$.
Since $q_3 p_2(z_0)=p_3 q_2(z_0)=0$, we have $p_2(z_0)\in \Ker(q_3)=\Img(j_3)$, so there is
$x_0\in A_{13}$ with $p_2(z_0)=j_3(x_0)$.
Let $d:=x_0-x\in A_{13}$.
Choose $w\in A_{12}$ with $p_1(w)=d$ (possible since $p_1$ is epic).
Then
\[
q_2(z_0-j_2(w))=q_2(z_0)
\quad\text{and}\quad
p_2(z_0-j_2(w))=p_2(z_0)-p_2j_2(w)=j_3(x_0)-j_3p_1(w)=j_3(x).
\]
Thus $z:=z_0-j_2(w)$ lies in $\Ker(p_3q_2)$ and satisfies $\pi(z)=(x,y)$.
So $\pi$ is epic.

Finally, let $z\in \Ker(\pi)$.
Then $p_2(z)=j_3\pi_{13}(z)=0$ and $q_2(z)=i_3\pi_{31}(z)=0$.
Hence $z\in \Ker(p_2)\cap \Ker(q_2)=\Img(i_2)\cap \Img(j_2)$.
Choose $u\in A_{21}$ and $v\in A_{12}$ with $i_2(u)=j_2(v)=z$.
Applying $q_2$ and using commutativity gives
\[
0=q_2(z)=q_2 i_2(u)=i_3 q_1(u),
\]
so $q_1(u)=0$ since $i_3$ is monic.
Thus $u\in \Ker(q_1)=\Img(j_1)$, so $u=j_1(a)$ for some $a\in A_{11}$.
Then
\[
z=i_2(u)=i_2j_1(a)=j_2 i_1(a),
\]
and since $j_2$ is monic, we get $v=i_1(a)$.
Therefore $z=\iota(a)$, i.e.\ $\Ker(\pi)=\Img(\iota)$.
\end{proof}

By the additivity of $\Ext^1$, we have an isomorphism
\begin{equation}
  \label{eq:exercise-ext-iso-1}
  \Ext^1(A_{13} \oplus A_{31}, A_{11})
  \cong \Ext^1(A_{13}, A_{11}) \oplus \Ext^1(A_{31}, A_{11})
\end{equation}
of abelian groups. The exact sequence \eqref{eq:exercise-ex-seq-1} defines an element of the left hand side of \eqref{eq:exercise-ext-iso-1}. The element of the right hand side of~\eqref{eq:exercise-ext-iso-1} corresponding to \eqref{eq:exercise-ex-seq-1} is described as follows:

\begin{lemma}
  \label{lem:exercise-lemma-2}
  Let $\alpha_1$ and $\beta_1$ be the elements of the Ext group corresponding to the first row and the first column of \eqref{eq:exercise-3x3-diagram}, respectively. Then the element represented by the exact sequence \eqref{eq:exercise-ex-seq-1} corresponds to the element $(\alpha_1, \beta_1)$ of the right hand side of \eqref{eq:exercise-ext-iso-1}.
\end{lemma}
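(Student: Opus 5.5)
The plan is to compute the two components of the class of \eqref{eq:exercise-ex-seq-1} under the isomorphism \eqref{eq:exercise-ext-iso-1}. The components are the pullbacks along the two canonical inclusions $\mathrm{in}_{13} : A_{13} \to A_{13} \oplus A_{31}$ and $\mathrm{in}_{31} : A_{31} \to A_{13} \oplus A_{31}$, since the additivity isomorphism $\Ext^1(A_{13}\oplus A_{31}, A_{11}) \to \Ext^1(A_{13},A_{11})\oplus\Ext^1(A_{31},A_{11})$ is $e \mapsto (\mathrm{in}_{13}^\star e, \mathrm{in}_{31}^\star e)$. So it suffices to show two things: $\mathrm{in}_{13}^\star$ of the class of \eqref{eq:exercise-ex-seq-1} is $\alpha_1$, and $\mathrm{in}_{31}^\star$ of it is $\beta_1$. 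For each I will use Lemma~\ref{lem:Ext1-pullback-pushout}(a): it is enough to exhibit a commutative diagram with exact rows whose top row is the first row (respectively first column) of \eqref{eq:exercise-3x3-diagram}, whose bottom row is \eqref{eq:exercise-ex-seq-1}, whose left vertical map is the identity of $A_{11}$, and whose right vertical map is the relevant inclusion.

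For the first component, I take the middle vertical map to be $j_2 : A_{12} \to A_{22}$. I must check that it factors through $\Ker(p_3 q_2)$, which holds because $q_2 j_2 = 0$. The left square commutes: $\iota = i_2 j_1 = j_2 i_1$ as maps into $A_{22}$, so the restricted map satisfies $j_2|\circ i_1 = \iota$. For the right square I need $\pi \circ j_2 = \mathrm{in}_{13}\circ p_1$. I check this componentwise using the defining equations of $\pi_{13}$ and $\pi_{31}$ together with monicity of $j_3$ and $i_3$. First, $j_3 \pi_{13} j_2 = p_2 j_2 = j_3 p_1$, so $\pi_{13} j_2 = p_1$. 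Second, $i_3 \pi_{31} j_2 = q_2 j_2 = 0$, so $\pi_{31} j_2 = 0$. Lemma~\ref{lem:Ext1-pullback-pushout}(a) then gives $\mathrm{in}_{13}^\star[\eqref{eq:exercise-ex-seq-1}] = \alpha_1$.

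The second component is symmetric. I use $i_2 : A_{21} \to \Ker(p_3 q_2)$, which is well defined since $p_2 i_2 = 0$. The left square is $i_2 j_1 = \iota$ by definition. For the right square, $i_3 \pi_{31} i_2 = q_2 i_2 = i_3 q_1$ gives $\pi_{31} i_2 = q_1$, and $j_3\pi_{13} i_2 = p_2 i_2 = 0$ gives $\pi_{13} i_2 = 0$. Hence $\mathrm{in}_{31}^\star[\eqref{eq:exercise-ex-seq-1}] = \beta_1$.

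There is no real obstacle here. The only point needing care is confirming that the identification \eqref{eq:exercise-ext-iso-1} is indeed given by pullback along the coproduct inclusions, which is the standard description coming from additivity of $\Ext^1$ in the first variable. Everything else is the diagram chase above, which I would phrase without elements by using the monicity of $j_3$ and $i_3$ exactly as in the definition of $\pi$.
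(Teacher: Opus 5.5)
Your proposal is correct and follows essentially the same route as the paper: both identify the components under \eqref{eq:exercise-ext-iso-1} as the pullbacks along the two coproduct inclusions, and then match these with the first row and the first column. The only difference is in execution. The paper computes the pullback middle terms directly as $\Ker(\pi_{31})=\Ker(q_2)\cong A_{12}$ and $\Ker(\pi_{13})=\Ker(p_2)\cong A_{21}$, whereas you write down the comparison maps $j_2$ and $i_2$, check the squares via monicity of $j_3$ and $i_3$, and invoke Lemma~\ref{lem:Ext1-pullback-pushout}(a), which avoids computing the pullback explicitly.
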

\begin{proof}
Under the canonical isomorphism
$\Ext^1(A_{13}\oplus A_{31},A_{11})\cong \Ext^1(A_{13},A_{11})\oplus \Ext^1(A_{31},A_{11})$,
the extension class is sent to the pair of its pullbacks along the inclusions
$A_{13}\hookrightarrow A_{13}\oplus A_{31}$ and $A_{31}\hookrightarrow A_{13}\oplus A_{31}$.

Let $e$ be the extension class represented by \eqref{eq:exercise-ex-seq-1}.
The pullback of \eqref{eq:exercise-ex-seq-1} along $A_{13}\hookrightarrow A_{13}\oplus A_{31}$ has middle term
$\Ker(\pi_{31})$, where $\pi_{31}$ is the composite $\Ker(p_3 q_2)\xrightarrow{\pi} A_{13}\oplus A_{31}\to A_{31}$.
By construction of $\pi$ in Lemma~\ref{lem:exercise-lemma-1}, we have
$i_3\circ \pi_{31}=q_2|_{\Ker(p_3 q_2)}$, hence $\Ker(\pi_{31})=\Ker(q_2)\cong A_{12}$.
Moreover, the induced map $\Ker(\pi_{31})\to A_{13}$ is identified with $p_1$ by the commutativity
$p_2 j_2=j_3 p_1$ and the defining property $j_3\circ \pi_{13}=p_2|_{\Ker(p_3 q_2)}$.
Thus this pullback extension is isomorphic to the first row of \eqref{eq:exercise-3x3-diagram}, hence equals
$\alpha_1$.

Similarly, the pullback along $A_{31}\hookrightarrow A_{13}\oplus A_{31}$ has middle term $\Ker(\pi_{13})$.
Since $j_3\circ \pi_{13}=p_2|_{\Ker(p_3 q_2)}$, we have $\Ker(\pi_{13})=\Ker(p_2)\cong A_{21}$.
The induced map $\Ker(\pi_{13})\to A_{31}$ is identified with $q_1$ using commutativity
$q_2 i_2=i_3 q_1$ and the defining property $i_3\circ \pi_{31}=q_2|_{\Ker(p_3 q_2)}$.
Hence this pullback extension is isomorphic to the first column of \eqref{eq:exercise-3x3-diagram}, so it equals
$\beta_1$.

Therefore $e$ corresponds to $(\alpha_1,\beta_1)$.
\end{proof}

In the opposite category $\mathcal{A}^{\op}$, the diagram \eqref{eq:exercise-3x3-diagram} looks like:
\begin{equation*}
  \begin{tikzcd}[column sep = 32pt]
    A_{33}   \arrow[d, hook', "{p_3}"'] \arrow[r, hook, "{q_3}"]
    & A_{32} \arrow[d, hook', "{p_2}"'] \arrow[r, two heads, "{i_3}"]
    & A_{31} \arrow[d, hook', "{p_1}"'] \\
    A_{23}   \arrow[d, two heads, "{j_3}"'] \arrow[r, hook, "{q_2}"]
    & A_{22} \arrow[d, two heads, "{j_2}"'] \arrow[r, two heads, "{i_2}"]
    & A_{21} \arrow[d, two heads, "{j_1}"'] \\
    A_{13}   \arrow[r, hook, "{q_1}"]
    & A_{12} \arrow[r, two heads, "{i_1}"]
    & A_{11}.
  \end{tikzcd}
\end{equation*}
We apply the above lemmas to this diagram and then interpret the result in $\mathcal{A}$ to obtain:

\begin{lemma}
  \label{lem:exercise-lemma-3}
  There is an exact sequence
  \begin{equation}
    \label{eq:exercise-ex-seq-2}
    \begin{tikzcd}
      0 \arrow[r]
      & A_{13} \oplus A_{31} \arrow[r, "{\iota}"]
      & \Cok(i_2 j_1) \arrow[r, "{\pi}"]
      & A_{33} \arrow[r] & 0,
    \end{tikzcd}
  \end{equation}
  where $\pi$ is induced by $p_3 q_2 : A_{22} \to A_{33}$ and $\iota$ is the unique morphism making the following diagram commutative:
  \begin{equation*}
    \begin{tikzcd}
      A_{12} \arrow[r, two heads, "{p_1}"] \arrow[d, hook', "{j_2}"']
      & A_{13} \arrow[r, hook]
      & A_{13} \oplus A_{31} \ar[d, "{\iota}"]
      & A_{31} \arrow[l, hook']
      & A_{21} \arrow[l, "{q_1}"', two heads] \arrow[d, hook', "{i_2}"] \\
      A_{22} \arrow[rr, two heads]
      & & \Cok(i_2 j_1)
      & & A_{22} \arrow[ll, two heads]
    \end{tikzcd}
  \end{equation*}
  Under the canonical isomorphism
  \begin{equation}
    \label{eq:exercise-ext-iso-2}
    \Ext^1(A_{33}, A_{13} \oplus A_{31})
    \cong \Ext^1(A_{33}, A_{13}) \oplus \Ext^1(A_{33}, A_{31})
  \end{equation}
  of abelian groups, the element of the left hand side represented by \eqref{eq:exercise-ex-seq-2} corresponds to the element $(\alpha_3, \beta_3)$ of the right hand side, where $\alpha_3$ and $\beta_3$ are elements represented by the third column and the third row of \eqref{eq:exercise-3x3-diagram}, respectively.
\end{lemma}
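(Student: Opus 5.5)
The plan is to obtain Lemma~\ref{lem:exercise-lemma-3} by duality from Lemmas~\ref{lem:exercise-lemma-1} and~\ref{lem:exercise-lemma-2}, applied in $\mathcal{A}^{\op}$ to the transposed diagram displayed just before the statement. That diagram is again a $3\times 3$ diagram with exact rows and exact columns. Its top-left entry is $A_{33}$ and its bottom-right entry is $A_{11}$. The first-row maps are $q_3, i_3$, the first-column maps are $p_3, j_3$, and the middle object is $A_{22}$.

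\textbf{Step 1: the exact sequence.} Lemma~\ref{lem:exercise-lemma-1} in $\mathcal{A}^{\op}$ gives an exact sequence
\[ 0 \to A_{33} \to \Ker^{\op}(\cdot) \to A_{31}\oplus A_{13} \to 0 \]
in $\mathcal{A}^{\op}$. The middle term is the kernel, computed in $\mathcal{A}^{\op}$, of the composite "$p_3q_2$" of the transposed diagram. That composite is the $\mathcal{A}^{\op}$-composite of $j_2$ and $i_1$, so it is the opposite of $i_2 j_1 = j_2 i_1 \colon A_{11}\to A_{22}$. A kernel in $\mathcal{A}^{\op}$ is a cokernel in $\mathcal{A}$, so the middle term is $\Cok(i_2 j_1)$.

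Reversing arrows turns this into the exact sequence \eqref{eq:exercise-ex-seq-2} in $\mathcal{A}$; finite biproducts are self-dual. The maps are identified as follows.
\begin{itemize}
\item The map $\iota$ of Lemma~\ref{lem:exercise-lemma-1} is "the restriction of $i_2 j_1$" of the transposed diagram, namely $p_3 q_2$ read in $\mathcal{A}^{\op}$. It dualizes to the map $\pi\colon\Cok(i_2j_1)\to A_{33}$ induced by $p_3q_2$.
\item The map $\pi$ of Lemma~\ref{lem:exercise-lemma-1} is characterized by a commutative diagram built from $p_2$, $j_3$, $q_2$, $i_3$ of the transposed diagram. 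In $\mathcal{A}$ these correspond to $j_2$, $p_1$, $i_2$, $q_1$. The dual diagram is exactly the one in the statement characterizing $\iota$. Uniqueness follows because $\Cok(i_2j_1)$ receives an epimorphism from $A_{22}$.
\end{itemize}

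\textbf{Step 2: the Ext class.} Lemma~\ref{lem:exercise-lemma-2} in $\mathcal{A}^{\op}$ says that, under $\Ext^1_{\mathcal{A}^{\op}}(A_{31}\oplus A_{13}, A_{33}) \cong \Ext^1_{\mathcal{A}^{\op}}(A_{31},A_{33})\oplus \Ext^1_{\mathcal{A}^{\op}}(A_{13},A_{33})$, the class corresponds to the pair (first row, first column) of the transposed diagram.

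We use the canonical identification $\Ext^1_{\mathcal{A}^{\op}}(X,Y)=\Ext^1_{\mathcal{A}}(Y,X)$: a short exact sequence in $\mathcal{A}^{\op}$ is one in $\mathcal{A}$ with arrows reversed. Under it, pullbacks along inclusions in $\mathcal{A}^{\op}$ become pushouts along projections in $\mathcal{A}$. Hence the splitting isomorphism in the first variable becomes the one in the second variable, namely \eqref{eq:exercise-ext-iso-2}.

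The first row $A_{33}\to A_{32}\to A_{31}$ of the transposed diagram is the third column of \eqref{eq:exercise-3x3-diagram}, i.e.\ $\alpha_3\in\Ext^1(A_{33},A_{13})$. Here we must check whether it lands in the $A_{13}$ or the $A_{31}$ slot; after matching the ordering of the summands, the components come out as $(\alpha_3,\beta_3)$. Likewise the first column of the transposed diagram is the third row of \eqref{eq:exercise-3x3-diagram}, giving $\beta_3$.

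\textbf{Main obstacle.} The main difficulty is bookkeeping rather than mathematics. One must track which summand ($A_{13}$ versus $A_{31}$) each component sits in after transposition, since the transposed diagram swaps the roles of rows and columns. One must also verify that the identification $\Ext^1_{\mathcal{A}^{\op}}(X,Y)\cong\Ext^1_{\mathcal{A}}(Y,X)$ is compatible with the additivity isomorphisms; this reduces to the fact that pullback along a biproduct inclusion dualizes to pushout along the corresponding projection. No sign issues arise, since only splitting isomorphisms (not Baer sums with inverses) are involved.
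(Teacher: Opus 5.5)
Your strategy is the paper's own. The paper likewise obtains this lemma by applying Lemmas~\ref{lem:exercise-lemma-1} and~\ref{lem:exercise-lemma-2} in $\mathcal{A}^{\op}$ to the diagram with $A_{33}$ in the top-left corner and translating back to $\mathcal{A}$. Your identification of the middle term $\Cok(i_2j_1)$, of $\pi$, and of $\Ext^1_{\mathcal{A}^{\op}}(X,Y)=\Ext^1_{\mathcal{A}}(Y,X)$ (pullback along inclusions versus pushout along projections) is correct.

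However, the bookkeeping you single out as the main obstacle is stated incorrectly, because you mix two layouts. Write $B$ for the diagram in $\mathcal{A}^{\op}$.

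\textbf{The layout you fix.} This is $B_{kl}=A_{4-k,4-l}$, so $B_{13}=A_{31}$ and $B_{31}=A_{13}$.
\begin{itemize}
\item The first row $A_{33}\to A_{32}\to A_{31}$ of $B$ is the third \emph{row} of \eqref{eq:exercise-3x3-diagram}. It represents $\beta_3\in\Ext^1(A_{33},A_{31})$; an extension of $A_{33}$ by $A_{31}$ cannot represent a class in $\Ext^1(A_{33},A_{13})$.
\item The first column $A_{33}\to A_{23}\to A_{13}$ is the third \emph{column} and represents $\alpha_3$.
\item Hence Lemma~\ref{lem:exercise-lemma-2} gives $(\beta_3,\alpha_3)$ relative to $A_{31}\oplus A_{13}$, which becomes $(\alpha_3,\beta_3)$ after swapping the summands.
\end{itemize}

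\textbf{Other slips in the same layout.}
\begin{itemize}
\item The maps $p_2,j_3,q_2,i_3$ of Lemma~\ref{lem:exercise-lemma-1} are the opposites of $i_2,q_1,j_2,p_1$, not of $j_2,p_1,i_2,q_1$.
\item The first row of $B$ consists of the opposites of $p_3,i_3$, and the first column of the opposites of $q_3,j_3$. The labels in the displayed diagram interchange $p_k$ and $q_k$.
\item Uniqueness of $\iota$ comes from $p_1$ and $q_1$ being epimorphisms, not from the epimorphism $A_{22}\to\Cok(i_2j_1)$.
\end{itemize}

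\textbf{The fix.} Your Step~1 correspondence and Step~2 identifications are exactly right for the anti-diagonal transpose $B_{kl}=A_{4-l,4-k}$. There $B_{13}=A_{13}$, $B_{31}=A_{31}$, and no reordering is needed. Either layout proves the lemma, provided you use one consistently.
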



For the rest of this section, we fix a short exact sequence in an abelian monoidal category $\C$ with enough
left flat and enough right flat objects:
\begin{equation}
  \label{eq:EP-lemma-proof-exact-seq-1}
  \begin{tikzcd}
    0 \arrow[r] & Y \arrow[r, "{i}"] & Z \arrow[r, "{p}"] & X \arrow[r] & 0.
  \end{tikzcd}
\end{equation}

\subsection{Proof of Lemma \ref{lem:Etingof-Penneys} (1)}

We give a proof of Lemma \ref{lem:Etingof-Penneys} (1).
Assume that $X$ and $Y$ are right flat and left rigid.
Lemma \ref{lem:rigid-implies-flat} implies that $X$ and $Y$ are also left flat.

\subsubsection{Construction of the dual object}
We first consider the following diagram:
\begin{equation*}
\begin{tikzcd}[column sep = 16pt]
  \Ext^1(X, Y) \arrow[r, "{\widetilde{- \otimes X^*}}"]
  \arrow[d, "{\widetilde{Y^* \otimes -}}"]
  & \Ext^1(X \otimes X^*, Y \otimes X^*)
  \arrow[r, "{(\coev_X)^{\star}}"]
  \arrow[d, "{\widetilde{Y^* \otimes -}}"]
  & \Ext^1(\unit, Y \otimes X^*)
  \arrow[d, "{\widetilde{Y^* \otimes -}}"] \\
  \Ext^1(Y^* \otimes X, Y^* \otimes Y)
  \arrow[r, "{\widetilde{- \otimes X^*}}" {yshift = 5pt}]
  \arrow[d, "{(\ev_Y)_{\star}}"]
  & \Ext^1(Y^* \otimes X \otimes X^*, Y^* \otimes Y \otimes X^*)
  \arrow[r, "{(\id \otimes \coev_X)^{\star}}" {yshift = 5pt}]
  \arrow[d, "{(\ev_Y \otimes \id)_{\star}}"]
  & \Ext^1(Y^*, Y^* \otimes Y \otimes X^*)
  \arrow[d, "{(\ev_Y \otimes \id)_{\star}}"] \\
  \Ext^1(Y^* \otimes X, \unit)
  \arrow[r, "{\widetilde{- \otimes X^*}}"]
  & \Ext^1(Y^* \otimes X \otimes X^*, X^*)
  \arrow[r, "{(\id \otimes \coev_X)^{\star}}"]
  & \Ext^1(Y^*, X^*),
\end{tikzcd}
\end{equation*}
where $\widetilde{- \otimes X^*}$ and $\widetilde{Y^* \otimes -}$ are the maps induced by the functors $- \otimes X^*$ and $Y^* \otimes -$, respectively.
Since $X^*$ is right flat by Lemma \ref{lem:rigid-implies-flat}, the maps $\widetilde{- \otimes X^*}$ in the diagram are well-defined homomorphisms of abelian groups.
The objects $X$, $X \otimes X^*$ and $\unit$ are right flat.
Thus, by Lemma \ref{lem:Ext1-and-functor-2-corollary}, the maps $\widetilde{Y^* \otimes -}$ in the diagram are also well-defined homomorphisms of abelian groups.

The top-left square commutes since the functors $- \otimes X^*$ and $Y^* \otimes -$ commute. By the basic property of the Yoneda Ext, the bottom-right square also commutes. The remaining squares also commute by Lemma \ref{lem:Ext1-and-functor-1}. The rows and the columns of the above diagram are bijections induced by adjunctions discussed in Lemma~\ref{lem:Ext1-adjunction}. Hence the above diagram shrinks to the following commutative diagram of isomorphisms of abelian groups:
\begin{equation}
  \label{eq:EP-lemma-proof-Ext1-adj}
  \begin{tikzcd}
    \Ext^1(X, Y) \arrow[r, "{\cong}"] \arrow[d, "{\cong}"'] 
    & \Ext^1(\unit, Y \otimes X^*) \arrow[d, "{\cong}"] \\
    \Ext^1(Y^* \otimes X, \unit) \arrow[r, "{\cong}"] 
    & \Ext^1(Y^*, X^*)
  \end{tikzcd}
\end{equation}

We denote by $\alpha \in \Ext^1(X, Y)$ the element represented by \eqref{eq:EP-lemma-proof-exact-seq-1}, and let $\beta \in \Ext^1(Y^*, X^*)$ be the element corresponding to $-\alpha$. We choose an exact sequence
\begin{equation}
  \label{eq:EP-lemma-proof-exact-seq-2}
  \begin{tikzcd}
    0 \arrow[r] & X^* \arrow[r, "{j}"] & Z^{\vee} \arrow[r, "{q}"] & Y^* \arrow[r] & 0
  \end{tikzcd}
\end{equation}
representing $\beta$. We will show that the middle term, $Z^{\vee}$, is a left dual object of $Z$.

\subsubsection{Construction of the coevaluation}

By taking the tensor product of exact sequences \eqref{eq:EP-lemma-proof-exact-seq-1} and \eqref{eq:EP-lemma-proof-exact-seq-2}, we obtain the following commutative diagram:
\begin{equation}
  \label{eq:EP-lemma-proof-diagram-1}
  \begin{tikzcd}[row sep = 16pt, column sep = 48pt]
    Y \otimes X^*
    \arrow[r, hook, "{i \otimes \id}"]
    \arrow[d, hook, "{\id \otimes j}"']
    & Z \otimes X^*
    \arrow[r, two heads, "{p \otimes \id}"]
    \arrow[d, hook, "{\id \otimes j}"']
    & X \otimes X^*
    \arrow[d, hook, "{\id \otimes j}"'] \\
    Y \otimes Z^{\vee}
    \arrow[r, hook, "{i \otimes \id}"]
    \arrow[d, two heads, "{\id \otimes q}"']
    & Z \otimes Z^{\vee}
    \arrow[r, two heads, "{p \otimes \id}"]
    \arrow[d, two heads, "{\id \otimes q}"']
    & X \otimes Z^{\vee}
    \arrow[d, two heads, "{\id \otimes q}"'] \\
    Y \otimes Y^*
    \arrow[r, hook, "{i \otimes \id}"]
    & Z \otimes Y^*
    \arrow[r, two heads, "{p \otimes \id}"]
    & X \otimes Y^* 
  \end{tikzcd}
\end{equation}
Since $X$ and $Y$ are left flat, $Z$ is also left flat by Lemma \ref{lem:flatness-criterion-2}. Thus all columns of this diagram are exact. Since $X^*$ and $Y^*$ are right flat, $Z^{\vee}$ is also right flat by Lemma \ref{lem:flatness-criterion-2}, and thus all rows of this diagram are exact.
Hence, by Lemma~\ref{lem:exercise-lemma-1}, we obtain an exact sequence
\begin{equation}
  \label{eq:EP-lemma-proof-exact-seq-3}
  \begin{tikzcd}[column sep = 20pt]
    0 \arrow[r]
    & X \otimes Y^* \arrow[r, "{i \otimes j}"]
    & \Ker(p \otimes q) \arrow[r, "{\pi}"]
    & (X \otimes X^*) \oplus (Y \otimes Y^*)
    \arrow[r] & 0,
  \end{tikzcd}
\end{equation}
where $\pi$ is a morphism in $\C$ such that the following diagram is commutative:
\begin{equation}
  \label{eq:EP-lemma-proof-diagram-2}
  \begin{tikzcd}[column sep = 16pt]
    Z \otimes Z^{\vee} \arrow[d, two heads, "{p \otimes \id}"']
    & & \Ker(p \otimes q) \arrow[d, "{\pi}"]
    \arrow[ll, hook'] \arrow[rr, hook]
    & & Z \otimes Z^{\vee} \arrow[d, two heads, "{\id \otimes q}"] \\
    X \otimes Z^{\vee}
    & X \otimes X^* \arrow[l, hook', "{\id \otimes j}"' {yshift=3pt}]
    & (X \otimes X^*) \oplus (Y \otimes Y^*)
    \arrow[l, two heads, "{\mathrm{pr}_X}"' {yshift=3pt}]
    \arrow[r, two heads, "{\mathrm{pr}_Y}" {yshift=3pt}]
    & Y \otimes Y^* \arrow[r, hook, "{i \otimes \id}" {yshift=3pt}]
    & Z \otimes Y^*,
  \end{tikzcd}
\end{equation}
where $\mathrm{pr}_X$ and $\mathrm{pr}_Y$ are respective projections.

We compute the pullback of the exact sequence \eqref{eq:EP-lemma-proof-exact-seq-3} along the morphism $\coev_{X, Y} := (\coev_X \oplus \coev_Y) \circ \mathrm{diag}_{\unit}$.
By Lemma \ref{lem:Ext1-Baer-sum} and the naturality of the Baer sum, we have the following commutative diagram:
\begin{equation*}
  \begin{tikzcd}
      \Ext^1(X \otimes X^*, X \otimes Y^*) \oplus \Ext^1(Y \otimes Y^*, X \otimes Y^*)
    \arrow[r, "{\cong}"]
    \arrow[d, "{(\coev_X)^{\star} \oplus (\coev_Y)^{\star}}"']
    & \Ext^1((X \otimes X^*) \oplus (Y \otimes Y^*), X \otimes Y^*)
    \arrow[d, "{(\coev_{X,Y})^{\star}}"] \\
      \Ext^1(\unit, X \otimes Y^*) \oplus \Ext^1(\unit, X \otimes Y^*)
    \arrow[r, "\text{the Baer sum}"]
    & \Ext^1(\unit, X \otimes Y^*)
  \end{tikzcd}
\end{equation*}
By Lemma~\ref{lem:exercise-lemma-2}, the element \eqref{eq:EP-lemma-proof-exact-seq-3} of the upper right corner corresponds to the element $(\alpha \otimes X^*, Y \otimes \beta)$ of the upper left corner. Thus, by the commutativity of the diagram, we have
\begin{align*}
  (\coev_{X,Y})^{\star} \eqref{eq:EP-lemma-proof-exact-seq-3}
  & = (\coev_X)^{\star}(\alpha \otimes X^*) + (\coev_Y)^{\star}(Y \otimes \beta)
\end{align*}
in $\Ext^1(\unit, X \otimes Y^*)$
The first and the second term of the right hand side correspond to $\alpha \in \Ext^1(X, Y)$ and $\beta \in \Ext^1(Y^*, X^*)$ in the diagram \eqref{eq:EP-lemma-proof-Ext1-adj}, respectively. Since $\beta$ corresponds to $-\alpha \in \Ext^1(X, Y)$ in the diagram \eqref{eq:EP-lemma-proof-Ext1-adj} by definition, the right hand side of the above equation is in fact zero.
This means that the pullback of \eqref{eq:EP-lemma-proof-exact-seq-3} along $\coev_{X,Y}$ splits. Summarizing our discussion so far, we have the commutative diagram
\begin{equation}
  \label{eq:EP-lemma-proof-diagram-3}
  \begin{tikzcd}[column sep = 20pt]
    0 \arrow[r]
    & X \otimes Y^* \arrow[r, "{i \otimes j}"]
    & \Ker(p \otimes q) \arrow[r, "{\pi}"]
    & (X \otimes X^*) \oplus (Y \otimes Y^*)
    \arrow[r] & 0 \\
    0 \arrow[r]
    & X \otimes Y^* \arrow[u, equal] \arrow[r]
    & E \arrow[r, "{r}"'] \arrow[u, "{u}"]
    \arrow[ru, phantom, "\text{(PB)}"]
    & \unit \arrow[u, "{\coev_{X,Y}}"']
    \arrow[l, dashed, bend left=30, "{s}"]
    \arrow[r] & 0
  \end{tikzcd}
\end{equation}
with exact rows, where (PB) is a pullback and $s$ is a section of $r$. We define
\begin{equation}
  \label{eq:EP-lemma-proof-def-coev}
  \eta_Z : \unit
  \xrightarrow{s} E \xrightarrow{u} \Ker(p \otimes q) \hookrightarrow Z \otimes Z^{\vee},
\end{equation}
which, in fact, becomes the coevaluation after suitable modification.
Before proceeding further, we remark that the morphism $\eta_Z$ makes the following diagram commute:
\begin{equation}
  \label{eq:EP-lemma-proof-diagram-4}
  \begin{tikzcd}[column sep = 48pt]
    Y \otimes Y^* \arrow[d, "{i \otimes \id}"']
    & \unit \arrow[l, "{\coev_Y}"']
    \arrow[r, "{\coev_X}"] \arrow[d, "{\eta_Z}"]
    & X \otimes X^* \arrow[d, "{\id \otimes j}"] \\
    Z \otimes Y^*
    & Z \otimes Z^{\vee}
    \arrow[l, "{\id \otimes q}"]
    \arrow[r, "{p \otimes \id}"']
    & X \otimes Z^{\vee}
  \end{tikzcd}
\end{equation}
Indeed, by the commutative diagrams \eqref{eq:EP-lemma-proof-diagram-2} and \eqref{eq:EP-lemma-proof-diagram-3}, we have
\begin{gather*}
  (\id_Z \otimes q) \circ \eta_Z
  = (i \otimes \id_{Y^*}) \circ \mathrm{pr}_Y \circ \pi \circ u \circ s \\
  = (i \otimes \id_{Y^*}) \circ \mathrm{pr}_Y \circ \coev_{X,Y}
  = (i \otimes \id_{Y^*}) \circ \coev_Y
\end{gather*}
and, in a similar way, $(p \otimes \id_{Z^{\vee}}) \circ \eta_Z = (\id_X \otimes j) \circ \coev_X$.

\subsubsection{Construction of the evaluation}

By taking the tensor product of exact sequences \eqref{eq:EP-lemma-proof-exact-seq-1} and \eqref{eq:EP-lemma-proof-exact-seq-2} in the different order as before, we obtain the following commutative diagram:
\begin{equation}
  \label{eq:EP-lemma-proof-diagram-1-alt}
  \begin{tikzcd}[row sep = 16pt, column sep = 48pt]
    X^* \otimes Y
    \arrow[r, hook, "{\id \otimes i}"]
    \arrow[d, hook, "{j \otimes \id}"']
    & X^* \otimes Z
    \arrow[r, two heads, "{\id \otimes p}"]
    \arrow[d, hook, "{j \otimes \id}"']
    & X^* \otimes X
    \arrow[d, hook, "{j \otimes \id}"'] \\
    Z^{\vee} \otimes Y
    \arrow[r, hook, "{\id \otimes i}"]
    \arrow[d, two heads, "{q \otimes \id}"']
    & Z^{\vee} \otimes Z
    \arrow[r, two heads, "{\id \otimes p}"]
    \arrow[d, two heads, "{q \otimes \id}"']
    & Z^{\vee} \otimes X
    \arrow[d, two heads, "{q \otimes \id}"'] \\
    Y^* \otimes Y
    \arrow[r, hook, "{\id \otimes i}"]
    & Y^* \otimes Z
    \arrow[r, two heads, "{\id \otimes p}"]
    & Y^* \otimes X
  \end{tikzcd}
\end{equation}
By Lemma~\ref{lem:flatness-criterion-2} and the flatness of $X$ and $Y$, the columns of the
diagram~\eqref{eq:EP-lemma-proof-diagram-1-alt} are exact. By Lemma~\ref{lem:flatness-criterion-3} applied to
$A = X^*, Z^{\vee}, Y^*$, we see that the rows of \eqref{eq:EP-lemma-proof-diagram-1-alt} are exact.

We compute the pullback of the exact sequence
\begin{equation*}
  0 \to (X^* \otimes X) \oplus (Y^* \otimes Y) \to \Cok(j \otimes i) \to Y^* \otimes X \to 0
\end{equation*}
obtained by applying Lemma~\ref{lem:exercise-lemma-3} to the diagram \eqref{eq:EP-lemma-proof-diagram-1-alt} along the morphism $\ev_{X,Y} := \mathrm{sum}_{\unit} \circ (\ev_X \oplus \ev_Y)$. By an argument dual to one when we have obtained the commutative diagram \eqref{eq:EP-lemma-proof-diagram-3}, we obtain the following commutative diagram with exact rows:
\begin{equation*}
  \begin{tikzcd}[column sep = 20pt]
    0 \arrow[r]
    & (X^* \otimes X) \oplus (Y^* \otimes Y) \arrow[r, "{\iota}"]
    \arrow[d, "{\ev_{X,Y}}"']
    & \Cok(j \otimes i) \arrow[r]
    \arrow[d, "{u'}"] \arrow[ld, phantom, "\text{(PO)}"]
    & Y^* \otimes X \arrow[r] \arrow[d, equal]
    & 0 \\
    0 \arrow[r]
    & \unit \arrow[r, "{r'}"']
    & E' \arrow[r] \arrow[l, bend left=30, dashed, "{s'}"]
    & Y^* \otimes X \arrow[r] & 0 \\
  \end{tikzcd}
\end{equation*}
Here, $s'$ is a section of $r'$. Now we set
\begin{equation}
  \label{eq:EP-lemma-proof-def-ev}
  \varepsilon_Z : Z^{\vee} \otimes Z
  \twoheadrightarrow \Cok(j \otimes i)
  \xrightarrow{u'} E' \xrightarrow{s'} \unit.
\end{equation}
Again by the dual argument, we see that the following diagram is commutative:
\begin{equation}
  \label{eq:EP-lemma-proof-diagram-5}
  \begin{tikzcd}[column sep = 48pt]
    Z^{\vee} \otimes Y
    \arrow[r, "{\id \otimes i}"]
    \arrow[d, "{q \otimes \id}"']
    & Z^{\vee} \otimes Z
    \arrow[d, "{\varepsilon_Z}"]
    & X^* \otimes Z
    \arrow[l, "{j \otimes \id}"']
    \arrow[d, "{\id \otimes p}"] \\
    Y^* \otimes Y \arrow[r, "{\ev_Y}"]
    & \unit
    & X^* \otimes X \arrow[l, "{\ev_X}"']
  \end{tikzcd}
\end{equation}

\subsubsection{Verifying the zig-zag relation}

We have defined the morphisms $\eta_Z : \unit \to Z \otimes Z^{\vee}$ and $\varepsilon_Z : Z^{\vee} \otimes Z \to \unit$ by \eqref{eq:EP-lemma-proof-def-coev} and \eqref{eq:EP-lemma-proof-def-ev}, respectively. We set
\begin{equation*}
  \xi = (\varepsilon_Z \otimes \id_{Z^{\vee}}) (\id_{Z^{\vee}} \otimes \eta_Z)
  \quad \text{and} \quad
  \zeta = (\id_Z \otimes \varepsilon_Z) (\eta_Z \otimes \id_Z),
\end{equation*}
and aim to show that $\xi$ and $\zeta$ are invertible. To show that $\xi$ is invertible, we consider the diagram of Figure~\ref{fig:EP-lemma-proof-of-zig-zag}. The cells in the diagram labeled `(nat.)' is commutative by the naturality of the monoidal product. Thus the diagram is commutative and it shrinks to the following commutative diagram with exact rows:
\begin{equation*}
  \begin{tikzcd}
    0 \arrow[r]
    & X^* \arrow[r, hook, "{j}"] \arrow[d, equal]
    & Z^{\vee} \arrow[r, two heads, "{q}"] \arrow[d, "{\xi}"]
    & Y \arrow[r] \arrow[d, equal]
    & 0 \\
    0 \arrow[r]
    & X^* \arrow[r, hook, "{j}"]
    & Z^{\vee} \arrow[r, two heads, "{q}"]
    & Y \arrow[r]
    & 0
  \end{tikzcd}
\end{equation*}
By the five lemma, $\xi$ is invertible. A similar argument shows that $\zeta$ is also invertible (the detail of this part is written in \cite{etingof2024rigidity}). Hence, by Lemma~\ref{lem:duality-modification}, $Z$ is left rigid. The proof is done.

\begin{figure}
  \begin{equation*}
    \begin{tikzcd}[row sep = 24pt, column sep = 24pt]
      X^* \arrow[dd, "{j}"']
      \arrow[rr, "{\id \otimes \coev_X}"]
      \arrow[rd, "{\id \otimes \eta_Z}"]
      & & X^* \otimes X \otimes X^*
      \arrow[ld, phantom, "\text{\eqref{eq:EP-lemma-proof-diagram-4}}"]
      \arrow[rr, "{\ev_X \otimes \id}"]
      \arrow[rd, "{\id \otimes \id \otimes j}"]
      & & X^* \arrow[dd, "{j}"]
      \arrow[ld, phantom, "\text{(nat.)}"]
      \\ 
      & X^* \otimes Z \otimes Z^{\vee}
      \arrow[rr, "{\id \otimes p \otimes \id}"]
      \arrow[rd, "{j \otimes \id \otimes \id}"]
      \arrow[ld, phantom, "\text{(nat.)}"]
      & & X^* \otimes X \otimes Z^{\vee}
      \arrow[ld, phantom, "\text{\eqref{eq:EP-lemma-proof-diagram-5}}"]
      \arrow[rd, "{\ev_X \otimes \id}"]
      \\ 
      Z^{\vee}
      \arrow[rr, "{\id \otimes \eta_Z}"]
      \arrow[rd, "{\id \otimes \coev_Y}"]
      \arrow[dd, "{q}"']
      & & Z^{\vee} \otimes Z \otimes Z^{\vee}
      \arrow[rr, "{\varepsilon_Z \otimes \id}"]
      \arrow[rd, "{\id \otimes \id \otimes q}"]
      \arrow[ld, phantom, "\text{\eqref{eq:EP-lemma-proof-diagram-4}}"]
      & & Z^{\vee}
      \arrow[dd, "{q}"]
      \\ 
      & Z^{\vee} \otimes Y \otimes Y^*
      \arrow[ld, phantom, "\text{(nat.)}"]
      \arrow[rr, "{\id \otimes i \otimes \id}"]
      \arrow[rd, "{q \otimes \id \otimes \id}" ]
      & & Z^{\vee} \otimes Z \otimes Y^*
      \arrow[rd, "{\varepsilon_Z \otimes \id}"]
      \arrow[ru, phantom, "\text{(nat.)}"]
      \arrow[ld, phantom, "\text{\eqref{eq:EP-lemma-proof-diagram-5}}"]
      \\ 
      Y^* \arrow[rr, "{\id \otimes \coev_Y}"]
      & & Y^* \otimes Y \otimes Y^*
      \arrow[rr, "{\ev_Y \otimes \id}"]
      & & Y^*
    \end{tikzcd}
  \end{equation*}
  \caption{Proof of the zig-zag equation}
  \label{fig:EP-lemma-proof-of-zig-zag}
\end{figure}


\subsection{Proof of Lemma \ref{lem:Etingof-Penneys} (2)}

We give a proof of Lemma \ref{lem:Etingof-Penneys} (2).
Assume that $X$ and $Z$ are left rigid.

We define $Y^{\vee} = \Cok(p^* : X^* \to Z^*)$ and denote by $\pi : Z^* \to Y^{\vee}$ the quotient morphism. Since $X$ is left flat, we have an exact sequence
\begin{equation*}
  \begin{tikzcd}
    0 \arrow[r]
    & Y \otimes Y^{\vee} \arrow[r, "{i \otimes \id}"]
    & Z \otimes Y^{\vee} \arrow[r, "{p \otimes \id}"]
    & X \otimes Y^{\vee} \arrow[r] & 0
  \end{tikzcd}
\end{equation*}
by Lemma~\ref{lem:flatness-criterion-3}. We set $\xi = (\id_{Z} \otimes \pi)\coev_Z$. Since $\pi p^* = 0$, we have
\begin{equation*}
  (p \otimes \id_{Y^{\vee}}) \xi
  = (p \otimes \pi)\coev_Z = (\id_{X} \otimes \pi p^*) \coev_X = 0.
\end{equation*}
Thus $\xi$ factors through $\Ker(p \otimes \id_{Y^{\vee}})$, which is equal to $\Img(i \otimes \id_{Y^{\vee}})$. Since $i \otimes \id_{Y^{\vee}}$ is monic, there exists a morphism $\eta_Y : \unit \to Y \otimes Y^{\vee}$ in $\C$ such that
\begin{equation}
  \label{eq:EP-lemma-2-proof-eq-1}
  (i \otimes \id_{Y^{\vee}}) \circ \eta_Y = (\id_{Z} \otimes \pi) \circ \coev_Z.
\end{equation}
There is also an exact sequence
\begin{equation*}
  \begin{tikzcd}
    X^* \otimes Y \arrow[r, "{p^* \otimes \id}"]
    & Z^* \otimes Y \arrow[r, "{\pi \otimes \id}"]
    & Y^{\vee} \otimes Y \arrow[r]
    & 0.
  \end{tikzcd}
\end{equation*}
We set $\zeta = \ev_Z (\id_{Z^*} \otimes i)$. Since $\zeta (p^* \otimes \id_{Y}) = \ev_X (\id_{X^*} \otimes p \circ i) = 0$, the kernel of $\zeta$ contains the image of $p^* \otimes \id_{Y}$, which is equal to the kernel of $\pi \otimes \id_{Y}$. Hence there exists a morphism $\varepsilon_Y : Y^{\vee} \otimes Y \to \unit$ in $\C$ such that
\begin{equation}
  \label{eq:EP-lemma-2-proof-eq-2}
  \varepsilon_Y \circ (\pi \otimes \id_{Y}) = \ev_Z \circ (\id_{Z^*} \otimes i).
\end{equation}
By \eqref{eq:EP-lemma-2-proof-eq-1}, \eqref{eq:EP-lemma-2-proof-eq-2} and Lemma~\ref{lem:duality-transfer-by-morphism}, we have
\begin{equation*}
  i \circ (\id_{Y} \otimes \varepsilon_Y) \circ (\eta_Y \otimes \id_{Y}) = i,
  \quad (\varepsilon_Y \otimes \id_{Y^{\vee}}) \circ (\id_{Y^{\vee}} \otimes \eta_Y) \circ \pi = \pi.
\end{equation*}
Since $i$ is monic and $\pi$ is epic, we conclude that $(Y^{\vee}, \varepsilon_Y, \eta_Y)$ is a left dual object of $Y$. The proof is done.


\subsection{Proof of Lemma \ref{lem:Etingof-Penneys} (3)}

We give a proof of Lemma \ref{lem:Etingof-Penneys} (3).
Assume that $Y$ and $Z$ are left rigid, and that $i^* : Z^* \to Y^*$ is epic in $\C$.

We define $X^{\vee} = \Ker(i^*)$ and denote by $\iota : X^{\vee} \to Z^*$ the inclusion morphism. Since $i^*$ is epic, we have an exact sequence $0 \to X^{\vee} \xrightarrow{\iota} Z^* \xrightarrow{i^*} Y^* \to 0$. We note that $Y^*$ is right flat since it has a right dual object $Y$. Thus, by Lemma \ref{lem:flatness-criterion-3}, we have the following exact sequence:
\begin{equation*}
  \begin{tikzcd}
    0 \arrow[r]
    & X \otimes X^{\vee} \arrow[r, "{\id_X \otimes \iota}"]
    & X \otimes Z^* \arrow[r, "{\id_X \otimes i^*}"]
    & X \otimes Y^* \arrow[r]
    & 0.
  \end{tikzcd}
\end{equation*}
Since $(\id_X \otimes i^*) (p \otimes \id_{Z^*}) \coev_Z = 0$, the morphism $(p \otimes \id_{Z^*}) \coev_Z$ factors through the kernel of $\id_X \otimes i^*$, which is equal to the image of $\id_X \otimes \iota$. Since $\id_X \otimes \iota$ is monic, we have a morphism $\eta_X : \unit \to X \otimes X^{\vee}$ such that
\begin{equation}
  \label{eq:EP-lemma-3-proof-eq-1}
  (\id_X \otimes \iota) \circ \eta_X = (p \otimes \id_{Z^*}) \circ \coev_Z.
\end{equation}
There is also an exact sequence $X^{\vee} \otimes Y \to X^{\vee} \otimes Z \to X^{\vee} \otimes X \to 0$. By the same argument as the proof of Part (2), we have a morphism $\varepsilon_X : X^{\vee} \otimes X \to \unit$ in $\C$ satisfying the following equation:
\begin{equation}
  \label{eq:EP-lemma-3-proof-eq-2}
  \varepsilon_X \circ (\id_{X^{\vee}} \otimes p)
  = \ev_Z \circ (\iota \otimes \id_Z).
\end{equation}
By \eqref{eq:EP-lemma-3-proof-eq-1}, \eqref{eq:EP-lemma-3-proof-eq-2} and Lemma~\ref{lem:duality-transfer-by-morphism}, we conclude that $(X^{\vee}, \varepsilon_X, \eta_X)$ is a left dual object of $X$. The proof is done.

\bibliographystyle{alpha}
\bibliography{references}

\end{document}